\NewDocumentCommand{\grad}{e{_^}}{%
  \mathop{}\!%
  \nabla
  \IfValueT{#1}{_{\!#1}}%
  \IfValueT{#2}{^{#2}}%
}
\newcommand{\R}{\bbR}
\DeclareMathOperator{\tr}{tr}
\DeclareMathOperator*{\argmin}{arg\, min}
\DeclareMathOperator{\sign}{sign}
\newtheorem{theorem}{Theorem}[section]
\newtheorem{corollary}[theorem]{Corollary}
\newtheorem{lemma}[theorem]{Lemma}
\newtheorem{proposition}[theorem]{Proposition}
\newtheorem{assumption}[theorem]{Assumption}
\theoremstyle{definition}
\newtheorem{definition}[theorem]{Definition}
\newenvironment{remark}
{\pushQED{\qed}\remarkx} %
{\popQED\endremarkx}
\newenvironment{example}
{\pushQED{\qed}\examplex} %
{\popQED\endexamplex}
\DeclarePairedDelimiter{\norm}{\lVert}{\rVert}
\DeclarePairedDelimiter{\bra}{(}{)}
\DeclarePairedDelimiter{\set}{\{}{\}}
\DeclarePairedDelimiter{\skp}{\langle}{\rangle}
\DeclareMathAlphabet{\mathup}{OT1}{\familydefault}{m}{n}
\newlength{\leftstackrelawd}
\newlength{\leftstackrelbwd}
\def\leftstackrel#1#2{\settowidth{\leftstackrelawd}%
{${{}^{#1}}$}\settowidth{\leftstackrelbwd}{$#2$}%
\addtolength{\leftstackrelawd}{-\leftstackrelbwd}%
\leavevmode\ifthenelse{\lengthtest{\leftstackrelawd>0pt}}%
{\kern-.5\leftstackrelawd}{}\mathrel{\mathop{#2}\limits^{#1}}}
 \def\calE{{\mathcal E}} \def\calF{{\mathcal F}}
\def\calG{{\mathcal G}} \def\calH{{\mathcal H}} \def\calI{{\mathcal I}}
  \def\calL{{\mathcal L}}
\def\calM{{\mathcal M}}  
\def\calP{{\mathcal P}}  
 \def\calW{{\mathcal W}} \def\calX{{\mathcal X}}
\def\calY{{\mathcal Y}} 
 \def\fw{{\mathrm W}}
 \def\bbE{{\mathbb E}} 
 \def\bbK{{\mathbb K}} 
 \def\bbN{{\mathbb N}} 
\def\bbP{{\mathbb P}}  \def\bbR{{\mathbb R}}
\def\bbS{{\mathbb S}} \def\bbT{{\mathbb T}} 
 \def\bbZ{{\mathbb Z}} 
\renewcommand{\#}{\sharp}
\newcommand{\dist}{\mathrm{dist}}
\newcommand{\proj}{\mathrm{proj}}
\newcommand{\divr}{\mathrm{div}}
\let\@fnsymbol\@arabic
\begin{document}

\title{Solutions of stationary McKean-Vlasov equation on a high-dimensional sphere and other Riemannian manifolds}

\author{Anna Shalova\thanks{\href{mailto:a.shalova@tue.nl}{a.shalova@tue.nl}} \quad Andr\'e Schlichting\thanks{\href{mailto:andre.schlichting@uni-ulm.de}{andre.schlichting@uni-ulm.de}%
}}
\date{\normalsize ${}^1$Department of Mathematics and Computer Science,\\
  Eindhoven University of Technology \\
${}^2$Institute of Applied Analysis, Ulm University}

\maketitle

\def\ourkeywords{McKean-Vlasov equation, bifurcations, phase transition, nonlocal PDEs, interacting particle systems, PDEs on manifolds.}

\begin{abstract}
    We study stationary solutions of McKean-Vlasov equation on a high-dimensional sphere and other compact Riemannian manifolds. We extend the equivalence of the energetic problem formulation to the manifold setting and characterize critical points of the corresponding free energy functional. On a sphere, we employ the properties of spherical convolution to study the bifurcation branches around the uniform state. We also give a sufficient condition for an existence of a discontinuous transition point in terms of the interaction kernel and compare it to the Euclidean setting. We illustrate our results on a range of system, including the particle system arising from the transformer models and the Onsager model of liquid crystals.
    \par\medskip
\noindent\textbf{Keywords and phrases. }
\ourkeywords
\end{abstract}

\tableofcontents

\section{Introduction}
McKean-Vlasov equation arises as a mean-field limit of various stochastic interacting particles systems. Such systems describe phenomena of different nature and have applications in fields varying from liquid crystals \cite{carrillo2020long, Vollmer2017} and statistical mechanics \cite{MartzelAslangul2001} to opinion dynamics \cite{HegselmannKrause2002}, mathematical biology \cite{KellerSegel1971, BurgerCapassoMorale2007}, galactic dynamics~\cite{binney2008}, droplet growth~\cite{ConlonSchlichting2019}, plasma physics~\cite{bittencourt1986fund}, and synchronisation~\cite{kuramoto1981rhythms}. In addition, recently, interacting particles systems found a whole set of applications in theoretical machine learning \cite{sirignano2020mean, rotskoff2022trainability, geshkovski2024mathematical}. Several of the above-mentioned applications are set on Riemannian manifolds, dominantly on a high-dimensional sphere~\cite{Vollmer2017, geshkovski2024mathematical}. Even though the solutions of the McKean-Vlasov equation are relatively well-studied in~$\bbR^n$ or the flat torus, the scope of work concerning McKean-Vlasov equation in a manifold setting is very limited. 

In this paper we characterize the set of measure-valued solutions $\rho \in \calP_{ac}(\calM)$ of the stationary McKean-Vlasov equation:
\begin{equation}
    \label{eq:mckean-vlasov}
    \gamma^{-1}\Delta\rho + \divr(\rho \nabla_x W(x, \cdot) *\rho) =0,
\end{equation}
 on a compact Riemannian manifold $\calM$ in general and on sphere $\calM =\bbS^{n-1}$ of arbitrary dimension bin particular. Solutions of this equation correspond to the densities which balance the first, \emph{diffusion} term and the second, \emph{interaction} term. 
 The function $W: \calM \times \calM \to \bbR$ is called an \emph{interaction kernel} and is assumed to be symmetric $W(x,y) = W(y,x)$ throughout this paper. 
 Depending on the direction of $\nabla W$, the interaction term can model both \emph{attractive} or \emph{repulsive} forces. 

The parameter $\gamma \in \bbR_+$, called \emph{inverse temperature}, expresses how much priority is given to the diffusion term. 
Formally, for $\gamma \to 0$ the impact of the interaction term becomes negligible; and as a result, we expect that the set of solutions of \eqref{eq:mckean-vlasov} will coincide with the kernel of the Laplace-Beltrami on $\calM$, which are constant with respect to the volume measure.
Similarly, for $\gamma \to \infty$ the priority is given to the interaction term and the structure of the set of the solutions can vary depending on the properties of the interaction kernel $W$. We study the case of small $\gamma$ for a general compact Riemannian manifold. In case of $\calM=\bbS^{n-1}$ the knowledge of a suitable basis of $L_2(\bbS^{n-1})$ and its behavior under convolution operations allows us to characterize the behaviour of certain solutions for a larger range of $\gamma \in \bbR_+$.

We begin our analysis by establishing equivalence between solutions of the stationary McKean-Vlasov equation \eqref{eq:mckean-vlasov} and critical points of the free energy functional $\calF_\gamma: \calP(\calM) \to \bbR$ (see Proposition~\ref{prop:equivalence}) which for any admissible $\calM$ consists of
\begin{equation}
\label{eq:free-energy}
    \calF_\gamma(\mu) := \gamma^{-1}\calE(\mu) + \calI(\mu) \,.
\end{equation}
where $\calE$ is the relative entropy with respect to the normalized volume measure $m$:
\begin{equation}
\label{eq:entropy}
\calE(\mu) := \begin{cases}
    \int_{\calM} \rho \log \rho \,d{m} & \text{ if } \mu \text{ admits a positive density } \rho \text{ w.r.t. } m,  \\
    +\infty &\text{otherwise.}
\end{cases}
\end{equation}
The second term $\calI: \calP(\calM) \to \bbR$ is called the interaction energy and denoted by
\begin{equation}
\label{eq:interaction-energy}
\calI(\mu) := \frac12\int_{\calM\times \calM} W(x, y )d\mu(x)d\mu(y).
\end{equation}
Using this equivalence we prove existence of solutions for arbitrary $\gamma\in\bbR_+$ and give a sufficient condition for the uniqueness of the solution for small $\gamma$.

Additional symmetry assumptions on the space $\calM$ and the interaction kernel $W$ can help to give a more explicit characterization of the solutions of \eqref{eq:mckean-vlasov} like it was done in case of a torus in \cite{carrillo2020long}. In \cite{carrillo2020long}, the authors showed that for an interaction kernel of form $W(x, y) = W(x-y)$ on a torus $\bbT^{n}$ the Fourier decomposition of the interaction kernel $W$ can be used to establish existence of bifurcation branches as well as characterize the phase transition of \eqref{eq:mckean-vlasov}. In this work we employ similar techniques to study the solutions of the stationary McKean-Vlasov equation on a sphere of arbitrary dimension $\calM=\bbS^{n-1}$. We study the bifurcation branches around the uniform state $\bar\rho$ and give a sufficient condition for the existence of a discontinuous transition point in terms of the spherical harmonics decomposition of the interaction kernel in case of a radially-symmetric kernel $W(x, y) = W(\left<x, y\right>)$. 

To characterize non-trivial stationary measures of the McKean-Vlasov equation we use another equivalent formulation (see Proposition~\ref{prop:equivalence}), namely the characterization of the invariant measures to~\eqref{eq:mckean-vlasov} in terms of the zeroes of the Gibbs-map $F: \bbR_+ \times L^2(\calM) \to L^2(\calM)$:
\begin{equation}
\label{eq:gibbs-map}
    F(\gamma, \rho) = \rho - \frac{1}{Z(\gamma, \rho)}e^{-\gamma W*\rho} \,,
\end{equation}
where $Z(\gamma, \rho)$ is a normalization constant $Z(\gamma, \rho) = \int_{\calM}e^{-\gamma W*\rho}dm$. Applying results from the bifurcation theory to the Gibbs map, we show that the bifurcation points can be expressed in terms of the spherical harmonics decomposition of $W$ and the corresponding invariant measures can be characterized in terms of the corresponding spherical basis functions. The same decomposition in combination with the known structure of the spherical harmonics allows us to study the behaviour of minimizers around the phase transition point. 

We apply our findings to a number of models of different nature. We begin by studying so-called noisy transformer model, which can be interpreted as stochastically perturbed continuous-time self-attention model \cite{geshkovski2024mathematical}. Self-attention is a key building block of transformers, the state-of-the-art large language models. We characterize invariant measures of the noisy transformers as well as calculate the critical noise ratio above which no prior information is preserved. We also study the Onsager model for liquid crystals, which also arises in mathematical biology, and generalize findings of \cite{WachsmuthThesis06,Vollmer2017} to the case of the unit sphere of an arbitrary dimension. Finally, we study the noisy Hegselmann–Krause model for opinion dynamics adapted to the spherical domain. 

All of the models can formally be interpreted as mean-filed limits of the corresponding particles system~\cite{McKean1966,Oelschlaeger1984,oelschlager1989derivation}. 
The corresponding evolution equation for the law has the structure:
\[
\partial_t\rho = \nabla \cdot\left(\rho \nabla \frac{\delta \calF_\gamma}{\delta\rho}\right),
\]
where $\frac{\delta \calF_\gamma}{\delta\rho}$ is the Fréchet derivative of the free energy functional from~\eqref{eq:free-energy}. PDEs of this form posed on the space of probability measures with bounded second moments belong to a larger class of systems, namely gradient flows. We refer the reader to \cite{ambrosio2005gradient, santambrogio2015optimal} for the general theory of gradient flows on the state space $\R^d$. On manifolds the general theory is not fully developed, but it is expected to carry over. 
For instance on manifolds of positive curvature \cite{erbar2010heat} establishes the gradient flow formulation of the heat equation driven by relative entropy, albeit without interaction term. 
Due to the regular structure of the sphere, we argue that the same approaches might be applicable to rigorously prove the limiting behavior of the interacting particles systems posed on a sphere. In this paper we treat the stationary version of the McKean-Vlasov equation but the convexity properties established in Section~\ref{sec:convexity}, generalizing results from~\cite{sturm2005convex}, may also be of use for the characterization of the gradient-flow solutions of the non-stationary equation.

\subsection{Main results}
In this section we give an overview our main contributions. Our results are two-fold: we first study the solutions of the stationary McKean-Vlasov equation \eqref{eq:mckean-vlasov} on a compact connected Riemannian manifold without boundary, and in the second part we employ the symmetry properties of the unit sphere endowed with the natural topology to give a more explicit characterization of the solutions in terms of the spherical harmonics basis.
\paragraph{Compact Riemannian manifold.} Let $\calM$ be a compact connected Riemannian manifold without boundary and let the interaction kernel $W: \calM\times\calM \to \bbR$ be continuous, then the following result holds (see Theorem~\ref{th:convexity-M} and Corollary~\ref{cor:convergence-min}).
\begin{theorem}[Existence and uniqueness of solutions]
    For any $\gamma \in \bbR_+$ there exist a solution $\rho_\gamma$ of \eqref{eq:mckean-vlasov} and $\rho_\gamma \in H^1(\calM) \cap \calP_{ac}(\calM)$. In addition, if the curvature of the manifold is bounded from below $\operatorname{Ric}(\calM) \geq \lambda$, $W$ is twice-differentiable and there exist $\alpha > -\gamma^{-1}\lambda$ such that $W$ satisfies
    \[
\partial^2_t W\left(\exp_x vt, \exp_y ut\right) \geq \alpha (\|v\|^2 + \|u\|^2)
    \]
    for all $x, y \in \calM, \ v\in T_x\calM, u \in T_y\calM$, then $\rho_\gamma$ is a unique solution of \eqref{eq:mckean-vlasov}.
\end{theorem}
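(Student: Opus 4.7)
By Proposition~\ref{prop:equivalence}, solutions of \eqref{eq:mckean-vlasov} correspond to critical points of $\calF_\gamma$ on $\calP(\calM)$ (equivalently, to zeros of the Gibbs map \eqref{eq:gibbs-map}), so it suffices to produce a minimizer. Since $\calM$ is compact, $\calP(\calM)$ is narrowly compact by Prokhorov's theorem. The interaction functional $\calI$ is narrowly continuous because $W$ is continuous and bounded on the compact space $\calM\times\calM$, while the relative entropy $\calE$ defined in \eqref{eq:entropy} is narrowly lower semicontinuous and nonnegative (as $m$ is a probability measure). Hence $\calF_\gamma$ is narrowly lsc and bounded below on a compact set, and the direct method yields a minimizer $\mu_\gamma = \rho_\gamma\, m$. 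Positivity and boundedness of $\rho_\gamma$ follow from the Gibbs form $\rho_\gamma = Z(\gamma,\rho_\gamma)^{-1}e^{-\gamma W*\rho_\gamma}$ given by Proposition~\ref{prop:equivalence}. Differentiating this identity yields $\nabla\rho_\gamma = -\gamma\, \rho_\gamma\, \nabla_x(W*\rho_\gamma)$; since $\rho_\gamma \in L^\infty$ and $W*\rho_\gamma$ is Lipschitz on the compact manifold $\calM$, the right-hand side is bounded and hence in $L^2$, so $\rho_\gamma\in H^1(\calM)\cap\calP_{ac}(\calM)$.

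\textbf{Plan for uniqueness.} The strategy is to upgrade $\calF_\gamma$ to a strongly $(\gamma^{-1}\lambda + \alpha)$-geodesically convex functional on $(\calP_2(\calM),W_2)$ and conclude by strict convexity. For the entropy term, the hypothesis $\mathrm{Ric}(\calM)\geq \lambda$ combined with the von~Renesse--Sturm theorem (equivalently the $\mathrm{CD}(\lambda,\infty)$ criterion of Lott--Sturm--Villani) gives that $\calE$ is $\lambda$-geodesically convex along Wasserstein geodesics between absolutely continuous measures. For the interaction term, let $\mu_0,\mu_1\in\calP_{ac}(\calM)$ and let $\Pi$ be McCann's displacement plan realized on the space of minimizing geodesics of $\calM$, so that $\mu_t = (e_t)_\#\Pi$ and $\mu_t\otimes\mu_t = (e_t\otimes e_t)_\#(\Pi\otimes\Pi)$. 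Then
\begin{equation*}
\calI(\mu_t) = \tfrac12 \iint W\bigl(\xi(t),\eta(t)\bigr)\, d(\Pi\otimes\Pi)(\xi,\eta).
\end{equation*}
The Hessian hypothesis on $W$, applied with $v=\dot\xi(0)$, $u=\dot\eta(0)$ so that $\|v\|^2+\|u\|^2 = d(\xi(0),\xi(1))^2+d(\eta(0),\eta(1))^2$, gives $\partial_t^2 W(\xi(t),\eta(t))\geq \alpha(\|v\|^2+\|u\|^2)$ for $(\Pi\otimes\Pi)$-almost every pair. Integrating the resulting scalar inequality and using $\int d(\xi(0),\xi(1))^2\,d\Pi = W_2^2(\mu_0,\mu_1)$ yields
\begin{equation*}
\calI(\mu_t)\leq (1-t)\calI(\mu_0) + t\calI(\mu_1) - \tfrac{\alpha}{2}\, t(1-t)\, W_2^2(\mu_0,\mu_1).
\end{equation*}
Summing, $\calF_\gamma$ is strictly geodesically convex with modulus $\gamma^{-1}\lambda + \alpha > 0$, which forces uniqueness of the minimizer and of every critical point. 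Via Proposition~\ref{prop:equivalence}, $\rho_\gamma$ is the unique solution of \eqref{eq:mckean-vlasov}.

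\textbf{Main obstacle.} The principal technical difficulty is the interaction-term convexity. The Hessian inequality in the hypothesis is pointwise along arbitrary pairs of exponential curves on $\calM$, and converting it into a bound for $t\mapsto \calI(\mu_t)$ requires lifting the Wasserstein geodesic to a plan $\Pi$ on the space of minimizing geodesics (so that the product plan $\Pi\otimes\Pi$ can be evaluated at pairs), which in turn depends on McCann's displacement interpolation result on Riemannian manifolds and thus on absolute continuity of the endpoints; this is supplied by the Gibbs structure together with the finiteness of $\calF_\gamma$. A secondary subtlety is that the $H^1$ conclusion in the first part tacitly uses some differentiability of $W$, which is present throughout the uniqueness statement but must be noted carefully in the existence step.
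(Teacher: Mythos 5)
Your argument follows the paper's route in both halves: existence by the direct method on $\calP(\calM)$ (compactness of $\calM$, narrow continuity of $\calI$ from boundedness of $W$, narrow lower semicontinuity of $\calE$) together with the Gibbs identity $\rho_\gamma=Z^{-1}e^{-\gamma W*\rho_\gamma}$ to upgrade to $H^1\cap\calP_{ac}$, and uniqueness by establishing $(\gamma^{-1}\lambda+\alpha)$-geodesic convexity of $\calF_\gamma$ on $(\calP_2(\calM),W_2)$, where the entropy part uses the Ricci bound (Sturm/von Renesse--Sturm) and the interaction part is bounded along displacement interpolation using the Hessian hypothesis on $W$. Your lift to a plan $\Pi$ on the space of minimizing geodesics with $\mu_t=(e_t)_\#\Pi$ is equivalent to the pushforward parametrization $\mu_s=(F_s)_\#\mu_0$ that the paper takes from Sturm's Lemma 3.1, and your careful accounting of the $\tfrac12$ prefactor in $\calI$ yields exactly the modulus $\alpha$ appearing in the statement.
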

In fact we don't require $W$ to be everywhere twice-differentiable but only need the bound on the lower-second derivative. The proof relies on the geodesic convexity condition of the free energy functional \eqref{eq:free-energy}. 
\paragraph{Sphere $\bbS^{n-1}$.}
In case of the high-dimensional sphere we impose more assumptions on the interaction kernel, namely we ask $W$ to be rotationally symmetric, namely by abuse of notation to take the form $W(x,y) = W(\left<x, y\right>)$ with $W:[-1,1]\to \R$. In this case, due to the symmetric structure of the unit sphere and the interaction kernel one can show that the uniform state $\bar\rho$ is always a solution of \eqref{eq:mckean-vlasov}. Employing the properties of the spherical convolution we are able to characterize non-trivial branches of solutions in terms of the spherical harmonics decomposition of the kernel. Components of the spherical harmonics decomposition are projections of the function on the symmetric spherical harmonics basis functions $Y_{k,0}$. An explicit form is given in the Definition~\ref{def:spherical-decomposition}. 

\begin{definition}[Spherical harmonics decomposition, see Definition \ref{def:spherical-decomposition}]
\label{def:sph-decomposition-intro}
    Let $W:\bbS^{n-1}\times \bbS^{n-1} \to \bbR$ be a rotationally symmetric kernel, then the spherical harmonics decomposition of $W$ is defined as
    \[
    \hat{W}_k = \alpha_k \int_{\bbS^{n-1}}W(\skp{x_0,\cdot}) Y_{k, 0} \,d\sigma,
    \]
    where $\sigma$ is the uniform measure on a sphere, $x_0\in \bbS^{n-1}$ an arbitrary reference point, $Y_{k, 0}$ are the spherical harmonics and $\alpha_k$ is the normalization constant for $k\in \bbN$. 
\end{definition}
We show that if the spherical decomposition is non-positive, under certain structural assumptions, which we discuss in Section \ref{ssec:InteractionSphere}, there exist bifurcation curves around the uniform state.  Our result can be summarized in the following formal theorem (for more details see Theorem \ref{th:bifurcations}).
\begin{theorem}[Bifurcations]
\label{th:bifurcations-intro}
Let $W \in C_b \cap H^1$ be a rotationally symmetric interaction kernel. 
If there exists $k\in \bbN$ with unique negative value $\hat W_k < 0$, that is $\forall j\in \bbN\setminus\set{k}: W_j\ne W_k$, then there exists a non-trivial branch of solutions $\rho_\gamma \in L_2(\bbS^{n-1})$ of the form
\[
\rho_\gamma(t) = \bar\rho + f(t)Y_{k, 0} + o(f(t)), \qquad \gamma(t) = \gamma_k + \mu(t),
\]
on some neighborhood $t \in (-\delta, \delta)$ around the bifurcation point $\gamma_k = -\frac{1}{\hat W_k}$,
where $\bar\rho$ is the uniform state, $Y_{k, 0}$ is the corresponding spherical harmonic and $f, \mu$ are continuous functions on $(-\delta, \delta)$ satisfying $f(0) = 0, \ \mu(0) =0$. 
\end{theorem}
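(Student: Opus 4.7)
The plan is to apply the Crandall--Rabinowitz bifurcation theorem to the Gibbs map $F(\gamma,\rho)=\rho-Z(\gamma,\rho)^{-1}e^{-\gamma W\ast\rho}$ introduced in \eqref{eq:gibbs-map}, exploiting the fact that for a zonal kernel $W(x,y)=W(\langle x,y\rangle)$ the spherical harmonics diagonalise the convolution operator (Funk--Hecke). The first step is to observe that $\bar\rho\equiv 1$ gives a trivial branch $F(\gamma,\bar\rho)=0$ for every $\gamma$: by rotational symmetry of $W$ the convolution $W\ast\bar\rho$ is constant, so $e^{-\gamma W\ast\bar\rho}$ is proportional to $\bar\rho$ and normalisation makes the Gibbs map vanish.

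Next I would compute the Fréchet derivative $D_\rho F(\gamma,\bar\rho)$ acting on zero-mean perturbations $\eta\in L_2(\bbS^{n-1})$. A direct calculation (differentiating the exponential and the normalising constant $Z$) yields
\[
D_\rho F(\gamma,\bar\rho)[\eta]=\eta+\gamma\bar\rho\bigl(W\ast\eta-\tfrac{1}{|\bbS^{n-1}|}\!\textstyle\int W\ast\eta\,dm\bigr).
\]
On the space of mean-zero $L_2$ functions this is $\eta+\gamma W\ast\eta$ up to a constant that vanishes on mean-zero inputs. By the Funk--Hecke formula each spherical harmonic space $\mathcal H_k$ is an eigenspace of $W\ast\cdot$ with eigenvalue $\hat W_k$; hence the spectrum of $D_\rho F(\gamma,\bar\rho)$ on mean-zero functions is $\{1+\gamma\hat W_k:k\geq 1\}$. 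A bifurcation point must therefore satisfy $\gamma_k=-1/\hat W_k$, requiring $\hat W_k<0$, matching the statement.

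The crux is producing a one-dimensional kernel, since the eigenspace $\mathcal H_k$ has dimension larger than one in general. I would resolve this by restricting to the closed subspace $X\subset L_2(\bbS^{n-1})$ of functions invariant under the subgroup of rotations fixing a chosen pole $x_0$, i.e.\ functions depending only on $\langle x,x_0\rangle$. Because $W$ is zonal, both the convolution and the Gibbs map preserve $X$, and $X\cap\mathcal H_k$ is exactly one-dimensional, spanned by the zonal harmonic $Y_{k,0}$. At $\gamma=\gamma_k$ the uniqueness assumption $\hat W_j\neq\hat W_k$ for $j\neq k$ ensures that in $X$ the kernel of $D_\rho F(\gamma_k,\bar\rho)$ equals $\Span\{Y_{k,0}\}$, and the range has codimension one with a complement $\Span\{Y_{k,0}\}$ as well (the operator is a compact perturbation of the identity and self-adjoint, so index zero with simple zero eigenvalue). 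The transversality condition reduces to checking
\[
D^2_{\gamma\rho}F(\gamma_k,\bar\rho)[Y_{k,0}]=\hat W_k\,Y_{k,0}\notin\Range D_\rho F(\gamma_k,\bar\rho),
\]
which holds because $\hat W_k\neq 0$ and $Y_{k,0}$ generates the missing direction in the range.

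Smoothness of $F$ in $(\gamma,\rho)$ on $\bbR_+\times X$ with values in $X$ is standard since $W\in C_b\cap H^1$ and the exponential together with its normalisation are smooth on $L_2$; I would verify it by expanding $e^{-\gamma W\ast\rho}$ in a Taylor series and using $L_\infty$ bounds on $W\ast\rho$ coming from $W\in C_b$. Once these hypotheses are in place, Crandall--Rabinowitz delivers a $C^0$ branch $(\gamma(t),\rho(t))=(\gamma_k+\mu(t),\bar\rho+tY_{k,0}+t\,\psi(t))$ with $\psi(0)=0$ and $\psi(t)$ in a complement to $\Span\{Y_{k,0}\}$ in $X$; setting $f(t)=t$ gives the stated form. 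The main obstacle is the combination of verifying the simplicity of the kernel and the transversality condition in the correct function-analytic setting: one must pin down the space $X$ so that Funk--Hecke both diagonalises the linearisation \emph{and} decouples the degrees, and confirm that the Gibbs map maps $X$ into itself with the required regularity. The rest is a clean application of the abstract bifurcation theorem.
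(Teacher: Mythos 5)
Your proposal is correct and follows essentially the same route as the paper: shift to the Gibbs map, diagonalise the linearisation via the Funk--Hecke formula, restrict to the zonal subspace (the paper's $L_2^s(\bbS^{n-1})$) to make the kernel one-dimensional, establish compactness of $K$ and Fredholm index zero, and invoke a Crandall--Rabinowitz-type theorem. The only cosmetic difference is that you verify the transversality condition $\hat W_k Y_{k,0}\notin\Range D_\rho F(\gamma_k,\bar\rho)$ directly (using self-adjointness), whereas the paper cites Deimling's Theorem~28.3, whose ``simple characteristic value'' hypothesis encodes the same algebraic-multiplicity-one requirement; these are equivalent.
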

Bifurcation theory describes continuous curves of solutions branching from the uniform state. These solutions however are not guaranteed to be (global) minimizers of the free energy functional \eqref{eq:free-energy}. Indeed, it may be the case that above certain value $\gamma > \gamma_c$ the uniform measure is no longer a global minimizer of \eqref{eq:free-energy} and a different configuration is preferable from the energy-minimization perspective. This phenomena is called phase transition and the value $\gamma_c$ where the uniform state stops being unique minimizer of the free energy is called a phase transition point (see Definition~\ref{def:transition-point}. 
We characterize the phase transition of the stationary McKean-Vlasov equation \eqref{eq:mckean-vlasov} for a certain class of the interaction kernels. We give a simplified version of the sufficient condition for a discontinuous phase transition here. See the detailed description in the Assumption \ref{assum:pt-general} and Theorem \ref{th:pt}.
\begin{assumption}[Competitor in spherical harmonics]
   \label{assum:resonance-intro}
   Let $W$ be a rotationally symmetric interaction kernel and let $k\in \bbN$ be such that $\hat W_k= \min_l \hat W_l$ is among the smallest component of the spherical harmonics decomposition of $W$. Let $N_{\hat W_k}$ be the set of the indexes of all components with $\hat W_n = \hat W_k:$ 
   \[
   N_{W_k}= \{n\in \bbN: \hat W_n = \hat W_k\},
   \] 
   The interaction potential $W$ satisfies the resonance condition if there exists a linear combination $v = \sum_{l\in N_{W_k}} \alpha_l Y_{l,0}$ satisfying:
   $ \int \hat v^3 \,d\sigma \neq 0.
   $
\end{assumption}
In particular we show that the above assumption is satisfied, for example, whenever the minimum is achieved for $k = 2$ or $k=4$, which is the case in the Examples of Sections~\ref{ssec:Onsager},~\ref{ssec:opinion} and~\ref{ssec:localized}.
In this sense, single modes can resonate with themselves. 
Under the above assumption we are able to prove existence of the discontinuous transition point. 
    \begin{theorem}[Phase transitions]
        Let the interaction kernel satisfy the resonance Assumption~\ref{assum:resonance-intro}, then there exists a discontinuous phase transition point $0<\gamma_c < -\frac{1}{\min_{n\in\bbN} \hat W_n}$.
    \end{theorem}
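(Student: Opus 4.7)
The plan is to split the proof into two steps. First I would establish the strict inequality $\gamma_c < \gamma_k := -1/\hat W_k$ by constructing an explicit energetic competitor to $\bar\rho$. Then I would upgrade this to discontinuity via local uniqueness of critical points of $\calF_\gamma$ near $\bar\rho$ at subcritical $\gamma$, using the same linearization already exploited in Theorem~\ref{th:bifurcations}.

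For the first step I would test $\calF_\gamma$ on $\rho_\epsilon := \bar\rho + \epsilon v$, where $v = \sum_{l \in N_{\hat W_k}} \alpha_l Y_{l,0}$ is the resonant combination from Assumption~\ref{assum:resonance-intro}. Since each $Y_{l,0}$ appearing in $v$ lies in the eigenspace of $W * \cdot$ with eigenvalue $\hat W_k$, one has $W * v = \hat W_k v$; moreover $\int v \, d\sigma = 0$ (as each $Y_{l,0}$ with $l \geq 1$ is orthogonal to the constants), so $\rho_\epsilon$ is a probability density for $|\epsilon|$ small. Combining the Taylor expansion of $x \log x$ around $x = 1$ with the exactly quadratic form of $\calI$ gives
\begin{equation*}
\calF_\gamma(\rho_\epsilon) - \calF_\gamma(\bar\rho)
= \frac{\epsilon^2}{2}\bigl(\gamma^{-1} + \hat W_k\bigr)\|v\|_{L^2(\sigma)}^2
- \frac{\gamma^{-1}\epsilon^3}{6}\int_{\bbS^{n-1}} v^3\, d\sigma
+ O(\epsilon^4),
\end{equation*}
where the cubic contribution comes entirely from the entropy. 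At $\gamma = \gamma_k$ the quadratic coefficient vanishes and the cubic is nonzero by Assumption~\ref{assum:resonance-intro}, so after flipping the sign of $v$ if necessary we have $\calF_{\gamma_k}(\rho_\epsilon) < \calF_{\gamma_k}(\bar\rho)$ for all small $\epsilon > 0$. To push the witness strictly below $\gamma_k$ I would set $\epsilon = c(\gamma_k - \gamma)$ with a fixed $c > 0$. Because $\gamma^{-1} + \hat W_k = (\gamma_k - \gamma)/\gamma_k^2 + O((\gamma_k - \gamma)^2)$, both the quadratic and the cubic contributions are of order $(\gamma_k - \gamma)^3$, with coefficients proportional to $c^2$ and $c^3$ respectively, while the remainder is of order $(\gamma_k - \gamma)^4$. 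Choosing $c$ large enough that the $c^3$ term dominates yields $\calF_\gamma(\rho_\epsilon) < \calF_\gamma(\bar\rho)$ for every $\gamma$ in a left neighborhood of $\gamma_k$, and therefore $\gamma_c < \gamma_k$.

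For the second step I argue by contradiction: suppose the transition is continuous, i.e.\ there is a sequence $\gamma_n \downarrow \gamma_c$ and minimizers $\rho_n \ne \bar\rho$ of $\calF_{\gamma_n}$ with $\rho_n \to \bar\rho$ in $L^2(\sigma)$. Each $\rho_n$ is a zero of the Gibbs map~\eqref{eq:gibbs-map}, and on the zero-mean subspace the linearization of $F(\gamma,\cdot)$ at $\bar\rho$ equals $\mathrm{Id} + \gamma\, W * \cdot$. This operator is invertible for every $\gamma \in (0, \gamma_k)$ because $-1/\gamma < \hat W_k = \min_j \hat W_j$ does not belong to the spectrum of $W * \cdot$. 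The implicit function theorem, together with the compactness of $W * \cdot$ on $L^2(\sigma)$, therefore furnishes a neighborhood $U$ of $\bar\rho$, of size uniform for $\gamma$ in any compact subinterval of $(0, \gamma_k)$, on which $\bar\rho$ is the only zero of $F(\gamma,\cdot)$; this forces $\rho_n = \bar\rho$ for $n$ large, contradicting $\rho_n \ne \bar\rho$. Hence the minimizers stay bounded away from $\bar\rho$ as $\gamma \downarrow \gamma_c$, giving the desired discontinuity. The main obstacle I foresee is precisely this uniformity of $U$ across $[\gamma_c, \gamma_k - \eta]$, which requires a careful Fredholm-theoretic setup but follows from continuity of $(\mathrm{Id} + \gamma\, W * \cdot)^{-1}$ on that compact interval.
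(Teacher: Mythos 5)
Your proposal is correct in outline and its first step matches the paper's Lemma~\ref{lemma:competitor} essentially exactly: you expand $\calF_\gamma$ along $\rho_\epsilon=\bar\rho+\epsilon v$ with $v$ the resonant combination, observe the quadratic terms cancel at $\gamma_\#=\gamma_k$ because $\calI$ is exactly quadratic with the ``right'' eigenvalue, and use the nonvanishing cubic (from the entropy, sign-adjusted) to beat $\bar\rho$ energetically. The additional ``push below $\gamma_k$'' scaling with $\epsilon=c(\gamma_k-\gamma)$ is unnecessary and does not by itself produce the conclusion $\gamma_c<\gamma_k$: knowing that $\bar\rho$ is not a minimizer at $\gamma_\#$ already forces $\gamma_c<\gamma_\#$, but only once one knows a transition point $\gamma_c$ exists, which the paper gets from Proposition~\ref{prop:existence-pt} (and the monotonicity Lemma~\ref{lemma:properties}); your writeup implicitly assumes $\gamma_c$ exists without invoking that machinery.

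Where your argument genuinely diverges from the paper is the second step. The paper appeals to Lemma~\ref{lemma:tp_lsp}, cited from~\cite{ChayesPanferov2010}, which says that a continuous transition must occur at the point of linear stability $\gamma_\#$; combined with $\gamma_c<\gamma_\#$ this immediately gives discontinuity. You instead reprove (a special case of) this lemma directly via local uniqueness from the implicit function theorem applied to the Gibbs map, using invertibility of $\mathrm{Id}-\gamma K$ on the mean-zero subspace for all $\gamma<\gamma_k$. This is a reasonable and self-contained alternative, and one can even simplify it: you do not need a neighborhood uniform over a compact subinterval of $(0,\gamma_k)$, just the IFT neighborhood at $(\bar\rho,\gamma_c)$, since a sequence with $\gamma_n\downarrow\gamma_c$, $\rho_n\to\bar\rho$ eventually enters that single product neighborhood. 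The one loose end is the mismatch of topologies: continuity of the transition is defined via $L^1$ convergence of minimizers, whereas the IFT/Fredholm argument lives in $L^2$; this is easily repaired using the uniform $L^\infty$ bound on minimizers from Theorem~\ref{th:minimizers} to upgrade $L^1$ to $L^2$, but should be said explicitly.
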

Note that in this case $\gamma_c$ is strictly smaller then any of the bifurcation points characterized in Theorem \ref{th:bifurcations-intro}, implying that in the bifurcation points the uniform measure is not a global minimizer of the free energy functional \eqref{eq:free-energy}. 
\subsection{Literature Review}

\paragraph{McKean-Vlasov equation as a mean-field limit.}

Mean-field limits of particles system is a vast area of research, we refer to several recent results in this direction. A number of works treat interaction and diffusion systems separately. Namely, the mean-field convergence of Vlasov system (without interaction) under various assumptions is reviewed in \cite{jabin2014review}. 
Convergence of the system of interacting particles (with noise) goes back to~\cite{McKean1966} with rigorous derivations with more and more singular interaction kernels in~\cite{Oelschlaeger1984,oelschlager1989derivation,Stevens2000} and quantitative limits in~\cite{duerinckx2016mean, Serfaty2020mean} for Riesz and Coulomb-type (repulsive) interactions, also see the overview \cite{golse2016dynamics} and the recent work~\cite{bresch2023mean} for a mean-field with singular kernels.
Recent innovations consider the question of uniform in time propagation of chaos in mean field limit of interacting diffusions with smooth kernels as for instance in~\cite{monmarche2017long} and references therein and upto the bifurcation point	 in~\cite{DelgadinoGvalaniPavliotisSmith2023}, optimal quantitative results as first established in~\cite{Lacker2023}, or revisit connection to large deviation principles~\cite{DawsonGaertner1989,hoeksema2024large}.

\paragraph{PDEs and free energies on manifolds.}

Well-posedness of the pure interaction systems on Riemannian manifolds have been studied in \cite{fetecau2021well, wu2015nonlocal}. Under the bounded curvature assumption the long-term behaviour of the same system have been established in \cite{fetecau2023long}. Relaxation of the manifold-restricted aggregation model has been introduced and studied in \cite{patacchini2021nonlocal}. On a sphere, well-posedness of the aggregation model is established in \cite{fetecau2021intrinsic}. In \cite{fetecau2023equilibria} the authors study the aggregation PDE on Cartan-Hadamar (hyperbolic) manifolds.

For the manifolds with negative curvature the it is also possible to establish well-posedness of the aggregation model in the presence of diffusion term. Stationary solutions of McKean-Vlasov equation on hyperbolic manifolds are characterized in \cite{fetecau2023equilibria, fetecau2023ground, carrillo2024existence}.

A few relevant results concern the free energies corresponding to the evolution equations on manifolds. The geodesic convexity of the entropic term and potential energy is established in  \cite{otto2005eulerian, sturm2005convex}. We give a more detailed description of~\cite{sturm2005convex} in Section~\ref{sec:convexity}. In  \cite{erbar2010heat}, the author shows existence and uniqueness of gradient flow solutions of the heat equations on manifolds of positive curvature. The general formalism of gradient flows for internal energies on the space of measures over a Riemannian manifold is discussed in~\cite{Villani2008}.

\paragraph{Bifurcations and phase transitions.}
Bifurcation theory dates back to the results formulated in \cite{CrandallRabinowitz1971}, for a general theoretical overview we refer the reader to the book of Kielhoefer \cite{Kielhoefer2012}. 
On a torus bifurcations of the free energy functional \eqref{eq:free-energy} have been studied in \cite{carrillo2020long} and in the presence of two local minima the existence of saddle point was proven~\cite{GvalaniSchlichting2020}. See also~\cite{CarrilloGvalani2021} for a generalization to nonlinear diffusion-aggregation equations.
On $\bbS^2$ bifurcations of the Onsager energy are characterized in~\cite{fatkullin2005critical, WachsmuthThesis06, lucia2010exact, Vollmer2017}. 

Phenomenon of phase transition has been show to appear in systems of different nature, see for example
\cite{PoschNarenhoferThirring1990,BarbaroCanizoCarrilloDegond2016, DegondFrouvelleLiu2015,Tugaut2014, Vollmer2017}. Phase transition of the McKean-Vlasov equation on a torus has been studied in
\cite{ChayesPanferov2010}, the authors introduce concepts of continuous and discontinuous transition points and study their properties in terms of the interaction kernel. Explicit conditions of continuous and discontinuous phase transition in terms of the Fourier decomposition of the kernel are introduced in 
\cite{carrillo2020long}. Phase transition of McKean-Vlasov equation of weakly coupled Hodgkin-Huxley oscillators is characterized in \cite{vukadinovic2023phase}. In \cite{delgadino2021diffusive}, the authors discuss the mean-field behaviour of systems exhibiting phase transition. 

\subsection*{Acknowledgments}
The authors are grateful to Hugo Melchers for the help concerning calculations in Section~\ref{sec:examples}. The authors are also thankful to Rishabh Gvalani, Jasper Hoeksema, Greg Pavliotis, Mark Peletier and Jim Portegies for helpful discussions. Andr\'e Schlichting is supported by the Deutsche Forschungsgemeinschaft (DFG, German Research Foundation) under Germany's Excellence Strategy EXC 2044-390685587, Mathematics M\"unster: Dynamics--Geometry--Structure. Anna Shalova is supported by the Dutch Research Council (NWO), in the framework of the program ‘Unraveling Neural Networks with Structure-Preserving Computing’ (file number OCENW.GROOT.2019.044).

\section{Compact Riemannian manifold}
\label{sec:general}

Throughout this section we assume that $\calM$ is a compact connected Riemannian manifold without boundary. 
We study the weak solutions on $\calM$ of the stationary McKean-Vlasov equation~\eqref{eq:mckean-vlasov}, that is
\begin{equation*}
	\gamma^{-1}\Delta\rho + \divr(\rho \nabla_x W(x, \cdot) *\rho) =0 \,,
\end{equation*}
where the operators $\nabla, \ \divr \text{ and } \Delta$ are manifold gradient, divergence and Laplace-Beltrami operator respectively and are rigorously defined in Appendix~\ref{sec:geometry} and $*$ denotes the measure convolution
\[
(W*\rho)(x) = \int_{\calM} W(x, y)\rho(y)dm.
\]
For a Riemannian manifold with metric $g$, given the interaction kernel $W\in H^1(\calM\times\calM)$ (see Appendix~\ref{ssec:SobolevMfds} for the notion of Sobolev spaces) the weak solutions are defined in the following sense.
\begin{definition}[Weak solution]\label{def:weak:mv}
	A function $\rho\in H^1(\calM) \cap \calP_{ac}(\calM)$ is a weak solution of \eqref{eq:mckean-vlasov} if for every $\phi \in H^1(\calM)$ it satisfies
	\[
	\gamma^{-1}\int_{\calM}g(\nabla \rho, \nabla \phi)d\sigma + \int_{\calM} g(\rho \nabla\phi, \nabla_x W(x,\cdot) *\rho) d\sigma  =0.
	\] 
\end{definition}
The structure of this section is the following: we first establish three equivalence formulations for weak solution in the sense of Definition~\ref{def:weak:mv} in Section~\ref{sec:formulations}. We then proceed by proving existence of minimizers of the free energy functional $\calF$ in Section~\ref{sec:existence}. Finally, we introduce a convexity criterion for $\calF$ and derive a sufficient condition for the uniqueness of the minimizers in Section~\ref{sec:convexity}.

\subsection{Equivalent characterizations of stationary states}
\label{sec:formulations}
In this section we reformulate the problem of solving the stationary McKean-Vlasov equation as a fixed-point problem of the Gibbs map $F$ as defined in \eqref{eq:gibbs-map} and as a minimization problem of the free energy functional defined in \eqref{eq:free-energy}. First we note that due to the smoothing effect of the convolution all the zeros of the Gibbs map are smooth, namely the following Lemma holds.
\begin{lemma}
	\label{lemma:gibbs-H1}
	Let $\gamma \in \bbR_+$ and let $W \in C_b(\calM \times\calM) \cap H^1(\calM \times\calM)$, then any $\rho \in L^1(\calM)$ satisfying $F(\rho, \gamma) = 0$ is an $H^1(\calM)$ function.%
\end{lemma}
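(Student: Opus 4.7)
The plan is to exploit the fixed-point relation $\rho = Z(\gamma,\rho)^{-1} e^{-\gamma W*\rho}$ by a bootstrap argument: first show that $\rho$ is bounded, then promote bounded to $H^1$ via the smoothing effect of convolution against an $H^1$ kernel, and finally close the loop by composition with the smooth function $s \mapsto e^{-\gamma s}$.

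\textbf{Step 1 (bootstrap to $L^\infty$).} Since $W \in C_b(\calM\times\calM)$ and $\rho \in L^1(\calM)$, the pointwise bound $|(W*\rho)(x)| \le \norm{W}_\infty \norm{\rho}_{L^1}$ shows $W*\rho \in L^\infty(\calM)$. Because $\calM$ is compact (hence $m(\calM)<\infty$) and $\gamma W * \rho$ is bounded, the normalization $Z(\gamma,\rho) = \int_\calM e^{-\gamma W*\rho}\,dm$ is finite and strictly positive, so the identity $\rho = Z^{-1} e^{-\gamma W *\rho}$ immediately gives $\rho \in L^\infty(\calM)$.

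\textbf{Step 2 (convolution lies in $H^1$).} Using $W \in H^1(\calM\times\calM)$ and Fubini, for $m$-a.e.\ $y \in \calM$ the slice $W(\cdot,y)$ lies in $H^1(\calM)$, and
\begin{equation*}
\int_\calM \bra*{\norm{W(\cdot,y)}_{L^2}^2 + \norm{\nabla_x W(\cdot,y)}_{L^2}^2 } dm(y) = \norm{W}_{H^1(\calM\times\calM)}^2 < \infty .
\end{equation*}
Differentiating under the integral sign (justified since the formal derivative is dominated by $\norm{\rho}_\infty |\nabla_x W(x,y)|$), I get $\nabla_x(W*\rho)(x) = \int_\calM \nabla_x W(x,y) \rho(y)\,dm(y)$ in the weak sense. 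Using Minkowski's integral inequality together with $m(\calM)<\infty$ and $\rho \in L^\infty$,
\begin{equation*}
\norm{\nabla_x (W*\rho)}_{L^2(\calM)} \le \norm{\rho}_\infty \,m(\calM)^{1/2} \norm{\nabla_x W}_{L^2(\calM\times\calM)},
\end{equation*}
and analogously for $W*\rho$ itself. Thus $W*\rho \in H^1(\calM)$.

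\textbf{Step 3 (chain rule closes the loop).} Since $W*\rho$ is bounded, the map $t\mapsto e^{-\gamma t}$ is smooth and globally Lipschitz on the range of $W*\rho$. The standard chain rule for Sobolev functions on Riemannian manifolds (see Appendix~\ref{ssec:SobolevMfds}) then yields $e^{-\gamma W*\rho} \in H^1(\calM)$, with $\nabla(e^{-\gamma W*\rho}) = -\gamma\, e^{-\gamma W*\rho}\nabla(W*\rho)$. Dividing by the positive constant $Z(\gamma,\rho)$ and invoking the fixed-point identity $\rho = Z^{-1} e^{-\gamma W*\rho}$ gives $\rho \in H^1(\calM)$.

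\textbf{Expected main obstacle.} The only non-routine step is Step~2: making precise the commutation of the manifold gradient with integration against $y$ for an $H^1$ kernel. On $\R^n$ this is standard, and on $\calM$ it follows by working in coordinate charts and using a partition of unity, but it requires the Fubini-type identification of $H^1(\calM\times\calM)$ with $L^2_y H^1_x \cap H^1_y L^2_x$, which I would spell out by reducing to local charts before assembling the global estimate via Minkowski. Once that is in place, Steps~1 and~3 are immediate.
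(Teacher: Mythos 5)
Your proof is correct and follows the same overall bootstrap as the paper: first an $L^\infty$ bound on $\rho$ from the fixed-point identity $\rho = Z^{-1}e^{-\gamma W*\rho}$, then control of $\nabla\rho$ via the chain rule on the exponential together with an $L^2$ bound on $\nabla_x W * \rho$. The only real difference is in the gradient estimate: you invoke Minkowski's integral inequality directly to obtain $\norm{\nabla_x(W*\rho)}_{L^2} \le \norm{\rho}_\infty m(\calM)^{1/2}\norm{\nabla_x W}_{L^2(\calM\times\calM)}$, whereas the paper expands the squared $L^2$ norm of $\nabla_x W * \rho$, applies a Young/AM--GM bound to the cross terms, and then uses the symmetry $W(x,y)=W(y,x)$ to fold $\nabla_x W$ and $\nabla_y W$ together into the full $H^1$ norm via the product metric. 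Your route is a bit cleaner and does not need the symmetry hypothesis at this stage. One small slip worth noting: your displayed Fubini identity should include $\norm{\nabla_y W(\cdot,y)}_{L^2}^2$ as well to equal $\norm{W}_{H^1(\calM\times\calM)}^2$; as written the left side is only bounded above by that quantity, but since you only need $\norm{\nabla_x W}_{L^2(\calM\times\calM)} \le \norm{W}_{H^1(\calM\times\calM)}$ this does not affect the argument.
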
 
\begin{proof}
	We begin by showing $\rho \in L^2(\calM)$. From the boundedness of the kernel we obtain the  following estimate
	\[
	\|W * \rho \|_\infty =  \left\|\int   W(x, y)\rho(y)dm(y)\right\|_\infty%
	\leq \|W\|_{L_\infty(\calM\times\calM)} \|\rho\|_{L_1(\calM)}.
	\]
	Any zero of the Gibbs map satisfies almost everywhere 
	\[
	\rho(x) = \frac{1}{Z(\gamma, \rho)} e^{-\gamma (W *\rho)(x)},
	\]
	implying that
	\begin{equation}
		\label{eq:rho-infty}
		\|\rho\|_\infty = \left\|\frac{1}{Z(\gamma, \rho)} e^{-\gamma W *\rho}\right\|_\infty = \frac{1}{Z(\gamma, \rho)}\left\| e^{-\gamma W *\rho}\right\|_\infty \leq \frac{1}{Z(\gamma, \rho)}e^{\gamma \|W \|_\infty}  = m(\calM)^{-1}e^{2\gamma \|W \|_\infty},
	\end{equation}
	where we used that $Z(\gamma, \rho)\geq \int e^{-\gamma \|W \|_\infty}dm = m(\calM)e^{-\gamma \|W \|_\infty} > 0$. As a result we conclude that $\rho$ is square integrable $\|\rho\|_2 \leq m(\calM)\|\rho\|^2_\infty < \infty$.
	
	Now, we show that $\nabla \rho \in L_2(T\calM)$. First of all note that the gradient exists and satisfies
	\begin{align*}
		\nabla \rho(x) &= \frac{1}{Z(\gamma, \rho)} \nabla e^{-\gamma (W *\rho)(x)} = - \frac{\gamma e^{-\gamma (W *\rho)(x)}}{Z(\gamma, \rho)} \int_\calM \nabla_x W(x, y)  \rho(y)dm(y)\\
		&=
		- \frac{\gamma e^{-\gamma (W *\rho)(x)}}{Z(\gamma, \rho)} (\nabla_x W\ast \rho)(x) \,.
	\end{align*}
	As a result we get the following bound
	\begin{align}
		\MoveEqLeft   
		\int_{\calM}g(\nabla \rho, \nabla \rho)dm \leq \frac{\gamma^2e^{2\gamma\|W*\rho\|_{\infty}}}{Z(\gamma, \rho)^2} \int_{\calM}g_x\bra*{(\nabla_x W\ast \rho)(x), (\nabla_x W\ast \rho)(x)} dm(x) \notag \\
		&\leq\frac{\gamma^2e^{2\gamma\|W*\rho\|_{\infty}}}{Z(\gamma, \rho)^2}\|\rho\|^2_{\infty}\int_{\calM^3}\mkern-4mu g_x\bigl( \nabla_x W(x, y),  \nabla_x W(x, z)\bigr) (dm)^3 \notag\\
		&\leq \frac{\gamma^2e^{2\gamma\|W*\rho\|_{\infty}}}{2Z(\gamma, \rho)^2}\|\rho\|^2_{\infty}
			\int_{\calM^3} \Bigl(g_x\bigl( \nabla_x W(x, y),  \nabla_x W(x, y) \bigr) \notag \\
			&\hspace{16em}+ g_x\bigl( \nabla_x W(x, z),  \nabla_x W(x, z) \bigr)\Bigr)(dm)^3 \notag\\
		&\leq \frac{\gamma^2e^{2\gamma\|W*\rho\|_{\infty}}}{2Z(\gamma, \rho)^2}\|\rho\|^2_{\infty} m(\calM) \int_{\calM^3}\Bigl(g_x\bigl( \nabla_x W(x, y),  \nabla_x W(x, y) \bigr)  \notag \\
        &\hspace{16em} + g_y\bigl( \nabla_y W(x, y),  \nabla_y W(x, y) \bigr)\Bigr)(dm)^3 \notag\\
		&\leq \frac{\gamma^2e^{2\gamma\|W*\rho\|_{\infty}}}{2Z(\gamma, \rho)^2}\|\rho\|^2_{\infty} m(\calM) \int_{\calM\times \calM} g^{\calM\times \calM} (\nabla W(x, y), \nabla W(x, y))(dm)^2 \notag \\
		&\leq\frac{\gamma^2e^{2\gamma\|W*\rho\|_{\infty}}}{2 Z(\gamma, \rho)^2}\|\rho\|^2_{\infty} m(\calM)\|W\|_{H^1} \,\label{eq:rho-h1}
	\end{align}
	where we use the product metric tensor $g^{\calM\times \calM}$ in the second last estimate (see Appendix~\ref{ssec:ProductMfds}).
\end{proof}
\begin{remark}
	In Euclidean setting the solutions of \eqref{eq:mckean-vlasov} are smooth functions $\rho \in C^\infty$, see for example \cite[Theorem 2.3]{carrillo2020long}. We argue that the same reasoning applies to the Riemannian manifold case and the solutions have in fact higher regularity. The main argument of the proof is the regularity of the 'convolution' which can be carried out in charts. Since it is not the main focus of the paper and is not required for the further analysis we do not provide the proof here.
\end{remark}
Estimates derived in the proof of Lemma \ref{lemma:gibbs-H1} also allow to characterize the limiting behavior of the minimizers for $\gamma \to 0$.
\begin{corollary}
	\label{cor:gibbs-gamma0}
	Let $W \in C_b(\calM \times\calM) \cap H^1(\calM \times\calM)$, and assume that for all $\gamma \in [0, M)$ there exists $\rho_\gamma \in H^1$ such that $(\gamma,\rho_\gamma)$ is a zero of the Gibbs map \eqref{eq:gibbs-map}, then 
	\[
	\lim_{\gamma\to 0}  \|\rho_\gamma - \bar \rho\|_{H^1} = 0 \,,
	\]
	where $\bar \rho = \frac{1}{m(\calM)}$ is the uniform state.
\end{corollary}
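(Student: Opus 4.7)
The plan is to leverage the explicit bounds established in the proof of Lemma~\ref{lemma:gibbs-H1}, which already give us nearly everything we need: the quantitative estimate on the gradient term in~\eqref{eq:rho-h1} carries an explicit $\gamma^2$ factor, so the $H^1$-seminorm collapses automatically as $\gamma\to 0$, and the uniform (hence $L_2$) convergence of $\rho_\gamma$ to the constant $\bar\rho$ is a direct consequence of the Gibbs representation.

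First, I would establish uniform-in-$\gamma$ bounds on the quantities appearing in~\eqref{eq:rho-h1} for $\gamma \in [0, M)$. Because $\rho_\gamma$ is a probability density, $\|W * \rho_\gamma\|_\infty \leq \|W\|_\infty$, so the normalization constant satisfies $Z(\gamma, \rho_\gamma) \geq m(\calM) e^{-M\|W\|_\infty}$, and by~\eqref{eq:rho-infty} we have $\|\rho_\gamma\|_\infty \leq m(\calM)^{-1} e^{2M\|W\|_\infty}$. Plugging these uniform bounds into~\eqref{eq:rho-h1} yields
\[
\|\nabla \rho_\gamma\|_{L_2}^2 \;\leq\; C \gamma^2,
\]
for some constant $C = C(M, \|W\|_\infty, \|W\|_{H^1}, m(\calM))$ independent of $\gamma$, so $\|\nabla \rho_\gamma\|_{L_2} \to 0$ as $\gamma \to 0$.

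Next, for the $L_2$-part, I would write
\[
\rho_\gamma - \bar\rho \;=\; \frac{e^{-\gamma W*\rho_\gamma}}{Z(\gamma,\rho_\gamma)} - \frac{1}{m(\calM)}
\]
and estimate the numerator and denominator separately. Using $|1 - e^{-\gamma W*\rho_\gamma}| \leq \gamma \|W\|_\infty e^{\gamma \|W\|_\infty}$ pointwise, one has $\|e^{-\gamma W*\rho_\gamma} - 1\|_\infty = O(\gamma)$; integrating gives $|Z(\gamma,\rho_\gamma) - m(\calM)| = O(\gamma)$ as well. Since $Z(\gamma,\rho_\gamma)$ is bounded away from zero, combining the two estimates yields $\|\rho_\gamma - \bar\rho\|_\infty = O(\gamma)$, and hence $\|\rho_\gamma - \bar\rho\|_{L_2} \leq m(\calM)^{1/2} \|\rho_\gamma - \bar\rho\|_\infty \to 0$.

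No real obstacle is expected here since all ingredients are already present in Lemma~\ref{lemma:gibbs-H1}: the only care needed is to verify that the constants in the bounds~\eqref{eq:rho-infty} and~\eqref{eq:rho-h1} can be made uniform over $\gamma \in [0, M)$ (which is immediate from the $\gamma$-monotonicity of $e^{\gamma\|W\|_\infty}$ on a bounded interval), and to note that the $\gamma^2$-prefactor in~\eqref{eq:rho-h1} provides the decay of the gradient term. Combining the $L_2$-convergence with the gradient bound gives $\|\rho_\gamma - \bar\rho\|_{H^1} \to 0$ as $\gamma \to 0$.
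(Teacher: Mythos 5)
Your proof is correct and takes essentially the same approach as the paper: both control the gradient term via the $\gamma^2$ prefactor in~\eqref{eq:rho-h1}, and both control the $L_2$ term through the Gibbs representation combined with the uniform bound $\|W*\rho_\gamma\|_\infty \leq \|W\|_\infty$. The paper packages the $L_2$ estimate slightly differently, deriving the two-sided pointwise bounds $\bar\rho\, e^{-2\gamma\|W\|_\infty} \leq \rho_\gamma \leq \bar\rho\, e^{2\gamma\|W\|_\infty}$ directly rather than your numerator/denominator decomposition, but the two computations are equivalent and yield the same $\|\rho_\gamma-\bar\rho\|_\infty = O(\gamma)$ conclusion.
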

\begin{proof}
	Since $\bar\rho$ is a constant function, expanding $\|\rho_\gamma - \bar \rho\|_{H^1}$ we get
	\[
	\|\rho_\gamma - \bar \rho\|_{H^1} = \|\rho_\gamma - \bar \rho\|_{L_2} + \|\nabla\rho_\gamma \|_{L_2(T\calM)}. 
	\]
	Analogously to \eqref{eq:rho-infty}, we also have the lower bound on $\|\rho_\gamma\|_\infty$:
	\begin{equation*}
		\|\rho_\gamma\|_\infty \geq  \frac{1}{Z(\gamma, \rho)}e^{-\gamma \|W \|_\infty}  = m(\calM)^{-1}e^{-2\gamma \|W \|_\infty}.
	\end{equation*}
	and as a result the $L_2$ norm can be bounded as
	\[
	\|\rho_\gamma - \bar \rho\|^2_{L_2} \leq m(\calM)\|\rho_\gamma - \bar \rho\|^2_\infty \leq \bar\rho \left((1 - e^{-2\gamma \|W \|_\infty})^2 + (e^{2\gamma \|W \|_\infty}-1)^2\right) \leq 16\gamma^2\bar\rho^2\|W \|_\infty^2\,,
	\]
	which vanishes for $\gamma\to 0$. In addition, the bound \eqref{eq:rho-h1} combined with the upper bound on~$\|\rho_\gamma\|_\infty$ gives $\|\nabla\rho_\gamma \|_{L_2(T\calM)} \to 0$.
\end{proof}

We are now ready to establish equivalence between weak solutions of the stationary McKean-Vlasov equation from Definition~\ref{def:weak:mv}, the zeros of the Gibbs map \eqref{eq:gibbs-map} and critical points of~$\calF_\gamma$.
\begin{proposition}
	\label{prop:equivalence}
	For $\rho\in H^1(\calM) \cap \calP_{ac}^+(\calM)$ and $\gamma \in \bbR_+$ the following statements are equivalent:
	\begin{enumerate}
		\item $\rho$ is a weak solution of the stationary McKean-Vlasov equation \eqref{eq:mckean-vlasov}  in the sense of Definition~\ref{def:weak:mv},
		\item $(\rho, \gamma)$ is a solution of $ F(\rho, \gamma) = 0$, where  $F$ is the Gibbs map defined in \eqref{eq:gibbs-map}.
		\item $\rho$ is a critical point of the free energy functional $\calF_\gamma$ \eqref{eq:free-energy}.
	\end{enumerate}
\end{proposition}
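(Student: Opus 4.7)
The plan is to establish the chain of equivalences in the order $(2)\Leftrightarrow(3)$ and then $(1)\Leftrightarrow(2)$. The first link is a direct first-variation computation, while the second exploits the drift-diffusion gradient-flow structure of the stationary McKean--Vlasov operator through a weighted Dirichlet energy identity.

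For $(2)\Leftrightarrow(3)$, I would differentiate $\calF_\gamma$ along a mass-preserving perturbation $\eta$ satisfying $\int_\calM\eta\,dm=0$. Using the symmetry $W(x,y)=W(y,x)$ to simplify the variation of $\calI$, the first variation reads
\[
\frac{d}{d\varepsilon}\calF_\gamma(\rho+\varepsilon\eta)\bigg|_{\varepsilon=0}=\int_\calM\bra[\big]{\gamma^{-1}(\log\rho+1)+(W*\rho)(x)}\eta(x)\,dm(x).
\]
Vanishing of this expression for every admissible $\eta$ is equivalent to $\gamma^{-1}\log\rho+W*\rho$ being $m$-a.e.\ constant, and after exponentiating and applying the mass constraint $\int\rho\,dm=1$ the constant is pinned down, yielding precisely the Gibbs fixed-point identity $F(\gamma,\rho)=0$.

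For $(2)\Rightarrow(1)$, the Gibbs form gives $\nabla\rho=-\gamma\rho\,(\nabla_x W*\rho)$ pointwise a.e., and substituting this into Definition~\ref{def:weak:mv} the two integrals cancel exactly for every test function $\phi\in H^1(\calM)$. For the reverse implication $(1)\Rightarrow(2)$, I would use the identity $\nabla\rho=\rho\nabla\log\rho$ valid on $\{\rho>0\}$ to recast the weak form as
\[
\int_\calM \rho\, g\bra[\big]{\nabla\psi,\nabla\phi}\,dm=0\qquad\text{for all }\phi\in H^1(\calM),
\]
where $\psi:=\gamma^{-1}\log\rho+W*\rho$. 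Taking the test function $\phi=\psi$ produces $\int_\calM \rho\, g(\nabla\psi,\nabla\psi)\,dm=0$, and together with $\rho>0$ a.e.\ and the connectedness of $\calM$ this forces $\psi$ to be $m$-a.e.\ constant, delivering statement (2).

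The principal obstacle is verifying that $\psi\in H^1(\calM)$ so that it is admissible as a test function. For $W*\rho$ this is routine: a Minkowski-type estimate combined with $\|\rho\|_{L^1}=1$ and $W\in H^1(\calM\times\calM)$ yields $W*\rho\in H^1(\calM)$, as already used in the bound~\eqref{eq:rho-h1}. The delicate term is $\log\rho$, which requires $\rho$ to be bounded away from zero. If the superscript in $\calP_{ac}^+(\calM)$ encodes strict positivity with an essential lower bound, this is built into the hypothesis; otherwise I would regularize via $\psi_\varepsilon:=\gamma^{-1}\log(\rho+\varepsilon)+W*\rho$, verify that the corresponding weighted Dirichlet energy vanishes in the limit $\varepsilon\downarrow 0$, and pass to the limit first on each sublevel set $\{\rho>\delta\}$ before sending $\delta\downarrow 0$, invoking a Moser-type $L^\infty$ bound on $\rho$ from the uniformly elliptic equation $\gamma^{-1}\Delta\rho+\divr(\rho b)=0$ with bounded drift $b=\nabla_x W*\rho$.
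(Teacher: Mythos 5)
Your overall architecture — first variation for $(2)\Leftrightarrow(3)$, weighted Dirichlet energy for $(1)\Leftrightarrow(2)$ — is the same as the paper's, and both directions are in principle workable. The genuine issue is in $(1)\Rightarrow(2)$, where your choice of unknown, $\psi=\gamma^{-1}\log\rho+W*\rho$, forces you to confront exactly the obstacle you flag: $\psi\in H^1(\calM)$ requires $\nabla\rho/\rho\in L_2$ (finite Fisher information) and $\log\rho\in L_2$, and neither follows from $\rho\in H^1\cap\calP_{ac}^+$, since the $+$ only means $\rho>0$ a.e., not essentially bounded below. The paper sidesteps this entirely by the change of variables $h=\rho/(T\rho)$ with $T\rho=e^{-\gamma W*\rho}/Z(\gamma,\rho)$: because $T\rho$ is bounded above and below by explicit positive constants, one has $h\in H^1$ automatically whenever $\rho\in H^1$, and the weak form becomes the uniformly elliptic equation $\int g(\nabla\phi,\,T\rho\nabla h)\,dm=0$, which after testing against $\phi=h$ yields $\nabla h\equiv 0$ with no need for a lower bound on $\rho$. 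Note $\psi=\gamma^{-1}\log h+\text{const}$, so $\rho\nabla\psi=\gamma^{-1}T\rho\nabla h$ — the two formulations agree as equations, but $h\in H^1$ is strictly weaker than $\psi\in H^1$ because $\nabla\psi=\gamma^{-1}\nabla h/h$ and $h$ may degenerate. Your proposed fix is also misaimed: a Moser-type $L^\infty$ bound gives an \emph{upper} bound on $\rho$, whereas what you would need is a \emph{lower} bound (a Harnack inequality), and this would have to be proved for the frozen elliptic equation you are in the middle of analyzing, which is circular without additional care. Switching to the $h$ variable removes the need for any such estimate.

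One smaller remark on $(3)\Rightarrow(2)$: you write that vanishing of the first variation against "every admissible $\eta$" with $\int\eta\,dm=0$ forces $\gamma^{-1}\log\rho+W*\rho$ to be constant, but admissibility also requires $\rho+\varepsilon\eta\ge0$ for small $\varepsilon>0$, which constrains $\eta$ on the set where $\rho$ is small. The paper handles this by using explicit ball-supported competitor densities and then passing to the exhaustion $A_\varepsilon=\{\rho\ge\alpha_\varepsilon\}$; your argument needs an analogous step to reach the a.e.\ statement when $\rho$ is not bounded away from zero.
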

\begin{proof}
	\textbf{(2)$\to$(1)} Let $\rho \in L_1(\calM)$ be a solution of $F(\rho, \gamma) = 0$. By Lemma \ref{lemma:gibbs-H1}, $\rho \in H^1(\calM)$ and by differentiating  $F(\rho, \gamma)$ we obtain 
	\[
	\nabla F(\rho, \gamma) = \nabla \rho -\gamma\frac{e^{-\gamma (W *\rho)(x)}}{Z(\rho, \gamma)}\nabla_x W(x, \cdot) * \rho =\nabla \rho -\gamma \rho \nabla_x W(x, \cdot) * \rho = 0.
	\]
	Testing against $\psi \in L_2(T\calM)$ shows that $\rho$ is a weak solution of McKean-Vlasov equation.
	
	\textbf{(1)$\to$(2)} Let $\rho \in H^1(\calM)$ be a weak solution of \eqref{eq:mckean-vlasov}, then $v = \rho$ is a solution of a "frozen" linear equation
	\begin{equation}
		\label{eq:mv-frozen}
		\gamma^{-1}\int_{\calM}g(\nabla v, \nabla \phi)dm + \int_{\calM} g(v \nabla\phi, \nabla_x W(x,\cdot) *\rho) dm  =0,
	\end{equation}
	for every $\phi \in H^1(\calM)$. Let $T\psi := \frac{1}{Z(\gamma, \psi)} e^{-\gamma W *\psi}$. In Lemma \ref{lemma:gibbs-H1} we have shown that $\|W*\rho\|_\infty <\infty$ and therefore $T\rho$ is uniformly bounded away from zero 
	\[
	(T\rho)(x) \geq \frac{e^{-\gamma\|W*\rho\|_\infty}}{m(\calM)e^{\gamma\|W*\rho\|_\infty}} > 0
	\]
	for any $\rho \in L_1(\calM)\cap \calP_{ac}(\calM)$.
	Consider the change of variables $h(x) = v(x)/(T\rho)(x)$ and note that $h$ satisfies
	\[
	\nabla v(x) = (T\rho)(x)\nabla h(x) + h(x)\nabla(T\rho)(x).
	\]
	Using the fact that $\nabla(T\rho)(x) =-\gamma (T\rho)(x)(\nabla_xW(x,\cdot)*\rho)(x)$ one can see that \eqref{eq:mv-frozen} for any $\phi \in H^1(\calM)$ rewrites as
	\begin{equation}
		\label{eq:elliptic-PDE}
		\int_{\calM} g(\nabla\phi,  T\rho \nabla h) dm  =0. 
	\end{equation}
	Recall from the proof of Lemma \ref{lemma:gibbs-H1} that $\|T\rho \|_\infty <\infty$ and thus \eqref{eq:elliptic-PDE} is weak formulation of a uniform-elliptic PDE 
	\[
	-\divr(T\rho\nabla h)=0.
	\]
	Similar to the Euclidean case, the only solutions satisfy $\nabla h = 0$ in $L_2(T\calM)$ sense and thus are constant functions $h = const$. By definition of $h$ we obtain for some $c>0$ that 
	\[
	\rho = v = c \; T\rho\,.
	\]
	and since $\|T\rho\|_{L_1} = 1$ we conclude that the only solution is $\rho = T\rho$.
	
	\textbf{(2)$\to$(3)} Let $\rho$ be a zero of the Gibbs map, take arbitrary $\rho' \in \calP_{ac}(\calM)$ and consider the curve $\rho_s = s\rho' + (1-s)\rho$ for $s\in[0,1]$. Applying $\calF_\gamma$ to $\rho_s$ and differentiating with respect to $s$ we obtain
	\[
	\frac{d}{ds}\calF_\gamma(\rho_s)\Big|_{s=0} = \int_\calM \left(\gamma^{-1}\log \rho + W*\rho \right)(\rho' - \rho)dm.
	\]
	Since $\rho$ is a zero of the Gibbs map we know that $\rho =  \frac{1}{Z(\gamma, \rho)} e^{-\gamma (W *\rho)(x)}$ and thus the above integral takes the form
	\begin{equation}
		\label{eq:2to3}
		\int_\calM \left(\gamma^{-1}\log \rho + W*\rho \right)(\rho' - \rho)dm= -\int_\calM \gamma^{-1}\log Z(\gamma, \rho) (\rho' - \rho)dm =0,
	\end{equation}
	so $\rho$ is a critical point of $\calF_\gamma$.
	
	\textbf{(3)$\to$(2)} Since $\rho \in H^1$, there exists a gradient of $\rho$ almost everywhere and thus it is almost everywhere continuous. Take an arbitrary point of continuity $x_0 \in \calM$, we show that 
	\[
	\gamma^{-1}\log \rho (x_0) + (W*\rho)(x_0) = \frac{1}{m(\calM)}\int_\calM \bigl(\gamma^{-1}\log \rho + W*\rho \bigr)dm = \text{const.} \, .
	\]
    First assume that there exist $\alpha_0 >0$ such that $\rho(x) \geq \alpha_0$ and we can take a sequence of positive densities $(\rho_n')_{n\in\bbN}$ of the form
	\[
	\rho'_n(x) = \begin{cases}
		\rho(x) + \frac{\alpha_0}{m(B(x_0, 1/(n +R)))} \qquad &\text{if } x\in B(x_0, 1/(n+R)), \\
		\rho(x) - \frac{\alpha_0}{m(\calM)- m(B(x_0, 1/(n+R)))}\qquad &\text{otherwise,} 
	\end{cases}
	\]
	for some $R >0$. Then from \eqref{eq:2to3} we obtain
	\begin{align}
		\MoveEqLeft\frac{\alpha_0}{m(B(x_0, 1/(n +R)))}\int_{B(x_0, 1/(n+R))} \left(\gamma^{-1}\log \rho + W*\rho \right)dm \label{eq:3to2-left}\\
		&= \frac{\alpha_0}{m(\calM)- m(B(x_0, 1/(n+R)))}\int_{\calM\backslash B(x_0, 1/(n+R))} \left(\gamma^{-1}\log \rho + W*\rho \right)dm.\label{eq:3to2-right}
	\end{align}
	Since $x_0$ is a point of continuity, the limit of the \eqref{eq:3to2-left} is simply the point evaluation
	\[
	\lim_{n\to \infty}\frac{\alpha_0}{m(B(x_0, 1/(n +R)))}\int_{B(x_0, 1/(n+R))} 
	\mkern-20mu \left(\gamma^{-1}\log \rho + W*\rho \right)dm = \bigl(\alpha_0\gamma^{-1}\log \rho + (W*\rho)\bigr)(x_0),
	\]
	and by the same argument the right hand side \eqref{eq:3to2-right} equals to the integral with respect to the volume measure
	\begin{align*}
		\MoveEqLeft\lim_{n\to \infty}\frac{\alpha_0}{m(\calM)- m(B(x_0, 1/(n+R)))}\int_{\calM\backslash B(x_0, 1/(n+R))} \left(\gamma^{-1}\log \rho + W*\rho \right)dm\\
		&= \alpha_0\int_{\calM}\left(\gamma^{-1}\log \rho + (W*\rho)\right)dm.    
	\end{align*}
	As a result we conclude that $\gamma^{-1}\log \rho + (W*\rho) = \text{const.}$\@ $m$-almost everywhere, and since $\rho$ is a probability measure we get the scaling
	\[
	\rho = \frac{1}{Z(\gamma, \rho)}e^{-\gamma(W*\rho)}.
	\]
	If $\rho$ is not bounded away from zero, we can choose an arbitrary small $\alpha_\varepsilon \in \bbR_+$ and show that the expression $\gamma^{-1}\log \rho + W*\rho$ is constant on every set of form $A_{\varepsilon} := \{x\in \calM: \rho(x) \geq \alpha_\varepsilon\}$. Since $\alpha_\varepsilon$ is arbitrary, we get the result.
\end{proof}
\begin{remark}
	Proposition~\ref{prop:equivalence} shows that the invariant measures do not depend on the induced metric $g$ but only on the interaction kernel $W$. Because we have the formulation of solutions of \eqref{eq:mckean-vlasov} in terms of the Gibbs map, one can see that for two different parametrization of the manifold $\calM: x = x_1(\theta_1) = x_2(\theta_2)$ the sets of solutions will be identical, assuming that they induce the same volume measure $m$ and that the interaction kernel is independent of the parametrization in the sense that $W(x_1(\theta_1), y_1(\theta_1)) = W(x_2(\theta_2), y_2(\theta_2))$ for all pairs of points $x, y \in \calM$. Using the energetic interpretation of the stationary measures, one can say that an invariant measure stays invariant under any re-parametrization which does not affect the interaction between particles.
\end{remark}
Finally, using the established equivalence and the $H^1$ convergence proved in Corollary~\ref{cor:gibbs-gamma0} we see that the solutions of the stationary McKean-Vlasov equation converge to the kernel of the Laplace-Beltrami operator, consisting just of constants, in the limit of infinitely small interaction $\gamma \to 0$.
\begin{corollary}
	\label{cor:convergence-min}
	Let the sequence of parameters $(\gamma_n)_{n\in\bbN}$ be such that $\gamma_n \in \bbR_+$ and $\gamma_n \to 0$. Let $W: \calM\times\calM \to \bbR$ be a continuous $H^1$ function on $\calM\times\calM$ satisfying  $W(x,y)=W(y,x)$, then the sequence of solutions of \eqref{eq:mckean-vlasov}, if they exist, converges in $H^1$ to $\bar\rho$ 
	\[
	\rho_\gamma \stackrel{H^1}{\to} \bar \rho,
	\]
	where $\bar \rho = \frac{1}{m(\calM)}$ is the unique (up to rescaling) solution of $\Delta \rho = 0$.
\end{corollary}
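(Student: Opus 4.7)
The plan is to combine the equivalence established in Proposition~\ref{prop:equivalence} with the quantitative estimate of Corollary~\ref{cor:gibbs-gamma0}, both of which have already been proved in the excerpt. Concretely, for each $n$ let $\rho_{\gamma_n}$ denote a weak solution of~\eqref{eq:mckean-vlasov} in the sense of Definition~\ref{def:weak:mv}. By the equivalence (1)$\Leftrightarrow$(2) in Proposition~\ref{prop:equivalence}, each pair $(\rho_{\gamma_n},\gamma_n)$ is a zero of the Gibbs map $F$. In particular, Lemma~\ref{lemma:gibbs-H1} ensures $\rho_{\gamma_n}\in H^1(\calM)$, so the hypotheses of Corollary~\ref{cor:gibbs-gamma0} are met (here we only need the existence assumption on a sequence $\gamma_n\to 0$, not on a full interval $[0,M)$, since the argument in Corollary~\ref{cor:gibbs-gamma0} is pointwise in $\gamma$ and gives a bound $\|\rho_\gamma-\bar\rho\|_{H^1}=O(\gamma)$). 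Applying that estimate along the sequence yields $\|\rho_{\gamma_n}-\bar\rho\|_{H^1}\to 0$, which is the claimed convergence.

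It then remains to verify the final sentence, namely that $\bar\rho=1/m(\calM)$ is the unique normalized solution of $\Delta\rho=0$ on $\calM$. This is a standard compact-manifold fact: testing the weak equation $\int_\calM g(\nabla\rho,\nabla\phi)\,dm=0$ with $\phi=\rho$ gives $\int_\calM g(\nabla\rho,\nabla\rho)\,dm=0$, hence $\nabla\rho\equiv 0$ in $L_2(T\calM)$. Since $\calM$ is connected, this forces $\rho$ to be constant $m$-almost everywhere, and the normalization $\int_\calM\rho\,dm=1$ selects $\rho=\bar\rho$. No part of this argument needs to be recomputed, since both the $L_2$ and $L_2(T\calM)$ bounds have already been derived inside the proof of Corollary~\ref{cor:gibbs-gamma0}.

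There is no substantive obstacle: the entire corollary is a bookkeeping consequence of results already in place. The only mild caveat is the phrasing of Corollary~\ref{cor:gibbs-gamma0} (which assumes existence of zeros of the Gibbs map for \emph{all} $\gamma$ in an interval, whereas here we only have a sequence). This is a cosmetic issue, since the proof of that corollary gives pointwise-in-$\gamma$ bounds $\|\rho_\gamma-\bar\rho\|_{L_2}\lesssim \gamma\|W\|_\infty$ and $\|\nabla\rho_\gamma\|_{L_2(T\calM)}\lesssim \gamma$ that depend only on the individual $\rho_\gamma$, and therefore apply along any sequence $\gamma_n\to 0$ for which a solution exists.
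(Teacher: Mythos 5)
Your proof is correct and follows exactly the route the paper intends: the text preceding the corollary explicitly says to combine Proposition~\ref{prop:equivalence} (to identify solutions of~\eqref{eq:mckean-vlasov} with zeros of the Gibbs map) with the $H^1$ estimate of Corollary~\ref{cor:gibbs-gamma0}. Your observation that the hypothesis of Corollary~\ref{cor:gibbs-gamma0} (existence for all $\gamma$ in an interval) is cosmetic, since its proof produces pointwise-in-$\gamma$ bounds that apply along any sequence, is a sound and worthwhile clarification; the only minor redundancy is your invocation of Lemma~\ref{lemma:gibbs-H1}, since Definition~\ref{def:weak:mv} already places weak solutions in $H^1(\calM)\cap\calP_{ac}(\calM)$ and Proposition~\ref{prop:equivalence} is stated for such $\rho$.
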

We show existence of minimizers in the next section. The small noise limit $\gamma \to \infty$ is more involved since the number and the structure of the solutions of the pure interaction PDE strongly depends on the interaction potential $W$, so is is only possible to show convergence up to a subsequence. In addition, for $\gamma = \infty$ solutions of \eqref{eq:mckean-vlasov} are no longer guaranteed to be $H^1$ functions, so we are only able to show convergence in the weak sense, see Lemma \ref{prop:gamma-infty}.

\subsection{Existence of minimizers}
\label{sec:existence}
Let $m$ be a normalized volume measure such that $m(\calM) = 1$. We consider the free energy functional of form \eqref{eq:free-energy}
with continuous interaction kernel $W: \calM\times\calM \to \bbR$. We show that for arbitrary value of $\gamma \in\bbR_+$ there exist a minimizer of the free energy functional on the space of probability measures $\calP(\calM)$, the minimizer admits density, and the density is an $L_2$ function. 

\begin{theorem}
	\label{th:minimizers}
	Let $\calF_\gamma$ be as defined in \eqref{eq:free-energy} and $W: \calM\times\calM \to \bbR$ be a continuous function on $\calM\times\calM$ satisfying  $W(x,y)=W(y,x)$, then there exist at least on minimizer $\mu^*$ in the space of probability measures $\calP(\calM)$
	\[
	\mu^* \in \argmin_{\mu\in \calP(\calM)}\calF(\mu).
	\]
	Moreover, every minimizer $\mu^*$ admits density w.r.t. normalized volume measure $d\mu^* = \rho^* dm$ and the density is a square-integrable function, $\rho^* \in L_2(\calM)$.%
\end{theorem}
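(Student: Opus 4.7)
I would apply the direct method of the calculus of variations, then upgrade the abstract minimizer to a bounded density via the Euler--Lagrange equation.

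\textbf{Step 1 (lower bound and compactness).} I would first check that $\calF_\gamma$ is bounded below. By Jensen's inequality applied to the convex function $t\mapsto t\log t$ against the probability measure $m$, one has $\calE(\mu)\ge 0$. Since $W$ is continuous on the compact $\calM\times\calM$, $\|W\|_\infty<\infty$ and thus $|\calI(\mu)|\le\tfrac12\|W\|_\infty$. Therefore $\calF_\gamma\ge-\tfrac12\|W\|_\infty$. The space $\calP(\calM)$ is compact in the narrow topology by Prokhorov's theorem (since $\calM$ is compact), so any minimizing sequence $(\mu_n)$ admits a narrowly convergent subsequence $\mu_n\rightharpoonup\mu^*$.

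\textbf{Step 2 (lower semicontinuity and existence).} The interaction functional $\mu\mapsto \frac12\int_{\calM\times\calM}W\,\rmd(\mu\otimes\mu)$ is narrowly continuous because $W$ is bounded and continuous. The relative entropy $\calE$ is narrowly lower semicontinuous, which follows from its dual representation $\calE(\mu)=\sup_{\phi\in C_b(\calM)}\bigl(\int\phi\,\rmd\mu-\log\int e^\phi\,\rmd m\bigr)$. Combining, $\calF_\gamma(\mu^*)\le\liminf_n\calF_\gamma(\mu_n)=\inf\calF_\gamma$, which proves existence.

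\textbf{Step 3 (absolute continuity).} Since $\calF_\gamma(m)=\calI(m)<\infty$, every minimizer satisfies $\calF_\gamma(\mu^*)<\infty$, and because $\calI$ is bounded this forces $\calE(\mu^*)<\infty$. By the convention in \eqref{eq:entropy}, this means $\mu^*$ admits a positive density $\rho^*$ with respect to $m$.

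\textbf{Step 4 ($L_2$ regularity).} Here I would invoke the Euler--Lagrange equation. Computing $\frac{d}{ds}\calF_\gamma\bigl((1-s)\rho^*+s\eta\bigr)\big|_{s=0^+}\ge 0$ along bounded positive admissible perturbations $\eta$ and using the symmetry $W(x,y)=W(y,x)$ gives the Gibbs identity $\rho^*=Z(\gamma,\rho^*)^{-1}e^{-\gamma W*\rho^*}$ almost everywhere, exactly as in the $(3)\Rightarrow(2)$ implication of Proposition~\ref{prop:equivalence}. Since $\|W*\rho^*\|_\infty\le\|W\|_\infty$ and $Z(\gamma,\rho^*)\ge e^{-\gamma\|W\|_\infty}$, this yields
\[
\|\rho^*\|_\infty \le e^{2\gamma\|W\|_\infty}<\infty,
\]
so $\rho^*\in L^\infty(\calM)\subset L_2(\calM)$ by finiteness of $m$.

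\textbf{Main obstacle.} The only delicate point is Step~4: we do not know a priori that $\rho^*$ is bounded away from zero, so log-perturbations are not immediately admissible. I would handle this with the same truncation argument used at the end of the proof of Proposition~\ref{prop:equivalence}: restrict perturbations to the super-level sets $\{\rho^*\ge\alpha_\varepsilon\}$ to obtain the Gibbs identity there, and then let $\alpha_\varepsilon\downarrow 0$. Everything else reduces to classical narrow compactness and lower semicontinuity on the compact base space $\calM$.
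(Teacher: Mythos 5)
Your Steps 1--3 are essentially the paper's existence argument: minimizing sequence, tightness/Prokhorov compactness on a compact $\calM$, weak lower semicontinuity of $\calE$, and continuity of $\calI$ via product-measure convergence. The additions (Jensen for a lower bound, the dual representation of $\calE$) are fine but not substantively different.

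Step 4 is where you take a genuinely different route, and it is the one step that needs repair. The paper does \emph{not} derive the Gibbs identity for the minimizer; instead it runs a one-shot comparison argument: it truncates $\rho^*$ at a large constant $C$, redistributes the excess mass onto the sublevel set $\set{\rho^* < 2}$, and shows by direct estimates that the truncated competitor strictly lowers the free energy unless $\rho^*\le C$ already. This gives the cruder but fully self-contained bound $C = \exp\bigl(12\gamma(W_{\max}-W_{\min})+4\bigr)$ without ever touching the Euler--Lagrange equation. Your approach instead wants the identity $\rho^* = Z^{-1}e^{-\gamma W*\rho^*}$ first and reads off the (sharper) bound $e^{2\gamma\|W\|_\infty}$. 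The difficulty is that you invoke "exactly the $(3)\Rightarrow(2)$ implication of Proposition~\ref{prop:equivalence}," but that implication is proved under the hypothesis $\rho\in H^1(\calM)\cap\calP_{ac}^+(\calM)$: the proof begins by extracting a point of continuity of $\rho$, which is how the ball-averages are turned into a pointwise identity. At Step~4 you only know $\rho^*\in L_1\cap\calP_{ac}^+$ with finite entropy; $H^1$ regularity would require first knowing $\rho^*$ is a zero of the Gibbs map (Lemma~\ref{lemma:gibbs-H1}), which is exactly what you are trying to prove. So the reference as stated is circular.

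The gap is fixable, but you should make the fix explicit rather than point to a lemma whose hypotheses you cannot yet verify. Two modifications are needed: (i) replace "point of continuity" by Lebesgue density points of $\rho^*$ and of $\log\rho^*$ restricted to the super-level set $A_\varepsilon=\set{\rho^*\ge\alpha_\varepsilon}$ (here $W*\rho^*$ is genuinely continuous since $W\in C_b$, so only the $\log$ term needs the Lebesgue-point care); and (ii) verify that $\log\rho^*$ is locally integrable on $A_\varepsilon$ — it is bounded below by $\log\alpha_\varepsilon$ there, and $(\log\rho^*)_+\in L_1$ because $(\rho^*\log\rho^*)_+\in L_1$ and $\log t\le t\log t$ for $t\ge 1$. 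You also need to keep the perturbation supported inside $A_\varepsilon$ (subtract mass only from $A_\varepsilon\setminus B$), otherwise nonnegativity of $\rho'_n$ may fail off the super-level set. With those additions your route gives a cleaner constant than the paper's, at the cost of a more delicate variational argument; the paper's competitor construction avoids all of this machinery.
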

\begin{proof}
	As follows from the compactness of $\calM$, the interaction kernel $W$ is bounded on it's domain; we will denote it's minimum and maximum  as $W_{\min} = \min_{x, y \in \calM} W(x, y)$ and $W_{\max} = \max_{x, y \in \calM}W(x, y)$. The proof is divided in two steps, in the first step we show existence of minimizers in the space of positive measures absolutely continuous with respect to the volume measure $\calP_{ac}^+(\calM)$, where
	\[
	\calP_{ac}^+(\calM) = \set*{\mu\in \calP(\calM): d\mu = \rho dm, \ \int \rho(x)dm(x) = 1, \ \rho(x)> 0 \ m-\text{a.e.}}.
	\]
	It is easy to see that bounded interaction kernel, the interaction energy is bounded for any $\mu \in \calP(\calM)$ and the entropy is finite only on $\calP^+_{ac}(\calM)$, and thus if a minimizer $\rho^*$ exist, it is an element of $\calP_{ac}^+(\calM)$.  At the second step we show the existence of an upper bound of the minimizer $C \in \bbR_+: \ \rho(x) \leq C $ for $m$-a.e. $x$. Then it is naturally follows that $\rho^*$ is square-integrable
	\[
	\int_{\calM} \rho(x)^2 dm(x) \leq C^2\int_{\calM}  dm(x) = C^2,
	\]
	in other words, $\rho^* \in L_2(\calM)$.

	\paragraph*{Existence of minimizers:}
	Take a minimizing sequence $(\rho_n)_{n\in \bbN}$, $\rho_n \in \calP_{ac}^+(\calM)$
	\[
	\inf_{\calP_{ac}^+(\calM)}\calF(\rho) = \lim_{n\to\infty}\calF(\rho_n).
	\]
	Since $\calM$ is a compact space, every sequence in $\calP_{ac}^+(\calM) \subset \calP(\calM)$ is tight and, by Prokhorov's theorem, relatively weakly compact in $\calP(\calM)$. Take a convergent subsequence $\rho_{n_k} \stackrel{w}{\to} \rho^* \in \calP(\calM)$ of $(\rho_n)_{n\in \bbN}$. 
	
	The entropy term is a weakly lower-semicontinuous functional on the space of measures $\calP(\calM)$  (see for example \cite[Lemma 1.4.3]{dupuis2011weak}). Using \cite[Lemma 7.3]{santambrogio2015optimal} we get weak convergence of the product measures along the convergent subsequence $\rho_{n_k}$:
	\[
	\rho_{n_k} \otimes\rho_{n_k} \stackrel{w}{\to} \rho^* \otimes\rho^*.
	\]
	Using the above and the boundedness of the interaction kernel we prove the continuity of the interaction energy \eqref{eq:interaction-energy}:
	\[
	\calI(\rho_{n_k})= \int_{\calM\times\calM} \mkern-10mu W(x, y )\rho_{n_k}(x)\rho_{n_k}(y)dm(x)dm(y) \to \int_{\calM\times\calM}  \mkern-10mu  W(x, y )\rho^*(x)\rho^*(y)dm(x)dm(y).
	\]
	As a result, $\calF$ is weakly lower-semicontinuous on $\calP(\calM)$ as a sum of lower-semicontinuous functionals. Moreover, since $\calF_\gamma(\rho^*) <\infty$ we conclude that $\rho^* \in \calP_{ac}(\calM)$ and by direct method of calculus of variations 
	\[
	\calF_\gamma(\rho^*) =\argmin_{\rho \in \calP(\calM)} \calF_\gamma(\rho) = \argmin_{\rho \in \calP_{ac}^+(\calM)} \calF_\gamma(\rho). 
	\]
	\textbf{Upper bound:} 
	The construction follows a similar approach from~\cite{vollmer2018bifurcation}, where this is done on the sphere $\bbS^2$.
	Let $\rho^*$ be a minimizer of $\calF$. Let $C = \exp(12\gamma(W_{\max} - W_{\min}) +4)$ and assume that there exist set $A_{>C} := \{x\in \calM: \rho^*(x)> C\}$ of positive measure $m(A_{>C}) > 0$. Let $A_{<2} = \{x\in \calM: \rho^*(x)< 2\}$, and note that $A_{<2}$ has a positive measaure because
	\begin{align*}
		1 &= \int_{\calM}\rho^*(x)dm(x) \geq \int_{\calM \backslash A_{<2}}\rho^*(x)dm(x) \geq 2(1-m(A_{<2}))
	\end{align*}
	which after rearranging gives
	\[
	m(A_{<2}) \geq \frac{1}{2}.
	\]
	Define a density $\hat \rho^* \in \calP_{ac}^+(\calM)$:
	\[
	\hat \rho^*(x) = \begin{cases}
		C ,\quad &x\in A_{>C}, \\
		\rho^*(x), \quad &x\in \calM\backslash (A_{>C}\cup A_{<2}), \\
		\rho^*(x) + \delta, &x\in A_{<2}, 
	\end{cases}
	\]
	where $\delta =\frac{\int_{A_{>C}}(\rho^*(x) - C)dm(x)}{m(A_{<2})} \leq 2$. We will show that $\calF(\hat \rho^* ) <\calF(\rho^* ) $, implying that $\rho^*$ can not be a minimizer.
	For the entropy we have
	\begin{align*}
		\MoveEqLeft \int_{\calM}\mkern-4mu\bra*{\rho^*\log \rho^* - \hat \rho^*\log\hat \rho^*}dm = \int_{A_{>C}}\mkern-8mu\bra*{\rho^*\log \rho^* - \hat \rho^*\log\hat \rho^*}dm + \int_{A_{<1}}\mkern-8mu\bra*{\rho^*\log \rho^* - \hat \rho^*\log\hat \rho^*} dm \\
		&\geq(\log C+1)\int_{A_{>C}} (\rho^* - C)dm - \delta\int_{A_{<1}} \left(\log(\rho^* +\delta) + 1 \right)dm \\
		&\geq(\log C+1)\int_{A_{>C}} (\rho^* - C)dm - \delta m(A_{<2}) \left(\log(1 +\delta) + 1 \right) \\
        &= \delta m(A_{<2})\left(\log C - \log(1+\delta)\right) \\
		&\geq \frac12\delta \left(\log C - \log 3\right).
	\end{align*}
	And the difference of the interaction energy can be naively bounded as follows
	\begin{align}
		\MoveEqLeft \int_{\calM\times\calM}W(x, y)\rho^*(x)\rho^*(y)dm(x)dm(y) - \int_{\calM\times\calM}W(x, y)\hat \rho^*(x)\hat \rho^*(y)dm(x)dm(y) \notag \\
		&=\int_{\calM\times\calM}(W(x, y)- W_{\min})\rho^*(x)\rho^*(y)dm(x)dm(y) \notag \\
		&\qquad- \int_{\calM\times\calM}(W(x, y)- W_{\min})\hat \rho^*(x)\hat \rho^*(y)dm(x)dm(y)\notag \\
		&= \int_{A_{>C}\times A_{>C}}(W(x, y)- W_{\min})(\rho^*(x)\rho^*(y) - C^2)dm(x)dm(y) \label{eq:interact:cc}\\
		&+\int_{(\calM \backslash A_{>C})\times (\calM \backslash A_{>C})}(W(x, y)- W_{\min})(\rho^*(x)\rho^*(y) - \hat \rho^*(x)\hat \rho^*(y))dm(x)dm(y) \label{eq:interact:22}\\ 
		&+2\int_{A_{>C}\times (\calM \backslash A_{>C})}(W(x, y)- W_{\min})(\rho^*(x)\rho^*(y) - C\hat \rho^*(y))dm(x)dm(y). \label{eq:interact:2c}
	\end{align}
	The first term \eqref{eq:interact:cc} is non-negative because on the set $A_{>C}$ we have $\rho^* > C$. For the second term \eqref{eq:interact:22} we use the fact that on $\calM \backslash A_{>C}$ the difference between the densities $\rho^*, \hat\rho^*$ is bounded $\rho^* - \hat \rho^* \leq \delta$ to get the estimate:
	\begin{align*}
		\eqref{eq:interact:22} &\geq (W_{\max}-W_{\min})\int_{(\calM \backslash A_{>C})\times (\calM \backslash A_{>C})} \mkern-16mu \bigl(\rho^*(x)\rho^*(y) - (\rho^*(x)+\delta)(\rho^*(y) + \delta)\bigr)dm(x)dm(y)  \\
		&= -2\delta(W_{\max}-W_{\min})\int_{\calM \backslash A_{>C}}\left(\frac12\delta+\rho^*(x)\right)dm(x)  \\
		&\geq -2\delta(W_{\max}-W_{\min})\left(m(\calM \backslash A_{>C}) + \int_{\calM \backslash A_{>C}}\rho^*(x)dm(x)\right) \geq -4\delta(W_{\max}-W_{\min}).
	\end{align*}
	Finally, the last term \eqref{eq:interact:2c} can be estimated as
	\begin{align*}
		\eqref{eq:interact:2c} &=2\int_{A_{>C}\times A_{<2}}(W(x, y)- W_{\min})(\rho^*(x)\rho^*(y) - C\rho^*(y) - C\delta)dm(x)dm(y) \\
		&\quad +2\int_{A_{>C}\times (\calM \backslash (A_{>C}\cup A_{<2}))}(W(x, y)- W_{\min})(\rho^*(x)\rho^*(y) - C\rho^*(y))dm(x)dm(y) \\
		&\geq 2\int_{A_{>C}\times A_{<2}}(W(x, y)- W_{\min})(\rho^*(x)- C)\rho^*(y) dm(x)dm(y) \\
		&\quad -2\delta(W_{\max}- W_{\min})\int_{A_{>C}\times (\calM \backslash (A_{>C}\cup A_{<2}))} C dm(x)dm(y) \\
		&\geq 0 - 2\delta(W_{\max}- W_{\min})m\left(\calM \backslash (A_{>C}\cup A_{<2})\right)\int_{A_{>C}} C dm(x) \geq -2\delta(W_{\max}- W_{\min}).
	\end{align*}
	Combining the above estimates we conclude that
	\[
	\calF_\gamma(\rho^* ) - \calF_\gamma(\hat \rho^* ) \geq \delta\gamma^{-1} \left(\frac12\log C - \frac12\log 3\right) - 6\delta(W_{\max}-W_{\min})\geq 0,
	\]
	implying that any minimizer $\rho^*$ is uniformly bounded by $C$, which completes the proof. %
\end{proof}
\subsection{Limit of small noise}
\label{sec:large-gamma}
In this section we study the limiting behavior of the minimizers of the free energy functional~\eqref{eq:free-energy} in the small noise regime $\gamma\to \infty$. Intuitively, as the noise ratio gets smaller, the resulting PDE tends to recover the behaviour of the pure interaction system.
We consider a sequence of parameter values $(\gamma_n)_{n\in \bbN}$ with $\gamma_n \to \infty$. Since there always exist a minimizer we then consider a sequence of such minimizers $(\rho_n)_{n\in\bbN}$, where $\rho_n \in \argmin \calF_{\gamma_n}$. Using the theory of $\Gamma$-convergence (see Appendix~\ref{appendix:Gamma}) we show that all the limiting points of such a sequence are the minimizers of the interaction energy $\calI$.
\begin{proposition}
	\label{prop:gamma-infty}
	Let $\calF_\gamma$ be as defined in \eqref{eq:free-energy} and $W: \calM\times\calM \to \bbR$ be a continuous function on $\calM\times\calM$ satisfying  $W(x,y)=W(y,x)$. Let $(\gamma_n)_{n\in \bbN}$ be a positive, increasing sequence satisfying $\gamma_n \to \infty$. Let $(\rho_n)_{n\in \bbN}$ be a sequence of minimizers of $\calF_{\gamma_n}$, then there exist a weakly convergent subsequence $\rho_{n_k}$ such that $\rho_{n_k} \stackrel{w}{\to} \rho^*$ and $\rho^*$ is the minimizer of the interaction energy
	\[
	\rho^* \in \argmin_{\rho \in \calP(\calM)} \calI(\rho).
	\]
\end{proposition}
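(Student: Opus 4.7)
The plan is to treat the claim as a $\Gamma$-convergence-type statement for the family $\calF_{\gamma_n}$ with limit $\calI$ on $\calP(\calM)$ endowed with the weak topology, and then to conclude via the standard fact that limits of minimizers of $\Gamma$-converging, equi-coercive functionals are minimizers of the limit. The compactness of $\calM$ makes the equi-coercivity automatic, so the real work consists of a liminf inequality, an upper bound, and a density argument to compare with possibly singular $\calI$-minimizers.

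First I would extract a weakly convergent subsequence $\rho_{n_k}\stackrel{w}{\to}\rho^*\in\calP(\calM)$ via Prokhorov's theorem: since $\calM$ is compact, $\calP(\calM)$ is tight and weakly sequentially compact. For the liminf step, Jensen's inequality applied to the probability measure $m$ gives $\calE(\mu)\geq 0$ on all of $\calP(\calM)$, so
\[
\calF_{\gamma_n}(\rho_n)\;\geq\;\calI(\rho_n).
\]
Since $W\in C(\calM\times\calM)$ is bounded, $\calI$ is weakly continuous on $\calP(\calM)$ (already exploited in the proof of Theorem~\ref{th:minimizers} via weak convergence of product measures, \cite[Lemma 7.3]{santambrogio2015optimal}); hence $\calI(\rho_{n_k})\to\calI(\rho^*)$, giving $\liminf_k \calF_{\gamma_{n_k}}(\rho_{n_k})\geq\calI(\rho^*)$.

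For the matching upper bound I would test minimality of $\rho_{n_k}$ against any competitor $\tilde\rho\in\calP_{ac}^+(\calM)$ with $\calE(\tilde\rho)<\infty$:
\[
\calF_{\gamma_{n_k}}(\rho_{n_k})\;\leq\;\calF_{\gamma_{n_k}}(\tilde\rho)\;=\;\gamma_{n_k}^{-1}\calE(\tilde\rho)+\calI(\tilde\rho).
\]
Passing to the limit along the subsequence (and using $\gamma_{n_k}^{-1}\to 0$) and combining with the liminf bound yields $\calI(\rho^*)\leq\calI(\tilde\rho)$ for every such $\tilde\rho$.

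The main obstacle, and the step that requires the most care, is to upgrade this to $\calI(\rho^*)\leq\inf_{\rho\in\calP(\calM)}\calI(\rho)$, because an $\calI$-minimizer is in general singular (for instance a Dirac mass or a measure concentrated on a lower-dimensional set) and so has infinite entropy, meaning it cannot be plugged in directly. To bridge this gap I would establish that the set of probability measures with smooth positive densities is weakly dense in $\calP(\calM)$; on a compact Riemannian manifold this can be achieved by mollifying with the heat semigroup, since for any $t>0$ the density $P_t\rho$ is smooth and bounded above and below by positive constants, hence has finite $\calE$, and $P_t\rho\stackrel{w}{\to}\rho$ as $t\to 0^+$. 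Weak continuity of $\calI$ then gives $\calI(P_t\rho)\to\calI(\rho)$ for arbitrary $\rho\in\calP(\calM)$, so the infimum of $\calI(\tilde\rho)$ over smooth competitors coincides with $\inf_{\rho\in\calP(\calM)}\calI(\rho)$, and we conclude $\rho^*\in\argmin_{\rho\in\calP(\calM)}\calI(\rho)$.
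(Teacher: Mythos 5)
Your proof is correct and arrives at the same conclusion, but takes a different route from the paper: you unpack the $\Gamma$-convergence machinery into an explicit direct argument, while the paper works abstractly through the shifted decreasing family $\bar\calF_{\gamma_n}=\calF_{\gamma_n}-\gamma_n^{-1}\calE(\bar\rho)$, invokes Proposition~\ref{prop:gamma-decreasing} to get $\bar\calF_{\gamma_n}\stackrel{\Gamma}{\to}\mathrm{lsc}(\bar\calF)$, identifies $\mathrm{lsc}(\bar\calF)=\calI$ using density of $\calP_{ac}^+(\calM)$ in $\calP(\calM)$, and then applies the fundamental theorem of $\Gamma$-convergence (Theorem~\ref{th:gamma-coonvergence}). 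Your liminf step ($\calE\ge 0$ by Jensen, weak continuity of $\calI$) and your upper-bound step (testing minimality against finite-entropy competitors and sending $\gamma_{n_k}^{-1}\to 0$) together are precisely the liminf and recovery-sequence halves of that $\Gamma$-limit computation, so the underlying mechanism is the same; the advantage of your presentation is that it is self-contained and does not require quoting the two abstract propositions from Appendix~\ref{appendix:Gamma}. Notably, you also make the density step concrete by mollifying with the heat semigroup $P_t$, which on a compact manifold produces smooth densities bounded above and below by positive constants (hence finite entropy) that converge weakly to the given measure as $t\to 0^+$; the paper asserts this density ``by simple construction'' without spelling it out, so your argument actually tightens that step. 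One small remark: your claim that $\calI$ is weakly \emph{continuous}, not merely weakly lower semicontinuous, is correct here and is in fact what the paper establishes in the proof of Theorem~\ref{th:minimizers} via weak convergence of product measures, and it is the ingredient that lets both proofs avoid difficulties in the liminf bound.
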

\begin{proof}
	Consider a shifted functional $\bar\calF_\gamma = \calF_\gamma - \gamma^{-1}\calE(\bar\rho)$, since the last term is a constant, minimizers of $\bar\calF_\gamma$ coincide with the minimizers of $\calF_\gamma$. At the same time for $\gamma_1 > \gamma_2 > 0$ and arbitrary $\rho \in \calP(\calM)$ we have
	\[
	\bar\calF_{\gamma_1}(\rho) = \calI(\rho) + \gamma_1^{-1}\left(\calE(\rho) - \calE(\bar\rho)\right) \leq  \calI(\rho) + \gamma_2^{-1}\left(\calE(\rho) - \calE(\bar\rho)\right) = \bar\calF_{\gamma_2}(\rho),
	\]
	so the sequence $(\bar\calF_{\gamma_n})_{n\in\bbN}$ is decreasing. At the same time, the pointwise limit of $\bar\calF_{\gamma_n}$ is
	\[
	\bar \calF =\lim_{n\to\infty}\bar\calF_{\gamma_n}(\rho) = \begin{cases}
		\calI(\rho), \qquad &\rho \in \calP_{ac}^+(\calM), \\
		+\infty &\text{otherwise.}
	\end{cases}
	\]
	By Proposition \ref{prop:gamma-decreasing} $\bar\calF_{\gamma_n} \stackrel{\Gamma}{\to} \text{lsc}(\bar \calF)$, where the lower-semicontinuous envelope of $\bar \calF$ is exactly~$\calI$. As shown in Theorem \ref{th:minimizers}, $\calI$ is a weakly lower-semicontinuous functional, so we only need to show that there exists no lower-semicontinuous functional $\calG\neq \bar\calF$ satisfying $\calI \leq \calG\leq \bar\calF$. Since $\bar\calF = \calI$ on $\calP_{ac}^+(\calM)$ we only need to consider $\rho \in \calP(\calM) \backslash \calP_{ac}^+(\calM)$. The space of measures absolutely continuous w.r.t. the volume measure $\calP_{ac}(\calM)$ is dense in $\calP(\calM)$ and by simple construction $\calP_{ac}^+(\calM)$ is dense in $\calP(\calM)$. Taking a sequence $\rho_n \stackrel{w}{\to} \rho$, where $\rho_n \in \calP_{ac}^+(\calM)$ we conclude that $\text{lsc}(\bar\calF)(\rho) \leq \calI(\rho)$ and thus $\text{lsc}(\bar\calF) = \calI$. Applying the fundamental theorem of $\Gamma$-convergence (Theorem \ref{th:gamma-coonvergence}) we get the result.
\end{proof}
\begin{remark}[Limitations]
	Note that for the small noise limit we only show convergence of the minimizers of the free energy functional, while the stationary solutions of the McKean-Vlasov equations are all of the critical points. We also do not answer the reverse question, namely whether every minimizer of the interaction energy can be approximated by the minimizers of the free energy functional with (infinitely)-large $\gamma$. %
\end{remark}

\subsection{Geodesic convexity}
\label{sec:convexity}

In this section we use the approach adapted from \cite{sturm2005convex} to characterize the convexity of the free energy functional \eqref{eq:free-energy}. The idea of generalizing the convexity criterion for the interaction potential on $\bbR^d$ to the manifold setting has been discussed in \cite[Chapter 17]{Villani2008}, but since we found no rigorous formulation in the literature we prove such a criterion in this Section. 
With a slight abuse of notation we will use $\calE(\rho)$ instead of $\calE(\mu)$ if $\mu$ admits density $\rho$.

A functional is geodesically convex if it satisfies the following definition.
\begin{definition}[Geodesic convexity]
	A functional $F: \calX \to \bbR$ on a metric space $(\calX, d)$ is geodesically $\lambda$-convex for $\lambda\in \bbR$ if for any geodesic $\gamma: [0,1] \to \calX$ it holds that
	\[
	F(\gamma(s)) \leq (1-s)F(\gamma(0)) + sF(\gamma(1)) -\frac{\lambda}{2} s(1-s) d(\gamma(0), \gamma(1)), \quad \forall s\in [0,1].
	\]
\end{definition}
For a lower-semicontinuous function $f:[0,1] \to \bbR$ define the lower centered second derivative 
\[
\underline{\partial_t^2}f(t) = \lim\inf_{s\to 0} \frac1{s^2}\left[f(t+s)+ f(t-s) - 2f(t)\right].
\]
Then a functional is $\lambda$-convex if and only if it is lower semicontinuous along geodesics and if for each geodesic $\gamma:[0,1] \to \calX$ with $F(\gamma(0)), F(\gamma(1)) < \infty$, it holds that $ F(\gamma(s)) \leq \infty$ for all $s\in (0,1)$ and
\[
\underline{\partial_s^2}F(\gamma(s))  \geq \lambda d(\gamma(0), \gamma(1))^2.
\]

We give a sufficient condition for $\lambda$-convexity of the functional \eqref{eq:free-energy} on the space of probability measures on a Riemannian manifold $\calM$ with finite second moment 
\[
\calP_2(\calM) := \{\mu \in \calP(\calM): \int \dist(x, x_0)^2d\mu <\infty\},
\]
for some $x_0 \in \calM$, equipped with Wasserstein metric $\fw_2$. For any two measures $\mu, \nu \in \calP_2(\calM)$ the $\fw_2$ distance is 
\[
\fw_2(\mu, \nu) := \inf_{\pi \in \Pi(\mu, \nu)}\left(\int \dist(x, y)^2d\pi(x, y)\right)^{1/2},
\]
where infimum is taken with respect to all possible couplings $\pi$ with first and second marginals being $\mu$ and $\nu$ respectively. Note that since $\calM$ is compact $\calP(\calM) = \calP_2(\calM)$, we continue using $\calP_2$ in this section to emphasise the usage of the Wasserstein-2 topology.

We begin by stating some relevant results.

\begin{lemma}[Lemma 3.1 \cite{sturm2005convex}] Let $\mu_0, \mu_1 \in \calP_2(\calM)$ admit densities $\rho_1, \rho_2 > 0$ w.r.t. the volume measure $m$. Then there exists a unique geodesic $\mu: [0,1] \to \calP_2(\calM)$ such that $\mu(0) = \mu_0, \ \mu(1) = \mu_1$ and for all $s \in [0,1]$ $\mu(s)$ is absolutely continuous w.r.t. $m$. Moreover, there exists a vector field $\Phi:\calM \to T\calM$ such that $\mu(s)$ is the push forward of $\mu_0$ under the map
	\[
	F_s: \calM \to \calM \quad\text{with} \quad F_s(x)=\exp_x(s\Phi).
	\]    
\end{lemma}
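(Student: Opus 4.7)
The plan is to invoke McCann's Riemannian analog of Brenier's theorem to obtain an optimal transport map, and to construct the geodesic as the associated displacement interpolation. On a compact Riemannian manifold without boundary, for $\mu_0, \mu_1 \in \calP_2(\calM)$ with positive densities, McCann's theorem yields a $c$-concave potential $\phi: \calM \to \bbR$ (with quadratic cost $c(x,y) = \tfrac{1}{2}\dist(x,y)^2$) such that $T(x) := \exp_x(\nabla\phi(x))$ is defined $\mu_0$-almost everywhere, pushes $\mu_0$ onto $\mu_1$, and uniquely solves the Monge problem with $\fw_2(\mu_0,\mu_1)^2 = \int_\calM \|\nabla\phi(x)\|^2 \, d\mu_0(x)$. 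Setting $\Phi := \nabla\phi$ and $\mu(s) := (F_s)_\#\mu_0$ with $F_s(x) := \exp_x(s\Phi(x))$ produces the candidate curve.

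To see that $s \mapsto \mu(s)$ is a constant-speed $\fw_2$-geodesic, I would use $(F_s, F_t)_\#\mu_0$ as a coupling between $\mu(s)$ and $\mu(t)$. For $\mu_0$-a.e.\ $x$ the curve $s \mapsto F_s(x)$ is a minimizing geodesic, since $c$-concavity of $\phi$ ensures that $F_1(x)$ is not in the cut locus of $x$. Hence $\dist(F_s(x), F_t(x)) = |t-s| \, \|\Phi(x)\|$ pointwise, and integrating gives
\[
\fw_2(\mu(s),\mu(t))^2 \leq (t-s)^2 \int_\calM \|\Phi(x)\|^2 \, d\mu_0(x) = (t-s)^2 \, \fw_2(\mu_0,\mu_1)^2.
\]
The triangle inequality in $(\calP_2(\calM),\fw_2)$ then forces equality, so $\mu$ is a constant-speed geodesic.

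The main obstacle is showing that $\mu(s) \ll m$ for every interior $s \in (0,1)$, not only at the endpoints. The standard route is to prove injectivity of $F_s$ on a set of full $\mu_0$-measure: any coincidence $F_s(x) = F_s(y)$ with $x \neq y$ and $s \in (0,1)$ would, by concatenating the two halves of the interpolating geodesics, yield a competing transport plan violating the $c$-cyclical monotonicity of $\supp\bigl((\mathrm{id},T)_\#\mu_0\bigr)$. One then analyzes the Jacobian determinant $J_s(x) := \det dF_s(x)$ in normal coordinates; $c$-concavity of $\phi$ ensures no conjugate points arise along $s \mapsto F_s(x)$ for $s \in [0,1)$, so $J_s > 0$ holds $\mu_0$-a.e., and the change-of-variables formula yields the explicit density
\[
\rho_s(y) = \frac{\rho_0(F_s^{-1}(y))}{J_s(F_s^{-1}(y))}.
\]

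Finally, for uniqueness of the geodesic, any other $\fw_2$-geodesic $\tilde\mu$ joining $\mu_0$ to $\mu_1$ is, by Lisini's superposition theorem, the image under $s$-evaluation of a probability measure concentrated on constant-speed geodesics of $\calM$, whose endpoint marginals constitute an optimal coupling between $\mu_0$ and $\mu_1$. Since McCann's theorem provides a unique such coupling, concentrated on the graph of $T$, the underlying vector field must coincide with $\Phi$ $\mu_0$-a.e., which forces $\tilde\mu = \mu$.
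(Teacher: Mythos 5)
The paper does not supply a proof of this lemma; it is stated as a recalled result from Sturm \cite{sturm2005convex}, so there is no internal argument to compare your sketch against. Your proposal follows the standard (and essentially the original) route: McCann's Riemannian polar factorization theorem furnishes the $c$-concave potential $\phi$ and the optimal map $T = \exp(\nabla\phi)$; the intermediate coupling $(F_s,F_t)_{\#}\mu_0$ combined with the triangle inequality in $(\calP_2(\calM),\fw_2)$ gives the constant-speed geodesic property; $c$-cyclical monotonicity of $\supp\bigl((\mathrm{id},T)_{\#}\mu_0\bigr)$ yields $\mu_0$-a.e.\ injectivity of $F_s$ for interior $s$, hence absolute continuity of $\mu(s)$ through the Jacobian/change-of-variables formula; and the characterization of $\fw_2$-geodesics as superpositions of minimizing geodesics of $\calM$ pins down uniqueness via uniqueness of the optimal coupling. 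This is correct at the level of detail you give. Two small remarks: the superposition representation of $\fw_2$-geodesics (measures on the space of minimizing geodesics whose endpoint marginals form an optimal plan) is more commonly attributed to Ambrosio--Gigli--Savar\'e or to Villani's monograph than to Lisini, whose theorem treats general absolutely continuous curves, though it does suffice for your purpose; and in the uniqueness step one should make explicit that $T(x)$ avoids the cut locus of $x$ for $\mu_0$-a.e.\ $x$, so the connecting minimizing geodesic is $\mu_0$-a.e.\ unique and the dynamical plan is therefore determined by the static coupling.
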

Note that by definition of the push forward the above implies that for any measurable function $u:\calM\to \R$ it holds that
\[
\int_\calM u(x)d\mu_s(x) = \int_\calM u(F_s(x))d\mu_0(x).
\]
\begin{lemma}[Corollary 1.5 \cite{sturm2005convex}]
	\label{lemma:entropy-convexity}
	Consider the entropy $\calE$  defined in \eqref{eq:entropy}. Then the lower second derivative of $\calE$ along geodesic $\rho_t$, with $\calE(\rho_0), \calE(\rho_1) < \infty$, satisfies 
	\[    \underline{\partial_t^2}\calE = \int 
	\operatorname{Ric}_x(\dot{F_t}, \dot{F_t})\rho_0(x)dm(x) 
	\]
	Moreover, $\calE$ is $\lambda$-convex for $\lambda\in\R$ if and only if $\forall x \in \calM, \ v\in T_x\calM$
	\[
	\operatorname{Ric}_x(v, v) \geq \lambda\|v\|^2.
	\]
\end{lemma}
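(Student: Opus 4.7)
The plan is to reduce the second-derivative analysis of $t\mapsto \calE(\rho_t)$ to a pointwise computation on the Jacobian of the Lagrangian map $F_t(x)=\exp_x(t\Phi(x))$, and then invoke the classical Jacobi/Raychaudhuri identity linking second time-derivatives of volume elements along geodesics to Ricci curvature.

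First, I would use the Monge--Amp\`ere relation $\rho_t(F_t(x))\,J_t(x)=\rho_0(x)$, where $J_t(x)=|\det DF_t(x)|$ denotes the Jacobian of $F_t$ with respect to the Riemannian volume $m$, together with the change of variables, to rewrite
\[
\calE(\rho_t) \;=\; \int_\calM \log\frac{\rho_0(x)}{J_t(x)}\,\rho_0(x)\,dm(x) \;=\; \calE(\rho_0) \;-\; \int_\calM \log J_t(x)\,\rho_0(x)\,dm(x).
\]
Hence the analysis of $\underline{\partial_t^2}\calE$ reduces to the second time-derivative of $-\log J_t(x)$ integrated against $\rho_0$, provided one can justify commuting $\underline{\partial_t^2}$ with the integral.

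Next, for each fixed $x$ the curve $s\mapsto F_s(x)$ is a geodesic, and a matrix-Riccati computation along this geodesic yields the pointwise identity
\[
\partial_s^2 \log J_s(x) \;=\; -\operatorname{Ric}_{F_s(x)}(\dot F_s,\dot F_s) \;-\; \|\Sigma_s(x)\|_{HS}^2,
\]
where $\Sigma_s(x)$ is the symmetric rate-of-strain tensor of the Lagrangian flow. Multiplying by $-\rho_0(x)$ and integrating, the nonnegative Hilbert--Schmidt term is absorbed by the $\liminf$ defining $\underline{\partial_t^2}$, producing the claimed identity. The equivalence with the Ricci bound then follows easily: the direction $\operatorname{Ric}\geq \lambda \Rightarrow \calE$ is $\lambda$-convex is immediate, since pointwise integration gives a lower bound of $\lambda\int_\calM \|\dot F_t\|^2\rho_0\,dm = \lambda\,\fw_2(\mu_0,\mu_1)^2$. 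For the converse, one localizes around a prescribed $(x_0,v)\in T\calM$, transports a family of smooth bumps $\rho_0^\varepsilon$ concentrated at $x_0$ with initial velocity $v$, and extracts $\operatorname{Ric}_{x_0}(v,v)\geq\lambda\|v\|^2$ as $\varepsilon\to 0$ in the $\lambda$-convexity inequality.

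The main obstacle is regularity: in full generality $F_t$ is only Lipschitz, $J_t$ is defined only a.e., and the Raychaudhuri identity holds only away from the cut locus, which is precisely why the statement is phrased with the \emph{lower} centered second derivative rather than an honest second derivative. The standard remedy is to first establish the identity on smooth, strictly positive densities whose supports remain in a common open set on which $F_t$ is smooth and injective for all $t\in[0,1]$, and then extend by approximation together with lower semicontinuity of the entropy along weak convergence; a secondary technical point is the uniform integrability of $\log J_t\,\rho_0$ needed to pass $\underline{\partial_t^2}$ under the integral, which is ensured by the a priori bound $J_t \leq C(\lambda,\calM)$ coming from the comparison principle for Jacobi fields under a lower Ricci bound.
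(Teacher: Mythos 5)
The paper does not prove this lemma: it is quoted verbatim as Corollary~1.5 of \cite{sturm2005convex}, and the authors use it as a black box in Theorem~\ref{th:convexity-M}. So there is no internal proof to compare against; the right question is whether your outline matches Sturm's argument, and in broad strokes it does (Monge--Amp\`ere relation for the Lagrangian map, Raychaudhuri/Bochner computation for $\partial_t^2\log J_t$, approximation and lower semicontinuity to handle the cut locus and lack of regularity, and a localization argument for the converse direction).

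However, there is one place where your reasoning, as written, does not support the conclusion you claim. Your pointwise identity reads $\partial_s^2\log J_s=-\operatorname{Ric}(\dot F_s,\dot F_s)-\|\Sigma_s\|_{HS}^2$, so $-\partial_s^2\log J_s=\operatorname{Ric}+\|\Sigma_s\|_{HS}^2\ge\operatorname{Ric}$. Integrating against $\rho_0$ and passing $\underline{\partial_t^2}$ inside (by Fatou or by the uniform bound on $J_t$ that you invoke), you obtain the one-sided bound
\[
\underline{\partial_t^2}\,\calE(\rho_t)\;\ge\;\int_\calM \operatorname{Ric}_{F_t(x)}\bigl(\dot F_t(x),\dot F_t(x)\bigr)\rho_0(x)\,dm(x),
\]
not an equality: the non-negative shear term $\|\Sigma_t\|_{HS}^2$ cannot be ``absorbed'' into a $\liminf$ to restore equality --- dropping a non-negative summand gives an inequality, full stop. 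This is precisely what Sturm proves (and it is the inequality, not the equality, that is used later in Theorem~\ref{th:convexity-M} to deduce $\underline{\partial_t^2}\calF_\gamma\ge(\gamma^{-1}\lambda+\alpha)\,\fw_2^2$). The equality stated in the lemma appears to be an imprecision of the paper, and your proposal inherits it; if you intend to prove the statement as written with ``$=$'', your argument has a genuine gap, whereas if you replace ``$=$'' by ``$\ge$'' (which is what the downstream argument actually needs) the outline is sound. A second, smaller remark: in the statement the subscript on $\operatorname{Ric}_x$ should be $F_t(x)$, since $\dot F_t(x)\in T_{F_t(x)}\calM$; your displayed formula is already written correctly at the base point $F_s(x)$.
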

Extending the result to the free energy functional $\calF_\gamma$ with the interaction term \eqref{eq:free-energy} we get the following sufficient condition for the geodesic convexity of $\calF_\gamma$.
\begin{theorem}
	\label{th:convexity-M}
	Consider the free energy $\calF_\gamma$ as defined in \eqref{eq:free-energy}. Assume that there exist $\alpha, \lambda \in \bbR$ such that $W$ satisfies
	\[
	\underline{\partial^2_t} W\left(\exp_x vt, \exp_y ut\right) \geq \alpha(\|v\|^2 + \|u\|^2)
	\]
	and $\calM$ is such that
	\[
	\operatorname{Ric}_x(v, v) \geq \lambda\|v\|^2
	\]
	for all $x, y \in \calM, \ v\in T_x\calM, u \in T_y\calM$,
	then $\calF_\gamma$ is an $(\gamma^{-1}\lambda + \alpha)$-convex functional. In particular, if $\underline{\partial^2_t} W\left(\exp_x vt, \exp_y ut\right) \geq 0$, $\calF_\gamma$ is $\gamma^{-1}\lambda$-convex.
\end{theorem}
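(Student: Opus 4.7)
The approach is to verify the differential criterion for geodesic $\lambda$-convexity recalled just above: a lower-semicontinuous functional $F$ on $(\calP_2(\calM),\fw_2)$ is $\lambda$-convex precisely when $F(\mu_s)<\infty$ for $s\in(0,1)$ along every geodesic and $\underline{\partial_s^2}F(\mu_s)\geq\lambda\,\fw_2(\mu_0,\mu_1)^2$. I will fix arbitrary endpoints $\mu_0,\mu_1\in\calP_2(\calM)$ with positive densities and finite $\calF_\gamma$-values, parametrise the connecting geodesic as $\mu_s=(F_s)_\#\mu_0$ with $F_s(x)=\exp_x(s\Phi(x))$ from the preceding Sturm lemma, and by superadditivity of $\liminf$ reduce to estimating each summand of $\calF_\gamma$ separately:
\[
\underline{\partial_s^2}\calF_\gamma(\mu_s)\;\geq\;\gamma^{-1}\,\underline{\partial_s^2}\calE(\mu_s)\;+\;\underline{\partial_s^2}\calI(\mu_s).
\]

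The entropy term is handled directly by Lemma \ref{lemma:entropy-convexity}: the Ricci lower bound gives
\[
\underline{\partial_s^2}\calE(\mu_s)\;\geq\;\lambda\int_\calM\|\Phi(x)\|^2\,d\mu_0(x)\;=\;\lambda\,\fw_2(\mu_0,\mu_1)^2,
\]
using the identification of the squared Wasserstein distance with the $L^2(\mu_0)$-kinetic energy of $\Phi$. For the interaction term I rewrite
\[
\calI(\mu_s)\;=\;\tfrac12\iint_{\calM\times\calM}W\bigl(F_s(x),F_s(y)\bigr)\,d\mu_0(x)\,d\mu_0(y)
\]
and try to pass the lower second derivative under the double integral by Fatou's lemma. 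Fibrewise, the curve $h\mapsto(F_{s+h}(x),F_{s+h}(y))$ equals $(\exp_{F_s(x)}(h\dot F_s(x)),\exp_{F_s(y)}(h\dot F_s(y)))$, and since velocities along a geodesic have constant norm, $\|\dot F_s(x)\|=\|\Phi(x)\|$ and $\|\dot F_s(y)\|=\|\Phi(y)\|$, so the hypothesis on $W$ applied at this base point produces the expected pointwise lower bound $\alpha(\|\Phi(x)\|^2+\|\Phi(y)\|^2)$ on the fibrewise $\liminf$.

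The main obstacle is that Fatou requires a uniform (in $h$) pointwise lower bound on the symmetric second difference, not merely a $\liminf$ bound. The resolution is a convexity upgrade: because the hypothesis is assumed at \emph{every} base point and in \emph{every} direction, the one-dimensional function $t\mapsto W(\exp_{x_0}v_0 t,\exp_{y_0}u_0 t)$ is itself $\alpha(\|v_0\|^2+\|u_0\|^2)$-convex on $\bbR$ by the equivalent criterion invoked in the preamble to the theorem, applied in dimension one. Subtracting the quadratic shift leaves a convex function, whose symmetric second differences are non-negative at every scale, so
\[
\frac{W(F_{s+h}(x),F_{s+h}(y))+W(F_{s-h}(x),F_{s-h}(y))-2W(F_s(x),F_s(y))}{h^2}\;\geq\;\alpha\bigl(\|\Phi(x)\|^2+\|\Phi(y)\|^2\bigr)
\]
uniformly in small $h$, with $\mu_0\otimes\mu_0$-integrable right-hand side. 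Fatou then gives $\underline{\partial_s^2}\calI(\mu_s)\geq\alpha\,\fw_2(\mu_0,\mu_1)^2$, and combining with the entropy estimate yields the announced $(\gamma^{-1}\lambda+\alpha)\,\fw_2(\mu_0,\mu_1)^2$ bound. The lower semicontinuity of $s\mapsto\calF_\gamma(\mu_s)$ required by the criterion follows from Sturm's result for $\calE$ and from continuity of $W$ on the compact product $\calM\times\calM$ together with weak continuity of $s\mapsto\mu_s\otimes\mu_s$ for $\calI$, while the specialisation $\alpha=0$ asserted at the end is just the corresponding instance.
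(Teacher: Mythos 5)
Your proof follows the same route as the paper's: decompose $\underline{\partial_s^2}\calF_\gamma\geq\gamma^{-1}\underline{\partial_s^2}\calE+\underline{\partial_s^2}\calI$, handle the entropy with Sturm's lemma, reparametrise $\calI$ via the push-forward map $F_s$, and then interchange $\liminf$ with the double integral to invoke the pointwise hypothesis on $W$; your final constant $\alpha\,\fw_2^2$ is also the one that matches the theorem statement. The one substantive thing you add is that you notice and fill a step the paper leaves tacit: the interchange $\liminf_{h\to0}\int\geq\int\underline{\partial_t^2}W$ is a Fatou argument and so needs a $\mu_0\otimes\mu_0$-integrable lower bound on the symmetric second differences that is uniform in $h$, not merely a pointwise $\liminf$ bound. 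Your upgrade — pass from the hypothesis $\underline{\partial_t^2}W(\exp_x vt,\exp_y ut)\geq\alpha(\|v\|^2+\|u\|^2)$ to genuine one-dimensional $\alpha(\|v\|^2+\|u\|^2)$-convexity of $t\mapsto W(\exp_x vt,\exp_y ut)$ (using continuity of $W$ and the one-dimensional form of the differential criterion for $\lambda$-convexity), so that the shifted function is convex and its symmetric second differences are nonnegative at all scales $h$ — is exactly what is needed, since $\alpha(\|\Phi(x)\|^2+\|\Phi(y)\|^2)$ is integrable because $\Phi\in L^2(\mu_0;T\calM)$. Your explicit remark on lower semicontinuity of $s\mapsto\calF_\gamma(\mu_s)$ is likewise a hypothesis of the criterion that the paper uses without comment. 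So: same approach, with the justification of the Fatou step made explicit.
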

\begin{proof}
	Recall that \eqref{eq:free-energy} is a sum of entropy and interaction energy $\calF = \gamma^{-1}\calE + \calI$. By definition of the lower second derivative we get
	\[
	\underline{\partial_t^2}\calF \geq \gamma^{-1}\underline{\partial_t^2}\calE + \underline{\partial_t^2}\calI.
	\]
	Let $\rho_t$ be a geodesic with boundary values satisfying $\calE(\rho_0), \calE(\rho_1) < \infty$. We calculate the lower second derivative of the interaction energy along $\rho_t$. We begin by rewriting the interaction energy in term of the map $F_t$, namely
	\[
	\calI(\mu_t) = \frac{1}{2}\int_{\calM \times\calM} W(x, y )d\mu_t(x)d\mu_t(y) =  \frac{1}{2}\int_{\calM \times\calM} W(F_t(x), F_t(y) )d\mu_0(x)d\mu_0(y).
	\]
	Then by definition of the lower second derivative we get
	\begin{align*}   \underline{\partial_t^2}\calI  &= \lim\inf_{s\to 0} \frac1{s^2}\left[f(t+s)+ f(t-s) - 2f(t)\right] \\
		&=\lim\inf_{s\to 0}\frac1{s^2}\int_{\calM \times\calM}\Big[W(F_{t+s}(x), F_{t+s}(y)) + W(F_{t-s}(x), F_{t-s}(y)) \\
		&\hspace{110pt}-2W(F_t(x), F_t(y))\Big]d\mu_0(x)d\mu_0(y) \\
		&\geq \int_{\calM \times\calM} \underline{\partial_t^2} W(F_t(x), F_t(y))d\mu_0(x)d\mu_0(y) \\
		&\geq \alpha \int_{\calM \times\calM}  \left( \|\dot{F}_t(x)\|^2+ \|\dot{F}_t(y)\|^2\right)d\mu_0(x)d\mu_0(y) = 2\alpha\int_{\calM}\|\dot{F}_0\|d\mu_0 = 2\alpha \fw_2^2(\mu_0, \mu_1).
	\end{align*}
	Combining the estimate with the bound from Lemma \ref{lemma:entropy-convexity} we get the result.
\end{proof}
\begin{remark}
	In the Euclidean case, $\calM = \bbR^d$, the criterion from Theorem \ref{th:convexity-M} reduces to $\alpha$-convexity of the interaction kernel $W: \bbR^{2d} \to \bbR$. As remarked in \cite[Proposition 7.25]{santambrogio2015optimal}, it is a sufficient but not necessary condition for the convexity of the corresponding interaction potential $S$.
\end{remark}
\begin{remark}[Gradient flow solutions]
	Formally, from the convexity properties one can also deduce existence (and uniqueness in case of $\lambda>0$) of a \emph{gradient flow solution} of the corresponding non-stationary McKean-Vlasov equation. For a separable Hilbert space $X$, such result for a large class of functionals on Wasserstein space $\calP_2(X)$ is rigorously established in \cite[Section 11.2]{ambrosio2005gradient}. On a manifold of positive curvature similar result was proved for the relative entropy (without the interaction term) in \cite{erbar2010heat}. 
    \end{remark}
    \begin{remark}[Functional inequalities]
	In Euclidean space the uniform geodesic convexity has been shown to be equivalent to the log-Sobolev inequality \cite{Villani2003}. We expect the same arguments to hold on smooth manifolds. On the equivalence of functional inequalities in Riemannian setting see \cite{otto2000generalization}. Logarithmic Sobolev inequality in the special case $\calM = \bbS^{n-1}$ is studied in \cite{brigati2023logarithmic}
\end{remark}

\paragraph*{The case of the sphere $\calM = \bbS^{n-1}$}
Consider a special case, namely $\calM = \bbS^{n-1}$. Note that any element of a unit sphere $x\in \bbS^{n-1}$ can be identified with a unit vector in $\bbR^{n}$. For any pair of points on a sphere $x, y \in \bbS^{n-1}$ we denote by $\left<x, y\right>$ a Euclidean scalar product between the corresponding vectors in $\bbR^n$. We now establish a sufficient condition for a convexity of an interaction energy for an interaction potential that defined in terms of the scalar product $W(x, y) = W(\left<x, y\right>)$ with now $W:[-1,1]\to\R$ by an abuse of notation.
\begin{remark}[Choice of parametrization]
	For a general manifold $\calM$ a natural choice for introducing the interaction potential is in terms of the squared geodesic distance (cf.~\cite{fetecau2021well})
	\[
	W(x, y) = W(\dist(x,y)^2).
	\]
	This choice is inconvenient in the case of a sphere, where geodesic distance is equal to
	\[
	\dist(x,y) = \arccos(\left<x, y\right>).
	\]
	The examples later are directly parametrized in terms of $\skp{x,y}$. Also, one can see that $\arccos$ is not differentiable at $\pm 1$ and in using the scalar product $\skp{x,y}$, we avoid dealing with regularity issues of the distance function at the endpoints.
\end{remark}

\begin{theorem}
	\label{th:convexity-sph}
	Consider the free energy functional $\calF_\gamma$ as defined in \eqref{eq:free-energy} on an $n$-dimensional sphere $\bbS^{n-1}$. Let the interaction kernel satisfy Assumption \ref{assum:sym-kernel} with some $W \in C^2((-1,1), \bbR)$ and let $\|W'\|_\infty, \|W''\|_\infty \leq C$. In addition let $W'(\pm 1)$ to be left/right derivative at $\pm 1$ respectively and assume that $|W'(\pm 1)|<C$, then $\calF$ is $\lambda$-convex, where $\lambda = \gamma^{-1}(n-2)-4C$.
\end{theorem}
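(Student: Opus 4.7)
The plan is to apply the general geodesic-convexity criterion of Theorem~\ref{th:convexity-M}, which requires two ingredients: a uniform lower Ricci bound $\lambda$ on $\bbS^{n-1}$, and a constant $\alpha$ such that $\underline{\partial_t^2} W(\exp_x(vt), \exp_y(ut)) \geq \alpha(\|v\|^2 + \|u\|^2)$ for all admissible $x,y,v,u$. The first is immediate from the fact that $\bbS^{n-1}$ has constant sectional curvature $1$, so $\operatorname{Ric}_x = (n-2)g_x$ and $\lambda = n-2$. The task therefore reduces to verifying that $\alpha = -4C$ is admissible.

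For the kernel estimate I exploit the embedding $\bbS^{n-1} \subset \bbR^n$. Viewing $\gamma_1(t) := \exp_x(vt)$ and $\gamma_2(t) := \exp_y(ut)$ as $\bbR^n$-valued great circles of constant speeds $\|v\|,\|u\|$, each satisfies $\gamma_i''(t) = -\|v_i\|^2\, \gamma_i(t)$ (with $v_1 = v$, $v_2 = u$). Setting $a(t) := \langle \gamma_1(t), \gamma_2(t)\rangle \in [-1,1]$, direct differentiation gives
\[
a'(t) = \langle \gamma_1'(t), \gamma_2(t)\rangle + \langle \gamma_1(t), \gamma_2'(t)\rangle, \qquad a''(t) = -(\|v\|^2 + \|u\|^2)\, a(t) + 2\langle \gamma_1'(t), \gamma_2'(t)\rangle.
\]
Cauchy--Schwarz, together with $\|\gamma_i(t)\|=1$, then yields the geometry-only bounds $(a'(t))^2 \leq (\|v\|+\|u\|)^2 \leq 2(\|v\|^2+\|u\|^2)$ and $|a''(t)| \leq \|v\|^2+\|u\|^2 + 2\|v\|\|u\| \leq 2(\|v\|^2+\|u\|^2)$.

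Passing these through $W$ completes the argument. Whenever $a(0) \in (-1,1)$, the map $t\mapsto W(a(t))$ is $C^2$ near $t=0$ and the chain rule combined with $\|W'\|_\infty, \|W''\|_\infty \leq C$ bounds its classical second derivative in absolute value by $4C(\|v\|^2+\|u\|^2)$. The expected main obstacle is the endpoint case $a(0) = \pm 1$ (that is, $x = \pm y$), where a classical $W''(\pm 1)$ need not exist. However, there $a$ attains a global extremum, forcing $a'(0)=0$; moreover $\|W''\|_\infty \leq C$ on $(-1,1)$ together with the one-sided bounds $|W'(\pm 1)| \leq C$ makes $W$ a $C^{1,1}$ function on $[-1,1]$, so the Taylor remainder $r(a) := W(a) - W(\pm 1) - W'(\pm 1)(a\mp 1)$ satisfies $|r(a)| \leq \tfrac{C}{2}(a\mp 1)^2$. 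Since $a'(0)=0$ forces $a(s) \mp 1 = O(s^2)$, this remainder contributes $o(1)$ to the centered second difference, and one obtains
\[
\underline{\partial_t^2}\bigl(W\circ a\bigr)(0) \;=\; W'(\pm 1)\, a''(0) \;\geq\; -2C(\|v\|^2+\|u\|^2).
\]
Thus $\alpha = -4C$ is admissible in every configuration, and Theorem~\ref{th:convexity-M} delivers the claimed $\bigl(\gamma^{-1}(n-2) - 4C\bigr)$-convexity of $\calF_\gamma$.
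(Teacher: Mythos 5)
Your proof is correct and follows essentially the same route as the paper: apply Theorem~\ref{th:convexity-M} with the Ricci bound $\operatorname{Ric}_{\bbS^{n-1}} = (n-2)g$, write the geodesics as great circles in $\bbR^n$, and bound $\underline{\partial_t^2}\,W(\langle\gamma_1(t),\gamma_2(t)\rangle)$ by the chain rule in the generic case and by a separate argument when $\langle\gamma_1,\gamma_2\rangle = \pm 1$. Your handling of the endpoint case via the $C^{1,1}$ Taylor remainder and the observation $a'(0)=0$ is in fact slightly cleaner than the paper's direct liminf expansion, and the final constant $\alpha = -4C$ agrees.
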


\begin{proof}
	For any $x \in \bbS^{n-1}, \ v\in T_x\bbS^{n-1}$ let $\gamma_{x, v}: [0,1] \to \bbS^{n}$ be a constant speed geodesic with $\gamma_{x, v}(0) = x$ and $\dot\gamma_{x, v}(0) = v$. Introduce $g(x, y; v, u, t) = \left<\gamma_{x,v}(t), \gamma_{y,u}(t)\right>$, then  it's first and second derivative with respect to $t$ are:
	\begin{align*}
		\partial_t g(x, y; v, u, t) 
		&=\left<\gamma_{x,v}(t), \dot\gamma_{y,u}(t)\right> + \left<\dot \gamma_{x,v}(t), \gamma_{y,u}(t)\right> \\
		\partial^2_t g(x, y; v, u; t) 
		&=2\left<\dot\gamma_{x,v}(t), \dot\gamma_{y,u}(t)\right> + \left<\ddot \gamma_{x,v}(t), \gamma_{y,u}(t)\right> + \left<\gamma_{x,v}(t), \ddot\gamma_{y,u}(t)\right>
	\end{align*}
	As $W$ is twice differentiable on $(-1, 1)$, for $g(x, y; v, u, t) \neq \pm 1$ by the chain rule we get
	\begin{align*}      \MoveEqLeft \underline{\partial^2_t} W\left(\gamma_{x,v}(t), \gamma_{y, u}(t)\right) = \partial^2_t W(g(x, y; u, v; t)) = W''(g(\cdot; t))\left(\partial_tg(\cdot; t)\right)^2 +  W'g(\cdot; t)\partial^2_tg(\cdot; t). 
	\end{align*}
	Using that for constant speed geodesics on a sphere $\|\dot\gamma_{x,v}(t)\| = \|v\|$ and $\|\ddot\gamma_{x,v}(t)\| = \|v\|^2$ we obtain the bound
	\begin{align*}
		\underline{\partial^2_t} W\left(\gamma_{x,v}(t), \gamma_{y, u}(t)\right) \geq -C\left((\|u\| +\|v\|)^2 + \|u\|^2 + \|v\|^2\right) \geq -3C\left(\|u\|^2 +\|v\|)^2\right).
	\end{align*}
	Note that $g(x, y; v, u, t) =1$ if and only if $\gamma_{x, v}(t) = \gamma_{y, u}(t)$, so for the case $g(x, y; v, u, t) =1$ we calculate
	\begin{align*}
		\MoveEqLeft \underline{\partial^2_t} W\left(\gamma_{x,v}(t), \gamma_{y, u}(t)\right) = \lim\inf_{s\to 0} \frac1{s^2}\Big[W(\left<\gamma_{x,v}(t+s), \gamma_{y, u}(t+s)\right>) \\
		&\qquad+ W(\left<\gamma_{x,v}(t-s), \gamma_{y, u}(t-s)\right>) 
		- 2W(\left<\gamma_{x,v}(t), \gamma_{y, u}(t)\right>)\Big] \\
		&=\lim\inf_{s\to 0} \frac1{s^2}\Big[W\left(1 -s^2\|\dot\gamma_{x,v}(t)-\dot\gamma_{y,u}(t)\|^2 + o(s^2)\right)  \\
		&\qquad+ W\left(1 -s^2\|\dot\gamma_{x,v}(t)-\dot\gamma_{y,u}(t)\|^2+ o(s^2) \right) - 2W(1)\Big] \\
		&=\lim\inf_{s\to 0} \frac1{s^2}\Bigl[ 2W(1)  - 2s^2W'(1)\|\dot\gamma_{x,v}(t)-\dot\gamma_{y,u}(t)\|^2 - 2W(1) +o(s^2)\Bigr] \\
		&= -2W'(1)\|\dot\gamma_{x,v}(t)-\dot\gamma_{y,u}(t)\|^2.
	\end{align*}
	Estimating the above expression we conclude that for $g(x, y; v, u, t) =1$:
	\[\begin{split}
	\underline{\partial^2_t} W\left(\gamma_{x,v}(t), \gamma_{y, u}(t)\right) &\geq -4|W'(1)|\left(\|\dot\gamma_{x,v}(t)\|^2+\|\dot\gamma_{y,u}(t)\|^2\right) \\
	&\geq -4C\left(\|\dot\gamma_{x,v}(t)\|^2 + \|\dot\gamma_{y,u}(t)\|^2\right).
\end{split}
	\]
	Analogous result holds for $g(x, y; v, u, t) =-1$. As a result, the assumption of Theorem~\ref{th:convexity-M} is satisfied for $\alpha = -4C$.
\end{proof}
Note that if $\lambda >0$ the above Theorems~\ref{th:minimizers} and~\ref{th:convexity-sph} guarantee strict convexity of the free-energy functional, and by the properties of strictly convex functionals we can conclude that $\calF_\gamma$ has a unique minimizer.
\begin{corollary}
	\label{cor:convexity-sph}
	Consider free energy functional $\calF_\gamma$ with the interaction kernel $W$ as in Theorem \ref{th:convexity-sph}. Let $\gamma < \frac{(n-2)}{4C}$, then there exists a unique minimizer of $\calF_\gamma$.
\end{corollary}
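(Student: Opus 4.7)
The plan is to read off the $\lambda$-convexity constant from Theorem~\ref{th:convexity-sph} and argue that strict positivity of $\lambda$ forces uniqueness, while existence has already been secured by Theorem~\ref{th:minimizers}. Concretely, under the hypothesis $\gamma < (n-2)/(4C)$ we have $\gamma^{-1}(n-2) > 4C$, so the convexity constant $\lambda = \gamma^{-1}(n-2) - 4C$ supplied by Theorem~\ref{th:convexity-sph} is strictly positive. Thus $\calF_\gamma$ is strictly geodesically convex along $\fw_2$-geodesics on $\calP_2(\bbS^{n-1})$.

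For existence, Theorem~\ref{th:minimizers} provides at least one minimizer $\rho^* \in \calP_{ac}^+(\bbS^{n-1}) \cap L_2(\bbS^{n-1})$, and in particular the density is strictly positive (otherwise the entropy would be $+\infty$, contradicting minimality since $\bar\rho$ gives a finite value). For uniqueness, suppose for contradiction that $\rho_0, \rho_1$ are two distinct minimizers. Both are absolutely continuous with strictly positive densities, so by Lemma~3.1 of~\cite{sturm2005convex} (quoted in the excerpt above) there is a unique $\fw_2$-geodesic $s \mapsto \rho_s$ connecting them that remains absolutely continuous for every $s\in[0,1]$. Moreover $\fw_2(\rho_0,\rho_1) > 0$ since the measures are distinct.

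Applying the $\lambda$-convexity inequality along this geodesic at, say, $s=1/2$, we obtain
\[
\calF_\gamma(\rho_{1/2}) \leq \tfrac12 \calF_\gamma(\rho_0) + \tfrac12\calF_\gamma(\rho_1) - \tfrac{\lambda}{8}\,\fw_2^2(\rho_0,\rho_1) = \min_{\calP_2(\bbS^{n-1})}\calF_\gamma - \tfrac{\lambda}{8}\,\fw_2^2(\rho_0,\rho_1),
\]
and since $\lambda>0$ and $\fw_2(\rho_0,\rho_1)>0$ the right-hand side is strictly below the minimum, a contradiction. Hence the minimizer is unique.

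There is no serious obstacle here: the only thing to be slightly careful about is that the $\lambda$-convexity of Theorem~\ref{th:convexity-sph} is stated along Wasserstein geodesics of absolutely continuous measures, so one must verify that both candidate minimizers are admissible endpoints for such a geodesic. This is immediate from Theorem~\ref{th:minimizers} together with the finiteness of entropy at any minimizer.
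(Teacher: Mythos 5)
Your proof is correct and fills in the details of the paper's one-line proof, which simply cites Theorems~\ref{th:minimizers} and~\ref{th:convexity-sph}. The argument via strict geodesic convexity ($\lambda>0$) along the unique Wasserstein geodesic between two putative minimizers, yielding a contradiction at the midpoint, is exactly the intended route.
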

\begin{proof}
	The statement follows directly from Theorems~\ref{th:minimizers} and~\ref{th:convexity-sph}.
\end{proof}

\section{The structure of \texorpdfstring{$L_2(\bbS^{n-1})$}{the Lebesgue space on the sphere}}

We collect some properties of spherical harmonics in Section~\ref{sec:harmonics}, the convolution and interaction on a sphere in Sections~\ref{sec:spherical-conv} and~\ref{ssec:InteractionSphere}, respectively.

\subsection{Spherical harmonics}
\label{sec:harmonics}
In our further analysis we will imploy the Hilbertian structure of $L_2(\bbS^{n-1})$. For that we will need some results concerning spherical harmonics, an orthonormal basis of square-integrable functions on a sphere. In this section we introduce the spherical harmonics basis and relate it to the eigenspaces of the Laplace-Beltrami operator. In the next section we give a definition of the convolution on a sphere and introduce the analog of the convolution theorem for $L_2(\bbS^{n-1})$. For more details on the topic we refer the reader to \cite[Chapters 1 and 2]{dai2013approximation}. 

    We will work with the Hilbert space of square-integrable functions on $\bbS^{n-1}$ equipped with the scalar product
    \[
    \left<f, g\right> = \frac{1}{\omega_n}
    \int_{\bbS^{n-1}}f(x)g(x)d\sigma(x),
    \]
    where $\sigma$ is the spherical measure (a uniform measure on $\bbS^{n-1}$) and  $\omega_n = \sigma(\bbS^{n-1}) = \frac{2\pi^{n/2}}{\Gamma(n/2)}$ is the normalization constant. We will also refer to the normalized version of the spherical measure as the uniform measure or uniform state $\bar\rho = \frac1{\omega_n}\sigma$. Spherical measure is induced by the metric generated by the standard Euclidean product in $\bbR^{n}$.  It is easy to verify that $\sigma$ is invariant under the rotation group: for any set $A \subseteq \bbS^{n-1}$ and any $g\in SO(n)$ it holds that $\sigma(gA) = \sigma(A)$, where $gA = \{ga: a\in A\}$. %

    The basis of $L_2(\bbS^{n-1})$ can be constructed with the spherical harmonics.
    \begin{definition}[$l$-th spherical harmonics $\calH_l$] For $l \in \bbN_0$ let $H_l \subset \ker \Delta$ be a linear space of $l$-homogeneous polynomials $p: \bbR^n \to \bbR$
    \[
    H_l = \{p(x): \Delta p =0, \ p(\lambda x) = \lambda^lp(x)\}.
    \]
    The linear space of $l$-th spherical harmonics $\calH_l$ is the space of the elements of $H_l$ restricted to the unit sphere:
    \[
    \calH_l = \{\tilde p(x): \exists p \in H_l \ \text{s.t.} \ \forall x \in \bbS^{n-1} \ \tilde p(x) = p(x)\},
    \]
    where $\tilde p:\bbS^{n-1} \to \bbR$.
    \end{definition}
    It can be verified that the subspaces $\calH_l,  \ \calH_m$ are orthogonal for $l\neq m$ (see \cite[Theorem 1.1.2]{dai2013approximation}) and every subspace is finite-dimensional (see \cite[Corollary 1.1.4]{dai2013approximation}). Moreover, all elements of the same subspace $\calH_l$ can be shown to be eigenfunctions of the Laplace-Beltrami operator corresponding to the same eigenvalue.
    \begin{theorem}[$\calH_l$ are eigenfunctions of $\Delta_{\bbS^{n-1}}$ {\cite[Theorem 1.4.5]
    {dai2013approximation}}] 
    \label{th:sph-egenfunctions}
    Let $\Delta_{\bbS^{n-1}}$ be the Laplace-Beltrami operator on $\bbS^{n-1}$, then for any $p \in \calH_l$ it holds
    \[
    \Delta_{\bbS^{n-1}} p = -l(n+l-2)p.
    \]      
    \end{theorem}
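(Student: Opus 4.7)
The plan is to derive the eigenvalue relation by decomposing the Euclidean Laplacian in polar coordinates and exploiting that any $Y_l \in \calH_l$ lifts to a harmonic homogeneous polynomial on $\bbR^n$. First I would recall the standard separation-of-variables formula for $\Delta_{\bbR^n}$ in spherical coordinates $(r,\omega) \in \bbR_+ \times \bbS^{n-1}$, namely
\[
\Delta_{\bbR^n} u = \partial_r^2 u + \frac{n-1}{r}\partial_r u + \frac{1}{r^2}\Delta_{\bbS^{n-1}} u,
\]
where $\Delta_{\bbS^{n-1}}$ is the Laplace--Beltrami operator on the unit sphere. This decomposition can be verified by comparing the Riemannian metric on $\bbR^n\setminus\{0\}$, expressed in polar coordinates as $dr^2 + r^2 g_{\bbS^{n-1}}$, with the general coordinate formula for the Laplace--Beltrami operator, which I can invoke from Appendix~\ref{sec:geometry}.

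Next, given $Y_l \in \calH_l$, by definition there exists a harmonic $l$-homogeneous polynomial $p \in H_l$ whose restriction to $\bbS^{n-1}$ is $Y_l$. In polar coordinates, $l$-homogeneity gives $p(r\omega) = r^l Y_l(\omega)$. I would apply the decomposition above to $p$ and use that $\Delta_{\bbR^n} p = 0$. Since $Y_l$ depends only on $\omega$, the radial derivatives act only on the $r^l$ factor, producing
\[
0 = l(l-1)r^{l-2} Y_l(\omega) + (n-1)l\, r^{l-2} Y_l(\omega) + r^{l-2}\Delta_{\bbS^{n-1}} Y_l(\omega).
\]
Cancelling the common factor $r^{l-2}$ (valid for $r>0$) and rearranging gives
\[
\Delta_{\bbS^{n-1}} Y_l = -\bigl(l(l-1) + (n-1)l\bigr) Y_l = -l(n+l-2) Y_l,
\]
which is precisely the claimed identity.

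The only step requiring a bit of care is the polar decomposition of $\Delta_{\bbR^n}$; one has to verify the coefficient $\frac{n-1}{r}$, which comes from the volume element $r^{n-1}dr\, d\sigma$ via the general formula $\Delta_g u = \frac{1}{\sqrt{|g|}}\partial_i(\sqrt{|g|} g^{ij}\partial_j u)$. This is routine but dimension-dependent, and it is where the parameter $n$ (and hence the eigenvalue dependence on $n$) enters. Once the decomposition is in hand, the rest is an immediate algebraic consequence of $l$-homogeneity and harmonicity of the extension $p$.
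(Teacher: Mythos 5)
Your proof is correct and is precisely the standard argument used in the cited reference (Dai--Xu, Theorem 1.4.5): decompose the Euclidean Laplacian in polar coordinates, exploit $l$-homogeneity of the harmonic extension so that $p(r\omega) = r^l Y_l(\omega)$, set $\Delta_{\bbR^n} p = 0$, and cancel the $r^{l-2}$ factor to read off the eigenvalue $-l(n+l-2)$. The paper itself does not give a proof but defers to this reference, so there is nothing to contrast.
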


    An orthonormal basis of $\calH_l$ has an explicit form in spherical coordinates $\theta_1...\theta_{n-1}$:
    \begin{align*}
        x_1 &= r\sin \theta_{n-1} ...\sin\theta_{2}\sin\theta_{1}, \\
        x_2 &= r\sin \theta_{n-1} ... \sin\theta_{2}\cos\theta_{1}, \\
        &\vdots\\
        x_n &= r\cos\theta_{n-1}.
    \end{align*}
    The spherical measure in spherical coordinates takes the form
    \[
    d\sigma = %
    \prod_{i=1}^{n-1} (\sin\theta_{n-i})^{n-i-1}d\theta_{n-i}.
    \]
And the corresponding basis of spherical harmonics then has the form:
\begin{equation}
\label{eq:harmonics}
Y_{l, \bbK}(\theta) = e^{ik_{n-2}\theta_1}A_K^l \prod_{j=0}^{n-3} C_{k_j-k_{j+1}}^{\frac{n-j-2}{2}+k_{j+1}}(\cos \theta_{n-j-1}) (\sin \theta_{n-j-1})^{k_{j+1}},
\end{equation}
where $K\in \bbK_l$ is a multi-index satisfying
\begin{equation}\label{eq:MultiIndex:admissible}
\bbK_l := \set*{K = (k_0, k_1 ... k_{n-2})\in \bbN_0^{n-2}\times \bbZ: l\equiv k_0 \geq k_1 \geq \cdots \geq k_{n-3} \geq |k_{n-2}|\geq 0},
\end{equation}
with $A_K^l$ is a normalization constant and $C^\lambda_n$ are Gegenbauer polynomials of degree $n$. Gegenbauer polynomials satisfy the following recurrence relation:
\begin{equation}
\label{eq:gegenbauer}
(n+2)C_{n+2}^\lambda(t) = 2(\lambda+n+1)tC_{n+1}^\lambda(t) -(2\lambda+n)C_{n}^\lambda(t),
\end{equation}
and the first two polynomials are given by $C_0^\lambda(t) = 1$ and $C_1^\lambda(t) = 2\lambda t$.

    It can be shown that the $Y_{l, K}$ form an orthonormal basis of square-integrable function on a sphere. For any $l \in \bbN_0$ define projection of $L_2(\bbS^{n-1})$ onto $\calH_l$, such that for any $f\in L_2(\bbS^{n-1})$:
\[
\proj_l f := \sum_{K \in \bbK_l}f_{l,K} Y_{l,K}, \quad\text{with}\quad f_{l,K} := \left<f, Y_{l, K}\right>_{L_2(\bbS^{n-1})},
\]
then the following theorem holds.
\begin{theorem}[Fourier decomposition on $\bbS^{n-1}$ {\cite[Theorem 2.2.2]
{dai2013approximation}}]
\label{th:sph-decomposition}
Let $Y_{l, K}$ be the orthogonal basis of $\calH_l$ as defined in \eqref{eq:harmonics}, then the set
\[
\calY = \{ Y_{l,k}: l \in \bbN_0, k \in \bbK\}
\]
is an orthonormal basis of $L_2(\bbS^{n-1})$. In particular for any $f \in L_2(\bbS^{n-1})$ the following identity holds
\[
f =\sum_{l\in \bbN_0} \proj_l f,
\]
in the sense that $\lim_{n\to \infty} \|f -  \sum_{l=1}^n \proj_l f\|_{L_2} =0$.
\end{theorem}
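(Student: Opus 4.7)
The plan is to establish that $\calY$ is an orthonormal basis of $L_2(\bbS^{n-1})$ in three steps: orthonormality, a harmonic decomposition of homogeneous polynomials, and density of polynomials on the sphere in $L_2$.

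\textbf{Step 1 (Orthonormality).} Orthogonality across different degrees, i.e., $\langle Y_{l,K}, Y_{m,K'}\rangle = 0$ whenever $l\neq m$, follows from Theorem~\ref{th:sph-egenfunctions}: since $Y_{l,K}$ and $Y_{m,K'}$ are eigenfunctions of the self-adjoint operator $\Delta_{\bbS^{n-1}}$ with distinct eigenvalues $-l(n+l-2)\neq -m(n+m-2)$, they must be $L_2$-orthogonal. For orthonormality within a fixed degree $\calH_l$, I would use the product structure of the spherical measure in spherical coordinates, which separates the integral $\int_{\bbS^{n-1}} Y_{l,K}\ol{Y_{l,K'}}\, d\sigma$ into a product of one-dimensional integrals in the $\theta_j$ variables. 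Each such integral reduces, up to the weight $(\sin\theta)^{n-j-2+2k_{j+1}}$, to the standard orthogonality relation for Gegenbauer polynomials $C_n^\lambda$, and the normalization constants $A_K^l$ are calibrated precisely so that $\langle Y_{l,K}, Y_{l,K'}\rangle = \delta_{K,K'}$.

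\textbf{Step 2 (Harmonic decomposition of polynomials).} The key algebraic fact is that every $l$-homogeneous polynomial $P : \bbR^n \to \bbR$ decomposes uniquely as
\[
P(x) = \sum_{j=0}^{\lfloor l/2\rfloor} |x|^{2j}\, H_{l-2j}(x), \qquad H_{l-2j} \in H_{l-2j}.
\]
I would prove this by induction on $l$ using the pairing $\langle P, Q\rangle_* := P(\partial)\ol Q\big|_{x=0}$ on the space $P_l$ of $l$-homogeneous polynomials, under which multiplication by $|x|^2$ and the Laplacian $\Delta$ are adjoint. This yields the orthogonal direct sum decomposition $P_l = H_l \oplus |x|^2 P_{l-2}$; the claim then follows by induction. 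Restricting to $\bbS^{n-1}$ where $|x|^2 = 1$, every polynomial on the sphere belongs to the finite span of the $Y_{j,K}$ with $j\le l$.

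\textbf{Step 3 (Density).} By the Stone-Weierstrass theorem, the algebra of restrictions of polynomials in $n$ variables to the compact set $\bbS^{n-1}$ separates points and contains constants, so it is dense in $C(\bbS^{n-1})$ in the uniform norm. Since the spherical measure is a finite Borel measure on the compact Hausdorff space $\bbS^{n-1}$, $C(\bbS^{n-1})$ is dense in $L_2(\bbS^{n-1})$. Combining with Step~2, the span of $\calY$ is dense in $L_2(\bbS^{n-1})$; together with Step~1 this yields that $\calY$ is an orthonormal basis, and the Fourier decomposition $f = \sum_{l\in \bbN_0} \proj_l f$ in $L_2$ follows from the general Hilbert space theory.

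The main obstacle is the algebraic Step~2, specifically the surjectivity of $\Delta: P_l \to P_{l-2}$ (equivalently, injectivity of multiplication by $|x|^2$ from $P_{l-2}$ into $P_l$ together with an orthogonality argument). The adjointness trick with the pairing $\langle\cdot,\cdot\rangle_*$ is the cleanest way I know to avoid an ad hoc induction on dimension and degree.
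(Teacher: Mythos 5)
Your plan is a correct, self-contained proof of the result, which the paper itself does not prove but cites directly from Dai and Xu~\cite[Theorem 2.2.2]{dai2013approximation}. The three-step structure (orthogonality via the spectral theorem and Gegenbauer orthogonality; Fischer decomposition $P_l = H_l \oplus |x|^2 P_{l-2}$ via the pairing $\langle P,Q\rangle_* = P(\partial)\ol Q|_{x=0}$; density of polynomial restrictions via Stone--Weierstrass) is exactly the standard textbook argument, so there is nothing to compare against on the paper's side beyond the bare citation. Two small points worth making explicit if you were to write this out: (i) for cross-degree orthogonality you need injectivity of $l\mapsto l(n+l-2)$ on $\bbN_0$, which holds since $l(n+l-2)=m(n+m-2)$ forces $(l-m)(l+m+n-2)=0$ and $l+m+n-2>0$; (ii) the explicit basis~\eqref{eq:harmonics} involves $e^{ik_{n-2}\theta_1}$ and hence is a priori complex-valued, so for the real $L_2$ space one should pass to real and imaginary parts, but this does not affect the argument.
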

Combining Theorem \ref{th:sph-decomposition} with Theorem \ref{th:sph-egenfunctions} one can see that $\calH_l$ coincides with $l$-th eigensubspace of the Laplace-Beltrami operator $\Delta_{\bbS^{n-1}}$.

We will further use the spherical harmonics basis functions $Y_{l,K}$ which are explicitly defined for a fixed coordinate system. Note that since the choice of the coordinate system is arbitrary, the results will also hold for an arbitrary spherical harmonics basis. Given that, for simplicity we fix the coordinate system for the rest of the paper.

\subsection{Spherical convolution}
\label{sec:spherical-conv}
Before we proceed to the convolution theorem on a sphere, we give some intuition on how it is related to the well-known convolution theorem in Euclidean space. Consider $L_2([-\pi, \pi]^d)$, the space of  square-integrable functions on $[-\pi, \pi]$ and 
any $f\in L_2([-\pi, \pi]^d)$ is identified with its $2\pi$-periodic extension to $\bbR^d$.
Then the set of Fourier harmonics $(w_k)_{k\in \bbZ^d}$ for $k = (k_1, ... k_d), \ k_i \in \bbZ$ defined as
\[
w_k(x) = \alpha_k\prod_{i=1}^d w_{k_i}(x_i),
\]
where $w_{k_i}: [-\pi, \pi] \to \bbR$ takes the form
\[
w_{k_i}(t) = \begin{cases} 
    \cos(k_i t), &\quad k_i > 0, \\
    1, &\quad k_i = 0, \\
    \sin(k_i t), &\quad k_i < 0,
\end{cases}
\]
and $\alpha_k$ is the normalization constant, is an orthonormal basis of $L_2([-\pi, \pi])$. For any function $f \in L_2([-\pi, \pi]^d)$ denote by $\hat f(k)$ the $k$-th component of it's Fourier decomposition:
\[
\hat f(k) := \left<f, w_k\right> = \int_{[-\pi, \pi]^d} f(x)w_k(x)dx.
\]
For any two functions $f, g \in L_2([-\pi, \pi]^d$ define their convolution as
\begin{equation}
   \label{eq:conv-rd} 
(f * g)(x) = \int_{[-\pi, \pi]^d} f(y)  g(x-y) dy,
\end{equation}
then by the convolution theorem it holds that
\[
\widehat{f*g}(k) = \hat f(k) \cdot \hat g(k). 
\]
In particular, the above representation implies that the convolution in a Euclidean space is a symmetric operator. Namely if the sum $\sum_{k\in \bbZ^d}\widehat{f*g}(k)$ is well-defined, it holds that $f*g = g*f$. 

\begin{remark}
Note that the definition \eqref{eq:conv-rd} requires the domain of the function $g$ to be a vector space, so it cannot be directly applied to $L_2(\bbS^{n-1})$. At the same time the function $h_y(x) = x-y$ can be interpreted as a shift operator, which allows to formally rewrite \eqref{eq:conv-rd} as
\[
(f * g)(x) = \int f(y) g(h_y(x)) dh_y,
\]
so that the integral is defined with respect to the measure $dh_y$ over the group of shifts. This definition can be generalized to a general group. In particular, convolution on a sphere can be defined as an integration over the special orthogonal group \cite{dokmanic2009convolution}, which can be formally introduced as:
\begin{equation}
   \label{eq:conv-rd2} 
(f * g)(x) = \int_{SO(n)} f(h)  g(hx) dh,
\end{equation}
where $dh$ is a measure on $SO(n)$ and $f$ is defined on $SO(n)$ instead of $\bbS^{n-1}$.
\end{remark}

We will now define a convolution on the sphere. Similar to the approach of \cite{dokmanic2009convolution} we will change the domain of the second argument $g$ making convolution a non-symmetric operation. We identify any element of $\bbS^{n-1}$ with the corresponding unit-length vector in $\bbR^d$. It allows us to define the convolution in a slightly more restrictive setting.
\begin{definition}[Convolution on $\bbS^{n-1}$]
	\label{def:conv-sn}
	Let $f\in L_2(\bbS^{n-1})$ and let $g: [-1, 1] \to \bbR$ satisfy
	\begin{equation}\label{eq:spherical:intergrability}
		\int_{[-1, 1]} |g(t)|(1 - t^2)^{\frac{n-3}{2}}dt < \infty,
	\end{equation}
	then the convolution of $f$ with $g$ is defined as
	\[
	(g*f)(x) :=%
	\int_{\bbS^{n-1}} f(y)g(\left<x, y\right>)d\sigma(y).
	\]
\end{definition}
We note for $g:[-1,1]\to\R$ satisfying the integrability condition~\eqref{eq:spherical:intergrability} the function $g(\skp{x_0,\cdot})\in L_1(\bbS^{n-1})$ for any $x_0\in \bbS^{n-1}$.
For such admissible $g:[-1,1]\to\R$, we define its spherical harmonic decomposition as follows:
\begin{definition}[Spherical harmonics decomposition]
	\label{def:spherical-decomposition}
	Consider $\calM = \bbS^{n-1}$, let $g: [-1, 1] \to \bbR$ satisfy the integrability condition~\eqref{eq:spherical:intergrability}
	then the sequence $(\hat g_k)_{k\in \bbN}$ is called a \emph{spherical harmonics decomposition} of $g$, where
	\[
	\hat g_k = g_{k, 0} = c_{\frac{n-2}{2}}\int_{-1}^1 g(t)\frac{C_k^{\frac{n-2}{2}}(t)}{C_k^{\frac{n-2}{2}}(1)}(1-t^2)^{\frac{n-3}{2}}dt,
	\]
	with $C^\alpha_k$ the Gegenbauer polynomials defined in \eqref{eq:gegenbauer} and $c_\lambda$ is the normalization constant 
	\[c_\lambda = \left(\int_{-1}^1(1-t^2)^{\lambda - \frac{1}{2}}dt\right)^{-1} = \frac{\Gamma(\lambda+1)}{\sqrt{\pi}\Gamma(\lambda+\frac12)}.
	\]
\end{definition}
Spherical harmonics decomposition allows to represent any admissible $g$ as a linear combination of Gegenbauer polynomials.
\begin{lemma}[\cite{dai2013approximation}] 
\label{lemma:gegegnbauer-decomposition} 
Let $g: [-1, 1] \to \bbR$ satisfy the integrability condition~\eqref{eq:spherical:intergrability}, then $g$ has the following representation in terms of the Gegenbauer polynomials:
\[
g = \sum_{k} \hat g_k \frac{2k +n-2}{n-2} C_k^{\frac{n-2}{2}},
\]
where $(\hat g_k)_{k\in\bbN}$ is the spherical harmonics decomposition of $g$, $C_k^{\frac{n-2}{2}}$ are the Gegenbauer polynomials and the equality holds in $L_2(\bbS^{n-1})$ sense. 
\end{lemma}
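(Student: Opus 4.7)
The plan is to reduce the claim to the well-known fact that the Gegenbauer polynomials $\{C_k^{\lambda}\}_{k\in\bbN_0}$ with parameter $\lambda = (n-2)/2$ form a complete orthogonal system in the weighted Hilbert space $L^2\bigl([-1,1],(1-t^2)^{\lambda-1/2}\dx t\bigr)$, and then to match normalization constants against the definition of $\hat g_k$.

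First I would observe that the integrability assumption~\eqref{eq:spherical:intergrability} is exactly $g\in L^1((1-t^2)^{(n-3)/2}\dx t)$, which is enough to make all the inner products $\int_{-1}^1 g(t) C_k^\lambda(t)(1-t^2)^{\lambda-1/2}\dx t$ well defined, since each $C_k^\lambda$ is a bounded polynomial on $[-1,1]$. The target formula is therefore an identity in $L^2(\bbS^{n-1})$, which I would interpret via the zonal function $G(x):=g(\skp{x_0,x})$ on the sphere (for fixed $x_0\in\bbS^{n-1}$) and the decomposition provided by Theorem~\ref{th:sph-decomposition}. Indeed, because $G$ depends only on $\skp{x_0,\cdot}$, its projection $\proj_k G$ onto $\calH_k$ is rotationally invariant around $x_0$ and thus a constant multiple of the zonal harmonic of degree $k$; that constant is exactly what $\hat g_k$ is designed to capture in Definition~\ref{def:spherical-decomposition}. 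Writing $G=\sum_k \proj_k G$ in $L^2(\bbS^{n-1})$ and pulling this back to an equality of functions on $[-1,1]$ (evaluated against $\skp{x_0,\cdot}$) yields a Gegenbauer expansion of $g$.

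Equivalently, and perhaps more directly, I would expand $g=\sum_k a_k C_k^\lambda$ in the orthogonal basis of Gegenbauer polynomials and compute $a_k$ by the standard formula
\[
a_k \;=\; \frac{\int_{-1}^1 g(t)\,C_k^\lambda(t)(1-t^2)^{\lambda-1/2}\dx t}{\int_{-1}^1 \bigl(C_k^\lambda(t)\bigr)^2 (1-t^2)^{\lambda-1/2}\dx t}.
\]
Using the tabulated values
\[
\int_{-1}^1 \bigl(C_k^\lambda(t)\bigr)^2(1-t^2)^{\lambda-1/2}\dx t \;=\; \frac{\pi\, 2^{1-2\lambda}\,\Gamma(k+2\lambda)}{k!\,(k+\lambda)\,\Gamma(\lambda)^2},
\qquad
C_k^\lambda(1)=\frac{\Gamma(k+2\lambda)}{k!\,\Gamma(2\lambda)},
\]
together with Legendre's duplication formula $\Gamma(2\lambda)=\tfrac{2^{2\lambda-1}}{\sqrt{\pi}}\,\Gamma(\lambda)\Gamma(\lambda+\tfrac12)$, the ratio of constants simplifies to
\[
a_k \;=\; \frac{k+\lambda}{\lambda}\,c_\lambda \int_{-1}^1 g(t)\,\frac{C_k^\lambda(t)}{C_k^\lambda(1)}(1-t^2)^{\lambda-1/2}\dx t \;=\; \frac{2k+n-2}{n-2}\,\hat g_k,
\]
which is the desired identity.

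Convergence in the $L^2(\bbS^{n-1})$ sense then follows from the completeness statement in Theorem~\ref{th:sph-decomposition} applied to the zonal function $G(x)=g(\skp{x_0,x})$. The only mildly delicate point is bookkeeping: one has to check that the normalization constant $c_\lambda$ in Definition~\ref{def:spherical-decomposition} is precisely the one that produces the coefficient $\frac{2k+n-2}{n-2}$ in front of $C_k^\lambda$, which is exactly what the duplication formula delivers. No step presents a real conceptual obstacle; the entire argument is a bookkeeping exercise matching the normalizations of Gegenbauer polynomials to the spherical-harmonic normalization inherited from~\eqref{eq:harmonics}.
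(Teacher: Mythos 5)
The paper states this lemma as a citation to \cite{dai2013approximation} without giving its own proof, so there is no internal argument to compare against. Your proof sketch is the standard one and the normalization bookkeeping is correct: using $h_k=\int_{-1}^1 (C_k^\lambda)^2(1-t^2)^{\lambda-1/2}\dx t = \frac{\pi 2^{1-2\lambda}\Gamma(k+2\lambda)}{k!(k+\lambda)\Gamma(\lambda)^2}$, $C_k^\lambda(1)=\frac{\Gamma(k+2\lambda)}{k!\,\Gamma(2\lambda)}$, $c_\lambda = \frac{\Gamma(\lambda+1)}{\sqrt{\pi}\Gamma(\lambda+\frac12)}$, and the duplication formula, one indeed gets $\frac{C_k^\lambda(1)}{c_\lambda h_k}=\frac{k+\lambda}{\lambda}=\frac{2k+n-2}{n-2}$, so $a_k = \frac{2k+n-2}{n-2}\,\hat g_k$ as required.

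One small point worth flagging, though it is really an imprecision in the lemma's statement rather than in your argument: the hypothesis~\eqref{eq:spherical:intergrability} is only an $L^1$ condition on $g$ against the weight $(1-t^2)^{\lambda-1/2}$, which suffices to define each coefficient $\hat g_k$, but is not by itself enough to conclude convergence of the expansion in $L^2(\bbS^{n-1})$ (equivalently, in the weighted $L^2([-1,1],(1-t^2)^{\lambda-1/2}\dx t)$). Your appeal to the completeness statement of Theorem~\ref{th:sph-decomposition} implicitly requires the zonal lift $G(x)=g(\skp{x_0,x})$ to lie in $L^2(\bbS^{n-1})$, i.e.\ $g\in L^2((1-t^2)^{\lambda-1/2}\dx t)$. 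In the paper this is harmless since all kernels $W$ considered are bounded, hence automatically in the weighted $L^2$; but if you want the proof to be self-contained you should state the $L^2$ hypothesis explicitly (or note that the convergence is understood in a weaker sense for merely $L^1$ data).
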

\begin{remark}
	Note that the Definition \ref{def:spherical-decomposition} is an explicit formulation in terms of the Gegenbauer polynomials of the (simplified)
	Definition \ref{def:sph-decomposition-intro} used in the Introduction. The coefficients $\hat W_k$ are properly scaled projections onto the spherical harmonics basis functions $Y_{k,0}$. See section \ref{sec:harmonics} for the explicit expressions of the spherical harmonics $Y_{k,0}$.
\end{remark}

The following theorem shows that the eigenspaces of the Laplace-Beltrami operator $\calH_p$ are invariant under the convolution with any admissible $g$. 

\begin{theorem}[Invariance of $\calH_p$, {\cite[Theorem 2.1.3]{dai2013approximation}}]
    Let $f, g$ be as in Definition \ref{def:conv-sn}, then 
\[
\proj_p (g * f) = \omega_n \hat g_p \proj_p f,
\]
 where $\hat g_p$ denotes the spherical harmonics decomposition of $g$ as in Definition \ref{def:spherical-decomposition}.
\end{theorem}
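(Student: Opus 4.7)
The result is a classical instance of the Funk--Hecke formula, and my plan is to derive it from the Gegenbauer expansion of $g$ (already provided by Lemma~\ref{lemma:gegegnbauer-decomposition}) combined with the addition theorem for spherical harmonics.

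First, I would expand
\[
g(t) \;=\; \sum_{k\ge 0} \hat g_k\, \frac{2k+n-2}{n-2}\, C_k^{(n-2)/2}(t)
\]
using Lemma~\ref{lemma:gegegnbauer-decomposition}. Substituting $t = \langle x,y\rangle$ and integrating against $f(y)\, d\sigma(y)$, the goal reduces to understanding how each integral of the form $\int_{\bbS^{n-1}} f(y)\, C_k^{(n-2)/2}(\langle x, y\rangle)\, d\sigma(y)$ acts on the spherical harmonic decomposition of $f$. The integrability assumption~\eqref{eq:spherical:intergrability} on $g$ together with $f \in L_2(\bbS^{n-1})$ justifies the interchange of sum and integral by dominated convergence (with a Cauchy--Schwarz bound provided by Parseval).

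Next, I would invoke the addition theorem for spherical harmonics (classical, see \cite[Thm.~1.2.6]{dai2013approximation}), which identifies the Gegenbauer kernel with the reproducing kernel of $\calH_k$:
\[
\sum_{K\in\bbK_k} Y_{k,K}(x)\, Y_{k,K}(y) \;=\; \kappa_{n,k}\, C_k^{(n-2)/2}(\langle x, y\rangle),
\]
with an explicit constant $\kappa_{n,k}$. Substituting back yields
\[
(g*f)(x) \;=\; \sum_{k\ge 0} \beta_k \sum_{K\in\bbK_k} Y_{k,K}(x) \int_{\bbS^{n-1}} f(y)\, Y_{k,K}(y)\, d\sigma(y),
\]
where $\beta_k$ collects all the normalization factors. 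The inner integral equals $\omega_n\, f_{k,K}$, because the inner product convention adopted in Section~\ref{sec:harmonics} contains a prefactor $1/\omega_n$. Orthogonality of distinct $\calH_k$ then implies $\proj_p(g*f) = (\omega_n\beta_p)\,\proj_p f$, and the final task is to check that $\omega_n\beta_p = \omega_n \hat g_p$, which reduces to the identity
\[
\frac{2p+n-2}{n-2}\cdot\kappa_{n,p} \;=\; 1,
\]
a standard consequence of the dimension formula for $\calH_p$ and the value $C_p^{(n-2)/2}(1)=\binom{p+n-3}{p}$.

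The principal difficulty is the bookkeeping of three overlapping normalization conventions: the $1/\omega_n$-weighted inner product on $L_2(\bbS^{n-1})$, the normalization of the Gegenbauer polynomials entering the definition of $\hat g_p$ (including the factor $c_{(n-2)/2}$), and the specific form of the addition theorem. An alternative route that sidesteps much of the arithmetic proceeds via group invariance: the convolution operator $T_g: f\mapsto g*f$ commutes with the natural $SO(n)$-action on $L_2(\bbS^{n-1})$ since both $\sigma$ and $\langle\cdot,\cdot\rangle$ are rotation-invariant, and each $\calH_p$ is an irreducible $SO(n)$-representation, so Schur's lemma forces $T_g|_{\calH_p}$ to be a scalar $\lambda_p\cdot\mathrm{id}$; one then identifies $\lambda_p = \omega_n \hat g_p$ by evaluating $(g * Y_{p,0})$ at the pole, where only the zonal component survives and the Gegenbauer expansion of $g$ directly reads off $\hat g_p$.
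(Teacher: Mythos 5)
The paper gives no proof of this statement; it simply cites \cite[Theorem~2.1.3]{dai2013approximation} (the Funk--Hecke formula). Your reconstruction is essentially the standard argument from that source, so the approach matches the one the paper implicitly relies on, and both your main route and the Schur-lemma alternative are sound in outline.

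Two points deserve care. First, a bookkeeping slip: with the addition theorem written as $\sum_{K}Y_{p,K}(x)Y_{p,K}(y)=\kappa_{n,p}\,C_p^{(n-2)/2}(\langle x,y\rangle)$, the identity you actually need is $\frac{2p+n-2}{n-2}\,\kappa_{n,p}^{-1}=1$, not $\frac{2p+n-2}{n-2}\,\kappa_{n,p}=1$. For $Y_{p,K}$ orthonormal under the $\tfrac1{\omega_n}$-weighted inner product one has $\kappa_{n,p}=\dim\calH_p / C_p^{(n-2)/2}(1) = \frac{2p+n-2}{n-2}$, so the normalizations do cancel, but only with the inverse in the right place. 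Second, and more substantively: your first step expands $g$ in its Gegenbauer series and then appeals to Parseval/dominated convergence, but the hypothesis on $g$ is only the weighted $L^1$ condition \eqref{eq:spherical:intergrability}, not $L^2$, so Parseval is not available and the series need not converge to $g$ in a sense strong enough to commute with the integral. (You are leaning on Lemma~\ref{lemma:gegegnbauer-decomposition}, which the paper states under the same $L^1$ hypothesis, so the gap is inherited rather than introduced by you; it is nonetheless a real gap.) The clean fix is to compute $\langle g*f,\,Y_{p,K}\rangle$ directly, apply Fubini (justified because $\int |g(\langle x,y\rangle)|\,d\sigma(x)$ is finite and independent of $y$ by the weight condition), and use the pointwise Funk--Hecke identity $\int g(\langle x,y\rangle)\,Y_{p,K}(x)\,d\sigma(x)=\lambda_p\,Y_{p,K}(y)$, which holds for $g$ merely weighted-$L^1$ and is proved by the zonal-averaging argument rather than by expanding $g$. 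This eliminates any dependence on the series expansion and gives $\proj_p(g*f)=\lambda_p\,\proj_p f$ with $\lambda_p=\omega_n\hat g_p$ directly. Your Schur-lemma variant reaches the same conclusion and is clean provided you cite the irreducibility of $\calH_p$ as an $SO(n)$-module; identifying $\lambda_p$ by evaluating at the pole is indeed the simplest way to tie the scalar to $\hat g_p$.
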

As a result of the above theorem we can formulate an alternative of the convolution theorem in $\bbS^{n-1}$. 
\begin{corollary}[Convolution theorem on $\bbS^{n-1}$]
\label{cor:convolution-theorem}
Let $f, g$ be as in Definition \ref{def:conv-sn}, then for any $p\in \bbN, \ k \in \bbK_p$ the $(p, k)$-th spherical harmonics coefficient of the convolution $f* g$ satisfy
\[
\left<g*f, Y_{p, k}\right>_{L_2(\bbS^{n-1})} =  \omega_n\hat g_p\left<f, Y_{p, k}\right>_{L_2(\bbS^{n-1})}.
\]
\end{corollary}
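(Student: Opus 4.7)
The plan is to derive Corollary \ref{cor:convolution-theorem} as a direct consequence of the invariance theorem stated immediately before it, namely $\proj_p(g*f) = \omega_n \hat g_p \proj_p f$, combined with the orthonormality properties of the spherical harmonic basis $\{Y_{p,k}\}$ summarized in Theorem \ref{th:sph-decomposition}.

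First I would recall that, by the definition of the projection operator $\proj_p$ onto the eigenspace $\calH_p$, for any $h \in L_2(\bbS^{n-1})$ and any multi-index $k \in \bbK_p$ we have the identity
\[
\langle h, Y_{p,k}\rangle_{L_2(\bbS^{n-1})} = \langle \proj_p h, Y_{p,k}\rangle_{L_2(\bbS^{n-1})},
\]
because $Y_{p,k} \in \calH_p$ is orthogonal to every other eigenspace $\calH_q$ with $q \ne p$ (by Theorem \ref{th:sph-egenfunctions} these are distinct eigenspaces of the self-adjoint operator $\Delta_{\bbS^{n-1}}$, hence mutually orthogonal).

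Next I would apply this identity to $h = g * f$. The admissibility assumption~\eqref{eq:spherical:intergrability} on $g$ and the boundedness estimate for convolutions implicit in Definition~\ref{def:conv-sn} ensure that $g*f \in L_2(\bbS^{n-1})$, so that the projection $\proj_p(g*f)$ is well-defined. Invoking the invariance theorem we obtain
\[
\langle g*f, Y_{p,k}\rangle_{L_2(\bbS^{n-1})} = \langle \proj_p(g*f), Y_{p,k}\rangle_{L_2(\bbS^{n-1})} = \omega_n \hat g_p \langle \proj_p f, Y_{p,k}\rangle_{L_2(\bbS^{n-1})},
\]
and a second application of the orthogonality identity above (this time to $f$) collapses the right-hand side to $\omega_n \hat g_p \langle f, Y_{p,k}\rangle_{L_2(\bbS^{n-1})}$, which is the claim.

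There is essentially no obstacle here, as this is a matching-coefficients argument: the real work was done in the invariance theorem, which identifies the eigenspaces $\calH_p$ as the simultaneous eigenspaces of the family of convolution operators $\{f \mapsto g*f\}_{g}$ with eigenvalue $\omega_n \hat g_p$. The corollary simply rewrites this eigenvalue equation coordinate by coordinate in the orthonormal basis $\{Y_{p,k}\}_{k\in\bbK_p}$ of $\calH_p$. The only point worth checking carefully is that all inner products are finite, which follows from $g*f \in L_2(\bbS^{n-1})$ and the fact that the basis functions $Y_{p,k}$ are bounded on the compact manifold $\bbS^{n-1}$.
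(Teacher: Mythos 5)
Your proof is correct and follows exactly the route the paper intends: the paper states Corollary~\ref{cor:convolution-theorem} as an immediate consequence of the invariance theorem $\proj_p(g*f) = \omega_n \hat g_p \proj_p f$ without writing out the details, and your argument (project, apply orthogonality of the $\calH_q$, extract the $(p,k)$-th coefficient) is precisely what is being elided.
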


In a given coordinate system $\theta_1, ... \theta_{n-1}$ we define the space of rotationally symmetric square-integrble functions as
\begin{definition}[Space $L_2^s(\bbS^{n-1})$]\label{def:L2s}
   Let $\theta_1, ... \theta_{n-1}$ be a coordinate system on $\bbS^{n-1}$. The space of $L_2^s(\bbS^{n-1})$ is the closed span of the spherical harmonics $Y_{l, 0}$ as defined in \eqref{eq:harmonics}:
   \[
   L_2^s(\bbS^{n-1}) = \overline{\operatorname{span}}^{L_2}\{Y_{l, 0},l\in\bbN\}.
   \]
\end{definition}
\begin{remark}
    By definition of spherical harmonics it is easy to see that any $f\in L_2^s(\bbS^{n-1})$ is a function of $\theta_{n-1}$ only and any harmonic $Y_{l, p}$ with $p\neq 0$ is orthogonal to $f$.
\end{remark}

\subsection{Admissible interaction kernels on a sphere}\label{ssec:InteractionSphere}

We endow the unit sphere $\bbS^{n-1}$ with the natural topology and assume that the interaction kernel satisfies the following assumption.
\begin{assumption}[Rotational symmetry] 
	\label{assum:sym-kernel}
	The interaction kernel $W: \bbS^{n-1}\times \bbS^{n-1} \to \bbR$ takes the form $W(x, y) = W(\left<x, y\right>)$, with $W:[-1,1]\to \R$ by abuse of notation and $\left<\cdot, \cdot\right>$ is the standard Euclidean product on $\bbR^n$.
\end{assumption}
In this case the measure convolution reduces to the spherical convolution defined in Section~\ref{sec:spherical-conv}. We discuss the spherical harmonics basis as well as properties of the convolution with rotationally symmetric function in Section \ref{sec:harmonics}. Properties of the spherical convolution are essential for the spectral analysis of the Gibbs map, which is the main component of the bifurcation analysis. For a kernel satisfying Assumption \ref{assum:sym-kernel} we define it's spherical harmonics decomposition following Definition~\ref{def:spherical-decomposition}.

For rotationally symmetric kernels spherical harmonics decomposition is an analogue of the Fourier decomposition in Euclidean case. In particular, set of all spherical harmonics is an orthonormal basis of $L_2(\bbS^{n-1})$ and a 'convolution' with rotationally symmetric kernel $W$ can be expresses solely in terms of the spherical harmonics decomposition of $W$. For any $u \in L_2(\bbS^{n-1})$ such that $u = \sum_{l, K} \hat u_{l, K} Y_{l, K}$ the convolution takes thanks to Corollary~\ref{cor:convolution-theorem} the form
\begin{equation}
	\label{eq:intro-conv}
	(W*u)(x) = \int_{\bbS^{n-1}} W(\left<x,y\right>) u(y)d\sigma(y) =\omega_n\sum_{l\in\bbN, K\in \bbK_l} \hat W_l \hat u_{l, K} Y_{l, K}(x),
\end{equation}
where $Y_{l, K}$ are the spherical harmonics from Section~\ref{sec:harmonics}. 
\begin{definition}[Stable kernels]\label{def:stable-kernel}
	A symmetric kernel $W$ satisfying the following condition
	\[
	\calI(u) = \frac12\int_{\bbS^{n-1}\times \bbS^{n-1}} W(\left<x,y\right>) u(x)u(y)d\sigma(x)d\sigma(y) \geq 0 \quad \forall u \in L_2(\bbS^{n-1})
	\]
	is called stable and the space of all stable kernels is denoted by $\calW_s$.
\end{definition}
\begin{lemma}[Spherical decomposition of a stable kernel]
	Let $W \in \calW_s$ be a stable kernel, then $\forall k \in \bbN$ it holds that $\hat W_k \geq 0$, where $(\hat{W}_n)_{n\in\bbN}$ is the spherical harmonics decomposition of $W$.
\end{lemma}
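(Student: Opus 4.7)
The natural strategy is to establish a Parseval-type identity expressing $\calI(u)$ in terms of the spherical harmonic coefficients of $u$ and $W$, and then isolate a single coefficient $\hat{W}_k$ by testing against a suitable basis function. Starting from Definition~\ref{def:stable-kernel}, I would first rewrite
\[
\calI(u) \;=\; \int_{\bbS^{n-1}} u(x)\,(W*u)(x)\,d\sigma(x)
\]
using the spherical convolution of Definition~\ref{def:conv-sn}. Expanding $u = \sum_{l,K}\hat{u}_{l,K}Y_{l,K}$ via Theorem~\ref{th:sph-decomposition} and inserting the convolution formula~\eqref{eq:intro-conv} (i.e. Corollary~\ref{cor:convolution-theorem}), the orthonormality of the basis $\calY$ collapses the double sum and yields, up to a positive normalisation depending only on $\omega_n$,
\[
\calI(u) \;=\; \sum_{l\in\bbN_0,\;K\in\bbK_l} \hat{W}_l\,|\hat{u}_{l,K}|^2.
\]

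Given this identity, non-negativity of $\hat{W}_k$ for any fixed $k$ follows by choosing a $u$ whose expansion is supported on a single mode. The cleanest choice is the zonal harmonic $u = Y_{k,0}$ from~\eqref{eq:harmonics}, which lies in $L_2^s(\bbS^{n-1})$ (Definition~\ref{def:L2s}) and satisfies $\hat{u}_{l,K} = \delta_{lk}\delta_{K,0}$. The Parseval identity above then reduces to $\calI(Y_{k,0}) = \hat{W}_k$, and the stability hypothesis $\calI(Y_{k,0})\ge 0$ gives $\hat{W}_k\ge 0$, completing the argument.

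There is essentially no substantive obstacle here: the lemma is a one-line consequence of the spherical convolution theorem once the Parseval identity is in place. The only thing requiring care is keeping the normalisation constants consistent, namely the factor $1/\omega_n$ in the scalar product of Section~\ref{sec:harmonics}, the $\omega_n\hat{g}_p$ in Corollary~\ref{cor:convolution-theorem}, and the constant $c_{(n-2)/2}$ entering Definition~\ref{def:spherical-decomposition}. These are bookkeeping issues rather than mathematical ones, and they affect only an overall positive prefactor in the Parseval identity, which is irrelevant for the sign conclusion.
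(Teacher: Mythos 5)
Your proposal is correct and follows essentially the same approach as the paper: expand $\calI(u)$ via the spherical convolution formula and orthonormality to obtain the Parseval-type identity $\calI(u)=\sum_{l,K}\hat W_l\,\hat u_{l,K}^2$, then test against the zonal harmonic $Y_{k,0}$. The paper phrases the final step as a contradiction (assuming some $\hat W_l<0$) rather than directly, but this is an immaterial difference, and your note about normalization constants being harmless for the sign conclusion is accurate.
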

\begin{proof}
	Using the relation \eqref{eq:intro-conv} we obtain for any $u \in L_2(\bbS^{n-1})$:
	\begin{align*}
		\MoveEqLeft\calI(u) = \frac12\int_{\bbS^{n-1}\times \bbS^{n-1}} W(\left<x,y\right>) u(x)u(y)d\sigma(x)d\sigma(y) \\
		&= \frac{1}{2\bar\rho}\int_{\bbS^{n-1}}d\sigma(x) \sum_{l, K}\hat W_l \hat u_{l, K} Y_{l, K}(x) \sum_{l, K} \hat u_{l, K} Y_{l, K}(x)  
		=\frac{1}{2\bar\rho} \sum_{l, K} \hat W_l \hat u_{l, K}^2,
	\end{align*}
	where we used the orthonormality of spherical harmonics basis. Assume that there exists $l \in\bbN$ such that $\hat W_l < 0$, by taking $u^* = Y_{l, 0}$ we obtain $\calI(u^*) = \frac{1}{2\bar\rho}\hat W_l <0$, contradiction.
\end{proof}
\begin{remark}
	Note that the concept of a stable kernel on a sphere is equivalent to the characterization of $H$-stability of the interaction kernel on a torus used in \cite{carrillo2020long}. 
\end{remark}
As it was established in \cite{GatesPenrose1970} for the Euclidean setting, phase transition is only possible if the interaction kernel is $H$-unstable. Similarly, in this work for the bifurcation analysis and the characterization of the phase transitions we require the interaction kernel to be unstable, and in particular to satisfy the following assumption.
\begin{assumption}
	\label{assum:non-stable-kernel}
	The interaction kernel $W: \bbS^{n-1}\times \bbS^{n-1} \to \bbR$ satisfies Assumption~\ref{assum:sym-kernel} and has at least one negative component of the spherical harmonics decomposition, i.e. $W \not\in \calW_s$.
\end{assumption}
\begin{remark}[Convexity of the interaction energy for stable kernels]
\label{remark:stable-kernels}
As noted in \cite[Corollary 2.7]{ChayesPanferov2010}, the interaction energy $\calI$ with $W\in \calW_s$ is a positive-semidefinite bilinear map on $L_2(\bbS^{n-1})$, so the uniform state $\bar\rho$ the unique minimizer of the free energy $\calF_\gamma$ for arbitrary~$\gamma \in \bbR$.
\end{remark}

\section{Sphere: Bifurcation analysis}\label{sec:bifurcations}
As shown before, for large enough noise the uniform measure is a unique minimizer of the aggregation-diffusion potential. We would like to know what happens to the set of minimizers as we decrease the noise. As one can see, for a fixed interaction kernel $W$ for certain values of $\gamma$ the convexity parameter $\lambda$ becomes negative, which may result in non-uniqueness of minimizers. Under certain conditions, the set of minimizers can start 'branching' around certain state $\rho$. This phenomenon is called bifurcation. In this section we give the necessary background information on the bifurcation theory in Hilbert spaces and apply it to the Gibbs map defined in \eqref{eq:gibbs-map}.
\subsection{Background on general theory}
We study behaviour of the set of solutions of the parametrized family of fixed point equations
\begin{equation}
\label{eq:fixed-point}
F(x, \lambda)=0,
\end{equation}
 for some $F: X\times \bbR \to X$, where $X$ is a Banach space and $\lambda$ is the parameter. The parameter point $\lambda_0$ combined with $x_0$, the solution of \eqref{eq:fixed-point} at which the set of minimizers $X_\lambda = \{x: F(x, \lambda_0) =0\}$ starts branching is called a bifurcation point. In particular by the branching we understand existence of solutions of \eqref{eq:fixed-point} different from $(x_0, \lambda_0)$ in any small proximity of $(x_0, \lambda_0)$.

 \begin{definition}[Bifurcation point]
Consider a Banach space $X$ with $U \subset X$ an open neighbourhood of $0$, and a nonlinear $C^2$ map, $F: U \times V \rightarrow X$, then a point $(x_0, \lambda_0) \in U \times V$ is a \emph{bifurcation point} of equation $F(x, \lambda) = 0$ if 
\[
(x_0, \lambda_0) \in \text{cl}\left\{ (x, \lambda): F(x, \lambda) = 0, \ x \neq x_0, \ \lambda \neq \lambda_0\right\}.
\]    
\end{definition}

 If the set of $(x, \lambda)$ form a curve in $X \times \bbR$, it's called a \emph{bifurcation curve}.

 \begin{definition}[Bifurcation curve]
Let $(x_0, \lambda_0)$ be a bifurcation point of $F$. If there exists $\varepsilon >0$ and a continuous map $S: (-\varepsilon, \varepsilon) \to X \times \bbR$ such that $S(0) = (x_0, \lambda_0)$ and for all $t \in (\varepsilon, \varepsilon)$:
\[
F(S(t)) = 0,
\]
then $(x(t), \lambda(t)) = S(t)$ is a bifurcation curve.
\end{definition}
The bifurcation theory often relies on the finite-dimensional inverse function theorem and thus sets additional assumptions of the functional $F$. Let $F$ take the form
\begin{equation}
\label{eq:k+g}
F(x, \lambda) =x - \lambda Kx + G(x, \lambda), 
\end{equation}
for some linear functional $K$, and a non-linear part $G$ with a bounded growth. The exact conditions will be made explicit later. To be able to reduce the problem to a finite-dimensional setting, we set an assumption on the linear part $K$ to be a Fredholm operator.
\begin{definition}
    Let $X, Y$ be Banach spaces and $F$ be a linear bounded operator $F: X \to Y$. $F$ is a \emph{Fredholm} operator of index $k$ if $\dim \ker F = d_1 <\infty$, $\dim (Y\backslash im(F)) = d_2 <\infty$ and $k = d_1 - d_2$,
    where $\operatorname{im}(F) =\{F(x): x\in X\}$.
\end{definition}
We will make use of the following properties of the Fredholm operators:
\begin{lemma}[Lemma 4.3.3 \cite{davies2007linear}]
\label{lemma:Fredholm-compact}
Let $X$ be a Banach space and let $A: X\to X$ be a compact operator, then for any $\lambda \neq 0$ operator $\lambda I -  A$ is Fredholm.
\end{lemma}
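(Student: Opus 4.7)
The plan is to prove the lemma by reducing to the case $\lambda=1$ and then establishing the two Fredholm conditions for $T := I - K$ with $K := A/\lambda$ compact.

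First I would show that $\ker T$ is finite-dimensional. Any $x\in \ker T$ satisfies $x=Kx$, so the closed unit ball of $\ker T$ is mapped onto itself by $K$ and is therefore relatively compact in $X$. By Riesz's lemma, a Banach space whose closed unit ball is compact must be finite-dimensional, giving $d_1:=\dim\ker T<\infty$. This step is routine and uses only compactness of $K$ and a standard consequence of Riesz's lemma.

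Next, and this is the main technical step, I would show that $\operatorname{im} T$ is closed. Take $y_n = Tx_n \to y$ in $X$; the goal is to produce $x\in X$ with $Tx=y$. Since $\ker T$ is finite-dimensional, it is complemented in a metric sense, and I may replace $x_n$ by $x_n - p_n$ where $p_n\in\ker T$ is a nearest point, so that $\|x_n\|=\operatorname{dist}(x_n,\ker T)$. I then claim $(\|x_n\|)_n$ is bounded. If not, normalize to $z_n := x_n/\|x_n\|$; then $Tz_n\to 0$, while by compactness $Kz_{n_k}\to w$ along a subsequence. Writing $z_{n_k} = Tz_{n_k} + Kz_{n_k}$ yields $z_{n_k}\to w$ with $Tw=0$, i.e.\ $w\in\ker T$, contradicting $\operatorname{dist}(z_n,\ker T)=1$. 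With $(x_n)$ bounded, compactness of $K$ gives a subsequence $Kx_{n_k}\to z$, hence $x_{n_k} = y_{n_k} + Kx_{n_k}\to y+z$, and continuity of $T$ forces $T(y+z)=y$. The delicate point is the nearest-point reduction: it relies on the fact that $\ker T$ is finite-dimensional (so proximinal), which is precisely what was just proved in Step~1. This is the step I expect to require the most care.

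Finally, I would show that $\operatorname{coker} T = X/\operatorname{im} T$ is finite-dimensional by dualization. Because $K$ is compact, its Banach-space adjoint $K^*\colon X^*\to X^*$ is also compact (Schauder's theorem), so by Step~1 applied to $T^* = I - K^*$, the kernel $\ker T^*$ is finite-dimensional. Since $\operatorname{im} T$ is closed by Step~2, the closed-range theorem identifies $(X/\operatorname{im} T)^* \cong (\operatorname{im} T)^\perp = \ker T^*$, whence $d_2 := \dim(X/\operatorname{im} T) = \dim\ker T^*<\infty$. Combining the three finiteness statements gives that $T$, and therefore $\lambda I - A = \lambda T$, is Fredholm. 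The lemma as stated does not claim index zero, so no further index computation is needed; however if desired, one could additionally invoke the stability of the Fredholm index under the compact perturbation $-K$ of the identity to conclude $\operatorname{ind}(\lambda I - A)=0$.
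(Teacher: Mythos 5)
Your proof is correct and complete. The paper does not prove this lemma itself but cites it as Lemma 4.3.3 in Davies, \emph{Linear Operators and their Spectra}; your argument is the standard Riesz--Schauder proof that a compact perturbation of a (nonzero multiple of the) identity is Fredholm: (i) finite-dimensionality of the kernel via Riesz's lemma since $K$ acts as the identity on $\ker T$, (ii) closedness of the range via the normalization $\|x_n\|=\operatorname{dist}(x_n,\ker T)$ and a compactness/contradiction argument, and (iii) finite-codimension by dualizing with Schauder's theorem and the closed-range identification $(X/\operatorname{im} T)^* \cong \ker T^*$. All three steps are carried out correctly, and you rightly observe that the statement as given requires only Fredholmness, not index zero (the paper derives the index-zero fact separately in Corollary \ref{cor:fredholm-family} via norm-continuity of the family $I-\gamma K$, rather than as part of this lemma). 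One small cosmetic point: in Step 2, after replacing $x_n$ by $x_n - p_n$ you continue to call the new sequence $x_n$; stating that renaming explicitly avoids the momentary ambiguity in the phrase ``so that $\|x_n\|=\operatorname{dist}(x_n,\ker T)$,'' but the argument is unaffected.
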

As it is clear from the above lemma, operator $1/\lambda(\lambda I -  A)  = I - 1/\lambda A$ is also a Fredholm operator of the same index.
\begin{lemma}[Theorem 4.3.11  \cite{davies2007linear}]
\label{lemma:fredholm-continuity}
If $A: X\to Y $ a Fredholm operator then there exists $\varepsilon > 0$
such that every bounded operator $X$ such that $\|A- X\| < \varepsilon$ is also Fredholm
with $\operatorname{index}(X) = \operatorname{index}(A)$.
\end{lemma}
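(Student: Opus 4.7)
The plan is to exploit the direct sum decomposition afforded by the Fredholm structure of $A$ together with the openness of the set of invertible operators in $L(X,Y)$. To avoid the notational clash in the statement I denote the perturbed operator by $B$ and work with $B$ satisfying $\|A-B\|$ small. First, I would use that $\ker A$ is finite-dimensional and $R(A)$ is closed of finite codimension (both contained in the Fredholm hypothesis, the closedness of the range being a standard consequence) to choose closed complements
\[
X = \ker A \oplus X_1, \qquad Y = R(A) \oplus Y_1, \qquad \dim Y_1 < \infty.
\]
The restriction $A|_{X_1}\colon X_1 \to R(A)$ is a bounded linear bijection, and hence a topological isomorphism by the open mapping theorem.

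Next, I would introduce the auxiliary map $\Phi\colon X_1 \times Y_1 \to Y$, $\Phi(x_1,y_1) = Ax_1 + y_1$, which is an isomorphism by the previous step, and for any bounded $B\colon X \to Y$ the analogous map $\Phi_B(x_1,y_1) = Bx_1 + y_1$. Since $\|\Phi - \Phi_B\| \le \|A-B\|$ and the set of isomorphisms is open in $L(X_1\times Y_1, Y)$ by the Neumann series argument, there exists $\varepsilon>0$ such that $\Phi_B$ is an isomorphism whenever $\|A-B\| < \varepsilon$. This is the key quantitative step transferring information from $A$ to its perturbations.

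The final step is to identify the kernel and range of $B$ through the change of coordinates $\Psi_B := \Phi_B^{-1} B\colon X \to X_1 \times Y_1$. A direct calculation shows that on $X_1$ the map $\Psi_B$ acts as $x_1 \mapsto (x_1,0)$, whereas on $\ker A$ it coincides with $\Phi_B^{-1}(B-A)|_{\ker A}$, which I write componentwise as $(S_1, S_2)$ with $S_2\colon \ker A \to Y_1$. Because $\Phi_B$ is an isomorphism and $B = \Phi_B\circ \Psi_B$, a short computation gives
\[
\ker B \cong \ker S_2, \qquad \operatorname{codim} R(B) = \dim Y_1 - \dim R(S_2),
\]
so that $B$ is Fredholm and the rank--nullity theorem applied to $S_2\colon \ker A \to Y_1$ yields
\[
\operatorname{ind}(B) = \dim \ker S_2 - \bigl(\dim Y_1 - \dim R(S_2)\bigr) = \dim \ker A - \dim Y_1 = \operatorname{ind}(A).
\]

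The main obstacle I anticipate is the bookkeeping in the third step: deriving the explicit formulas for $\ker B$ and $\operatorname{codim} R(B)$ requires simultaneously exploiting the splitting $X = \ker A \oplus X_1$ and the isomorphism property of $\Phi_B$, so care is needed to identify which block contributes to which invariant. Once the structure of $\Psi_B$ relative to the two decompositions is written down, everything else reduces to a finite-dimensional linear-algebra identity, and no analytic tools beyond the open mapping theorem and the Neumann series are required.
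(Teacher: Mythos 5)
The paper does not prove this lemma: it is quoted verbatim from Davies' \emph{Linear Operators and Their Spectra} (Theorem 4.3.11) and used as a black box, so there is no internal proof to compare against. Your argument is, however, correct and is the standard ``splitting'' proof of stability of the Fredholm index: choose closed complements $X=\ker A\oplus X_1$, $Y=R(A)\oplus Y_1$, observe that $\Phi(x_1,y_1)=Ax_1+y_1$ is an isomorphism $X_1\times Y_1\to Y$, use openness of the group of isomorphisms to get that $\Phi_B(x_1,y_1)=Bx_1+y_1$ stays invertible for $\|A-B\|$ small, and then read off $\ker B$ and $\operatorname{coker}B$ from the finite-dimensional block $S_2=\pi_{Y_1}\circ\Phi_B^{-1}(B-A)|_{\ker A}$. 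Your bookkeeping in the last step checks out: writing $x=x_0+x_1$ one finds $\Psi_B(x)=(S_1x_0+x_1,\,S_2x_0)$, whence $\ker B=\{x_0-S_1x_0:\ x_0\in\ker S_2\}\cong\ker S_2$ and $R(\Psi_B)=X_1\times R(S_2)$, so $R(B)=\Phi_B(X_1\times R(S_2))$ is closed (the image of a closed set under an isomorphism) with $\operatorname{codim}R(B)=\dim Y_1-\dim R(S_2)$; rank--nullity on $S_2\colon\ker A\to Y_1$ then gives $\operatorname{ind}(B)=\operatorname{ind}(A)$. The only point worth spelling out in a polished version is the closedness of $R(B)$, which you state implicitly through ``$B$ is Fredholm'' but which is what licenses calling $B$ Fredholm in the first place; the identification $R(B)=\Phi_B(X_1\times R(S_2))$ supplies it. An alternative, arguably shorter route taken in some texts is via Atkinson's theorem: $A$ is Fredholm iff its image in the Calkin algebra $B(X)/K(X)$ is invertible, invertibility is open in a Banach algebra, and the index is locally constant because it is a continuous integer-valued function on the Fredholm operators; your approach has the advantage of being completely self-contained and giving the quantitative estimate $\varepsilon$ explicitly in terms of $\|(A|_{X_1})^{-1}\|$.
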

\begin{corollary}
\label{cor:fredholm-family}
    Let $(A_t)_{t\in [0,1]}$ be a norm-continuous family of Fredholm operators $A_t: X\to Y$. Let $A_0$ be a Fredholm operator of index $k$, then $\operatorname{index}(A_t) = k$ for all $t\in [0,1]$. 
\end{corollary}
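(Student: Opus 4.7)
The plan is a standard connectedness argument: show that the subset of $[0,1]$ on which the index equals $k$ is both open and closed, hence equal to all of $[0,1]$.

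Define the set
\[
S := \set*{t \in [0,1] : A_t \text{ is Fredholm with } \operatorname{index}(A_t) = k}.
\]
By hypothesis $0 \in S$, so $S$ is non-empty. I would first verify that $S$ is open in $[0,1]$. Fix $t_0 \in S$. Since $A_{t_0}$ is Fredholm, Lemma~\ref{lemma:fredholm-continuity} provides $\varepsilon > 0$ such that every bounded operator $B$ with $\|B - A_{t_0}\| < \varepsilon$ is Fredholm of the same index $k$. By the norm-continuity of the family $t \mapsto A_t$, there exists $\delta > 0$ such that $|t - t_0| < \delta$ implies $\|A_t - A_{t_0}\| < \varepsilon$, whence $\operatorname{index}(A_t) = k$ for all such $t$. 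Thus $S$ contains a relative neighborhood of $t_0$ in $[0,1]$.

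Next I would show $S$ is closed in $[0,1]$. Let $(t_n)_{n \in \bbN} \subset S$ converge to some $t_* \in [0,1]$. By assumption the limit operator $A_{t_*}$ is Fredholm; let $m := \operatorname{index}(A_{t_*})$. Applying Lemma~\ref{lemma:fredholm-continuity} to $A_{t_*}$, there exists $\varepsilon' > 0$ such that every bounded operator within $\varepsilon'$ of $A_{t_*}$ is Fredholm of index $m$. By norm-continuity, $\|A_{t_n} - A_{t_*}\| < \varepsilon'$ for $n$ large, which forces $\operatorname{index}(A_{t_n}) = m$. But $\operatorname{index}(A_{t_n}) = k$ by definition of $S$, so $m = k$ and $t_* \in S$.

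Since $[0,1]$ is connected and $S$ is a non-empty clopen subset, we conclude $S = [0,1]$, which is the desired claim. I do not anticipate any genuine obstacle: the entire content is the local constancy statement of Lemma~\ref{lemma:fredholm-continuity}, and the only small subtlety is to invoke it at both endpoints of the path (to get openness) and at limit points (to get closedness) — for the latter it is essential that the hypothesis already provides Fredholmness of every $A_t$, so Lemma~\ref{lemma:fredholm-continuity} applies at $A_{t_*}$ directly.
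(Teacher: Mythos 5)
Your proof is correct and is essentially the same argument as the paper's: both reduce to the observation that Lemma~\ref{lemma:fredholm-continuity} together with norm-continuity makes $t\mapsto\operatorname{index}(A_t)$ locally constant, and then exploit connectedness of $[0,1]$. The paper phrases this as a contradiction (a non-constant integer-valued map on $[0,1]$ must have a discontinuity point, but local constancy forbids one), while you phrase it as the non-empty clopen set $S$ being all of $[0,1]$; these are two standard wordings of the identical idea, with your version spelling out the openness and closedness steps a little more explicitly.
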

\begin{proof}
    Consider the map $I: [0,1] \to \bbN_0$ defined as $I(t):= \operatorname{index}(A_t)$. If there exists $t$ such that $I(t) \neq k$, there exist at least one point of discontinuity of $I$. Take such a discontinuity point $t_0$, then by Lemma \ref{lemma:fredholm-continuity} there exists $\varepsilon >0$ such that map $I$ is constant on $B(A_{t_0}, \varepsilon)$. As the family $A_t$ is norm-continuous, there exists $\delta > 0$ such that for all $t \in (t_0 -\delta, t_0+\delta): \ \|A_t - A_{t_0}\| < \varepsilon$, so the map $I$ is constant on $(t_0 -\delta, t_0+\delta)$, contradiction. 
\end{proof}
We are now ready to formulate the sufficient condition of a bifurcation curve around $(x_0, \lambda_0)$, which we apply afterwards. 
\begin{theorem}[Existence of a bifurcation curve {\cite[Theorem 28.3]{deimling2013nonlinear}}]
\label{th:bifurcation-curve}
Let $X$ be a real Banach space and $F: U \times V \to X $ be an operator of form \eqref{eq:k+g}, where $U \times V \subseteq X\times \bbR$ is a neighborhood of $(0, \lambda_0)$. Let $K$ be a bounded linear operator and $G: U \times V \to X$ is such that $G_x, G_\lambda, G_{x,\lambda}$ are continuous on $U \times V$. In addition suppose that 
    \begin{itemize}
        \item $I - \lambda_0 K$ is a Fredholm operator of index zero and $1/\lambda_0$ is a simple characteristic value of $K$ with the corresponding eigenvector $v_0$.
        \item $G(x, \lambda) =o(\|x\|)$ as $x\to 0$ uniformly in $\lambda$ on $U \times V$,
    \end{itemize}
    then $(0, \lambda_0)$ is a bifurcation point of $F(x, \lambda)$. In addition, there exists $\delta > 0$ and a bifurcation curve $S: (-\delta, \delta) \to X \times \bbR$, $S(t) = (x(t), \lambda(t))$ such that $x(t), \lambda(t)$ take the form
    \[
    x(t) = tv_0 + tz(t), \qquad \lambda(t) = \lambda_0 + \mu(t)
    \]
    where $\left<z(t), v_0\right> =0 $ for all $|t| <\delta$ and $z, \mu$ are continuous functions satisfying $z(0) =0, \ \mu(0) = 0$. %
\end{theorem}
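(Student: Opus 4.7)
The plan is a Lyapunov--Schmidt reduction followed by the implicit function theorem. First, I would use that $L := I - \lambda_0 K$ is Fredholm of index zero with simple one-dimensional kernel $N = \operatorname{span}\{v_0\}$ to split $X = N \oplus R$, where $R = \operatorname{range} L$ is closed of codimension one. Let $P: X \to N$ and $Q = I - P: X \to R$ denote the associated continuous projections; in the Hilbertian setting relevant to the paper, $\ker P$ is simply the orthogonal complement of $v_0$, which encodes the condition $\langle z, v_0\rangle = 0$.

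\textbf{Rescaling and IFT.} Next, I would parametrize candidate solutions via $x = t(v_0 + w)$ with $w \in \ker P$, and introduce the rescaled map
\[
\tilde F(t, w, \lambda) := \begin{cases}\tfrac{1}{t} F\bigl(t(v_0+w), \lambda\bigr), & t \neq 0, \\ (I - \lambda K)(v_0 + w), & t = 0.\end{cases}
\]
The hypothesis $G(x, \lambda) = o(\|x\|)$ uniformly in $\lambda$ forces $G(0, \lambda) \equiv 0$ and $G_x(0, \lambda) \equiv 0$; writing $G(tu, \lambda)/t = \int_0^1 G_x(stu, \lambda)(u)\, ds$ then shows $\tilde F$ extends continuously across $t = 0$. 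Zeros of $F$ in a punctured neighborhood of $(0, \lambda_0)$ correspond bijectively to zeros of $\tilde F$ with $t \neq 0$. At $(0, 0, \lambda_0)$ one has $\tilde F = L v_0 = 0$, and using $K v_0 = v_0/\lambda_0$ the partial derivative in $(w, \lambda) \in \ker P \times \bbR$ acts as
\[
D_{(w,\lambda)} \tilde F(0, 0, \lambda_0)(h, \mu) = L h - \frac{\mu}{\lambda_0} v_0.
\]
Since $L|_{\ker P}: \ker P \to R$ is an isomorphism (simple characteristic value and zero index) and the second summand spans $N$, this linearization is an isomorphism $\ker P \times \bbR \to R \oplus N = X$. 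The implicit function theorem then yields continuous functions $w(t)$ and $\lambda(t) = \lambda_0 + \mu(t)$ on some $(-\delta, \delta)$ with $w(0) = 0,\ \mu(0) = 0$ satisfying $\tilde F(t, w(t), \lambda(t)) = 0$; setting $z(t) := w(t)$ and $x(t) := t(v_0 + z(t)) = tv_0 + tz(t)$ produces the advertised bifurcation curve, and $x(t) \neq 0$ for $t \neq 0$ confirms that $(0, \lambda_0)$ is indeed a bifurcation point.

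\textbf{Main obstacle.} The principal technical difficulty is the joint $C^1$-regularity of $\tilde F$ at $t = 0$, since na\"ive differentiation of the quotient $G(t(v_0+w), \lambda)/t$ in $t$ ostensibly requires $G_{xx}$, which is not assumed. The remedy is to use the integral representation $G(tu,\lambda)/t = \int_0^1 G_x(stu, \lambda)(u)\, ds$ together with the vanishing identities $G_x(0, \lambda) \equiv 0$, $G_\lambda(0, \lambda) \equiv 0$, and $G_{x\lambda}(0, \lambda) \equiv 0$ (the last by commuting $\partial_\lambda$ past $G_x$ which vanishes at $x = 0$), and to extract joint continuous differentiability of this integral from the assumed continuity of $G_x$, $G_\lambda$, and $G_{x\lambda}$ alone. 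Once this regularity is established, invertibility of the linearization above together with local uniqueness in the IFT completes the argument.
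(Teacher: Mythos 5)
The paper does not prove this theorem; it is quoted as \cite[Theorem 28.3]{deimling2013nonlinear} and used as a black box. So there is no in-paper proof to compare against, and what you have done is reconstruct the textbook argument. Your Lyapunov--Schmidt reduction followed by the implicit function theorem applied to the rescaled map $\tilde F(t,w,\lambda)$ is exactly the standard Crandall--Rabinowitz route that Deimling follows, so your reconstruction is both correct in substance and faithful in spirit.

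Three small remarks on the write-up. First, identifying $\ker P$ with $v_0^{\perp}$ is the Hilbertian specialization relevant to the paper's use on $L_2(\bbS^{n-1})$; in the general Banach setting one instead fixes a continuous functional $l$ with $l(v_0)=1$ and takes $\ker l$ as the complement, which is what the condition $\skp{z(t),v_0}=0$ abstracts. Second, the direct-sum split $X = N \oplus R$ genuinely uses simplicity of the characteristic value (equivalently $v_0 \notin R$): Fredholmness of index zero with one-dimensional kernel alone does not preclude $N \subset R$ (think of a nilpotent Jordan block), so it is worth making explicit that simplicity is doing this work, as you do invoke it. Third, in the ``main obstacle'' paragraph you aim for ``joint continuous differentiability'' of $\tilde F$, but the $\partial_t$-derivative of the quotient $G(t(v_0+w),\lambda)/t$ would need $G_{xx}$ and is not available. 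What you actually need --- and what your integral representation and the vanishing of $G_x(0,\lambda)$, $G_\lambda(0,\lambda)$, $G_{x\lambda}(0,\lambda)$ do deliver --- is only continuity of $\tilde F$ together with continuity of the partial Fr\'echet derivative $D_{(w,\lambda)}\tilde F$ in all three variables. The implicit function theorem in this partial-$C^1$ form (also found in Deimling) then returns a continuous, not necessarily differentiable, curve $(z(t),\mu(t))$, which matches the regularity claimed in the statement. With that adjustment to the final paragraph, the proof is complete and correct.
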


\subsection{Main result}
In the previous section we've established equivalence between the set of solutions of the stationary McKean-Vlasov equation and the set of zeros of the Gibbs map. In this section we study bifurcations of the Gibbs map on an $n$-dimensional unit sphere around the uniform measure $\bar \rho$. First of all note that because of the symmetry of the problem the uniform measure is a solution of $F(x, \gamma)=0$ for any $\gamma\in\bbR_+$.
\begin{lemma}
\label{lemma:barrho}
Let $W$ satisfy Assumption \ref{assum:sym-kernel}, then for any $\gamma\in\bbR_+$ the pair $(\bar\rho, \gamma)$ is a zero of the Gibbs map, i.e.: 
\[
\bar\rho - \frac{1}{Z(\gamma, \bar \rho)}e^{-\gamma W*\bar \rho} = 0.
\] 
\end{lemma}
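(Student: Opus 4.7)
The plan is to verify directly that for the uniform state $\bar\rho$ on the sphere, the right-hand side $\frac{1}{Z(\gamma,\bar\rho)}e^{-\gamma W*\bar\rho}$ collapses to the constant $\bar\rho$. The key ingredient is that, under Assumption~\ref{assum:sym-kernel}, the function $x\mapsto (W*\bar\rho)(x)$ is constant on $\bbS^{n-1}$, after which the normalization makes the identity immediate.

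First I would show that $W*\bar\rho$ is constant. Writing $\bar\rho\equiv 1/\omega_n$ as density with respect to the (unnormalized) spherical measure $\sigma$, one has
\[
(W*\bar\rho)(x)=\frac{1}{\omega_n}\int_{\bbS^{n-1}}W(\skp{x,y})\,d\sigma(y).
\]
For any rotation $R\in SO(n)$, the change of variables $y=Ry'$ and the $SO(n)$-invariance of $\sigma$ give
\[
(W*\bar\rho)(Rx)=\frac{1}{\omega_n}\int_{\bbS^{n-1}}W(\skp{Rx,y})\,d\sigma(y)=\frac{1}{\omega_n}\int_{\bbS^{n-1}}W(\skp{x,R^{\tran}y})\,d\sigma(y)=(W*\bar\rho)(x).
\]
Since $SO(n)$ acts transitively on $\bbS^{n-1}$, the function $W*\bar\rho$ is constant; denote its value by $c:=\frac{1}{\omega_n}\int_{\bbS^{n-1}}W(\skp{x_0,y})\,d\sigma(y)$ for any fixed $x_0\in\bbS^{n-1}$. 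Note that $c$ is well-defined and finite because $W$ is assumed continuous (hence integrable) on the compact set $[-1,1]$.

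With $W*\bar\rho\equiv c$, the integrand in the normalization is constant, so
\[
Z(\gamma,\bar\rho)=\int_{\bbS^{n-1}}e^{-\gamma W*\bar\rho}\,dm=e^{-\gamma c}m(\bbS^{n-1})=e^{-\gamma c},
\]
since $m$ is the normalized volume measure. Therefore
\[
\frac{1}{Z(\gamma,\bar\rho)}e^{-\gamma(W*\bar\rho)(x)}=\frac{e^{-\gamma c}}{e^{-\gamma c}}=1=\bar\rho(x)
\]
(as density with respect to $m$), which is exactly $F(\gamma,\bar\rho)=0$. There is no real obstacle here: the statement is essentially a consequence of the rotational symmetry of $\sigma$ together with Assumption~\ref{assum:sym-kernel}, and the only thing to be careful about is bookkeeping of the normalization convention between $\sigma$ and $m$.
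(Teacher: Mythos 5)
Your proof is correct and follows essentially the same approach as the paper: both argue that $W*\bar\rho$ is constant (the paper states this tersely as a consequence of $\bar\rho$ being constant and $W$ rotationally symmetric, while you spell out the $SO(n)$-invariance and transitivity argument) and then use the normalization $Z(\gamma,\bar\rho)$ to conclude. The only difference is bookkeeping: the paper expresses $\bar\rho$ as $1/\omega_n$ with respect to $\sigma$, whereas you take $\bar\rho\equiv 1$ with respect to the normalized measure $m$, which you correctly flag.
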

\begin{proof}
    Note that by definition for any $x, y \in \bbS^{n-1}$ it holds that $\bar \rho(x) = \bar \rho(y)$, implying that for any interaction kernel of form $W(x, y) = W(\left<x, y\right>)$ the following equality is satisfied $e^{-\gamma (W*\bar \rho)(x)} = e^{-\gamma (W*\bar \rho)(y)}$. Finally note that  
    \[
    \int_{\bbS^{n-1}}\frac{1}{Z(\gamma, \bar \rho)}e^{-\gamma W*\bar \rho}d\sigma = \frac{e^{-\gamma (W*\bar \rho)(x_0)}\sigma(\bbS^{n-1})}{Z(\gamma, \bar \rho)} =1 = \bar{\rho}(x_0)\sigma(\bbS^{n-1}).
    \]
    The statement of the lemma immediately follows.
\end{proof}
In fact, due to the symmetric structure of the sphere, all the minimizers of the free energy functional can be shown to admit certain symmetries. Similar result has been established in~\cite{BaernsteinTaylor1976} using symmetrization argument, see also~\cite[Chapter 4]{hayman1994multivalent}.

Define $\hat F: L_2(\bbS^{n-1})\times \bbR_+ \to L_2(\bbS^{n-1})$ as a shifted version of $F$:
\begin{equation}
   \label{eq:hatF} 
\hat F(u, \gamma) = F(\bar \rho + u, \gamma) = \bar \rho + u - \frac{1}{Z(\gamma, u)} e^{-\gamma W* (\bar \rho + u)},
\end{equation}
where $Z( \gamma, u) = \int_{\bbS^{n-1}} e^{-\gamma W* (\bar \rho + u)}d\sigma $. Then the following trivial representation of $\hat F$ holds
\[
\hat F(u, \gamma) = D_u \hat F(0, \gamma) [u] + \left(\hat F(u, \gamma) - D_u \hat F(0, \gamma) [u]\right),
\]
where $D_u \hat F(\bar\rho, \gamma)$ is the Frechet derivative of $\hat F$ and $\left(\hat F(u, \gamma) -  D_u \hat F(0, \gamma) [u]\right)$ is the non-linear term. Note that for any bifurcation curve $\hat S(t) = (\hat x(t), \hat \gamma(t))$ of the shifted functional $\hat F$ there exists corresponding bifurcation curve of $F$ which takes the form $S(t) = (\bar\rho + \hat x(t), \hat \gamma(t))$, and thus it is sufficient to establish bifurcation points of $\hat F$.

Recall that for a rotationally symmetric interaction kernel according to Corollary \ref{cor:convolution-theorem} for any $u\in L_2(\bbS^{n-1})$ it holds that
\[
\left<W*u, Y_{k, p}\right>_{L_2(\bbS^{n-1})} =  \omega_n \hat W_k\left<u, Y_{k, p}\right>_{L_2(\bbS^{n-1})},
\]
where $(\hat W_k)_{k\in \bbN}$ is the spherical harmonics decomposition of $W$. Using this property we are able to prove the following theorem.
\begin{theorem}
\label{th:bifurcations}
Let $W \in C_b(\bbS^{n-1} \times \bbS^{n-1})$ satisfy Assumption \ref{assum:non-stable-kernel}. If $\hat W_k < 0$ is a unique component of the spherical harmonics decomposition $(\hat W_k)_{k\in \bbN}$, then $(0, \gamma_k)$ with $\gamma_k =- \frac{1}{\hat W_k}$ is a bifurcation point of $\hat F$. Moreover there exists at least one bifurcation curve $\hat S(t) = (\hat x(t), \gamma(t))$ of form
\[
\hat x(t) = tY_{k, 0} + tz(t), \qquad \gamma(t) = \gamma_k + \mu(t),
\]
where $z, \mu$ are continuous functions satisfying $z(0)=0, \ \mu(0)= 0$ and $\hat x(t) \in L_2^s(\bbS^{n-1})$ for all $t \in (-\delta, \delta)$, with $L^s_2(\bbS^{n-1})$ from Definition~\ref{def:L2s}.
\end{theorem}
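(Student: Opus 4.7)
The plan is to apply the bifurcation criterion of Theorem~\ref{th:bifurcation-curve} to the restriction of $\hat F$ to the closed subspace $L_2^{s,0}(\bbS^{n-1}) := \{u \in L_2^s(\bbS^{n-1}) : \int u\,d\sigma = 0\}$. Working in $L_2^{s,0}$ rather than in the full space $L_2(\bbS^{n-1})$ is essential because the $k$-th spherical-harmonic subspace $\calH_k$ has dimension strictly greater than one whenever $n\geq 3$, so the characteristic value of the linearization of $\hat F$ can never be simple on all of $L_2(\bbS^{n-1})$; on $L_2^{s,0}$, however, each eigenspace collapses to the one-dimensional span of $Y_{l,0}$, which is exactly the structure that Theorem~\ref{th:bifurcation-curve} requires.

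First I would verify the invariance of $L_2^{s,0}$ under $\hat F(\cdot,\gamma)$. If $u$ depends only on the azimuthal coordinate $\theta_{n-1}$, then by rotational symmetry of $W(\langle\cdot,\cdot\rangle)$ together with Corollary~\ref{cor:convolution-theorem}, the convolution $W\ast u$ inherits this property, and hence so do $\exp(-\gamma W\ast(\bar\rho+u))$ and its re-normalization; the mean-zero property is preserved by $\int \hat F(u,\gamma)\,d\sigma = (1 + \int u\,d\sigma) - 1 = 0$. Next I would compute the Fréchet derivative at $u=0$ by the chain and quotient rules. Two simplifications are crucial: $W\ast\bar\rho$ is constant by rotational invariance, and the term produced by differentiating the normalization $Z(\gamma,u)$ carries a factor $\int v\,d\sigma$ which vanishes on $L_2^{s,0}$. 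The outcome is the clean expression
\[
D_u\hat F(0,\gamma)[v] \;=\; v - \gamma K v, \qquad K v := -\bar\rho\,(W\ast v),
\]
which, by Corollary~\ref{cor:convolution-theorem}, diagonalises in the spherical-harmonic basis as $K Y_{l,0} = -\hat W_l\,Y_{l,0}$ for every $l\geq 1$. The uniqueness assumption on $\hat W_k$ then makes $-\hat W_k$ a simple eigenvalue of $K$ on $L_2^{s,0}$, with eigenvector $Y_{k,0}$, and since $\hat W_k<0$ the corresponding characteristic value $1/\gamma_k = -\hat W_k$ is positive, yielding $\gamma_k = -1/\hat W_k > 0$.

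For the Fredholm hypothesis, $T_W : v\mapsto W\ast v$ is Hilbert--Schmidt on $L_2(\bbS^{n-1})$ because $W \in C_b(\bbS^{n-1}\times\bbS^{n-1})\subset L_2(\bbS^{n-1}\times\bbS^{n-1})$; hence $K$ is compact on $L_2$ and its restriction to the closed subspace $L_2^{s,0}$ remains compact. Lemma~\ref{lemma:Fredholm-compact} then provides that $\tfrac1{\gamma_k}I - K$, and therefore $I - \gamma_k K$, is Fredholm, and Corollary~\ref{cor:fredholm-family} applied to the norm-continuous homotopy $t\mapsto I - tK$, every member of which is Fredholm and which equals the identity (index $0$) at $t=0$, forces the index to remain $0$ throughout. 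The nonlinear remainder $G(u,\gamma) := \hat F(u,\gamma) - D_u\hat F(0,\gamma)[u]$ is jointly smooth in $(u,\gamma)$ because the Gibbs map is a composition of the exponential, convolution with the bounded kernel $W$, and the strictly positive normalization $u\mapsto Z(\gamma,u)^{-1}$; the standard Taylor estimate for a $C^2$ map vanishing to first order yields $G(u,\gamma)=o(\|u\|)$ as $u\to 0$ uniformly in $\gamma$ near $\gamma_k$, together with continuity of $G_u$, $G_\gamma$ and $G_{u,\gamma}$ on that neighbourhood.

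With every hypothesis of Theorem~\ref{th:bifurcation-curve} verified on $X = L_2^{s,0}$, the theorem produces $\delta>0$ and a continuous branch $\hat S(t) = (\hat x(t),\gamma(t))$ of zeros of $\hat F$ of the form $\hat x(t) = t Y_{k,0} + t z(t)$ and $\gamma(t) = \gamma_k + \mu(t)$, with $z(0)=0$, $\mu(0)=0$ and $\langle z(t), Y_{k,0}\rangle_{L_2} = 0$, which is exactly the claimed form. Because every step is carried out inside $L_2^{s,0}\subset L_2^s$, the inclusion $\hat x(t)\in L_2^s$ holds for free. The main technical obstacles I anticipate are (i) the invariance check for $L_2^{s,0}$ under $\hat F$, without which the kernel of the linearization is too large and the simple-characteristic-value hypothesis of Theorem~\ref{th:bifurcation-curve} fails; and (ii) the careful bookkeeping of the two contributions in the Fréchet derivative, arising respectively from differentiating the convolution $W\ast$ and from re-normalizing via $Z(\gamma,u)$, in order to recover the clean spectral form of $D_u\hat F(0,\gamma)$.
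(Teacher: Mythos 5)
Your proof is correct and takes essentially the same route as the paper's: restrict the Gibbs map $\hat F$ to the rotationally symmetric sector so that the linearization diagonalizes on $Y_{l,0}$ with a simple eigenvalue at level $k$, verify compactness and Fredholm index zero via the homotopy to the identity, establish regularity of the nonlinear remainder, and invoke Theorem~\ref{th:bifurcation-curve}. The one place you depart from the paper is the choice of ambient subspace: you pass to $L_2^{s,0}$ (mean-zero symmetric functions), which kills the normalization contribution $\bar\rho^2\int W*w\,d\sigma$ and gives the cleaner linearization $Kv=-\bar\rho\,W*v$, whereas the paper works on all of $L_2^s$ and keeps that extra term in $K$ (so that $KY_{0,0}=0$), relying on $-\hat W_k\ne 0$ so the characteristic value remains simple despite the nontrivial kernel of $K$. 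Both choices are valid; yours slightly streamlines the spectral computation, while the paper's avoids needing to check that $\hat F$ preserves the mean-zero constraint -- though you verify this cleanly. One small caveat: your phrasing that restricting to $L_2^{s,0}$ is ``essential'' is a bit too strong -- what is essential is restricting from $L_2(\bbS^{n-1})$ to the symmetric sector $L_2^s$ so that the eigenspaces become one-dimensional; the further restriction to mean-zero functions is a convenience, not a necessity, as the paper's argument shows.
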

In order to prove the above theorem we require the following technical lemmas, for which we provide the proofs in the Appendix \ref{appendix:bifurcations}. 
\begin{lemma}
\label{lemma:G}
    Let $W \in C_b(\bbS^{n-1} \times \bbS^{n-1}) $ satisfy Assumption \ref{assum:non-stable-kernel}. Let $G:L_2(\bbS^{n-1}) \times \bbR_+$ be the non-linear operator as defined in \eqref{eq:G},
    then for any $\gamma\in\bbR_+$ there exists a neighborhood of $(0, \gamma)$: $U \times V \subseteq L_2(\bbS^{n-1})\times \bbR$, such that $G_u, G_\gamma, G_{u\gamma}$ are continuous on $U \times V$. Moreover, $G(w, \gamma) =o(\|w\|_{L_2})$ as $w\to 0$ uniformly in $\gamma$ on $U \times V$.
\end{lemma}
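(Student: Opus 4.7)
The plan is to prove both claims as consequences of joint $C^\infty$-smoothness of $\hat F$ as a map from a neighbourhood of $(0,\gamma_0)$ in $L_2(\bbS^{n-1}) \times \bbR_+$ into $L_2(\bbS^{n-1})$, for any fixed $\gamma_0 \in \bbR_+$. First I would exploit that $\bar\rho$ is constant, so $W * \bar\rho$ is a constant function on $\bbS^{n-1}$; the factor $e^{-\gamma W*\bar\rho}$ then cancels between numerator and denominator of~\eqref{eq:hatF}, giving
\[
\hat F(u,\gamma) = \bar\rho + u - \frac{e^{-\gamma W*u}}{\int_{\bbS^{n-1}} e^{-\gamma W*u}\,d\sigma}.
\]
The crucial observation is that convolution with the bounded kernel $W$ promotes $L_2$-regularity to $L_\infty$-regularity: by $W \in C_b$ and Cauchy--Schwarz one has $\|W*u\|_{L_\infty} \leq \|W\|_{L_\infty}\,\sigma(\bbS^{n-1})^{1/2}\,\|u\|_{L_2}$, so the map $(u,\gamma)\mapsto -\gamma\, W*u$ is bilinear continuous from $L_2(\bbS^{n-1}) \times \bbR$ into $L_\infty(\bbS^{n-1})$.

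Next I would chain known smooth operations. The exponential $v\mapsto e^v$ is real-analytic from $L_\infty$ to $L_\infty$, integration $L_\infty \to \bbR$ is linear bounded, and scalar inversion is smooth on $(0,\infty)$. For $u$ in a small $L_2$-ball $U$ and $\gamma$ in a compact subinterval $V \subset \bbR_+$ containing $\gamma_0$, the estimates already used in the proof of Lemma~\ref{lemma:gibbs-H1} provide a uniform upper bound on $\|e^{-\gamma W*u}\|_{L_\infty}$ and a uniform strictly positive lower bound on the normalizer $\int e^{-\gamma W*u}\,d\sigma$, so the reciprocal is smooth on the relevant range. Since $L_\infty(\bbS^{n-1}) \hookrightarrow L_2(\bbS^{n-1})$ on this finite-measure space, the composition $\hat F$ is jointly $C^\infty$ from $U\times V$ into $L_2(\bbS^{n-1})$, with all partial derivatives up to any fixed order uniformly bounded in operator norm on $U\times V$.

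Both conclusions of the lemma then follow quickly. Lemma~\ref{lemma:barrho} yields $\hat F(0,\gamma)=0$ for every $\gamma \in V$, so by the definition of $G$ and Taylor's theorem with integral remainder,
\[
G(u,\gamma) = \hat F(u,\gamma) - D_u\hat F(0,\gamma)[u] = \int_0^1 (1-s)\,D_u^2\hat F(su,\gamma)[u,u]\,ds,
\]
and the uniform bound on $\|D_u^2\hat F(\cdot,\cdot)\|$ on $U\times V$ gives $\|G(u,\gamma)\|_{L_2} \leq C(U,V)\|u\|_{L_2}^2$, which is the required $o(\|u\|_{L_2})$-bound uniform in $\gamma \in V$. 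Continuity of $G_u$, $G_\gamma$ and the mixed $G_{u\gamma}$ on $U\times V$ reduces to the same statement for $\hat F$, which is part of the joint smoothness, because subtraction of the linear-in-$u$, smooth-in-$\gamma$ map $(u,\gamma)\mapsto D_u\hat F(0,\gamma)[u]$ preserves regularity in both arguments. The main obstacle is precisely the $L_2 \to L_\infty$ step: without the boundedness $W\in C_b$ to gain regularity from convolution, composition with the exponential would not even map into $L_2$, let alone smoothly; once that regularisation is installed, everything else is routine composition of analytic operations on Banach spaces.
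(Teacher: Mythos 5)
Your proof is correct, but it follows a genuinely different and more structural route than the paper's. The paper proceeds by brute force: it writes $A_\gamma(w)=B_\gamma(w)C_\gamma(w)$ with $B_\gamma=1/Z(\gamma,w)$ and $C_\gamma=e^{-\gamma W*(\bar\rho+w)}$, Taylor-expands each factor to second order with Lagrange remainders, bounds the remainders using $\|W*w\|_\infty\lesssim\|w\|_{L_2}$, and then separately writes out explicit formulas for $D_wG$, $D_\gamma G$ and $D_{w\gamma}G$ and proves continuity of the constituent operators $Q_1,Q_2,Q_3$ by direct estimate. You instead observe that after cancelling the constant $e^{-\gamma W*\bar\rho}$, the map $\hat F$ is a composition of a bounded bilinear map $L_2\times\bbR\to L_\infty$ (thanks to the $W\in C_b$ regularization), the entire analytic map $v\mapsto e^v$ on the Banach algebra $L_\infty$, bounded linear integration, smooth scalar inversion away from zero, and the bounded embedding $L_\infty\hookrightarrow L_2$; joint $C^\infty$ (indeed analytic) regularity of $\hat F$ on $U\times V$ then follows from the chain rule in Banach spaces, and both conclusions of the lemma — continuity of the three partial derivatives of $G=\hat F-D_u\hat F(0,\cdot)[\,\cdot\,]$, and the quadratic smallness $\|G(u,\gamma)\|_{L_2}\le C\|u\|_{L_2}^2$ via the integral-form Taylor remainder and $\hat F(0,\gamma)=0$ — drop out at once. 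Both proofs hinge on the same essential point, the $L_2\to L_\infty$ gain of regularity from convolution with a bounded kernel, but your version replaces the paper's page of explicit expansions and operator estimates with a clean appeal to standard facts about analytic maps on Banach spaces; the trade-off is that the paper's proof is self-contained and elementary, while yours is shorter and applies verbatim to any compact manifold in place of $\bbS^{n-1}$, at the cost of invoking the analyticity of the exponential on $L_\infty$ and the Banach-space Taylor theorem as known background. One small imprecision to note: you ask for $V$ to be a compact subinterval containing $\gamma_0$, whereas the lemma asks for an open neighbourhood; your uniform bounds on a compact interval of course restrict to any open subinterval containing $\gamma_0$, so this is harmless.
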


\begin{lemma}
\label{lemma:compactness}
    Let $(\calM, \dist)$ be a compact smooth Riemannian manifold, let $\sigma$ be the uniform measure on $\calM$ and let $W\in C_b(\calM\times\calM, \bbR)$, then the operators $A, B: L_2(\calM) \to L_2(\calM)$
    \[
    Au:= W * u \quad \text{ and } \quad Bu:= \sigma \int_\calM W * ud\sigma
    \]
    are compact.
\end{lemma}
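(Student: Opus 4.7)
My plan is to treat the two operators separately: $A$ is a standard integral operator with a continuous (hence $L_2$) kernel on a finite-measure space, so it is compact by Hilbert--Schmidt (or equivalently by Arzelà--Ascoli); while $B$ is manifestly of finite rank (in fact rank at most one), hence automatically compact. Both arguments rely only on continuity of $W$ and finiteness of $\sigma(\calM)$, both of which follow from the standing hypotheses.

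For $A$, I would first show the mapping property $A: L_2(\calM) \to C(\calM)$. Using Cauchy--Schwarz,
\[
|(Au)(x) - (Au)(x')| \leq \left(\int_\calM |W(x,y) - W(x',y)|^2 \, d\sigma(y)\right)^{1/2} \|u\|_{L_2(\calM)},
\]
and since $W$ is uniformly continuous on the compact product $\calM \times \calM$, the right-hand side tends to $0$ as $\dist(x,x') \to 0$ uniformly in $u$ from the closed unit ball of $L_2(\calM)$. Combined with the uniform bound $\|Au\|_\infty \leq \|W\|_\infty \sigma(\calM)^{1/2} \|u\|_{L_2}$, the Arzelà--Ascoli theorem gives that $A$ maps the closed unit ball of $L_2(\calM)$ into a relatively compact subset of $C(\calM)$, which embeds continuously into $L_2(\calM)$ because $\sigma(\calM) < \infty$. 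Hence $A$ is a compact operator from $L_2(\calM)$ to $L_2(\calM)$. An equally short alternative would be to observe that $W \in C(\calM \times \calM) \subset L_2(\calM \times \calM)$, so $A$ is Hilbert--Schmidt and thereby compact.

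For $B$, I would apply Fubini to the bounded symmetric kernel $W$ and rewrite
\[
\int_\calM (W * u)(x) \, d\sigma(x) = \int_\calM u(y) \left(\int_\calM W(x,y) \, d\sigma(x)\right) d\sigma(y) =: \varphi(u),
\]
so $\varphi$ is a bounded linear functional on $L_2(\calM)$ (with $\|\varphi\| \leq \|W\|_\infty \sigma(\calM)^{3/2}$). Consequently $Bu = \varphi(u) \cdot \mathbf{1}_\calM$ (interpreting the prefactor $\sigma$ as the constant density of the uniform measure), so the range of $B$ is contained in the one-dimensional subspace $\Span\{\mathbf{1}_\calM\}$ of $L_2(\calM)$. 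A bounded operator of finite rank is compact, which concludes the proof.

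There is no real obstacle here; the only care needed is to verify the continuity estimate for $Au$ using uniform continuity of $W$ on the compact product manifold, and to correctly interpret the prefactor $\sigma$ in the definition of $B$ so as to recognise $B$ as a rank-one operator.
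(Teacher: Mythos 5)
Your proof is correct. For $A$, your main Arzelà--Ascoli argument is essentially the paper's: both use Cauchy--Schwarz plus uniform continuity of $W$ on the compact product $\calM\times\calM$ to show equicontinuity of $A(U)$ for a bounded $U\subset L_2(\calM)$, and then pass from compactness in $C(\calM)$ to compactness in $L_2(\calM)$ via the finite measure of $\calM$. Where you differ is twofold, and both are genuine improvements in economy. First, your Hilbert--Schmidt alternative for $A$ (observing $W\in C(\calM\times\calM)\subset L_2(\calM\times\calM)$, so $A$ has a square-integrable kernel on a finite-measure space) is a shorter route to compactness that bypasses Arzelà--Ascoli entirely; the paper does not mention it. Second, for $B$ the paper also uses Arzelà--Ascoli, noting only that $Bu$ is a constant function so $B(U)$ is trivially equicontinuous; you instead identify $B$ as a bounded rank-one operator $Bu = \varphi(u)\mathbf{1}_\calM$ with $\varphi$ a bounded linear functional by Fubini and Cauchy--Schwarz, which is cleaner and makes the compactness of $B$ immediate and independent of the Arzelà--Ascoli machinery. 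Both routes are valid; yours requires slightly less work and isolates the essential structure ($A$ is Hilbert--Schmidt, $B$ is finite rank) more sharply.
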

We are now ready to prove the main theorem of this section.

\begin{proof}[Proof of Theorem~\ref{th:bifurcations}]
The structure of the proof is the following, we first give an explicit form of both linear and non-linear parts of $\hat F$. Then we show that the space of symmetric functions $L_2^s$ is invariant under $\hat F$ and thus we can study the restriction of $\hat F$ to $L_2^s$. Finally, we show that the restricted operator satisfies the assumptions of Theorem \ref{th:bifurcation-curve}. 

\smallskip\noindent
\textbf{Linear and non-linear parts of $\hat F$:} By definition of Frechet derivative we have
\begin{align*}
\hat{F}(u+w, \gamma) - \hat{F}(u, \gamma) = D_u \hat F(u, \gamma) [w] + o(\|w\|).
\end{align*}
Expanding the left hand side we get
\begin{align*}
   \MoveEqLeft \hat{F}(u+w, \gamma) - \hat{F}(u, \gamma) - w= \frac{1}{Z(\gamma, u)} e^{-\gamma W* (\bar \rho + u)} - \frac{1}{Z(\gamma, u + w)} e^{-\gamma W* (\bar \rho + u + w)} \\
   &=\frac{1}{Z(\gamma, u)} e^{-\gamma W* (\bar \rho + u)}\left(1 - \frac{Z(\gamma, u)}{Z(\gamma, u + w)}e^{-\gamma W* w}\right) \\
   &=\frac{1}{Z(\gamma, u)}  e^{-\gamma W* (\bar \rho + u)}\cdot \gamma W* w - \frac{e^{-\gamma W* (\bar \rho + u)}}{Z(\gamma, u)^2} \left(Z(\gamma, u + w) - Z(\gamma, u)\right) + o(\|w\|) \\
   &=\frac{1}{Z(\gamma, u)}  e^{-\gamma W* (\bar \rho + u)}\cdot \gamma W* w - \frac{ \gamma e^{-\gamma W* (\bar \rho + u)}}{Z(\gamma, u)^2}\int_{\bbS^{n-1}} e^{-\gamma W* (\bar \rho + u)} \cdot W* w d\sigma + o(\|w\|)
\end{align*}
Then the first Frechet derivative $D_u \hat F$ for variation $w \in L_2(\bbS^n)$ satisfies:
\begin{equation}
\label{eq:DwF}
D_u \hat F(u, \gamma) [w] = w + \frac{1}{Z(\gamma, u)}  e^{-\gamma W* (\bar \rho + u)} \cdot \gamma W* w - \frac{ \gamma e^{-\gamma W* (\bar \rho + u)}}{Z(\gamma, u)^2}\int_{\bbS^{n-1}} e^{-\gamma W* (\bar \rho + u)} \cdot W* w d\sigma,
\end{equation}
in particular for $u = 0$ we have
\[
D_u \hat F(0, \gamma) [w] = w + \gamma \bar \rho \cdot  W* w - \gamma \bar\rho^2  \int_{\bbS^{n-1}}  W* w d\sigma,
\]
and can define the linear and non-linear parts as
\begin{align}
Kw&:= -\bar \rho \cdot  W* w + \bar\rho^2  \int_{\bbS^{n-1}}  W* w d\sigma, \label{eq:K} \\
G(w, \gamma) &:=\bar \rho - \frac{1}{Z(\gamma, w)} e^{-\gamma W* (\bar \rho + w)} - \gamma \bar \rho \cdot  W* w + \gamma \bar\rho^2  \int_{\bbS^{n-1}}  W* w dx. \label{eq:G}
\end{align}

\smallskip\noindent
\textbf{Invariance of $L_2^s$:}
Recall that the subset of symmetric spherical harmonics $(Y_{l, 0})_{l\in\bbN}$ is an orthonormal basis for $L_2^s(\bbS^{n-1})$. In addition, we prove $L_2^s(\bbS^{n-1})$ is invariant under $K$. For any $w \in L_2^s(\bbS^{n-1})$ we obtain:
\begin{align*}
\MoveEqLeft \norm[\bigg]{ Kw - \sum_{l\in\bbN}\left<Kw, Y_{l,0}\right>Y_{l,0}}_{L_2}  = \norm[\bigg]{Kw - \sum_{l\in\bbN}\proj_l Kw }_{L_2} = 0,
\end{align*}
where we used that for any $m \in \bbN, \ p\neq 0$:
\begin{align*}
\MoveEqLeft \left<Kw, Y_{m,p}\right> = \left<\proj_m Kw, Y_{m,p}\right> = \biggl<\proj_m K\sum_{l\in\bbN}\left<w, Y_{m,0}\right>Y_{l, 0}, Y_{m,p}\biggr> \\
&= -\int \bar \rho \cdot \proj_m \biggl( W* \sum_{l\in\bbN}\left<w, Y_{l,0}\right>Y_{l, 0} \biggr)Y_{m, p}d\sigma \\
&\qquad +  \int_{\bbS^{n-1}}\bar\rho^2 \cdot \proj_m\biggl(\int_{\bbS^{n-1}}  W* wd\sigma \biggr)Y_{m, p} d\sigma \\
&=-\int \bar \rho \cdot\proj_m \biggl(\sum_{l\in\bbN}\hat W_l \left<w, Y_{l,0}\right>Y_{l, 0}\biggr) Y_{m, p}d\sigma +0 \\
&= -\int \bar \rho \cdot\hat W_m \left<w, Y_{m,0}\right>Y_{m, 0} Y_{m, p}d\sigma = 0.
\end{align*}

\smallskip\noindent
\textbf{Properties of $K$:} We will now show that $I -\gamma K$ is a Fredholm operator of index zero for any $\gamma \in \bbR_+$ as an operator on the space $L_2^s(\bbS^{n-1})$. 
By Lemma \ref{lemma:compactness} operator $K$ is compact on $L_2(\bbS^{n-1})$. Since $L_2^s(\bbS^{n-1})$ is invariant under $K$ and $L_2^s(\bbS^{n-1})$ is a closed subspace of $L_2(\bbS^{n-1})$, it is also a compact operator on the smaller space $K:L_2^s(\bbS^{n-1}) \to L_2^s(\bbS^{n-1})$. By Lemma \ref{lemma:Fredholm-compact} $I -\gamma K$ is Fredholm. Moreover the family $(I-\gamma K)_{\gamma \in \bbR^+}$ is norm-continuous and as a result $\operatorname{index}(I-\gamma K) = \operatorname{index}(I) = 0$ by Corollary \ref{cor:fredholm-family}. We then calculate for $Y_{0,0}$
\[
KY_{0, 0} = -\frac{1}{\omega_n} W*Y_{0, 0} + \frac{1}{\omega_n^2} \int_{\bbS^{n-1}}  W* Y_{0, 0} d\sigma = -\hat W_0 Y_{0,0} + \frac{1}{\omega_n} Y_{0,0}\int_{\bbS^{n-1}}\hat W_0 Y_{0,0}d\sigma=0.
\]
and for the other harmonics $Y_{l, 0}, l\in\bbN$ 
\[
KY_{l, 0} = -\frac{1}{\omega_n} W*Y_{l, 0} + \frac{1}{\omega_n^2} Y_{0,0} \int_{\bbS^{n-1}}  W*Y_{l, 0} d\sigma = -\hat W_0 Y_{l,0}+ 0= -\hat W_0 Y_{l,0}.
\]
So in the orthonormal basis of spherical harmonics of order zero $K$ is a diagonal operator
\[
KY_{l, 0} = \begin{cases}
    0, \qquad &l=0,
    \\
   -\hat W_lY_{l, 0}, \qquad &l\in \bbN,
\end{cases}
\]
which under assumption of the theorem implies that $-1/\hat W_k$ is a simple characteristic value of $K$ and $Y_{k, 0}$ is the corresponding eigenvector. Thus we can apply Theorem \ref{th:bifurcation-curve}, where necessary regularity of the nonlinear term $G$ follows from Lemma \ref{lemma:G}.
\end{proof}

\section{Sphere: Phase transitions}
As shown in Lemma \ref{lemma:barrho} the uniform measure is always a critical point of the free energy functional. Moreover, since sphere has positive curvature, if $\gamma$ is small enough then $\calF$ is strictly geodesically convex and thus uniform measure is a unique global minimizer. As shown is Section~\ref{sec:bifurcations} if the kernel $W$ is unstable, the set of stationary points of $\calF$ behaves non-trivially for larger values of $\gamma$, so $\bar \rho$ is not guaranteed to be the global minimizer anymore. The value $\gamma$ at which the uniform measure stops being the global minimizer of the free energy functional is called the \emph{transition point}. In this section we give a necessary condition for an existence of a transition point $\gamma_c$ and characterize the minimizers of $\calF_\gamma$ around it.

\subsection{Background}

\begin{definition}[Transition point]\label{def:transition-point}
    A parameter value $\gamma_c \in\bbR_+$ is a \emph{transition point} of the free energy functional \eqref{eq:free-energy} if the following holds
    \begin{enumerate}
        \item for all $\gamma \in (0,\gamma_c)$ the uniform measure $\bar\rho$ is a unique (global) minimizer of $\calF_\gamma$,
        \item for $\gamma =\gamma_c$ the uniform measure is a minimizer of $\calF_\gamma$,
        \item for any $\gamma>\gamma_c$ there exist at least one probability measure $\rho_\gamma \in \calP_{ac}^+(\bbS^{n-1})$ different from $\bar\rho$, minimizing $\calF_\gamma$.
    \end{enumerate}
\end{definition}
To give a characterization of the phase transition in the spherical case we introduce the result analogous to the analysis of phase transitions of the McKean-Vlasov equation on a torus from \cite{ChayesPanferov2010}. Since the proof does not rely on the structure of the domain, all the arguments from \cite{ChayesPanferov2010} directly translate to the case of a (high-dimensional) sphere. 
\begin{lemma}
\label{lemma:properties}
    Let $\calF_\gamma$ be the free energy functional~\eqref{eq:free-energy} with fixed $\gamma>0$ and let $W \in \mathcal{W}_s^c$ be an unstable interaction potential (see Definition~\ref{def:stable-kernel}). %
    Suppose that for some $\gamma_T <\infty$ there exists a minimizer $\rho_T \in \calP_{ac}(\bbS^{n-1})$ not equal to the uniform measure $\bar\rho $ such that: 
\[
\calF_{\gamma_T}(\rho_T) \leq \calF_{\gamma_T}(\bar \rho)
\]
then, for all $\gamma > \gamma_T$ the uniform measure no longer minimises the free energy.

\end{lemma}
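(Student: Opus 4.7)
The plan is to exploit the observation that the interaction energy $\calI$ in the decomposition $\calF_\gamma = \gamma^{-1}\calE + \calI$ is independent of $\gamma$, so moving from $\gamma_T$ to a larger $\gamma$ only shrinks the entropic penalty against any non-uniform competitor.

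First I would note the elementary fact that the relative entropy $\calE$ as defined in \eqref{eq:entropy} satisfies $\calE(\bar\rho)=0$ (since $\bar\rho \equiv 1$ with respect to the normalized volume measure $m$) and $\calE(\mu)>0$ strictly for every $\mu \in \calP_{ac}(\bbS^{n-1})$ with $\mu \neq \bar\rho$; this is just Jensen's inequality applied to the strictly convex function $x \mapsto x\log x$. In particular, since $\rho_T \neq \bar\rho$, we have $\calE(\rho_T) > 0$.

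Next I would rewrite the hypothesis $\calF_{\gamma_T}(\rho_T) \leq \calF_{\gamma_T}(\bar\rho)$ as
\[
\calI(\rho_T) - \calI(\bar\rho) \leq -\gamma_T^{-1}\calE(\rho_T),
\]
and then for any $\gamma > \gamma_T$ use this bound to estimate
\begin{align*}
\calF_\gamma(\rho_T) - \calF_\gamma(\bar\rho)
&= \gamma^{-1}\calE(\rho_T) + \bigl(\calI(\rho_T) - \calI(\bar\rho)\bigr) \\
&\leq \gamma^{-1}\calE(\rho_T) - \gamma_T^{-1}\calE(\rho_T)
= \bigl(\gamma^{-1} - \gamma_T^{-1}\bigr)\calE(\rho_T).
\end{align*}
Since $\gamma > \gamma_T > 0$ gives $\gamma^{-1} - \gamma_T^{-1} < 0$ and $\calE(\rho_T)>0$, the right-hand side is strictly negative, so $\calF_\gamma(\rho_T) < \calF_\gamma(\bar\rho)$, which means $\bar\rho$ is not a minimizer of $\calF_\gamma$ for any $\gamma > \gamma_T$.

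There is really no substantial obstacle here; the only thing to be careful about is that the strict inequality in the conclusion rests on $\calE(\rho_T)>0$, which in turn rests on $\rho_T \neq \bar\rho$ together with strict convexity of $x\log x$. The hypothesis $W \in \calW_s^c$ and the existence of $\gamma_T$ are not actively used in this monotonicity argument; they only serve to make the hypothesis non-vacuous, guaranteeing the existence of such a competitor $\rho_T$.
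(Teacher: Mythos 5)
Your proof is correct. The paper does not supply its own proof of this lemma; it cites the analogous torus result from Chayes--Panferov~\cite{ChayesPanferov2010} and remarks that the argument is domain-independent. What you have reconstructed is exactly that argument: since $\calI$ carries no $\gamma$-dependence, the difference $\calF_\gamma(\rho_T)-\calF_\gamma(\bar\rho)$ is affine in $\gamma^{-1}$ with slope $\calE(\rho_T)>0$, so it can only decrease as $\gamma$ increases past $\gamma_T$. All the supporting facts check out---$\calE(\bar\rho)=0$ because $\bar\rho\equiv 1$ w.r.t.\ the normalized volume measure, $\calE(\rho_T)>0$ by strict convexity of $x\mapsto x\log x$ together with $\rho_T\neq\bar\rho$---and you correctly note that the minimizer property of $\rho_T$ and $W\in\calW_s^c$ are not actually used, only $\calF_{\gamma_T}(\rho_T)\leq\calF_{\gamma_T}(\bar\rho)$ and $\rho_T\neq\bar\rho$.
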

Existence of the phase transition point depends on the properties of the interaction kernel~$W$. Namely, instability of the interaction kernel is a necessary and sufficient condition of a phase transition as well. This result dates back to \cite{GatesPenrose1970}, where the analog of the following proposition is proved in Euclidean setting, for completeness we provide a short proof below.
\begin{proposition}[Existence of phase transition point]
\label{prop:existence-pt}
    Consider the free energy functional \eqref{eq:free-energy} with rotationally symmetric interaction kernel~$W$. The system exhibits phase transition if and only if $W\in\calW_s^c$. 
\end{proposition}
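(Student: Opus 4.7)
The biconditional separates into an easy direction and a more delicate one. For the necessity direction, if $W\in\calW_s$, then by Remark~\ref{remark:stable-kernels} the bilinear form $\calI$ is positive semidefinite on $L_2(\bbS^{n-1})$, the functional $\calF_\gamma$ is strictly convex, and $\bar\rho$ is the unique global minimizer for every $\gamma>0$; no value of $\gamma$ can then satisfy Definition~\ref{def:transition-point}.

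For the converse, assume $W\in\calW_s^c$ and pick $k\geq 1$ with $\hat W_k<0$. To exhibit a regime in which $\bar\rho$ is not a global minimizer, I would perturb along the offending harmonic by setting $\rho_\varepsilon:=\bar\rho+\varepsilon Y_{k,0}$. For $|\varepsilon|$ small this defines a positive probability density, since $Y_{k,0}$ is bounded with zero mean. Expanding the entropy via $(1+u)\log(1+u)=u+u^2/2+O(u^3)$ and using $\int Y_{k,0}\,dm=0$ together with $\int Y_{k,0}^2\,dm=1$ gives $\calE(\rho_\varepsilon)=\varepsilon^2/2+O(\varepsilon^3)$; bilinearity of $\calI$ combined with the convolution identity in Corollary~\ref{cor:convolution-theorem} yields the exact equality $\calI(\rho_\varepsilon)-\calI(\bar\rho)=\varepsilon^2\hat W_k/2$. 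Combining these,
\[
\calF_\gamma(\rho_\varepsilon)-\calF_\gamma(\bar\rho)=\tfrac{\varepsilon^2}{2}\bigl(\gamma^{-1}+\hat W_k\bigr)+O(\varepsilon^3),
\]
which is strictly negative whenever $\gamma>-1/\hat W_k$ and $\varepsilon$ is small enough. Hence $\bar\rho$ fails to be a global minimizer of $\calF_\gamma$ in this range. In the opposite regime, Corollary~\ref{cor:convexity-sph} gives strict geodesic $\lambda$-convexity of $\calF_\gamma$ for $\gamma$ below an explicit threshold, and since $\bar\rho$ is always a critical point by Lemma~\ref{lemma:barrho} it is the unique global minimizer there.

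These two bounds let me set $\gamma_c:=\sup\{\gamma>0:\bar\rho\text{ is the unique global minimizer of }\calF_\gamma\}$, which lies in $(0,-1/\hat W_k]$. Lemma~\ref{lemma:properties} implies that the set in the supremum is downward-closed in $(0,\infty)$, which is condition~(1) of Definition~\ref{def:transition-point}. For condition~(3), Theorem~\ref{th:minimizers} furnishes a global minimizer for every $\gamma>\gamma_c$, and by the definition of $\gamma_c$ this minimizer cannot be uniquely $\bar\rho$. The delicate step---where I expect the main obstacle---is condition~(2), that $\bar\rho$ remains a minimizer at $\gamma=\gamma_c$ itself. I would argue by contradiction: if some $\rho^\ast\neq\bar\rho$ satisfied $\calF_{\gamma_c}(\rho^\ast)<\calF_{\gamma_c}(\bar\rho)$, then because the map $\gamma\mapsto\calF_\gamma(\rho^\ast)-\calF_\gamma(\bar\rho)=\gamma^{-1}\calE(\rho^\ast)+\bigl(\calI(\rho^\ast)-\calI(\bar\rho)\bigr)$ is continuous (in fact affine in $\gamma^{-1}$) on $(0,\infty)$, the strict inequality would persist on an entire left neighborhood of $\gamma_c$, contradicting the uniqueness of $\bar\rho$ for $\gamma<\gamma_c$.
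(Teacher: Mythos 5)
Your proof is correct and takes essentially the same route as the paper: the necessity direction via stability of the kernel, the competitor $\bar\rho+\varepsilon Y_{k,0}$ along a negative mode, the supremum definition of $\gamma_c$, Lemma~\ref{lemma:properties} for downward-closedness, Theorem~\ref{th:minimizers} for condition~(3), and the continuity-by-contradiction argument for condition~(2) all mirror the paper's proof. The only deviation is that you Taylor-expand the entropy to second order, giving $\calF_\gamma(\rho_\varepsilon)-\calF_\gamma(\bar\rho)=\tfrac{\varepsilon^2}{2}(\gamma^{-1}+\hat W_k)+O(\varepsilon^3)$ and hence the sharp bound $\gamma_c\le -1/\hat W_k$ directly, whereas the paper uses the cruder bound $\calE(\rho_k)\le\log\varrho$ and only recovers $\gamma_c\le\gamma_\#$ later in Theorem~\ref{th:pt}; this is a minor quantitative tightening, not a structural change.
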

\begin{proof}
    Necessity follows immediately from the Remark \ref{remark:stable-kernels} since for $W\in\calW_s^c$ the uniform state is the unique minimizer of the free energy functional.
    
    We now prove sufficiency. Since $W\in\calW_s^c$, there exists $k\in \bbN$ such that the corresponding coefficient of the spherical harmonics decomposition $\hat W_k <0$. Choose such a $k$ and consider the competitor $\rho_k = \bar\rho + \varepsilon Y_{k, 0}$ for some $\varepsilon >0$ but small enough to ensure positivity of the density $\rho_k > 0$. Calculating the interaction energy for $\rho_k$ we obtain
    \[
    \calI(\rho_k) = \frac12\int W(x, y)(\bar\rho + \varepsilon Y_{k, 0}(x)) (\bar\rho + \varepsilon Y_{k, 0}(y))d\sigma(x)d\sigma(y) = \calI(\bar\rho) + \frac{\varepsilon^2}{2\bar\rho}\hat W_k < \calI(\bar\rho).
    \]
    Since the spherical harmonics are continuous functions, $\rho_k$ is continuous and thus is upper bounded $\rho_k < \varrho \in \bbR_+$. As a result we can estimate the entropy at $\rho_k$ as 
    \[
    \calE(\rho_k) \leq \int \rho_k\log \rho_k d\sigma \leq \int \rho_k \log \varrho d\sigma= \log \varrho.
    \]
    Combining the estimates we conclude that the difference $\calF_\gamma(\rho_k) -\calF_\gamma(\bar\rho)$ has the following upper-bound:
    \[
    \calF_\gamma(\rho_k) -\calF_\gamma(\bar\rho) = \gamma^{-1}(\calE(\rho_k)- \calE(\bar \rho)) +\calI(\rho_k)- \calI(\bar \rho) \leq \frac{\varepsilon^2}{2\bar\rho}\hat W_k + \gamma^{-1}(\log \varrho - \log \bar \rho).
    \]
    Choosing $\tilde\gamma > -\frac{\bar\rho(\log \varrho - \log \bar \rho)}{\varepsilon^2\hat W_k}$ we get $\calF_{\tilde\gamma}(\rho_k) -\calF_{\tilde\gamma}(\bar\rho) <0$ and thus $\bar \rho$ is not a minimizer of the free energy functional $\calF_{\tilde\gamma}$. We aim to show that there exists a parameter value $\gamma_c $ satisfying definition \ref{def:transition-point}. Since $\bar\rho$ is always a critical point of $\calF$, from the convexity properties of $\calF$ we know that for small enough $\gamma$ the uniform state $\bar \rho$ is the unique minimizer of the free energy. Since for any $\gamma\in\bbR$ there exists a minimizer of $\calF_\gamma$, by Lemma \ref{lemma:properties}, we conclude that there exist a point $\gamma_c \leq \tilde\gamma$ such that for every $\gamma > \gamma_c$ there exists a minimizer of the free energy functional $\rho_\gamma \neq \bar\rho$. Let 
    \[\gamma^*_c = \sup\{\gamma: \bar\rho \text{ is a minimizer of } \calF_\gamma\},
    \]
    then for $\gamma < \gamma^*_c$ the uniform state is the unique minimizer of the free energy.
    We will show that $\gamma^*_c$ is a phase transition point. If $\bar\rho$ is a minimizer of $\calF_{\gamma^*_c}$, then by Theorem \ref{th:minimizers} for every $\gamma > \gamma^*_c$ there exist a minimizer of the free energy different from $\bar\rho$, so $\gamma^*_c$ is a transition point. We thus only need to show that $\bar\rho$ is a minimizer of $\calF_{\gamma^*_c}$. 
    
    Assume that $\bar\rho$ is not a minimizer of $\calF_{\gamma^*_c}$, then by
    Theorem \ref{th:minimizers} there exists $\rho_{\gamma^*_c} \in L_2(\bbS^{n-1}) \neq \bar\rho$ minimizing $\calF_{\gamma^*_c}$. For a fixed $\rho \in L_2(\bbS^{n-1})$ the map $\varGamma_\rho: \bbR_+ \to \bbR$
    \[
    \varGamma_\rho(\gamma ) = \calF_\gamma(\rho )
    \]
    is continuous, so the map
    \[
    f(\gamma) :=\varGamma_{\bar\rho}(\gamma) - \varGamma_{\rho_{\gamma^*_c}}(\gamma)
    \]
    is also continuous on $\bbR_+$. Since for $\gamma < \gamma^*_c$ the uniform state is the unique minimizer, we have $f(\gamma) <0$ on $(0, \gamma^*_c)$. At the same time by our assumption $\calF_{\gamma^*_c}(\bar\rho) > \calF_{\gamma^*_c}(\rho_{\gamma^*_c})$, so  $f(\gamma^*_c) > 0$, which contradicts continuity, thus $\bar\rho$ is a minimizer of $\calF_{\gamma^*_c}$ and thus $\gamma^*_c$ is a point of phase transition of the corresponding free energy functional.  
\end{proof}

Phase transition points are called \emph{continuous} or \emph{discontinuous} depending on the structure of the set of minimizers around $\gamma_c$.
\begin{definition}[Continuous and discontinuous transition points]
    A transition point $\gamma_c$ is called a \emph{continuous} transition point of the free energy functional \eqref{eq:free-energy} if $\bar\rho$ is a unique minimizer for $\gamma = \gamma_c$ and for any family of minimizers $\{\rho_\gamma \in \calP_{ac}(\bbS^{n-1})| \gamma > \gamma_c\}$ it holds that
    \[
    \lim_{\gamma\downarrow \gamma_c}\|\rho_\gamma - \bar\rho\|_{L_1} = 0.
    \]
    \emph{Discontinuous} transition point is any transition points violating at least one of the above conditions.
\end{definition}
\begin{remark}
    Our definition of a continuous transition point is equivalent to the definition used in \cite{ChayesPanferov2010} and \cite{carrillo2020long} since norm is non-negative and thus $\limsup$ coincides with the limit.
\end{remark}

The following Lemma relates continuous transition points with the points of linear stability, namely the parameter value $\gamma_\#$ at which the linearized version of the free energy functional at $\rho=\bar\rho$ becomes unstable (admits a non-negative eigenvalue). We give a formal definition as well as the necessary linear stability results in the following subsection.
\begin{lemma}[\cite{ChayesPanferov2010}] 
\label{lemma:tp_lsp}Let $\calF_\gamma$ be the free energy functional with fixed $\gamma$ and the interaction potential $W \in C_b$. Assume that there exists at least one $k\in \bbN_0$ such that $\hat W_k <0$ and assume that $\calF$ exhibits a continuous transition point at some $\gamma_c < \infty$. Then the phase transition point coincides with the point of linear stability $\gamma_c = \gamma_\#$.
    
\end{lemma}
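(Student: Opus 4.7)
The plan is to prove both inequalities $\gamma_c\leq\gamma_\#$ and $\gamma_c\geq\gamma_\#$, adapting the Chayes--Panferov strategy to the sphere via the spherical-harmonic diagonalization from Corollary~\ref{cor:convolution-theorem}.

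\textbf{Step 1 (Taylor expansion and diagonalization).} For a mean-zero perturbation $\eta$ with $\bar\rho+\eta>0$, I would combine the pointwise expansion
$(\bar\rho+\eta)\log(\bar\rho+\eta) = \bar\rho\log\bar\rho + (\log\bar\rho + 1)\eta + \tfrac{\eta^2}{2\bar\rho} + O(\eta^3)$, the mean-zero constraint $\int\eta\,dm = 0$, and rotational symmetry of $W$ (which makes $W*\bar\rho$ constant and hence kills the linear contribution from $\calI$) to obtain
\begin{equation}\label{eq:tp_lsp_expansion_plan}
\calF_\gamma(\bar\rho+\eta) - \calF_\gamma(\bar\rho) = \tfrac{1}{2}\mathcal{Q}_\gamma(\eta) + R(\eta),
\end{equation}
where
$$
\mathcal{Q}_\gamma(\eta) = \frac{1}{\gamma\bar\rho}\int_{\bbS^{n-1}}\eta^2\,dm + \int_{\bbS^{n-1}\times\bbS^{n-1}}W(\skp{x,y})\,\eta(x)\eta(y)\,dm(x)dm(y),
$$
and $|R(\eta)|\leq C\|\eta\|_\infty\|\eta\|_{L_2}^2$ whenever $\bar\rho+\eta$ stays uniformly bounded away from $0$. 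By Corollary~\ref{cor:convolution-theorem}, $\mathcal{Q}_\gamma$ diagonalizes in the spherical-harmonic basis with eigenvalue proportional to $\gamma^{-1}+c\,\hat W_l$ on mode $(l,K)$ for a positive normalization $c$. Hence the linear-stability threshold is $\gamma_\#=-(c\min_{l\geq 1}\hat W_l)^{-1}$ (the minimum is attained because $W\in H^1$ forces $\hat W_l\to 0$), and for every $\gamma\in(0,\gamma_\#)$ the form $\mathcal{Q}_\gamma$ is strictly positive definite on mean-zero functions with a spectral gap that is uniformly bounded below on compact subsets of $(0,\gamma_\#)$.

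\textbf{Step 2 ($\gamma_c\leq\gamma_\#$).} For any $\gamma>\gamma_\#$ there exist indices $l,K$ with $\mathcal{Q}_\gamma(Y_{l,K})<0$. Testing~\eqref{eq:tp_lsp_expansion_plan} with $\eta_\varepsilon=\varepsilon Y_{l,K}$ and $\varepsilon\to 0^+$ gives $\calF_\gamma(\bar\rho+\eta_\varepsilon)<\calF_\gamma(\bar\rho)$ for all sufficiently small $\varepsilon$, so $\bar\rho$ is not a minimizer at~$\gamma$, whence $\gamma_c\leq\gamma_\#$.

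\textbf{Step 3 ($\gamma_c\geq\gamma_\#$).} Suppose for contradiction that $\gamma_c<\gamma_\#$. Pick $\gamma'\in(\gamma_c,\gamma_\#)$, choose any sequence $\gamma_n\downarrow\gamma_c$, and let $\rho_n\ne\bar\rho$ be minimizers of $\calF_{\gamma_n}$ (which exist by definition of a transition point). The continuous-transition hypothesis gives $\rho_n\to\bar\rho$ in $L_1$. By Proposition~\ref{prop:equivalence}, every $\rho_n$ is a zero of the Gibbs map, so $\rho_n = Z_n^{-1}\exp(-\gamma_n W*\rho_n)$. Since $\|W*(\rho_n-\bar\rho)\|_\infty \leq \|W\|_\infty\|\rho_n-\bar\rho\|_{L_1}\to 0$, the exponential formula upgrades $L_1$ convergence to $\|\rho_n-\bar\rho\|_\infty\to 0$. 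Applying~\eqref{eq:tp_lsp_expansion_plan} with $\eta=\rho_n-\bar\rho$: for $n$ large the uniform spectral gap from Step 1 gives $\mathcal{Q}_{\gamma_n}(\rho_n-\bar\rho)\geq c_0\|\rho_n-\bar\rho\|_{L_2}^2$ while $|R(\rho_n-\bar\rho)| = o(\|\rho_n-\bar\rho\|_{L_2}^2)$. The minimizing property $\calF_{\gamma_n}(\rho_n)\leq\calF_{\gamma_n}(\bar\rho)$ then forces $\rho_n=\bar\rho$ for $n$ large, contradicting $\rho_n\ne\bar\rho$.

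The main obstacle is the upgrade from $L_1$ to $L_\infty$ convergence of $\rho_n$ to $\bar\rho$ needed to dominate the cubic entropy remainder by the coercive quadratic form; the smoothing structure of the Gibbs map (boundedness of $W$ combined with the exponential representation of $\rho_n$) is precisely what makes this upgrade automatic in our setting.
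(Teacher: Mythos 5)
The paper does not prove this lemma: it is stated as a citation to \cite{ChayesPanferov2010}, with the earlier remark that "since the proof does not rely on the structure of the domain, all the arguments from \cite{ChayesPanferov2010} directly translate to the case of a (high-dimensional) sphere." Your proposal therefore fills a gap the paper deliberately leaves to the reference, and it does so by a faithful adaptation of the Chayes--Panferov strategy. The three steps are sound. In Step~1, the quadratic form $\mathcal{Q}_\gamma$ diagonalizes in the spherical-harmonic basis (Corollary~\ref{cor:convolution-theorem}), with modal coefficient $\gamma^{-1}+\hat W_l$ on $\calH_l$ for $l\geq 1$ once the measures are normalized consistently (your abstract constant $c$ comes out to be $1$ in the paper's conventions, matching $\gamma_\#=-1/\min_{l\geq 1}\hat W_l$); the cubic entropy remainder bound $|R(\eta)|\leq C\|\eta\|_\infty\|\eta\|_{L_2}^2$ is correct once $\bar\rho+\eta$ is bounded away from zero, and $W\in L_2$ already forces $\hat W_l\to 0$ so the minimum is attained. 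Step~2 is the easy inclusion $\gamma_c\leq\gamma_\#$, which indeed does not use continuity — just Definition~\ref{def:transition-point}(2) together with indefiniteness of $\mathcal{Q}_\gamma$ past $\gamma_\#$. Step~3 is the heart of the matter: the $L_1\to L_\infty$ bootstrap through the Gibbs fixed-point formula $\rho_n=Z_n^{-1}e^{-\gamma_n W*\rho_n}$ is exactly the right device to promote the continuity hypothesis to uniform smallness, after which the spectral gap of $\mathcal{Q}_{\gamma_n}$ on $[\gamma_c,\gamma']$ with $\gamma'<\gamma_\#$ dominates the cubic remainder and forces $\rho_n=\bar\rho$, a contradiction. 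Two small points you could tighten: (i) the passage from "$\rho_n$ is a minimizer" to "$\rho_n$ is a zero of the Gibbs map" invokes Proposition~\ref{prop:equivalence}(3)$\Rightarrow$(2), which is stated for $H^1$ critical points whereas Theorem~\ref{th:minimizers} a priori only yields $L_\infty$ minimizers — a Lebesgue-point variant of the paper's argument, or a direct Euler--Lagrange computation, closes this as the authors themselves implicitly do; (ii) the hypothesis of the lemma lists $k\in\bbN_0$, but only modes $k\geq 1$ enter the quadratic form (the constant mode is frozen by the mass constraint), which your restriction to $l\geq 1$ already handles.
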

\subsection{Linear stability analysis}
We define the linear operator $\bar \calL: L_2(\bbS^{n-1}) \to  L_2(\bbS^{n-1})$ as a linearization of the non-linear PDE \eqref{eq:mckean-vlasov} at $\rho = \bar\rho$, namely:
\[
\bar \calL w := \Delta w +\gamma \bar\rho \Delta (W * w),
\]
for every $w\in L_2(\bbS^{n-1}) $. Using the properties of spherical convolution we conclude that
\[
\bar \calL Y_{l, K} = -l(n+l-2)(1 + \gamma\hat W_l)Y_{l, K}. 
\]
In other words basis of spherical harmonics of order $l$ forms the eigensubspace of $\bar L$ corresponding to the eigenvalue
\[
\lambda_l =  -l(n+l-2)(1 + \gamma\hat W_l).
\]
If there exists at least one negative element of the spherical harmonics decomposition of $W$, $\hat W_k <0$, there also exists a parameter value $\gamma_k = -\frac{1}{\hat W_k}$ where the corresponding eigenvalue $\lambda_k$ changes the sign. The minimum of such values 
\begin{equation}\label{eq:def:gamma_sharp}
\gamma_\# = -\frac{1}{\min_{k\in \bbN}\hat W_k},
\end{equation}
is called the \emph{point of linear stability}.
\begin{remark}[Uniform state is an unstable solution for $\gamma > \gamma_\#$]
    Mirroring the stability analysis of the linearized PDE, we remark that for $\gamma > \gamma_\#$ the uniform measure $\bar\rho$ is an unstable saddle point of the free energy functional $\calF_\gamma$ (namely $D^2_\rho \calF_\gamma(\bar\rho)$ is not a positively semidefinite bilinear operator). First recall that $\bar \rho$ is always a critical point by the symmetry argument. To show the latter let $l \in \argmin_{k\in \bbN}\hat W_k$ and consider the function $f_t(t) = \calF_\gamma(\bar\rho + tY_{l, 0})$. Differentiating $f(t)$ twice we obtain
    \[
    \frac{d^2}{dt^2}f(t)\Big|_{t=0} = D^2_{\rho} \calF_\gamma(\bar\rho)[Y_{l, 0}, Y_{l, 0}] =\frac{1}{\bar\rho}(1+ \gamma\hat W_l) <0,
    \]
    so $\bar\rho$ is an unstable critical point for any $\gamma > \gamma_\#$.
\end{remark}
\subsection{Main result}
\label{ssec:transition}
In this section we present a sufficient condition for the existence of a discontinuous phase transition point of McKean-Vlasov equation on a high-dimensional sphere. Let $\hat W_k$ be the minimal eigenvalue of the linearized operator discussed in the previous section. We introduce a criterion which relies on the properties of the eigenvectors corresponding to $\hat W_k$. We also show that criterion is trivially satisfied if $k = 2$ or $k=4$. 

In this Section we require the interaction potential to satisfy the (relaxed) resonance condition Assumption~\ref{assum:pt-almost} below. To make a parallel with the analysis of phase transitions on a torus \cite{carrillo2020long} we call it a \emph{resonance condition} even though on a sphere it may be satisfied even if the corresponding eigenspace is one-dimensional.

\begin{assumption}[Relaxed resonance condition]
\label{assum:pt-almost}
 Given an interaction potential $W$ satisfying Assumption \ref{assum:non-stable-kernel} and denote the set of $\delta$-resonating modes as
 \[
 K_{\sharp,\delta} = \set*{k \in \bbN: \hat W_k \leq -\frac{1-\delta}{\gamma_{\#}} }.
 \]
The kernel $W$ satisfies the relaxed resonance condition with bandwidth $\delta$ if the following holds:
\begin{itemize}
    \item there exists a linear combination of spherical harmonics $u = \sum_{l\in K_{\alpha, \delta}} c_l Y_{l, 0}$ such that
 \begin{equation}\label{eq:delta-resonant-U3}
 \int_{\bbS^{n-1}} u^3d\sigma = U_3 \neq 0,  \quad \text{and} \quad |u|_\infty \leq 1 \,;
\end{equation}
 \item  the bandwidth is small enough, namely $ \delta < \min\Bigl(\frac14, \frac{U_3^2}{49\sigma(\bbS^{n-1})^2}\Bigr)$ for some $u$ satisfying~\eqref{eq:delta-resonant-U3}.
 \end{itemize}
    
\end{assumption}

For convenience of the reader we also provide an hierarchy of stricter resonance conditions. We start by introducing the (strict) resonance condition, namely the special case of 
the Assumption \ref{assum:pt-almost} with $\delta= 0$.
\begin{assumption}[Resonance condition]
 \label{assum:pt-general}   
 Given an interaction potential $W$ satisfying Assumption \ref{assum:non-stable-kernel} and denote the set $K_\sharp = \{k \in \bbN: \hat W_k = -\gamma_{\#}^{-1} \}$ (note~\eqref{eq:def:gamma_sharp}). We say that $W$ satisfies the resonance condition if there exists a finite linear combination $u = \sum_{l\in K_{\alpha}} c_l Y_{l, 0}$ such that
 \[
 \int_{\bbS^{n-1}} u^3d\sigma = U_3 \neq 0, \quad %
 \]
 where $Y_{l, 0}$ are the corresponding spherical harmonics.
\end{assumption}
Note that the supremum norm of the competitor $u$ is uniformly bounded for any finite linear combination after a proper rescaling, which is not necessarily true for arbitrary $u$ due to the lack of the uniform (in $x$ and $k$) bound on the absolute value of the basis functions $Y_{k, 0}(x)$. That is why in the Assumption \ref{assum:pt-general} we restricted the set of possible competitors to the finite linear combinations. %
In particular, it may be enough to have a single eigenfunction corresponding to the minimal eigenvalue. In this case the Assumption~\ref{assum:pt-general} reduces to the Assumption \ref{assum:pt} . 
\begin{assumption}[Single component resonance]
 \label{assum:pt}   
 Given an interaction potential $W$ satisfying Assumption \ref{assum:non-stable-kernel} and let $k_{\min} = \argmin_{k\in \bbN}\hat W_k$. We say that $W$ satisfies the resonance condition if 
 \[
 \int_{\bbS^{n-1}} Y_{k_{\min}, 0}^3d\sigma \neq 0,
 \]
 where $Y_{k_{\min}, 0}$ is the corresponding spherical harmonic.
\end{assumption}
The Assumption~\ref{assum:pt} can only be satisfied for even $l$. Indeed from the recurrence relation we obtain that for odd $l$ the harmonic $Y_{l,0}$ is an odd function, implying that $Y_{l,0}^3$ is also odd and thus integrates to zero. Calculation for $l=2$ gives
\begin{align*}
Y_{2, 0}(\theta) &= A_0^2 C_{2}^{\frac{n-2}{2}}\left(\cos \theta_{n-1}\right), \\
\int Y_{2, 0}^3d\sigma &= (A_0^2)^3\int_{-1}^1\left( C_{2}^{\frac{n-2}{2}}(t)\right)^3(1-t^2)^{\frac{n-3}{2}}dt = \frac{4(A_0^2)^3(n-2)^3\sqrt{\pi}\Gamma(\frac{n+1}{2})}{(n+2)(n+4)\Gamma(\frac{n}{2}-1)} \neq 0
\end{align*}
and similar for $l=4$
\begin{align*}
\int Y_{4, 0}^3d\sigma &= \frac{(A_0^4)^3(n-2)^3n^4(n^2-4)\sqrt{\pi}\Gamma(\frac{n+5}{2})}{64\Gamma(\frac{n}{2}+6)},
\end{align*}
where $\Gamma$ is the Gamma function and $A_K^l$ are the normalizing coefficients as in \eqref{eq:harmonics}. So in both cases Assumption \ref{assum:pt} is satisfied for a sphere of arbitrary dimension $n \geq 3$. We conjecture that it is satisfied for all even values of $l$. 

\begin{theorem}
\label{th:pt}
Let the interaction potential $W \in C_b$ satisfy Assumption \ref{assum:pt-almost}, then there exists a discontinuous transition point of \eqref{eq:mckean-vlasov} $\gamma_c \in (0, \gamma_\#)$.    
\end{theorem}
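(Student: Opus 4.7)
The strategy is to construct, at the linear-stability threshold $\gamma_\# = -1/\alpha$ with $\alpha = \min_{k\in\bbN}\hat W_k$, a competitor measure whose free energy is strictly below that of $\bar\rho$, and then invoke continuity in $\gamma$ together with Lemma~\ref{lemma:tp_lsp}. Pick the resonant combination $u = \sum_{l\in K_\alpha} c_l Y_{l,0}$ guaranteed by Assumption~\ref{assum:pt-general}, so that $U_3 = \int_{\bbS^{n-1}} u^3 \,d\sigma \neq 0$. Since each $Y_{l,0}$ with $l\geq 1$ is orthogonal to constants and the sum is finite, $u$ is bounded with $\int u \,dm = 0$, hence $\rho_\varepsilon := \bar\rho + \varepsilon u \in \calP_{ac}^+(\bbS^{n-1})$ for $|\varepsilon|$ small enough. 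The plan is to Taylor-expand $\calF_{\gamma_\#}(\rho_\varepsilon) - \calF_{\gamma_\#}(\bar\rho)$ in $\varepsilon$: the linear coefficient vanishes because $\bar\rho$ is a critical point (Lemma~\ref{lemma:barrho}); the quadratic coefficient vanishes by the very definition of $\gamma_\#$; and the cubic coefficient, coming purely from the entropy, is proportional to $U_3 \neq 0$.

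For the quadratic cancellation, applying $(1+x)\log(1+x) = x + x^2/2 - x^3/6 + O(x^4)$ to the entropy gives $\calE(\rho_\varepsilon) - \calE(\bar\rho) = \tfrac{\varepsilon^2}{2}\int u^2\,dm - \tfrac{\varepsilon^3}{6}\int u^3\,dm + O(\varepsilon^4)$, while the convolution theorem (Corollary~\ref{cor:convolution-theorem}) together with $\hat W_l = \alpha$ for every $l\in K_\alpha$ yields $\calI(\rho_\varepsilon) - \calI(\bar\rho) = \tfrac{\varepsilon^2 \alpha}{2}\int u^2\,dm$ (the linear part vanishes because $W*\bar\rho$ is constant). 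Combining both contributions at $\gamma_\# = -1/\alpha$, the $\varepsilon^2$ terms cancel exactly, leaving
\[
\calF_{\gamma_\#}(\rho_\varepsilon) - \calF_{\gamma_\#}(\bar\rho) = -\frac{\gamma_\#^{-1}}{6}\,\varepsilon^3 \int u^3 \,dm + O(\varepsilon^4).
\]
Choosing the sign of $\varepsilon$ so that $\varepsilon^3 U_3 > 0$ and $|\varepsilon|$ small enough that the cubic term dominates the remainder, we conclude $\calF_{\gamma_\#}(\rho_\varepsilon) < \calF_{\gamma_\#}(\bar\rho)$.

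To close the argument, observe that $\gamma \mapsto \calF_\gamma(\rho_\varepsilon) - \calF_\gamma(\bar\rho)$ is continuous, since the interaction part is $\gamma$-independent and the entropy part is $\gamma^{-1}$ times a fixed number; hence the strict inequality persists on a two-sided neighborhood of $\gamma_\#$. In particular there exists $\gamma_T < \gamma_\#$ at which $\bar\rho$ fails to be a minimizer of $\calF_{\gamma_T}$. Proposition~\ref{prop:existence-pt} delivers a transition point $\gamma_c>0$, and by the defining property of $\gamma_c$ together with Lemma~\ref{lemma:properties} we must have $\gamma_c \leq \gamma_T < \gamma_\#$. Since Lemma~\ref{lemma:tp_lsp} forces any \emph{continuous} transition point to coincide with $\gamma_\#$, the strict inequality $\gamma_c < \gamma_\#$ implies that $\gamma_c$ is a discontinuous transition point. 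The main technical obstacle is the normalization bookkeeping between $d\sigma$, $dm$, $\omega_n$ and the constants in the convolution theorem that are needed to make the second-order cancellation at $\gamma_\#$ precise; once these are matched, the algebra is straightforward and the resonance condition supplies the rest.
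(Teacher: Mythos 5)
Your proposal is correct and follows essentially the same route as the paper: construct the competitor $\rho_\varepsilon$ from the resonant combination $u$, use the quadratic cancellation between entropy and interaction at $\gamma_\#$ to expose a nonzero cubic entropy contribution $\propto U_3$, conclude $\bar\rho$ is not a minimizer at $\gamma_\#$ (this is exactly Lemma~\ref{lemma:competitor}), and combine Proposition~\ref{prop:existence-pt}, Lemma~\ref{lemma:properties}, and Lemma~\ref{lemma:tp_lsp} to place a discontinuous transition point strictly below $\gamma_\#$. The only cosmetic differences are your additive perturbation $\bar\rho + \varepsilon u$ versus the paper's multiplicative $\bar\rho(1+\varepsilon\xi_l u)$ and the extra intermediate point $\gamma_T$, neither of which changes the substance.
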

\begin{proof}
    By Proposition \ref{prop:existence-pt} the system exhibits a phase transition. We aim to show that the phase transition point does not happen at the point of linear stability $\gamma_c \neq \gamma_\#$, then by Lemma \ref{lemma:tp_lsp} it must be a discontinuous transition point. By Lemma \ref{lemma:competitor} below $\bar\rho$ is not a minimizer of $\calF_{\gamma_\#}$, so the phase transition occurs at some $\gamma_c < \gamma_\#$ and is discontinuous.

\end{proof}

\begin{lemma}
    \label{lemma:competitor}
    Let the interaction potential $W \in C_b$ satisfy Assumption \ref{assum:pt-almost} for some~$u$ and $\delta\in[0,1)$, then there exists a state $\rho_\# \in \calP_{ac}(\bbS^{n-1})$ with lower free energy compared to the uniform state $\bar\rho$ at the point of linear stability $\calF_{\gamma_\#}(\rho_\#) < \calF_{\gamma_\#}(\bar\rho)$.
\end{lemma}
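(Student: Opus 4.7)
The plan is to construct an explicit competitor $\rho_\# = \bar\rho + \varepsilon u$, with $u$ the resonant linear combination of spherical harmonics supplied by Assumption~\ref{assum:pt-general}, and to Taylor-expand $\calF_{\gamma_\#}$ to third order in $\varepsilon$. At $\gamma_\#$ the quadratic term of this expansion vanishes on $u$ by the spectral characterisation of the Hessian of $\calF_\gamma$ at $\bar\rho$, while the resonance condition forces a non-zero cubic term; choosing the sign of $\varepsilon$ accordingly will give $\calF_{\gamma_\#}(\rho_\#) < \calF_{\gamma_\#}(\bar\rho)$.

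First I would fix coefficients $(c_l)_{l\in K_\alpha}$ so that $u := \sum_{l \in K_\alpha} c_l Y_{l,0}$ satisfies $U_3 = \int u^3\,d\sigma \neq 0$. Since $u$ is a finite linear combination of continuous spherical harmonics, it is bounded on $\bbS^{n-1}$, and since each $Y_{l,0}$ with $l\ge 1$ is orthogonal to the constant $Y_{0,0}$ one has $\int u\,dm = 0$. Hence $\rho_\varepsilon := \bar\rho + \varepsilon u \in \calP_{ac}^+(\bbS^{n-1})$ for every $\varepsilon$ with $|\varepsilon|\,\|u\|_\infty < \bar\rho$; on this range $\rho_\varepsilon$ is also bounded away from $0$, so the Lagrange remainder in the Taylor expansion of $x\log x$ around $\bar\rho$ is uniformly of order $\varepsilon^4$.

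Next I would expand $\calF_{\gamma_\#} = \gamma_\#^{-1}\calE + \calI$ term by term. Using $\int u\,dm = 0$, the entropy obeys
\[
\calE(\rho_\varepsilon) - \calE(\bar\rho) = \frac{\varepsilon^2}{2\bar\rho}\int u^2\,dm - \frac{\varepsilon^3}{6\bar\rho^2}\int u^3\,dm + O(\varepsilon^4).
\]
The interaction energy is purely quadratic; its linear term vanishes because $W*\bar\rho$ is constant by rotational invariance and $\int u\,dm = 0$, leaving
\[
\calI(\rho_\varepsilon) - \calI(\bar\rho) = \frac{\varepsilon^2}{2}\int\!\!\int W(x,y)\, u(x) u(y)\,dm(x)\,dm(y).
\]
The combined quadratic coefficient equals $\tfrac12 Q_{\gamma_\#}[u]$, where $Q_\gamma$ denotes the Hessian of $\calF_\gamma$ at $\bar\rho$.

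The crucial step, and the main technical obstacle, is the cancellation $Q_{\gamma_\#}[u] = 0$. By Corollary~\ref{cor:convolution-theorem} convolution with $W$ diagonalises in the spherical harmonic basis, so the quadratic form $Q_\gamma$ does too, with eigenvalue on $Y_{l,K}$ proportional to $1 + \gamma\hat W_l$. Since $\hat W_l = \alpha$ for every $l \in K_\alpha$ and $\gamma_\# = -1/\alpha$, each summand of $u$ lies in $\ker Q_{\gamma_\#}$, and orthogonality of the $Y_{l,0}$ kills the cross terms. Consequently
\[
\calF_{\gamma_\#}(\rho_\varepsilon) - \calF_{\gamma_\#}(\bar\rho) = -\frac{\varepsilon^3}{6\,\gamma_\#\bar\rho^2}\int u^3\,dm + O(\varepsilon^4),
\]
and $\int u^3\,dm = U_3/\omega_n \neq 0$. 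Choosing $\sign(\varepsilon) = \sign(U_3)$ and $|\varepsilon|$ sufficiently small so that the cubic term dominates the remainder yields the desired strict inequality with $\rho_\# := \rho_\varepsilon$.
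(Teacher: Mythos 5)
Your argument reproduces the paper's proof: both perturb $\bar\rho$ linearly along the resonant combination $u$, Taylor-expand $\calF_{\gamma_\#}$ to third order, use the diagonalisation of the Hessian in spherical harmonics together with $\gamma_\# = -1/\alpha$ to kill the quadratic term, and let the non-vanishing cubic term $\int u^3 \neq 0$ force a strict decrease. The only cosmetic difference is that you fix the sign of $\varepsilon$ at the end, whereas the paper builds the sign into the competitor from the start via the prefactor $\xi_l = \sign\bra*{\int u^3\,d\sigma}$.
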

\begin{proof}
    Take $u$ as in the Assumption \ref{assum:pt-general} and denote 
    \[
    \xi = \sign\bra[\bigg]{\int  u^3d\sigma},
    \] we show that there exist $\varepsilon >0$ such that the statement of the lemma holds for the competitor $ \rho_\#$ of form:
    \[
    \rho_\# = \bar\rho(1 + \varepsilon \xi u). 
    \]
    Consider the Taylor expansion of $\calE$ and $S$ around the uniform state:
    \begin{align*}
    \calE(\rho_\#) &= \calE(\bar\rho) +\frac12  \varepsilon^2\|u\|^2  - \frac{1}{6}\xi\bar\rho\varepsilon^3\int  u^3d\sigma + \frac{1}{12}\varepsilon^4\bar\rho \int\frac{u^4}{(1+\varepsilon\xi \tilde u)^3}d\sigma,
    \end{align*}
    for some $\tilde u$ satisfying $ |\tilde u(x)| \in  [0, |u(x)|]$ for all $x\in \bbS^{n-1}$, and
    \begin{align*}
        \calI(\rho_\#) &= \calI(\bar\rho) + \frac12\varepsilon^2\bar\rho^2\int_{\bbS^{n-1}\times \bbS^{n-1}} \sum_{l\in K_{\sharp,\delta}} c_l^2 W(x,y)Y_{l, 0}(x)Y_{l, 0}(y)d\sigma(x)d\sigma(y) \\
        &= \calI(\bar\rho) + \frac12\varepsilon^2 \bar\rho \sum_{l\in K_{\sharp,\delta}}\hat W_l c_l^2 \|Y_{l,0}\|^2 \leq \calI(\bar\rho) - \frac{1}{2\gamma_\sharp}\varepsilon^2 \bar\rho \|u\|^2 %
         +  \frac{1}{2\gamma_\sharp}\varepsilon^2 \bar\rho \delta\|u\|^2.
    \end{align*}
    Using the above and recalling that thanks to the definition of $\gamma_\sharp$ in~\eqref{eq:def:gamma_sharp}, we obtain a cancellation of the second order terms and arrive at the following expression for the free energy functional:
    \begin{align*}
        \calF_{\gamma_\#}(\rho_\#) - \calF_{\gamma_\#}(\bar\rho) &= - \frac{1}{6\gamma_{\#}}\xi\bar\rho\varepsilon^3\int  u^3d\sigma + \frac{1}{12\gamma_{\#}}\varepsilon^4\bar\rho \int\frac{u^4}{(1+\varepsilon\xi \tilde u)^3}d\sigma + \frac{1}{2\gamma_{\#}}\varepsilon^2 \bar\rho \delta\|u\|^2.
    \end{align*}
If $\delta = 0$, we obtain
\[
\calF_{\gamma_\#}(\rho_\#) - \calF_{\gamma_\#}(\bar\rho) = - \frac{1}{6\gamma_{\#}}\bar\rho\varepsilon^3\left(|U_3| - \frac{1}{2}\varepsilon \int\frac{u^4}{(1+\varepsilon\xi \tilde u)^3}d\sigma\right). 
\]
Recall that $|u|_\infty \leq 1$, then taking $\varepsilon < 1/2$ allows us to bound the integral above as
\[
\frac{1}{2} \int\frac{u^4}{(1+\varepsilon\xi \tilde u)^3}d\sigma \leq 4\sigma(\bbS^{n-1}).
\]
Choosing $\varepsilon < \min\left(\frac{1}{2}, \frac{|U_3|}{4\sigma(\bbS^{n-1})}\right)$ we obtain $ \calF_{\gamma_\#}(\rho_\#) - \calF_{\gamma_\#}(\bar\rho) < 0$, and thus the statement of the lemma is satisfied.

In case $\delta >0$, by choosing $\varepsilon = \sqrt{\delta}$ and using $|u|_\infty \leq 1$, we similarly conclude that the energy difference is negative
\[
\calF_{\gamma_\#}(\rho_\#) - \calF_{\gamma_\#}(\bar\rho) \leq - \frac{1}{6\gamma_{\sharp}}\bar\rho\delta^{3/2}\left(|U_3| - 4\delta^{1/2} \sigma(\bbS^{n-1}) - 3\delta^{1/2}\sigma(\bbS^{n-1})\right) < 0,
\]
for $\delta$ chosen sufficiently small. Hence, the statement of the lemma holds for the corresponding $u$.
\end{proof}
\begin{remark}
\label{remark:near-resonance}
    As studied in \cite{carrillo2020long}, existence of a discontinuous phase transition of McKean-Vlasov equation on a torus can be proven under a resonance condition between at least three different components of the Fourier decomposition of $W$. In contrast, on a sphere, due to the structure of the basis functions of $L_2^s$, a single negative mode $\hat W_k$ may be enough for a discontinuous phase transition. %
\end{remark}
\begin{remark}
    We also remark that in the spherical setting we restrict the set of possible competitors to finite linear combinations because the basis functions are not guaranteed to be uniformly bounded. As a result, the residual term in the Taylor expansion can only be made arbitrary small for a finite linear combinations. 
\end{remark}
\begin{remark}[Continuous phase transition]
   As follows from the Lemma~\ref{lemma:tp_lsp}, to show continuity of the phase transition point it is enough to prove that the phase transition happens at the point of linear stability $\gamma_c = \gamma_\#$. In \cite{carrillo2020long}, the authors use a criteria ensuring the uniform growth of the entropy under all perturbations corresponding to the negative components of the kernel decomposition. The difficulty in generalizing this approach to the spherical case lies in the fact that spherical harmonics are not uniformly bounded. 
\end{remark}
\section{Examples}
\label{sec:examples}
We apply our results to several interaction kernels including exponential kernel (transformer model), the Onsager model of liquid crystals and an adaptation of the Hegselmann-Krause model to a high-dimensional sphere. This section heavily relies on the properties of the Gegenbauer polynomials, for which we refer the reader to \cite{abramowitz1968handbook} and their integrals, which can be found in~\cite{gradshteyn2014table}.
\subsection{Noisy Transformers}
Transformer is an architecture of neural networks, which revolutionized the field of natural language processing. It was first introduced in the paper by Vaswani et. al \cite{vaswani2017attention}. In \cite{geshkovski2024mathematical} the authors propose to use an interacting particles perspective to study the behavior of transformers. In particular it is shown that the underlying inference dynamics of transformer model is related to the gradient flow in the space of probability measures on a high dimensional sphere $\calP(\bbS^{n-1})$ of the interaction energy with exponential
kernel $W_\beta$:
\begin{equation}
\label{eq:kernel-transformer}
W_\beta(x, y) := -\frac{1}{\beta}e^{\beta\left<x, y\right>}.
\end{equation}

In this example we consider the system of identical interacting particles on a high-dimensional sphere in the presence of noise
\[
d X_i =  -\nabla_{X_i}\frac{1}{N}\sum_{j=1}^N W_\beta(X_i, X_j) + \sqrt{2\gamma^{-1}}dB_i,
\]
where $\{B_i\}_{i\leq N}$ are independent Brownian motions on a sphere. This system can be seen as a proxy of the transformer model with perturbation of order $\gamma^{-1}$. We call the corresponding McKean-Vlasov equation the \emph{noisy transformer model} and study the structure of the set of invariant measures of this system for various set of parameters $\beta, \gamma \in \bbR_+$.

Clearly $W_\beta$ is bounded and rotationally symmetric, so the results of Section \ref{sec:general} apply. In particular note that $|\partial W_\beta|_\infty = e^\beta$ and $|\partial^2 W_\beta|_\infty = \beta e^\beta$, so using Corollary \ref{cor:convexity-sph} we conclude that for the inverse noise ratio $\gamma < \gamma_o = \frac{(n-2)}{4 \max(\beta e^{\beta}, e^\beta)}$ the uniform state is the unique invariant measure of the noisy transformer model. Since it is unique, the uniform states is the long-term limit of a solution with arbitrary admissible initial conditions. For the transformer model it implies that for the noise with the amplitude larger than $\gamma_o^{-1}$ the model does not preserve any information about the initial conditions, which is strongly undesirable from the application perspective.

We are also able to recover the bifurcation branches on every level of the spherical harmonics, namely:
\begin{proposition}[Noisy transformer on $\bbS^{n-1}$]
    Consider McKean-Vlasov equation \eqref{eq:mckean-vlasov} with the exponential interaction kernel defined in \eqref{eq:kernel-transformer}. Then for every $\beta\in \bbR_+$, for all $k \in \bbN_+$ there exist a bifurcation branch around $(\bar\rho, \gamma_{k})$, where 
    \[
    \gamma_{k} = \frac{\sqrt{\beta^n}}{2^{\frac{n-2}{2}}\Gamma(\frac{n}{2})I_{k+\frac{n-2}{2}}(\beta)}, 
    \]
    and $I_{\alpha}$ is the modified Bessel function. %
\end{proposition}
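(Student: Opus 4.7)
The plan is to verify the hypotheses of Theorem~\ref{th:bifurcations} for the rotationally symmetric kernel $W_\beta(t) = -\tfrac{1}{\beta}e^{\beta t}$ by computing every coefficient of its spherical harmonics decomposition explicitly and identifying $\gamma_k = -1/\hat{W}_k$ with the expression in the statement. Since $W_\beta \in C^\infty([-1,1])$, the integrability condition \eqref{eq:spherical:intergrability} and the hypothesis $W \in H^1 \cap C_b$ are trivially satisfied, so it only remains to compute $(\hat W_k)_{k\in\bbN}$ and check the uniqueness of each value.

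First I would compute $\hat W_k$ from Definition~\ref{def:spherical-decomposition} with $\lambda := (n-2)/2$. The key analytic ingredient is the classical Gegenbauer expansion of the exponential,
\[
e^{\beta t} = \Gamma(\lambda)\bra[\Big]{\tfrac{2}{\beta}}^{\lambda}\sum_{k=0}^{\infty}(k+\lambda)\, I_{k+\lambda}(\beta)\, C_k^{\lambda}(t),
\]
combined with the $L_2$-orthogonality of the Gegenbauer polynomials with weight $(1-t^2)^{\lambda-1/2}$. These two facts together yield
\[
\int_{-1}^{1} e^{\beta t} C_k^{\lambda}(t)(1-t^2)^{\lambda-1/2}\,dt = \frac{\pi\, 2^{1-\lambda}\,\Gamma(k+2\lambda)}{k!\,\Gamma(\lambda)}\,\beta^{-\lambda}\, I_{k+\lambda}(\beta).
\]
Plugging this into the formula for $\hat W_k$, using $C_k^{\lambda}(1) = \Gamma(k+2\lambda)/(k!\,\Gamma(2\lambda))$, the normalization $c_\lambda = \Gamma(\lambda+1)/(\sqrt{\pi}\,\Gamma(\lambda+1/2))$, and the Legendre duplication formula $\Gamma(2\lambda) = 2^{2\lambda-1}\pi^{-1/2}\Gamma(\lambda)\Gamma(\lambda+1/2)$, all of the $\Gamma$-factors telescope and give
\[
\hat W_k \;=\; -\,\frac{2^{(n-2)/2}\,\Gamma(n/2)}{\beta^{n/2}}\,I_{k+(n-2)/2}(\beta).
\]
In particular $-1/\hat W_k = \gamma_k$ as in the statement, and $\hat W_k < 0$ for every $k \in \bbN$.

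Next I would check the uniqueness condition of Theorem~\ref{th:bifurcations}, namely $\hat W_k \ne \hat W_j$ for $j\ne k$. By the explicit formula above, this reduces to $I_{k+\lambda}(\beta) \ne I_{j+\lambda}(\beta)$. This follows from the well-known strict monotonicity of $\nu \mapsto I_\nu(\beta)$ on $[0,\infty)$ for fixed $\beta>0$ (an immediate consequence of the series representation $I_\nu(\beta) = \sum_{m\ge 0}(\beta/2)^{2m+\nu}/(m!\,\Gamma(m+\nu+1))$, each term being strictly decreasing in $\nu$). Hence the values $(\hat W_k)_{k\in\bbN}$ are pairwise distinct.

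With the computation of $\hat W_k$ and the distinctness verified, Theorem~\ref{th:bifurcations} applies for every $k\in\bbN_+$ and yields a bifurcation curve branching from $(\bar\rho,\gamma_k)$, completing the proof. The only nontrivial obstacle is locating and justifying the Gegenbauer/Bessel integral identity; everything else is bookkeeping with Gamma functions and the elementary monotonicity of $I_\nu$.
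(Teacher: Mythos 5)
Your computation of $\hat W_k$ is correct, and it arrives at the same closed form as the paper, $\hat W_k = -2^{(n-2)/2}\beta^{-n/2}\Gamma(n/2)I_{k+(n-2)/2}(\beta)$, but by a genuinely different route. You invoke the classical Gegenbauer expansion of the exponential, $e^{\beta t}=\Gamma(\lambda)(2/\beta)^{\lambda}\sum_k(k+\lambda)I_{k+\lambda}(\beta)C_k^{\lambda}(t)$, and read off $\hat W_k$ from the $L_2$-orthogonality of the Gegenbauer polynomials. The paper instead starts from the Rodrigues representation of $C_k^{\lambda}$ and integrates by parts $k$ times against $e^{\beta t}$, landing on the same Bessel integral. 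Your route is arguably cleaner if one is happy to quote the expansion as a known identity (it is in Watson and in standard tables); the paper's route is more self-contained in the sense that it only assumes the Rodrigues formula. Both bookkeep the same Gamma-factors via the Legendre duplication formula, and both then invoke the strict decrease of $\nu\mapsto I_\nu(\beta)$ to verify the simplicity hypothesis of Theorem~\ref{th:bifurcations}.

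One caveat: your parenthetical justification of that monotonicity, namely that each term $(\beta/2)^{2m+\nu}/(m!\,\Gamma(m+\nu+1))$ of the series is itself strictly decreasing in $\nu$, is not true in general. The logarithmic derivative of the $m$-th term in $\nu$ is $\log(\beta/2)-\psi(m+\nu+1)$, which is positive for $m=0$ and small $\nu$ once $\beta$ exceeds roughly $2e^{\psi(\nu+1)}$ (already for $\beta>2$ at $\nu=0$). So the termwise argument fails for large $\beta$ even though the conclusion is correct. The fact that $I_\nu(\beta)$ is strictly decreasing in $\nu\geq 0$ for fixed $\beta>0$ is a genuine classical result (it is what the paper also quotes), but it requires a real argument rather than termwise comparison; you should either cite it or replace the parenthetical with a correct proof, e.g.\ via Nicholson-type integral representations or the known product/interlacing structure of $I_\nu$. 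The rest of the argument, including the reduction to Theorem~\ref{th:bifurcations} and the identification $\gamma_k=-1/\hat W_k$, is sound.
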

\begin{proof}
The Rodrigues formula for Gegenbauer polynomials (see~\cite[p. 303]{Andrews_Askey_Roy_1999} or~\cite[p.~143f]{chihara2011introduction}) provides the representation
\begin{equation}\label{eq:Rodrigues}
C_k^\alpha(t) = \frac{(-1)^k\Gamma(\alpha+\frac12)\Gamma(k+2\alpha)}{2^k k!\Gamma(2\alpha)\Gamma(\alpha+k+\frac12)}(1-t^2)^{-\alpha+\frac12}\frac{d^k}{dt^k}\left[(1-t^2)^{k+\alpha-\frac12}\right].
\end{equation}
Using the equality $C_k^\alpha(1) = \frac{\Gamma(k+2\alpha)}{\Gamma(2\alpha)\Gamma(k+1)}$ and the Rodrigues formula~\eqref{eq:Rodrigues}, after integrating by parts $k$ times we obtain the following expression for the spherical harmonics decomposition of $W_\beta$:
    \begin{align*}
\hat W_{\beta,k} &= -\frac{c_{\frac{n-2}{2}}}{\beta C_k^{\frac{n-2}{2}}(1)}\int_{-1}^1e^{\beta t}C^{\frac{n-2}{2}}_k(t)(1-t^2)^{\frac{n-3}{2}}dt \\
&= \frac{(-1)^{k+1}\Gamma(\frac{n}{2})}{2^k\beta\sqrt{\pi}\Gamma(k+\frac{n-1}{2})}\int_{-1}^1 e^{\beta t}\frac{d^k}{dt^k}\left[(1-t^2)^{k+\frac{n-3}{2}}\right] dt \\
&= \frac{(-1)^{2k+1}\Gamma(\frac{n}{2})\beta^{k-1}}{2^k\sqrt{\pi}\Gamma(k+\frac{n-1}{2})}\int_{-1}^1 e^{\beta t}(1-t^2)^{k+\frac{n-3}{2}} dt = -2^{\frac{n-2}{2}}\beta^{-\frac{n}{2}}\Gamma\left(\frac{n}{2}\right)I_{k+\frac{n-2}{2}}(\beta).
\end{align*}
Modified Bessel functions $I_\alpha(x)$ are decreasing in $\alpha$ for any fixed $x \in \bbR_+$, so all the components of the spherical harmonics decomposition of the exponential kernel are distinct. Applying the Theorem \ref{th:bifurcations} we get the result.
\end{proof}
\begin{remark}[Phase transition]
    Since modified Bessel functions $I_\alpha(x)$ are decreasing in $\alpha$ for any fixed $x \in \bbR_+$, so for all admissible $\beta$ the smallest component of the spherical harmonics  decomposition of the interaction kernel $W_\beta$ corresponds to $k_{\min} = k_1$. At the same time, corresponding harmonic $Y_{1, 0}$ is odd and thus it's cube integrates to zero, so the resonance Assumption \ref{assum:pt} is not satisfied and a finer analysis is required to establish the type of the phase transition of the noisy transformer model.
\end{remark}
\begin{remark}
    As studied in \cite{bruno2024emergence, geshkovski2024dynamic}, in the noiseless case $\gamma = \infty$, the transformer model exhibits metastable behavior. We expect the noisy model to have similar dynamics for large $\gamma$. Since the number of bifurcations points growths with the inverse temperature $\gamma$, we expect the model to have more saddles and more evident metastable behavior as $\gamma$ increases. 
\end{remark}
\subsection{Onsager model}\label{ssec:Onsager}
Onsager model of liquid crystals describes the behaviour of the liquid crystals materials in so-called nematic phase. Liquid crystals are assumed to consist of oriented interacting particles, and during nematic phase the main contribution of the interaction energy is coming from the orientation of the molecules (not their spatial coordinates), and thus the model is naturally posed on $\bbS^{n-1}$. Onsager interaction kernel has the form
\begin{equation}
 \label{eq:kernel-onsager}   
W(x, y) := \sqrt{1 - \left<x, y\right>^2}.
\end{equation}
Note that in this case correlated molecules $\left<x, y\right> = 1$ have the same contribution as anticorelated ones $\left<x, y\right> = -1$. The noise parameter $\gamma$ in this case plays the role of inverse temperature or particles 'mobility'. Existence of solutions and bifurcation branches of the Onsager model on~$\bbS^2$ have been established in \cite{WachsmuthThesis06,vollmer2018bifurcation}. In this Section we generalize the result to the sphere of arbitrary dimension $\bbS^{n-1}$ and characterize the phase transition of the corresponding model.

   \begin{proposition}[Onsager model on $\bbS^{n-1}$]
   Consider the McKean-Vlasov equation with the interaction kernel \eqref{eq:kernel-onsager}, then 
    \begin{itemize}
        \item the model undergoes a discontinuous phase transition,
        \item for every $k = 2l, \ l\in \bbN$ there exists a bifurcation branch around $(\bar\rho, \gamma_{2l})$,
        where
        \[
        \gamma_{2l} = s_n^{-1}\frac{\Gamma(l+\frac{n+1}{2})\Gamma(l+n-2)}{ \Gamma(l- \frac12 )\Gamma(l+ \frac{n-2}{2})},
        \]
        where $s_n = \frac{(n-2)\Gamma(\frac{n}{2})\Gamma(n-2)}{4\sqrt{\pi}\Gamma(\frac{n-1}{2})}$
    \end{itemize}
   \end{proposition}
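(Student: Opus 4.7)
The plan is to reduce the proposition to applications of Theorem~\ref{th:bifurcations} and Theorem~\ref{th:pt} by explicitly computing the spherical harmonics decomposition $(\hat W_k)_{k \in \bbN}$ of the Onsager kernel. Before invoking those theorems I would verify that $W(t) = \sqrt{1-t^2}$ defines a kernel satisfying Assumption~\ref{assum:non-stable-kernel}: rotational symmetry is built into the definition, boundedness is immediate, and the $H^1$ regularity on $\bbS^{n-1}\times\bbS^{n-1}$ follows because the only potential singularities of $\nabla_x W$ occur where $\left<x,y\right>=\pm 1$ and are integrable against the volume measure.

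The key computation is $\hat W_k = c_{(n-2)/2}\int_{-1}^1 \sqrt{1-t^2}\,\frac{C_k^{(n-2)/2}(t)}{C_k^{(n-2)/2}(1)}(1-t^2)^{(n-3)/2}\,dt$, where the weight combines with $\sqrt{1-t^2}$ to produce $(1-t^2)^{(n-2)/2}$. Since $W$ is even in $t$ while $C_{2l+1}^{(n-2)/2}$ is odd, we immediately get $\hat W_{2l+1}=0$ for every $l\geq 0$. For $k=2l$ I would proceed exactly as in the noisy-transformer computation: apply the Rodrigues formula~\eqref{eq:Rodrigues} to $C_{2l}^{(n-2)/2}$, then integrate by parts $2l$ times, the boundary contributions vanishing thanks to the surviving positive power of $(1-t^2)$. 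This reduces the integral to $\int_{-1}^1 \frac{d^{2l}}{dt^{2l}}[(1-t^2)^{1/2}]\,(1-t^2)^{2l+(n-3)/2}\,dt$, which in turn can be evaluated either by Leibniz' rule or by expanding $\frac{d^{2l}}{dt^{2l}}[(1-t^2)^{1/2}]$ as a polynomial in $t$ and applying the Beta-function identity $\int_{-1}^1 t^{2j}(1-t^2)^{2l+(n-3)/2}\,dt = B(j+\tfrac12, 2l+\tfrac{n-1}{2})$. Gamma-function bookkeeping then collapses the expression to $-s_n\,\Gamma(l-\tfrac12)\Gamma(l+\tfrac{n-2}{2})/[\Gamma(l+\tfrac{n+1}{2})\Gamma(l+n-2)]$, giving $\gamma_{2l} = -1/\hat W_{2l}$ as stated.

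From this explicit formula the sign analysis is immediate: $\hat W_{2l} < 0$ for all $l\geq 1$ (while $\hat W_0 > 0$ since $W\geq 0$), and the ratio $\hat W_{2(l+1)}/\hat W_{2l} = \frac{(l-1/2)(l+(n-2)/2)}{(l+(n+1)/2)(l+n-2)}\in (0,1)$ shows that $(|\hat W_{2l}|)_{l\geq 1}$ is strictly decreasing. Combined with $\hat W_{2l+1}=0$ and $\hat W_0>0$, all non-zero coefficients are pairwise distinct; in particular the hypothesis of Theorem~\ref{th:bifurcations} is satisfied at each $k=2l$, yielding the asserted bifurcation branches at $\gamma_{2l}=-1/\hat W_{2l}$.

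For the phase transition, the strict monotonicity argument identifies $k_{\min}=2$ as the unique minimizer of $k\mapsto \hat W_k$, so $\gamma_{\#}=\gamma_2$ and the minimizing eigenspace consists of multiples of the single symmetric mode $Y_{2,0}$. The explicit evaluation $\int_{\bbS^{n-1}} Y_{2,0}^3\,d\sigma \neq 0$ for all $n\geq 3$ is already recorded in the paragraph following Assumption~\ref{assum:pt}, so the single-mode resonance Assumption~\ref{assum:pt} (a special case of Assumption~\ref{assum:pt-general}) holds with $u=Y_{2,0}$. Theorem~\ref{th:pt} then delivers a discontinuous transition point $\gamma_c\in(0,\gamma_\#)$, proving the first bullet. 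I expect the Gamma-function manipulations in the integration-by-parts evaluation of $\hat W_{2l}$ to be the most delicate step; the structural claims (existence of bifurcation branches and of a discontinuous transition) then reduce to sign/monotonicity checks together with invocations of the abstract theorems already proved.
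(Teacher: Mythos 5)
Your overall architecture is exactly the paper's: verify that the Onsager kernel satisfies the standing assumptions, compute the spherical harmonics decomposition $(\hat W_k)$ explicitly, observe $\hat W_{2l+1}=0$ by parity and $\hat W_{2l}<0$ with strictly decreasing absolute value, conclude $k_{\min}=2$, invoke the self-resonance of $Y_{2,0}$ for Theorem~\ref{th:pt}, and invoke Theorem~\ref{th:bifurcations} for the branches. That matches.

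Where you diverge is in the evaluation of $\hat W_{2l}$, and your chosen route contains a slip. You propose Rodrigues plus $2l$-fold integration by parts (as in the transformer computation), arriving at $\int_{-1}^1 \frac{d^{2l}}{dt^{2l}}\bigl[(1-t^2)^{1/2}\bigr](1-t^2)^{2l+\frac{n-3}{2}}\,dt$, and then say one can "expand $\frac{d^{2l}}{dt^{2l}}\bigl[(1-t^2)^{1/2}\bigr]$ as a polynomial in $t$" and apply the Beta identity with weight $(1-t^2)^{2l+(n-3)/2}$. But $\frac{d^{2l}}{dt^{2l}}(1-t^2)^{1/2}$ is \emph{not} a polynomial; it has the form $P_{2l}(t)\,(1-t^2)^{1/2-2l}$ with $P_{2l}$ an even polynomial of degree $2l$. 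When you correct this, the singular factor $(1-t^2)^{1/2-2l}$ recombines with the $(1-t^2)^{2l+(n-3)/2}$ to give weight $(1-t^2)^{(n-2)/2}$, and the Beta identity must be applied with that weight, not with $(1-t^2)^{2l+(n-3)/2}$. At that point the computation is effectively the same as the paper's, which avoids Rodrigues altogether: because $\sqrt{1-t^2}\cdot(1-t^2)^{(n-3)/2}=(1-t^2)^{(n-2)/2}$ directly, the paper simply expands $C_{2l}^{(n-2)/2}$ in the monomial basis and uses $\int_{-1}^1 t^{2m}(1-t^2)^{(n-2)/2}dt=\Gamma(m+\tfrac12)\Gamma(\tfrac n2)/\Gamma(m+\tfrac{n+1}{2})$. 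The Rodrigues route works in the transformer example precisely because $e^{\beta t}$ has trivial derivatives, which is not the case for $\sqrt{1-t^2}$; for Onsager, the paper's direct monomial expansion is the cleaner path and sidesteps the bookkeeping you would need to repair.

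Everything downstream of the formula for $\hat W_{2l}$ — the sign and ratio analysis, pairwise distinctness (including comparison with $\hat W_0>0$), identification of $k_{\min}=2$, and the appeal to the explicit nonvanishing of $\int Y_{2,0}^3\,d\sigma$ and Theorems~\ref{th:bifurcations} and~\ref{th:pt} — is correct and coincides with the paper.
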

   \begin{proof}
       First of all note that both the kernel and the marginal of the spherical measure $\sigma$ are even functions of $\theta_{n-1}$. At the same time, since odd Gegenbauer polynomials are odd functions, corresponding coefficients of the spherical harmonics decomposition of the Onsager kernel are zero, namely $\hat W_{2l+1} = 0$ for all $l\in\bbN$. To calculate the even components of the decomposition we use the representation of the Gegenbauer polynomials in the monomial basis:
       \begin{align*}
       C_{k}^{\frac{n-2}{2}}(x)
    = \sum_{j=0}^{\lfloor k/2 \rfloor}(-1)^j \frac{\Gamma(k - j + \frac{n}{2} - 1)}{\Gamma(\frac{n}{2} - 1)j!(k - 2j)!}(2t)^{k - 2j}.
       \end{align*}
       Integrating the monomials we obtain
       \begin{align*}
            \int_{-1}^{1} t^{2m}\left( 1 - t^2 \right)^{(n - 2) / 2}~dt= \int_{0}^{\pi} \cos^{2m}\theta \sin^{n-1}\theta ~d\theta = \frac{\Gamma(m+ \frac{1}{ 2})\Gamma(\frac{n}{2})}{\Gamma(m +\frac{n+1}{2})},
       \end{align*}
       and combining the above formulas we get the following expression for the even coefficients $\hat W_{2k}:$
       \begin{align*}
           \hat W_{2k} &= \frac{c_{\frac{n-2}{2}}}{C_{2k}^{\frac{n-2}{2}}(1)}\int_{-1}^1\sum_{j=0}^{k}(-1)^j \frac{\Gamma(2k - j + \frac{n}{2} - 1)}{\Gamma(\frac{n}{2} - 1)j!(2k - 2j)!}4^{k-j}t^{2(k - j)}(1-t^2)^\frac{n-2}{2}dt \\
           &= \frac{(\frac{n}{2} - 1)\Gamma(\frac{n}{2})\Gamma(n-2)\Gamma(k+1)}{\Gamma(\frac{n-1}{2})\Gamma(k+n-2)} \sum_{j=0}^{k}(-1)^j \frac{\Gamma(2k - j + \frac{n}{2} - 1)}{j!(k - j)!\Gamma(k - j + \frac{n+ 1}{2})} \\
           &=-\frac{\sqrt{\pi}(\frac{n}{2}-1)\Gamma(\frac{n}{2})\Gamma(n-2)}{2\Gamma(\frac{n-1}{2})\Gamma(-\frac{3}{2})\Gamma(\frac{5}{2})} \times \frac{\Gamma(k-\frac{1}{2})\Gamma(k+\frac{n}{2}-1)}{\Gamma(k+\frac{n+1}{2})\Gamma(k+n-2)} \\
           &= -s_n \cdot \frac{\Gamma(k-\frac{1}{2})\Gamma(k+\frac{n}{2}-1)}{\Gamma(k+\frac{n+1}{2})\Gamma(k+n-2)},
       \end{align*}
       where $s_n$ is a dimension-dependent constant. Note that $s_n >0$ for $n\geq 3$ since all the components are positive in this case. Also note that the coefficients are decreasing in absolute value in $k$, for any $n\geq 3$. Namely we have
       \begin{align*}
       \frac{\hat W_{2k+2}}{\hat W_{2k}} &= \frac{\Gamma(k+\frac12 )\Gamma(k+\frac{n}{2})\Gamma(k+n-2)\Gamma(k+\frac{n+1}{2})}{\Gamma(k+1+\frac{n+1}{2})\Gamma(k+n-1)\Gamma(k- \frac12 )\Gamma(k - 1+\frac{n}{2})}\\
       &= \frac{(k-\frac12)(k+\frac{n}{2} - 1)}{(k +\frac{n+1}{2})(k+n-2)} = \frac{k-\frac12}{k+n-2} \times  \frac{k+\frac{n-2}{2}}{k+\frac{n+1}{2}} <1,
       \end{align*}
       implying that for any dimension $n\geq 3$ the minimal component corresponds to $k =2$. As shown in Section \ref{ssec:transition}, the second spherical harmonic $Y_{2,0}$ satisfies the self-resonance Condition \ref{assum:pt}, and thus the system has a discontinuous transition point. Moreover, since all the components are distinct, applying Theorem \ref{th:bifurcations} we recover all the bifurcation branches of the corresponding free energy functional.
   \end{proof}
   \begin{remark}
       For $n = 3$ we get $s_n = 1/8$ so the bifurcation points correspond to the values 
       \[
       \gamma_{2l} = \frac{8\Gamma(l+2)\Gamma(l+1)}{ \Gamma(l- \frac12 )\Gamma(l+ \frac{1}{2})},
       \]
       and we recover the result of \cite{vollmer2018bifurcation}. 
   \end{remark}
\subsection{Opinion dynamics}\label{ssec:opinion}
We introduce a spherical analogue of the Hegselmann-Krause model of opinion dynamics proposed in \cite{HegselmannKrause2002}. In this model, posed in $\bbR$, the particles (or 'agents') attract if they are in a close proximity of each other and have no effect on each other otherwise. We propose using the following interaction kernel as a spherical alternative of the Hegselmann-Krause model:
\begin{equation}
\label{eq:kernel-opinion}   
W_p(x, y) = -(1+\left<x, y\right>)^p = -(2 - (1-\left<x, y\right>))^p,
\end{equation}
where $p \in \bbR_+$. Note that the expression $(1-\left<x, y\right>)$ plays the role of the 'distance' between two vectors, and thus the kernel is an alternative to the rational kernel of the following form $W(x,y) = -(2 - \|x- y\|)_+^p$ on $\bbR$. Note, that localization in this model corresponds to large values of $p$.

The kernel is an element of $C_b$ for any $p\geq  0$ and an element of $H^1$ for $p> \frac12 -\frac{n-3}{2}$, so the corresponding existence (\ref{th:minimizers}) and regularity (\ref{prop:equivalence}) results apply. For $p\geq 2$, the second derivative is also bounded and we conclude that similar to the noisy transformer model, for the inverse noise ratio $\gamma <\gamma_o = \frac{(n-2)}{4\max(2^{p-1}p, 2^{p-2}p(p-1))}$ the free energy is geodesically convex and thus the model obtains a unique invariant measure, the uniform state $\bar\rho$. We can also characterize the set of bifurcation branches of the model on an arbitrary-dimensional sphere.
\begin{proposition}[Bifurcations for the opinion dynamics model on $\bbS^{n-1}$]
Consider the McKean-Vlasov equation with rational interaction kernel as introduced in \eqref{eq:kernel-opinion}, then 
\begin{itemize}
    \item for $p\in \bbN$ model has at most $p$ bifurcation branches,
    \item if $p\in \bbN$ and $p\leq n-1$, then for every $k \in \{1...p\}$ there exist a bifurcation branch around the uniform state
    \item for $p\in \bbR_+\backslash \bbN$ the model has a countable number of bifurcation branches, namely for every $k \in \bbN$ such that $\frac{(-1)^k\Gamma(k-p)}{\Gamma(-1-p)} < 0$ there exists a bifurcation branch at the $k$-th level of spherical harmonics basis
    \item if there exist a bifurcation branch at the $k$-th level, the corresponding $\gamma_k$ takes the form
    \[
    \gamma_k = \frac{\sqrt{\pi}\Gamma(p+1-k)\Gamma(n+k-1)}{2^{n-2+p}\Gamma(\frac{n}{2})\Gamma(\frac{n-1}{2}+p)\Gamma(p+1)}
    \]
   \end{itemize} 
\end{proposition}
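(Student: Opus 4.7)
The proof reduces to computing the spherical harmonic coefficients $\hat W_{p,k}$ of $W_p(t)=-(1+t)^p$ and feeding them into Theorem~\ref{th:bifurcations}. My plan is to apply the Rodrigues representation~\eqref{eq:Rodrigues} to rewrite $C_k^{(n-2)/2}(t)(1-t^2)^{(n-3)/2}$ as a constant multiple of $\partial_t^k\bigl[(1-t^2)^{k+(n-3)/2}\bigr]$, in the same spirit as the Onsager calculation in Section~\ref{ssec:Onsager}. Integrating by parts $k$ times against $(1+t)^p$ (the boundary terms vanish for $n\geq 3$ since $(1-t^2)^{k+(n-3)/2}$ and its lower-order derivatives die to sufficient order at $\pm 1$) transfers the derivatives onto $(1+t)^p$. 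Using $\partial_t^k(1+t)^p = \Gamma(p+1)(1+t)^{p-k}/\Gamma(p+1-k)$ and the Beta integral
\[
\int_{-1}^1 (1-t)^{k+\tfrac{n-3}{2}}(1+t)^{p+\tfrac{n-3}{2}}\,dt = \frac{2^{k+p+n-2}\,\Gamma\bigl(k+\tfrac{n-1}{2}\bigr)\Gamma\bigl(p+\tfrac{n-1}{2}\bigr)}{\Gamma(k+p+n-1)},
\]
together with $C_k^{(n-2)/2}(1)=\Gamma(k+n-2)/(\Gamma(n-2)\,k!)$ and the definition of $c_{(n-2)/2}$, produces the closed form for $\hat W_{p,k}$ whose reciprocal (up to the overall minus sign in $W_p$) is the stated $\gamma_k$.

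The two structural dichotomies follow from reading off the single remaining Gamma factor $1/\Gamma(p+1-k)$. For $p\in\bbN$ with $k>p$, this factor vanishes---equivalently $\partial_t^k(1+t)^p\equiv 0$---so $\hat W_{p,k}=0$ and at most $p$ modes can carry a bifurcation. For $1\leq k\leq p$ with $p\in\bbN$, $\Gamma(p+1-k)=(p-k)!>0$ and all remaining prefactors are positive, so $\hat W_{p,k}<0$. For non-integer $p$ I will combine the reflection identities $\Gamma(p+1-k)\Gamma(k-p)=\pi/\sin(\pi(k-p))$ and $\Gamma(-1-p)=\pi/[\sin(\pi p)\Gamma(p+2)]$ with $\sin(\pi(k-p))=(-1)^{k+1}\sin(\pi p)$ to derive
\[
\Gamma(p+1-k)=\frac{(-1)^{k+1}\Gamma(-1-p)\Gamma(p+2)}{\Gamma(k-p)}.
\]
Since $\Gamma(p+2)>0$, this identifies $\sign\Gamma(p+1-k)$ with $-\sign\bigl((-1)^k\Gamma(k-p)/\Gamma(-1-p)\bigr)$, converting $\hat W_{p,k}<0$ into the alternating criterion stated in the proposition.

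To invoke Theorem~\ref{th:bifurcations} at level $k$, the value $\hat W_{p,k}$ must also be simple in the sequence $(\hat W_{p,j})_{j\in\bbN}$. A direct ratio calculation from the closed form yields a strictly monotone chain $\gamma_1<\gamma_2<\cdots<\gamma_p$ under the hypothesis $p\leq n-1$ in the integer case, and the nonzero $\hat W_{p,k}$ are automatically distinct from the vanishing modes $\hat W_{p,j}=0$ for $j>p$, so simplicity is clear. The main delicate step I anticipate is the Gamma-function bookkeeping in the non-integer regime: both getting the sign criterion into the compact form above and carefully checking simplicity among the infinitely many admissible negative modes that appear when $p\notin\bbN$, where the monotonicity argument of the integer case no longer terminates.
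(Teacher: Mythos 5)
Your route is genuinely different from the paper's: the paper evaluates
\[
\int_{-1}^1 C_k^{\frac{n-2}{2}}(t)\,(1-t)^{\frac{n-3}{2}}(1+t)^{\frac{n-3}{2}+p}\,dt
\]
by citing a tabulated integral from Gradshteyn--Ryzhik, whereas you re-derive it from scratch via the Rodrigues formula~\eqref{eq:Rodrigues}, $k$ integrations by parts, and the Beta integral. This is the same self-contained scheme the paper itself uses for the transformer kernel, and it is cleaner here: you actually verify that all boundary terms vanish (they do, since at $t=1$ the lower-order derivatives of $(1-t^2)^{k+(n-3)/2}$ vanish, and at $t=-1$ the product $(1+t)^{p-j}\partial_t^{k-1-j}(1-t^2)^{k+(n-3)/2}\sim (1+t)^{p+(n-1)/2}\to 0$ for $p>0$). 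The sign dichotomy via the reflection identities and the observation that simplicity is automatic against the vanishing modes $\hat W_{p,j}=0$ for integer $p$ and $j>p$ are both correct and match the paper in spirit.

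There is, however, one concrete gap: you assert the calculation ``produces the closed form for $\hat W_{p,k}$ whose reciprocal is the stated $\gamma_k$,'' but if you carry the Beta integral through with $a=k+\tfrac{n-3}{2}$, $b=p+\tfrac{n-3}{2}$, then $a+b+2=k+p+n-1$, so the denominator Gamma is $\Gamma(k+p+n-1)$ and not the $\Gamma(n+k-1)$ appearing in the proposition; i.e.\ your own steps give
\[
\hat W_{p,k} = -\frac{2^{n-2+p}\,\Gamma\!\big(\tfrac{n}{2}\big)\Gamma(p+1)\Gamma\!\big(\tfrac{n-1}{2}+p\big)}{\sqrt{\pi}\,\Gamma(p+1-k)\,\Gamma(k+p+n-1)}.
\]
A spot check at $p=1$, $n=3$, $k=1$ settles which is right: directly, $\hat W_{1,1}=\tfrac12\int_{-1}^1\!\big(-(1+t)\big)t\,dt=-\tfrac13$, which matches $\Gamma(k+p+n-1)=\Gamma(4)=6$ in the denominator and not $\Gamma(n+k-1)=\Gamma(3)=2$. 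So your derivation does \emph{not} reproduce the displayed $\gamma_k$; it corrects it. Fortunately the correction is only a positive Gamma factor, so your sign criterion, reflection-identity manipulation, and the monotonicity check (which now reads $\hat W_{p,k+1}/\hat W_{p,k}=(p-k)/(k+p+n-1)<1$ for all $k\geq 1$, so the hypothesis $p\leq n-1$ is not even needed for distinctness among $1\leq k\leq p$) all survive unchanged. You should make the mismatch explicit rather than assert agreement with the stated $\gamma_k$.
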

\begin{proof}
First note that for $p\in \bbN$ the kernel is of a polynomial type and thus can be exactly represented by a linear combination of the first $p$ Gegenbauer polynomials, implying that $\hat W_{p, k} = 0$ for $k > p$. In general case, the spherical harmonics decomposition of $W_p$ takes the form
\begin{align*}
    \hat W_{p, k} &= -\frac{c_{\frac{n-2}{2}}}{C_k^{\frac{n-2}{2}}(1)}\int_{-1}^1(1+t)^pC^{\frac{n-2}{2}}_k(t)(1-t^2)^{\frac{n-3}{2}}dt \\
    &=-\frac{\Gamma(\frac{n}{2})\Gamma(n-2)\Gamma(k+1)}{\sqrt{\pi}\Gamma(\frac{n-1}{2})\Gamma(k+n-2)} \int_{-1}^1C^{\frac{n-2}{2}}_k(t)(1-t)^{\frac{n-3}{2}}(1+t)^{\frac{n-3}{2}+p}dt \\
    &= -\frac{2^{n-2+p}\Gamma(\frac{n}{2})\Gamma(\frac{n-1}{2}+p)\Gamma(p+1)}{\sqrt{\pi}\Gamma(p+1-k)\Gamma(n+k-1)} \sim -\frac{1}{\Gamma(p+1-k)\Gamma(n+k-1)},
\end{align*}
where we used the value of the definite integral from \cite[p. 795]{gradshteyn2014table}. Using the properties of Gamma function we then obtain
\[
-\frac{1}{\Gamma(p+1-k)\Gamma(n+k-1)}= \frac{(-1)^k\Gamma(k-p)}{\Gamma(-1-p)\Gamma(p+2)\Gamma(n+k-1)},
\]
and thus for $k> p$ the sign of $\hat W_{p, k}$ is $\frac{(-1)^k}{\Gamma(-1-p)}$, implying that either every even or every odd coefficient for $k>p$ is negative and all the components are distinct. Applying the Theorem \ref{th:bifurcations} we get the result. Finally note that for $p\in \bbN$ all nonzero components of the spherical harmonics decomposition are negative. Calculating the ratio we obtain
\begin{equation}\label{eq:HK:W:ratio}
\frac{\hat W_{p, k+1}}{\hat W_{p, k}} = \frac{\Gamma(p+1 - k)\Gamma(n+k-1)}{\Gamma(p-k)\Gamma(n+k)} = \frac{p-k}{n-1 +k },
\end{equation}
so for $p \leq n-1$, the ratio satisfies $\frac{\hat W_{p, k+1}}{\hat W_{p, k}} < 1$ for $k\geq 1$, implying that all the coefficients are distinct and thus, by theorem \ref{th:bifurcations} there exists a bifurcation branch for every $k \in \{1...p\}$.
\end{proof}
First of all, the above result shows that the sign of the coefficient of the spherical harmonics decomposition of $W_p$ does not depend on the dimensionality of the sphere but only on the parameter $p$. In particular, for $k=1$ the above criterion gives
\[
-\frac{\Gamma(1-p)}{\Gamma(-1-p)} = -(-p)(-1-p) = -p(1+p) < 0,
\]
so the bifurcation branch might exist for any positive $p$. Similar calculation for $k=2$ gives
\[
\frac{\Gamma(2-p)}{\Gamma(-1-p)} = (1-p)p(1+p) <0,
\]
implying that the branch can exist only if $p>1$. Analogous analysis can be conducted for any $k\in \bbN_+$. 

\begin{proposition}[Discontinuous phase transition for opinion dynamics]\label{prop:HK:discont}
    The McKean-Vlasov equation with rational interaction kernel as introduced in \eqref{eq:kernel-opinion} has a discontinuous phase transition on $\bbS^{n-1}$ for some $\gamma_c<\gamma_{\#}$ provided that $p=n+2$.
\end{proposition}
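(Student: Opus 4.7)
The plan is to verify the single-component resonance Assumption~\ref{assum:pt} with $k_{\min}=2$ and then invoke Theorem~\ref{th:pt}. Since $p=n+2\in\bbN$, the kernel $W_p$ is polynomial of degree $p$, so the formula from the preceding proposition reduces to
\[
\hat W_{p,k} \;=\; -\frac{C_n}{\Gamma(n+3-k)\,\Gamma(n+k-1)}\quad\text{for } 0\le k\le n+2,\qquad \hat W_{p,k}=0\ \text{for } k>n+2,
\]
with a positive dimensional constant $C_n$. In particular every nonzero coefficient is strictly negative, so Assumption~\ref{assum:non-stable-kernel} is automatic, and the hypotheses of Theorem~\ref{th:pt} will follow once we locate the argmin and verify the resonance integral at the corresponding harmonic.

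Next I would identify $k_{\min}$. Specializing the ratio identity~\eqref{eq:HK:W:ratio} to $p=n+2$ gives
\[
\frac{\hat W_{p,k+1}}{\hat W_{p,k}} \;=\; \frac{n+2-k}{n-1+k},
\]
which is strictly greater than $1$ for $k\in\{0,1\}$ and strictly less than $1$ for $k\ge 2$; equivalently, by log-convexity of $\Gamma$ applied to the constant-sum product $\Gamma(n+3-k)\Gamma(n+k-1)$ (here $a_k+b_k=2n+2$ is independent of $k$). Since all nonzero coefficients are negative, $|\hat W_{p,k}|$ strictly increases from $k=0$ to $k=2$ and strictly decreases for $k\ge 2$, so $k_{\min}=2$ is the \emph{unique} mode realizing $\min_{k\in\bbN}\hat W_{p,k}$.

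Finally, the required resonance $\int_{\bbS^{n-1}} Y_{2,0}^3\,d\sigma\neq 0$ is precisely the closed-form computation already carried out in the excerpt just after the statement of Assumption~\ref{assum:pt}, and is nonzero for every $n\ge 3$. Therefore Assumption~\ref{assum:pt} holds (equivalently Assumption~\ref{assum:pt-general} with $K_\alpha=\{2\}$), and Theorem~\ref{th:pt} yields the desired discontinuous transition point $\gamma_c\in(0,\gamma_\#)$. The only point requiring any care is to confirm that the argmin is taken over the full range $k\in\bbN$ (in particular against the trivial $k=0$ mode); this is handled uniformly by the monotonicity deduced from the ratio identity above, so no genuine obstacle arises.
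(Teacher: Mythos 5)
Your proposal is correct and follows essentially the same route as the paper: specialize the ratio identity~\eqref{eq:HK:W:ratio} to $p=n+2$ to see that the nonzero coefficients are all negative, that the ratio exceeds $1$ at $k=1$ and is strictly below $1$ for $k\geq 2$, hence $k_{\min}=2$ is the unique minimizing mode, and then invoke the already-computed $\int_{\bbS^{n-1}} Y_{2,0}^3\,d\sigma\neq 0$ together with Theorem~\ref{th:pt}. Your log-convexity-of-$\Gamma$ remark is a pleasant alternative way to read off that the constant-sum product $\Gamma(n+3-k)\Gamma(n+k-1)$ is minimized at the symmetric point $k=2$, but it gives the same conclusion as the elementary ratio computation the paper uses (and the aside about the $k=0$ mode is moot, since the minimum in Assumption~\ref{assum:pt} runs over $k\in\bbN=\{1,2,\dots\}$).
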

\begin{proof}
    We verify Assumption~\ref{assum:pt} for $k_{\min}=2$. So, we have to ensure that indeed $k=2$ is the most negative mode. This can be observed immediately from~\eqref{eq:HK:W:ratio} with the choice $p=n+2$, which then becomes $\hat W_{n+2,k+1}/\hat W_{n+2,k}=(n+2-k)/(n-1+k)$. Indeed, for $k=1$, we have $\hat W_{n+2,k+1}/\hat W_{n+2,k}=n+1/n > 1$ and for $k\geq 2$, we get 
    \[
    \frac{\hat W_{n+2,k+1}}{\hat W_{n+2,k}}=\frac{n-1+k+3-2k}{n-1+k}=1-
\frac{2k-3}{n-1+k}<1.
    \]
    Hence, $k_{\min}=2$ and we obtain a discontinuous phase transition from Theorem~\ref{th:pt}.
\end{proof}

\subsection{Localized spherical Gaussian kernel}\label{ssec:localized}
The Hegselmann-Krause model of opinion dynamics on $\bbR^n$ has a localized nature, while in our spherical analogue introduced in the previous section we only found a very special range of kernels for which we can prove a discontinuous phase transition (see Proposition~\ref{prop:HK:discont})
For this reason, we study a system with a localized kernel, where Assumption~\ref{assum:pt} is verifiable.
A canonical choice is the McKean-Vlasov equation with the exact hyperspherical heat kernel $W_\varepsilon(x, y) = - u(\varepsilon, x, y)$, where $u(\varepsilon, x, y)$ is the fundamental solution on $\bbS^{n-1}$. As follows from \cite{zhao2018exact}, $W_\varepsilon$ has an explicit representation in terms of the Gegenbauer polynomials:
\begin{equation}
    \label{eq:heat-kernel}
    W_\varepsilon(x,y) = -\sum_k e^{-k(k+n-2)\varepsilon}\frac{2k+n-2}{n-2}\frac{\Gamma(\frac{n}{2})}{2\sqrt{\pi^n}}C_k^{\frac{n-2}{2}}(\skp{x,y}),
\end{equation}
using Lemma \ref{lemma:gegegnbauer-decomposition} we can recover the bifurcation branches of the corresponding McKean-Vlasov equation.
\begin{proposition}[Localized interaction model on $\bbS^{n-1}$]
\label{prop:localized}
    Consider the McKean-Vlasov equation with the localized interaction kernel as in \eqref{eq:heat-kernel}, then for every $k\in \bbN$ there exists a bifurcation branch around $(\bar \rho, \gamma_k)$, where $\gamma_k$ has the form
    \[
    \gamma_k = \frac{2\sqrt{\pi^n}}{\Gamma(\frac{n}{2})}e^{k(k+n-2)\varepsilon}.
    \]

\end{proposition}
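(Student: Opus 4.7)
The plan is to apply Theorem~\ref{th:bifurcations} directly, so the main task is to read off the spherical harmonics decomposition of $W_\varepsilon$ from~\eqref{eq:heat-kernel} and verify the assumptions (regularity and uniqueness of each negative mode).

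First I would match~\eqref{eq:heat-kernel} against Lemma~\ref{lemma:gegegnbauer-decomposition}. The lemma gives the representation
\[
g = \sum_{k} \hat g_k \,\frac{2k+n-2}{n-2}\, C_k^{\frac{n-2}{2}},
\]
so comparing coefficient by coefficient against the prescribed series for $W_\varepsilon$ yields
\[
\hat W_{\varepsilon,k} \;=\; -\frac{\Gamma\bigl(\tfrac{n}{2}\bigr)}{2\sqrt{\pi^{\,n}}}\, e^{-k(k+n-2)\varepsilon} \qquad \text{for every } k\in\bbN.
\]
This identification is the main computational step; it is essentially bookkeeping of normalization constants, but one must take care to use the same Gegenbauer convention as in Definition~\ref{def:spherical-decomposition}.

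Next I would check the hypotheses of Theorem~\ref{th:bifurcations}. For any fixed $\varepsilon>0$ the prefactors $e^{-k(k+n-2)\varepsilon}$ decay faster than any polynomial in $k$, while the Gegenbauer polynomials $C_k^{(n-2)/2}$ grow only polynomially on $[-1,1]$, so the series~\eqref{eq:heat-kernel} and all its derivatives converge absolutely and uniformly. Hence $W_\varepsilon\in C^\infty(\bbS^{n-1}\times\bbS^{n-1})\subset C_b\cap H^1$, which is the required regularity. Moreover, for each $k\in\bbN$ the coefficient $\hat W_{\varepsilon,k}$ is strictly negative, and since the map $k\mapsto k(k+n-2)$ is strictly increasing on $\bbN$ (for $n\ge 2$), the values $\hat W_{\varepsilon,k}$ are pairwise distinct. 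Thus each $\hat W_{\varepsilon,k}$ is the unique negative value at level $k$ in the sense of Theorem~\ref{th:bifurcations}.

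Finally, Theorem~\ref{th:bifurcations} produces a bifurcation branch at $\gamma_k=-1/\hat W_{\varepsilon,k}$, which after substitution gives
\[
\gamma_k \;=\; \frac{2\sqrt{\pi^{\,n}}}{\Gamma\bigl(\tfrac{n}{2}\bigr)}\, e^{k(k+n-2)\varepsilon},
\]
as claimed. I do not expect any genuine obstacle here: once the spherical harmonic coefficients are read off from the explicit series, everything reduces to applying Theorem~\ref{th:bifurcations}. The only subtle point is making sure the Gegenbauer-polynomial normalization in~\eqref{eq:heat-kernel} is compatible with the one used in Definition~\ref{def:spherical-decomposition}, so that the identification of $\hat W_{\varepsilon,k}$ is correct.
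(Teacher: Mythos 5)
Your proof is correct and follows the paper's argument essentially verbatim: identify the coefficients $\hat W_{\varepsilon,k}$ by matching the given Gegenbauer series against Lemma~\ref{lemma:gegegnbauer-decomposition}, observe that the $\hat W_{\varepsilon,k}$ are all negative and pairwise distinct, and invoke Theorem~\ref{th:bifurcations} to obtain $\gamma_k=-1/\hat W_{\varepsilon,k}$. You are somewhat more explicit than the paper in verifying the regularity hypothesis ($W_\varepsilon\in C_b\cap H^1$ via the rapid decay of $e^{-k(k+n-2)\varepsilon}$) and the distinctness of the coefficients via the strict monotonicity of $k\mapsto k(k+n-2)$, both of which the paper simply asserts; this is a minor sharpening of the same argument, not a different route.
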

\begin{proof}
    Using Lemma \ref{lemma:gegegnbauer-decomposition} we conclude that the spherical harmonics decomposition of $W_\varepsilon$ takes the form
    \[
    \hat W_{\varepsilon, k} = -\frac{\Gamma(\frac{n}{2})}{2\sqrt{\pi^n}}e^{-k(k+n-2)\varepsilon}.
    \]
    Note that all the coefficients $\hat W_{\varepsilon, k}$ are distinct for arbitrary $\varepsilon$ and arbitrary $n$. Applying Theorem~\ref{th:bifurcations} we recover all the bifurcation branches of the corresponding system.
\end{proof}
\begin{proposition}[Discontinuous phase transition of the localized interaction model]
Consider the McKean-Vlasov equation with the localized interaction kernel \eqref{eq:heat-kernel}, then there exists $\varepsilon_0 > 0$ such that for all $\varepsilon \in (0, \varepsilon_0)$ the model undergoes a discontinuous phase transition.
\end{proposition}
\begin{proof}
 It follows from the proposition above that all the components of the spherical harmonics decomposition are different and $k_{\min} = 1$. However, for small $\varepsilon$ the difference between the first and the second components decays with $\varepsilon$:
 \[
 \hat W_{\varepsilon, 2} - \hat W_{\varepsilon, 1} \sim (n+1)\varepsilon
 \]
 and thus we can show that for small enough $\varepsilon$ the model satisfies the relaxed resonance condition \ref{assum:pt-almost}. Making this observation rigorous we obtain
 \[
 \hat W_{\varepsilon, 2} - \hat W_{\varepsilon, 1} = \frac{\Gamma(\frac{n}{2})}{2\sqrt{\pi^n}}\left(e^{-(n-1)\varepsilon}  - e^{-2n\varepsilon}\right) = \frac{\Gamma(\frac{n}{2})e^{-(n-1)\varepsilon}}{2\sqrt{\pi^n}} \left(1  - e^{-(n+1)\varepsilon}\right) \leq \frac{3\varepsilon(n+1)\Gamma(\frac{n}{2})}{4\sqrt{\pi^n}},
 \]
 provided that $\varepsilon(n+1) < 1$. Taking $u = \frac{Y_{2, 0}}{|Y_{2, 0}|_\infty}$, we then can check that for $\varepsilon < \frac{C_0\Gamma\left(\frac{n}{2}\right)U_3^2}{(n-1)\sqrt{\pi^n}} =\varepsilon_0$, where $C_0$ is a constant independent of $n$, the bandwidth $\delta_\varepsilon = \hat W_{\varepsilon, 2} - \hat W_{\varepsilon, 1}$ is sufficiently small for $W_\varepsilon$ to satisfy Condition \ref{assum:pt-almost}. Thus, according to Theorem~\ref{th:pt}, the model undergoes a discontinuous phase transition for any $\varepsilon < \varepsilon_0$.
\end{proof}

\appendix

\section{Differential forms}
\label{sec:geometry}
In this section we give some background information on Riemannian geometry and define the differential operators used in this paper. We consider a smooth compact Riemannian manifold $(\calM, g)$ without boundary with metric $g$ which assigns a positive-definite quadratic form on the tangent bundle $g_x: T_x\calM \times T_x\calM \to \bbR_+$ to any point $p\in \calM$. For a smooth function $\gamma: [0,1] \to \calM$ we define the curve-length as
\[
L(\gamma) := \int_0^1 \sqrt{p_{\gamma(s)}[\gamma'(s),\gamma'(s)]}ds,
\]
where 
\[
\gamma'(s) = \lim_{\delta \to 0}\frac{1}{2\delta} [\gamma(s+\delta) - \gamma(s-\delta)] \in T_{\gamma(s)}\calM,
\]
for $s \in (0,1)$, and the right or left limit for the end points. Then the distance between any two points $x, y \in \calM$ is defined as
\[
\dist(x,y) = \inf_{\gamma \in \Gamma_{x,y}}L(\gamma),
\]
where $\Gamma_{x,y}$ is a set of all smooth curves $\gamma$ satisfying $\gamma(0)= x, \ \gamma(1)= y$. Given a connection $\nabla$ on $\calM$, a constant-speed geodesic $\gamma$ is a curve satisfying the zero-acceleration condition
\begin{equation}
    \label{eq:geodesic}
\nabla_{\gamma'(s)}\gamma'(s) = 0.
\end{equation}
In this work we will always consider Levi-Civita connection, the unique for every manifold torsion-free and metric-preserving connection. For two smooth vector fields $X, Y$ the map $\nabla_XY: \calM \to  T\calM$ is called a covariant derivative of $Y$ in the direction of $X$ and it's evaluation at $\xi\in\calM$ shows the change of vector $Y_\xi = Y(\xi)$ in the direction of $X_\xi= X(\xi)$. %

 For any fixed connection on $\calM$, for every point $x\in \calM$ and a tangent vector $v\in T_x\calM$ there exists a unique
geodesic $\gamma_{x,v}: [0,1] \to \calM$ with initial condition $\gamma(0) = x, \ \gamma'(0) = v$. Then the exponential map is the end point of such a curve:
\[
\exp_x(v) = \gamma_{x,v}(1).
\]
For a smooth function $f: \calM \to \bbR$ it's differential at a point $x\in\calM$ is a linear map $df_x: T_x\calM \to \bbR$ such that for any smooth curve satisfying $\gamma(0) = x, \ \gamma'(0) = v$ it holds that
\[
df_x(\gamma'(0)) = (f\circ \gamma)'(0),
\]
where the expression on the right hand side $f\circ \gamma$ is a curve in $\bbR$. The gradient of a smooth function $f: \calM \to \bbR$ is a vector field $\grad f$ which for any vector field $Z$ on $\calM$ and any point $x\in \calM$ satisfies
\[
g_x((\grad f)_x, Z_x) = df_x(Z_x).
\]
\begin{example}
    On a unit sphere $\calM = \bbS^{n-1}$ equipped with distance $\dist(x, y) = \arccos(\left<x, y\right>)$ the manifold gradient $\grad_{\bbS^{n-1}} f$ in Euclidean coordinates is equal to the projection of the Euclidean gradient onto the tangent space at $x$:
    \[
    \grad_{\bbS^{n-1}} f_x = \nabla_{\bbR^n} f_x -  \left<\nabla_{\bbR^n} f_x, x\right> x,
    \]
    where $\left<\cdot, \cdot \right>$ is a Euclidean scalar product and $\nabla_{\bbR^n} f_x = \left(\frac{\partial f(x)}{\partial x_1}, ... \frac{\partial f(x)}{\partial x_n} \right)$.
\end{example}
\begin{remark}
    In a special case when the metric $g$ is induced by a scalar product in some Euclidean space $\bbR^{n'}$ for $n'>n$, the scalar product satisfies
    \[
    g(X, Y) = \left<X, P_{T\calM}Y\right>_{\bbR^{n'}},
    \]
    and as a result the manifold gradient is a projection of the Euclidean gradient onto the tangent space
    $\grad_\calM f = P_{T\calM}\grad_{\bbR^{n'}}f$.
\end{remark}
Divergence of a smooth vector-field $X$ on a manifold is a trace of the covariant derivative $\nabla X$ with Levi-Civita connection:
\[
\divr X := \tr (\nabla X),
\]
where $\nabla X$ is an object which for every smooth vector field $Y$ satisfies $\nabla X(Y) = \nabla_Y X$ 
in particular let $\{e_i\}$ be an orthonormal basis of the tangent bundle $T\calM$, then
\[
\divr X = \sum_i \left<\nabla_{e_i} X, e_i\right> = \sum_i g(\nabla_{e_i} X, e_i).
\]
For an $n$-dimensional Riemannian manifold we can define a unique volume measure $m$ which in local coordinates takes the form
\[
dm = \sqrt{\det g_{ij}}dx,
\]
where $g_{ij}$ is the metric tensor in local coordinates and $dx$ is the Lebesgue volume element in $\bbR^n$. As a result for any compact manifold without boundary $(\calM, g)$ we get the following integration by part rule
\[
\int \phi \cdot \divr X dm = -\int g(\grad \phi, X )dm
\]
for any $\phi \in C^\infty(\calM)$.

Laplace-Beltrami operator is a generalization of the Laplace operator to the manifold setting, namely for any smooth function $f: \calM \to \bbR$ such that $\grad f$ is a smooth vector field the action of the Laplace-Beltrami operator is defined as
\[
\Delta f := \divr (\grad f).
\]
\begin{example}[Corollary 1.4.3 {\cite{dai2013approximation}}]
    On a unit sphere $\calM = \bbS^{n-1}$ equipped with distance $\dist(x, y) = \arccos(\left<x, y\right>)$ the Laplace-Beltrami operator $\Delta f$ is equal to the Euclidean Laplacian of the function $\tilde f : \bbR^n \to \bbR$ defined as $\tilde f (x) = f(x/\|x\|)$:
    \[
    \Delta_{\bbS^{n-1}} f = \Delta_{\bbR^n} \tilde f, 
    \]
    where $\Delta_{\bbR^n} = \sum_i \frac{\partial^2}{\partial x_i^2}$.
\end{example}

\begin{remark}(Laplace-Beltrami is a generator of Brownian motion)

Consider a smooth Riemannian manifold $\calM$ and let $p(x, y, t)$ for $x, y \in \calM,  \ t \in [0,T)$ be the (unique) solution of the following initial value problem:
\begin{align*}
    u_t - \Delta u &= 0, \\
    \lim_{t\to +0} u(x, t) &= \delta_y(x), \\ 
    p(y, x, t) &= u(x, t).
\end{align*}
Define the $\calM$-valued stochastic process $B_t$ with the transition probability $\bbP(B_t = x| B_0 = y) = p(y, x, t)$, then one can show that the Laplace-Beltrami is a generator of $B_t$, namely for every admissible $f$ it holds that
\[
\bbE f(B_t) = \bbE f(B_0) + \frac12\int_0^t \bbE\Delta f(B_s)ds.
\]
We will call $B_t$ a Brownian motion on $\calM$. 
\end{remark}

\subsection{Sobolev spaces on manifolds} \label{ssec:SobolevMfds}

We define the $H^1(\calM)$ on a manifold $\calM$ as the space of all functions $f\in L_2(\calM)$ such that the gradient $\nabla f$ exists $m$-almost everywhere and is an element of $L_2(T\calM)$:
\[
H^1(\calM) := \{f: \calM \to \bbR: \int_\calM|f|^2dm < \infty, \ \int_\calM g(\nabla f, \nabla f)dm < \infty\},
\]
equipped with the norm
\[
\|f\|_{H^1(\calM)} = \int_\calM |f|^2dm + \int_\calM g(\grad f , \grad f)dm.
\]
Higher order Sobolev spaces can be defined in terms of the higher order covariant derivatives, we refer the reader to \cite{hebey1996sobolev} for details. Similar to $\bbR^d$, Sobolev spaces on a manifold can also be defined as a closure of the space of smooth functions with respect to the corresponding measure. On the equivalence between two definitions see also \cite{chan2024meyers}.

For any $f\in H^1(\calM)$ we say that $u: \calM \to \bbR$ satisfies $u = \Delta f$ in a weak sense if for any $\phi \in C^\infty(\calM)$ we have
\[
\int \phi \cdot u dm = -\int g(\grad \phi, \grad f )dm.
\]
Finally, note that by definition $H^1(\calM) \subseteq L_2(\calM)$. %

\subsection{Product manifolds}\label{ssec:ProductMfds}

When working with a product space $\calM\times \calM$ we will consider the following product distance $\dist_{\calM \times \calM}(x,y)^2 = \dist_{\calM}(x_1,y_1)^2 + \dist_{\calM}(x_2,y_2)^2$, which implies that at any point there exist local coordinates such that the metric tensor $g_{\calM \times \calM}$ has the block-diagonal form and the scalar product satisfies $g_{\calM\times\calM}(X, Y) = g_{\calM}(X_1, Y_1) + g_{\calM}(X_2, Y_2)$. Then the gradient can also be decomposed into two components $\nabla f(x)= (\nabla_{x_1}f, \nabla_{x_2}f)$ and the volume measure is a product measure $dm_{\calM\times \calM}(x)=dm(x_1)dm(x_2)$.

\section{\texorpdfstring{$\Gamma$}{Gamma}-convergence}\label{appendix:Gamma}
In this section we give the background information on $\Gamma$-convergence, we refer the reader to \cite{braides2006handbook} for the proofs.
\begin{definition}[$\Gamma$-convergence]
    A sequence of functionals $F_n: X \to \bbR$ on some metric space $(X,d)$ is said to converge to a functional $F:X \to \bbR$ in the sense of $\Gamma$-convergence, denoted by $F_n \stackrel{\Gamma}{\to} F$, if
the following two conditions are satisfied:
\begin{itemize}
    \item \emph{(Liminf inequality)} For every convergent sequence $x_n \to \hat x$ it holds
    \[
    F(\hat x) \leq \liminf_{n\to\infty} F_n(x_n),
    \]
    \item \emph{(Limsup inequality)} For every $\hat x \in X$ there exists a convergent sequence $x_n \to x$ such that 
    \[
    F(\hat x) \geq \limsup_{n\to\infty} F_n(x_n).
    \]    
\end{itemize}
\end{definition}
Note that $\Gamma$-convergence depends on the topology of the underlying space $X$. Remarkably, a constant sequence $F_n =F$ converges to $F$ in the sense of  $\Gamma$-convergence only if $F_n$ is a lower-semicontinuous functional. In general case, $\Gamma$-limit of a constant sequence $F_n = F$ is the lower-semicontinuous envelope of $F$.
\begin{definition}[Lower-semicontinuous envelope]
For a functional $F: X\to \bbR$ defined on a metric space $(X, d)$ we define it's \emph{lower-semicontinuous envelope} $\text{lsc}(F): X\to \bbR$ as the largest lower-semicontinuous functional not exceeding $F$:
\[
\text{lsc}(F) := \sup\{ G: \ G \leq F, \ G : X \to \bbR \text{ is $d$-lower semi-continuous} \}.
\]    
\end{definition}
We will use the following proposition.
\begin{proposition}[$\Gamma$-limit of decreasing sequence]
\label{prop:gamma-decreasing}
    Let $(F_n)_{n\in\bbN}$ be a decreasing sequence of functionals and let $F_n \to F$ pointwise, then $F_n \stackrel{\Gamma}{\to} \text{\emph{lsc}}(F)$.
\end{proposition}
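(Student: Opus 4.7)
Since $(F_n)$ is decreasing and converges pointwise to $F$, for every $x \in X$ we have $F_n(x) \searrow F(x)$, and in particular $F_n \geq F \geq \mathrm{lsc}(F)$ on $X$ for every $n \in \bbN$. The plan is to verify the two defining inequalities of $\Gamma$-convergence separately.

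For the \emph{liminf inequality}, take any $x_n \to x$. The pointwise bound $F_n \geq \mathrm{lsc}(F)$ gives $F_n(x_n) \geq \mathrm{lsc}(F)(x_n)$ for every $n$, and since $\mathrm{lsc}(F)$ is lower semicontinuous by construction, we get
\[
\liminf_{n\to\infty} F_n(x_n) \;\geq\; \liminf_{n\to\infty} \mathrm{lsc}(F)(x_n) \;\geq\; \mathrm{lsc}(F)(x).
\]
This step is essentially free; the only nontrivial content is the general fact $\mathrm{lsc}(F) \leq F$, which is immediate from the definition of the lower semicontinuous envelope.

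The \emph{limsup inequality} (construction of a recovery sequence) is the main obstacle, because the obvious candidate, the constant sequence $x_n = x$, only yields $\limsup F_n(x) = F(x)$, which may strictly exceed $\mathrm{lsc}(F)(x)$. To repair this, I will invoke the relaxation formula
\[
\mathrm{lsc}(F)(x) \;=\; \inf\set[\big]{\liminf_{k\to\infty} F(y_k) : y_k \to x},
\]
which is a standard characterisation of the lower semicontinuous envelope on a metric space. Choose (possibly after passing to a subsequence) a sequence $y_k \to x$ with $F(y_k) \to \mathrm{lsc}(F)(x)$. For each fixed $k$, pointwise convergence $F_n(y_k) \to F(y_k)$ lets us pick an index $n(k)$, which we may take strictly increasing with $n(k) \to \infty$, such that $F_{n(k)}(y_k) \leq F(y_k) + \tfrac1k$.

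The final step is a diagonal construction: define $x_n := y_k$ for every $n \in [n(k), n(k+1))$, extended arbitrarily for $n < n(1)$. Since $y_k \to x$ and each $n$ eventually falls in intervals with $k \to \infty$, we have $x_n \to x$. Monotonicity of $(F_n)$ combined with $n \geq n(k)$ on the corresponding interval gives
\[
F_n(x_n) \;=\; F_n(y_k) \;\leq\; F_{n(k)}(y_k) \;\leq\; F(y_k) + \tfrac1k,
\]
and letting $n \to \infty$ (hence $k \to \infty$) yields $\limsup_{n\to\infty} F_n(x_n) \leq \mathrm{lsc}(F)(x)$, completing the recovery sequence. Combining both inequalities gives $F_n \stackrel{\Gamma}{\to} \mathrm{lsc}(F)$ as claimed.
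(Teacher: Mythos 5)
Your argument is correct and is the standard proof of this fact. The paper itself does not prove Proposition~\ref{prop:gamma-decreasing}; it is cited from~\cite{braides2006handbook}, so there is no ``paper proof'' to compare against, but what you wrote is essentially the textbook argument (see e.g.\ Dal~Maso's monograph). The liminf bound is exactly the right one-liner, and the recovery sequence via the relaxation formula plus a monotonicity-assisted diagonal is the canonical construction. Two minor points worth flagging for completeness: (i) when $\mathrm{lsc}(F)(\hat x)=+\infty$ the relaxation formula still lets you pick $y_k\to\hat x$ with $F(y_k)\to+\infty$ (indeed any sequence works), so the argument goes through without a case split, but it does no harm to say so; and (ii) the phrase ``possibly after passing to a subsequence'' slightly understates the work: the infimum in the relaxation formula runs over all admissible sequences and need not be attained by any single one, so in general one diagonalizes across a family of sequences $y^m_k\to\hat x$ with $\liminf_k F(y^m_k)<\mathrm{lsc}(F)(\hat x)+1/m$ to extract a single $y_k\to\hat x$ with $F(y_k)\to\mathrm{lsc}(F)(\hat x)$. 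Both are routine and do not affect the validity of the proof.
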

$\Gamma$-convergence is an important tool for the characterization of the minimizers of the limiting functionals. The fundamental Theorem of $\Gamma$-convergence establishes necessary conditions for the convergence of minimizers for an equi-coercive sequence of functionals. 
\begin{definition}[Equicoercivity]
    A sequence of functionals $(F_n)_{n\in\bbN}$ is equi-coercive if for all $\xi \in \bbR$ there exist a compact set $K_\xi\subseteq X$ such that $\{x: F_n(x) < \xi\} \subset K_\xi$ for all $n \in \bbN$.
\end{definition}
\begin{theorem}[Fundamental theorem of $\Gamma$-convergence]
    \label{th:gamma-coonvergence}
    Let $(F_n)_{n\in\bbN}$ be an equi-coercive sequence of functionals and let $F_n \stackrel{\Gamma}{\to} F$, such that $\inf F = \alpha_0$, then 
    \begin{enumerate}    
        \item $\lim_{n\to\infty} \inf F_n =\alpha_0$,
        \item if $x_n \to \hat x$ and $F_n(x_n)\to \alpha_0$, then $\hat x$ is a minimizer of $F$,
        \item every sequence $(x_n)_{n\in\bbN}$ satisfying $F_n(x_n)\to \alpha_0$ has a convergent subsequence $x_{n_k} \to \hat x$ and every accumulation point $\hat x$ is a minimizer of $F$.
\end{enumerate}
\end{theorem}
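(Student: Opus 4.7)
The plan is to establish the three claims in order, using the two inequalities defining $\Gamma$-convergence together with the equi-coercivity hypothesis to extract compactness whenever we have an approximate minimizing sequence.

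First I would prove (1) via two matching bounds. For the upper bound, I would fix an arbitrary $x\in X$ with $F(x)<\infty$ and apply the limsup inequality to get a recovery sequence $y_n\to x$ with $\limsup_n F_n(y_n)\leq F(x)$; since $\inf F_n \leq F_n(y_n)$ for each $n$, taking $\limsup$ and then infimum over $x$ yields $\limsup_n \inf F_n \leq \alpha_0$. For the lower bound, I would pick an almost-minimizing sequence $x_n$ satisfying $F_n(x_n)\leq \inf F_n + 1/n$. For $n$ large, $F_n(x_n)\leq \alpha_0+1$ (assuming $\alpha_0<\infty$; the case $\alpha_0=+\infty$ is trivial via the upper bound), so by equi-coercivity the $x_n$ lie in a common compact set $K_{\alpha_0+1}$. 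Extract a convergent subsequence $x_{n_k}\to \hat x$ and apply the liminf inequality: $\alpha_0\leq F(\hat x)\leq \liminf_k F_{n_k}(x_{n_k}) = \liminf_k \inf F_{n_k} \leq \liminf_n \inf F_n$. Combining the two bounds forces the limit to exist and equal $\alpha_0$.

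Claim (2) is essentially immediate from the liminf inequality: if $x_n\to \hat x$ with $F_n(x_n)\to\alpha_0$, then $F(\hat x)\leq \liminf_n F_n(x_n)=\alpha_0=\inf F$, so $\hat x$ is a minimizer. Claim (3) then follows by combining equi-coercivity with (2): any sequence $(x_n)$ with $F_n(x_n)\to\alpha_0$ is eventually contained in some $K_{\alpha_0+1}$, hence has a convergent subsequence, and by (2) every accumulation point is a minimizer of $F$.

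I do not anticipate a serious obstacle here: the result is the standard fundamental theorem of $\Gamma$-convergence and each step is a short application of one of the hypotheses. The only subtlety worth stating carefully is that the recovery sequences supplied by the limsup inequality need not be almost minimizing for $F_n$, which is why the lower bound in (1) must instead be obtained by running the argument on a genuinely almost-minimizing sequence and exploiting equi-coercivity to land in a compact set where the liminf inequality can be invoked. The only degenerate case to address separately is $\alpha_0=+\infty$, where (1) is vacuous from the upper bound alone and (2)–(3) have no hypothesis to verify.
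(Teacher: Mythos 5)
The paper does not actually prove this statement: Appendix~B opens by referring the reader to the handbook of Braides for all proofs of the $\Gamma$-convergence facts, including this one. So I compare your argument to the standard textbook proof, with which it largely agrees. There is, however, one genuine error and one smaller inaccuracy.

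The error is in the last link of the chain you use for the lower bound in (1): after extracting a convergent subsequence $x_{n_k}\to\hat x$ and applying the liminf inequality you write
\[
\alpha_0 \leq F(\hat x) \leq \liminf_k F_{n_k}(x_{n_k}) = \liminf_k \inf F_{n_k} \leq \liminf_n \inf F_n .
\]
The final inequality points the wrong way: for any sequence $(a_n)$ and any subsequence $(a_{n_k})$ one has $\liminf_k a_{n_k} \geq \liminf_n a_n$, not $\leq$. As written, you have only shown that \emph{some} subsequence of $\inf F_n$ has liminf $\geq \alpha_0$, which does not give $\liminf_n \inf F_n \geq \alpha_0$. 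The fix is standard and you should make it explicit: first pass to a subsequence $(n_k)$ along which $\inf F_{n_k}\to\liminf_n\inf F_n$, then choose almost-minimizers $x_{n_k}$, use equi-coercivity to extract a further convergent subsubsequence, and only then invoke the liminf inequality to conclude $\alpha_0 \leq \liminf_n \inf F_n$; combining with the upper bound $\limsup_n\inf F_n\leq\alpha_0$ finishes (1). A second, smaller inaccuracy: you say the case $\alpha_0=+\infty$ is ``trivial via the upper bound,'' but the upper bound $\limsup_n\inf F_n\leq +\infty$ is vacuous and gives nothing. That case still requires the same compactness argument (if $\inf F_{n_k}$ were bounded along a subsequence, equi-coercivity and the liminf inequality would produce a point with $F(\hat x)<\infty$, contradicting $F\equiv+\infty$). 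Parts (2) and (3) are fine; note only that in (3) you are applying the liminf inequality along a subsequence, which is justified because $\Gamma$-convergence of $(F_n)$ to $F$ is inherited by every subsequence.
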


\section{Proofs of auxilary Lemmas for Section \ref{sec:bifurcations}}
\label{appendix:bifurcations}
\begin{proof}[Proof of Lemma \ref{lemma:G}]
\textbf{Uniform convergence of $G(w, \gamma)$.} We begin by showing the uniform in $\gamma$ convergence to zero around zero. Let
\begin{align*}
    A_\gamma(w)&:=\frac{1}{Z(\gamma, w)} e^{-\gamma W* (\bar \rho + w)} = B_\gamma(w)C_\gamma(w), \quad \text{where}\\
    B_\gamma(w) &:= \frac{1}{Z(\gamma, w)}, \\
    C_\gamma(w) &:= e^{-\gamma_0 W* (\bar \rho + w)}.
\end{align*}
Using Taylor expansion we get the estimates for $ B_\gamma(w)$ and $C_\gamma(w)$ for $\|w\|_{L_2} \to 0$:
\begin{align*}
    B_\gamma(w) &= \frac{1}{Z(\gamma, 0) + (Z(\gamma, w) - Z(\gamma, 0))}= \frac{1}{Z(\gamma, 0)}\left(1 + \frac{(Z(\gamma, 0) - Z(\gamma, w))}{Z(\gamma, 0)} + f_1(w)\right) \\
    &= \frac{1}{Z(\gamma, 0)}\left(1 + \bar\rho\int\left(-\gamma W*w + \frac{1}{2}e^{-\gamma\xi_x}(\gamma W*w)^2\right)d\sigma + f_1(w)\right), \\
    C_\gamma(w) &=e^{-\gamma W*\bar\rho}\left(1 -\gamma W* w + \frac{1}{2}e^{-\gamma\xi_x}(\gamma W*w))^2\right),
\end{align*}
where $\xi_x$ is the point of estimation of the second derivative for the Lagrange form  of the remainder, by definition we know that for every $x\in\bbS$ the absolute value of $\xi_x$ is bounded by the value of convolution at $x:$ $|\xi_x| \leq (W*w)(x)$. In Lemma \ref{lemma:gibbs-H1} we have shown the uniform bound on the convolution $\|W*w\|_\infty \leq \|W\|_\infty\|w\|_{L_1}$, and since $\|w\|_{L_2} <\infty$ and the sphere is a compact domain we can bound the $L_1$-norm of $w$ in terms of $L_2$-norm $\|w\|_{L_1} \leq \|w\|_{L_2}^2 + \sigma(\bbS^{n-1})$. As a result, we conclude that $|\xi_x|$ is uniformly bounded and so is the exponential pre-factor $|e^{-\gamma\xi_x}| < C_\xi$. 

We use this argument to estimate the residual term $f_1(w)$, which by Taylor's theorem can be bounded by
\[
|f_1(w)|\leq \beta_f\left(\bar\rho\int\left(-\gamma W*w + \frac{1}{2}e^{-\gamma\xi_x}(\gamma W*w)^2\right)d\sigma\right),
\]
for some $\beta_f(x) =O(x^2)$ as $x \to 0$. At the same time estimating the square of the argument of $\beta_f$, by Cauchy-Schwartz inequality we obtain 
\begin{align}
\label{eq:long-bound}
\MoveEqLeft\left(\int\left(-\gamma W*w + \frac{1}{2}e^{-\gamma\xi_x}(\gamma W*w)^2\right)d\sigma\right)^2 \leq \gamma^2\|W\|^2_\infty\|w\|_{L_1}^2 +\frac14C_\xi^2\gamma^2 \|W\|^4_\infty\|w\|_{L_1}^4 \\
&\qquad \leq \gamma^2(C_1 \|w\|^2_{L_2} + C_2\|w\|^4_{L_2}) = O(\|w\|^2_{L_2}),\notag
\end{align}
uniformly in $\gamma$ on any neighborhood $(\gamma-\delta, \gamma+\delta)\subset \bbR_+$.
As follows from the bound \eqref{eq:long-bound} and the uniform bound obtained in Lemma \ref{lemma:G}, both $A_\gamma$ and $B_\gamma$ are also uniformly bounded on $L_2$-bounded sets, take $C_\infty \in \bbR$ such that $A_\gamma(w), B_\gamma(w) <C_\infty$ for all $w: \|w\|_{L_2}< C_w$. Then, combining the above estimates and using the fact that $\bar\rho = e^{-\gamma W*\bar\rho}/Z(\gamma, 0)$ we obtain the following bound for the non-linear operator $G$:
\begin{align*}
\MoveEqLeft \|G(w, \gamma)\|^2_{L_2} = \Big\|\bar \rho - \gamma \bar \rho \cdot  W* w + \gamma \bar\rho^2  \int_{\bbS^{n-1}}  W* w dx  -A_\gamma(w)\Big\|^2_{L_2} \\
&=%
C_\infty^2\left(|f(w)|^2 +e^{-2\gamma W*\bar\rho}\frac14C_\xi^2\|(\gamma W*w)^2\|_{L_2}^2\right) + \bar\rho^2\|\gamma W*w \int_{\bbS^{n-1}}  W* w d\sigma\|_{L_2}^2
\\
&\leq \tilde C_1|f(w)|^2 + \tilde C_2\|(\gamma W*w)^2d\sigma\|_{L_2}^2 = o(\|w\|^2_{L_2}).
\end{align*}
\textbf{Higher derivatives.} We now calculate Frechet derivatives of the non-linear term $G$. For $D_w G$ using \eqref{eq:DwF} we get
\begin{align*}
\MoveEqLeft    D_w G(w, \gamma)[v] = D_w\hat{F}(w, \gamma)[v] -v + \gamma Kv \\
&= \gamma Kv +\frac{1}{Z( \gamma, w)}  e^{-\gamma W* (\bar \rho + w)} \cdot \gamma W* v - \frac{ \gamma e^{-\gamma W* (\bar \rho + w)}}{Z( \gamma, w)^2}\int_{\bbS^{n-1}} e^{-\gamma W* (\bar \rho + w)} \cdot W* v d\sigma.
\end{align*}
Note that 
Define operators $Q_1(w):= Z(\gamma, w)^{-1}$, $Q_2(w) := e^{-\gamma W* (\bar \rho + w)}$ and $Q_3(w): = \int_{\bbS^{n-1}} e^{-\gamma W* (\bar \rho + w)} \cdot W* v$ for $v\in L_2(\bbS^{n-1})$, we show that all of them are continuous. To begin with, by Cauchy-Schwartz inequality we get the following uniform bound
\[
|W * w|_\infty \leq \|W\|_{L_2} \|w\|_{L_2}.
\]
Using it, for $Q_2$ we obtain for sufficiently small $\|w_1\|_{L_2}, \|w_1\|_{L_2} < c_w$ the bound
\begin{align*}
    \|Q_2(w_1) - Q_2(w_2)\|^2_{L_2} &= \left\|e^{-\gamma W* (\bar \rho + w_2)} - e^{-\gamma W* (\bar \rho + w_1)}\right\|^2_{L_2} \\
    &= \int e^{-2\gamma W* (\bar \rho + w_2)}\left(1 - e^{-\gamma W*(w_1 - w_2)}\right)^2 \\
    &\leq 2\gamma \sigma(\bbS^{n-1}) \|W\|^2_{L_2}e^{2\gamma\|W\|_{L_2}(1 + c_w)}\|w_1 - w_2\|_{L_2}^2\\
    &= O(\|w_1 - w_2\|_{L_2}^2).
\end{align*}
Similarly for $Q_1$ we then get the estimate
\begin{align*}
    |Q_1(w_1) - Q_1(w_2)| &= \frac{\left|\int e^{-\gamma W* (\bar \rho + w_2)} - e^{-\gamma W* (\bar \rho + w_1)}\right|}{\int e^{-\gamma W* (\bar \rho + w_1)} \cdot \int e^{-\gamma W* (\bar \rho + w_2)}} = \frac{\int e^{-\gamma W* (\bar \rho + w_2)}\left(1 - e^{-\gamma W*(w_1 - w_2)}\right)}{\int e^{-\gamma W* (\bar \rho + w_2)} \cdot \int e^{-\gamma W* (\bar \rho + w_1)}}\\
    &\leq C e^{3\|W\|_{L_2}(1+c_w)}\int\left|1 - e^{-\gamma W*(w_1 - w_2)}\right| \\
    &\leq 2\gamma C \sigma(\calM)\|W\|_{L_2} e^{3\|W\|_{L_2}(1+c_w)} \|w_1-w_2\|_{L_2} = O(\|w_1 - w_2\|_{L_2}).
\end{align*}
Finally, for $Q_3$ assuming that $\|v\|_{L_2}<1$ we obtain
\begin{align*}
    |Q_3(w_1) - Q_3(w_2)| &= \int (Q_1(w_1) - Q_1(w_2)) \cdot W*v \leq \|W\|_{L_2}\int|Q_1(w_1) - Q_1(w_2)| \\
    &\leq 2\gamma \sigma(\bbS^{n-1}) \|W\|_{L_2}e^{\gamma\|W\|_{L_2}(1 + c_w)}\|w_1 - w_2\|_{L_2} = O(\|w_1 - w_2\|_{L_2}). 
\end{align*}
The derivative $D_w G(w, \gamma)[v]$ then takes the form
\[
D_w G(w, \gamma)[v] =\gamma Kv +\gamma Q_1(w)Q_2(w)\cdot W*v - \gamma Q_1(w)Q_2(w)^2Q_3(w),
\]
and, as directly follows from the above estimates, is continuous in $w$ for sufficiently small perturbation $w$ and any finite $\gamma$. Continuity in $\gamma$ follows in similar manner. Calculating $D_\gamma G(w, \gamma)$ and $D_{w,\gamma} G(w, \gamma)[u]$ we obtain the following expressions.
\begin{align*}
D_\gamma G(w,\gamma) &= D_\gamma\left(\frac{1}{Z(\gamma, w)} e^{-\gamma W* (\bar \rho + w)} \right)+  \bar \rho \cdot  W* w -  \bar\rho^2  \int_{\bbS^{n-1}}  W* w d\sigma \\
&=e^{-\gamma W* (\bar \rho + w)} D_\gamma\left(\frac{1}{Z(\gamma, w)}\right) + \frac{1}{Z(\gamma, w)} D_\gamma\left(e^{-\gamma W* (\bar \rho + w)}\right) \\
&\qquad +  \bar \rho \cdot  W* w -  \bar\rho^2  \int_{\bbS^{n-1}}  W* w d\sigma \\
&=e^{-\gamma W* (\bar \rho + w)} \frac{e^{-\gamma W* (\bar \rho + w)} }{Z(\gamma, w)^2} \int e^{-\gamma W*(\bar\rho +w) }\cdot W * (\bar\rho + w) -\frac{\gamma e^{-\gamma W* (\bar \rho + w)}}{Z(\gamma,w)} \\
&\qquad +  \bar \rho \cdot  W* w -  \bar\rho^2  \int_{\bbS^{n-1}}  W* w d\sigma.
\end{align*}
And analogously for the mixed derivative:
\begin{align*}
    D_{w,\gamma} G(w,\gamma)[v] &= Kv +\frac{1}{Z( \gamma, w)}  e^{-\gamma W* (\bar \rho + w)} \cdot W* v  + \gamma W* v\cdot D_\gamma\left(\frac{1}{Z( \gamma, w)}  e^{-\gamma W* (\bar \rho + w)}\right) \\
    &\qquad- \frac{e^{-\gamma W* (\bar \rho + w)}}{Z( \gamma, w)^2}\int_{\bbS^{n-1}} e^{-\gamma W* (\bar \rho + w)} \cdot W* v d\sigma \\
    &\qquad+\gamma D_\gamma\left(\frac{e^{-\gamma W* (\bar \rho + w)}}{Z( \gamma, w)^2}\int_{\bbS^{n-1}} e^{-\gamma W* (\bar \rho + w)} \cdot W* v d\sigma\right).
\end{align*}
A more explicit expression for $D_{w,\gamma} G(w,\gamma)[v]$ can be obtained using the product rule. Continuity of both $D_\gamma G(w,\gamma)$ and $D_{w,\gamma} G(w,\gamma)[v]$ follows from the arguments analogous to the estimates for $D_w G(w, \gamma)$. 
\end{proof}

\begin{proof}[Proof of Lemma \ref{lemma:compactness}]
    Consider a bounded set $U \subset L_2(\calM)$, we aim to show that the images of $U$ under both $A$ and $B$ are relatively compact by applying Arzelà–Ascoli theorem. We start by showing the pointwise boundedness. Using the argument from Lemma \ref{lemma:gibbs-H1} and Hölder's inequality we obtain
    \begin{align*}
        \|Au\|_\infty &= \|W*u\|_\infty \leq \|W\|_\infty \|u\|_{L_1} \leq \|W\|_\infty \|u\|_{L_1} \leq \|W\|_\infty \sqrt{\sigma(\calM)}\|u\|_{L_2}, \\
        \|Bu\|_\infty &= \left\|\sigma \int_\calM W * ud\sigma\right\|_\infty \leq \left\| \int_\calM \|W\|_\infty\sqrt{\sigma(\calM)}\|u\|_{L_2} d\sigma\right\|_\infty = \|W\|_\infty \sigma(\calM)^{3/2}\|u\|_{L_2}.
    \end{align*}
    Note that $Bu$ is a constant function for every $u\in L_2(\calM)$, so the set $B(U)$ is equicontinuous. For $A(U)$ we estimate for $x_1, x_2 \in \calM$
    \begin{align*}
         |(Au)(x_1) - (Au)(x_2)|^2 &= \left|\int_\calM W(x_1, y)u(y)d\sigma(y) - \int_\calM W(x_2, y)u(y)d\sigma(y) \right|^2 \\
        &= \left|\int_\calM \left(W(x_1, y) - W(x_2, y)\right)u(y)d\sigma(y)  \right|^2 \\
        &\leq \int_\calM  \left|W(x_1, y) - W(x_2, y)\right|^2 \|u(y)\|^2d\sigma(y)  \\
    &\leq \|u\|_{L_2}^2\sup_{y\in \calM}\left|W(x_1, y) - W(x_2, y)\right|^2.
    \end{align*}
    Since $W$ is continuous on $\calM\times \calM$ and $\calM$ is compact, $W$ is uniformly continuous. Thus for every $\varepsilon >0$ there exists $\delta>0$ such that for all $x_1, x_2: \ \dist(x_1, x_2) <\delta$ it holds that $\left|W(x_1, y) - W(x_2, y)\right| < \varepsilon$, which guarantees that 
    \[
     |(Au)(x_1) - (Au)(x_2)|^2 \leq \varepsilon^2\|u\|_{L_2}^2,
    \]
    so $A(U)$ is equicontinuous. Thus by Arzelà–Ascoli theorem both $A(U)$ and $B(U)$ are relatively compact sets in the topology of uniform convergence. Since $\calM$ is compact, $A(U)$ and $B(U)$ are also relatively compact in $L_2(\calM)$ implying that both operators are compact.
\end{proof}
\bibliographystyle{myalpha}
\bibliography{biblio}

\newcommand{\etalchar}[1]{$^{#1}$}
\begin{thebibliography}{BCnCD16}

\bibitem[AAR99]{Andrews_Askey_Roy_1999}
G.~E. Andrews, R. Askey, and R. Roy.
\newblock {\em Special Functions}.
\newblock Encyclopedia of Mathematics and its Applications. Cambridge University Press, 1999.

\bibitem[AGS05]{ambrosio2005gradient}
L. Ambrosio, N. Gigli, and G. Savar{\'e}.
\newblock {\em Gradient flows: in metric spaces and in the space of probability measures}.
\newblock Springer Science \& Business Media, 2005.

\bibitem[AS68]{abramowitz1968handbook}
M. Abramowitz and I.~A. Stegun.
\newblock {\em Handbook of mathematical functions with formulas, graphs, and mathematical tables}, volume~55.
\newblock US Government printing office, 1968.

\bibitem[BCM07]{BurgerCapassoMorale2007}
M. Burger, V. Capasso, and D. Morale.
\newblock On an aggregation model with long and short range interactions.
\newblock {\em Nonlinear Anal. Real World Appl.}, 8(3):939--958, 2007.

\bibitem[BCnCD16]{BarbaroCanizoCarrilloDegond2016}
A.~B.~T. Barbaro, J.~A. Ca\~nizo, J.~A. Carrillo, and P. Degond.
\newblock Phase transitions in a kinetic flocking model of {C}ucker-{S}male type.
\newblock {\em Multiscale Model. Simul.}, 14(3):1063--1088, 2016.

\bibitem[BDS23]{brigati2023logarithmic}
G. Brigati, J. Dolbeault, and N. Simonov.
\newblock Logarithmic sobolev and interpolation inequalities on the sphere: constructive stability results.
\newblock {\em Annales de l'Institut Henri Poincar{\'e} C}, 41(5):1289--1321, 2023.

\bibitem[Bit86]{bittencourt1986fund}
J.~A. Bittencourt.
\newblock {\em Fundamentals of plasma physics}.
\newblock Pergamon Press, Oxford, 1986.

\bibitem[BJW23]{bresch2023mean}
D. Bresch, P.-E. Jabin, and Z. Wang.
\newblock Mean field limit and quantitative estimates with singular attractive kernels.
\newblock {\em Duke Mathematical Journal}, 172(13):2591--2641, 2023.

\bibitem[BPA24]{bruno2024emergence}
G. Bruno, F. Pasqualotto, and A. Agazzi.
\newblock Emergence of meta-stable clustering in mean-field transformer models.
\newblock {\em arXiv preprint arXiv:2410.23228}, 2024.

\bibitem[Bra06]{braides2006handbook}
A. Braides.
\newblock A handbook of $\gamma$-convergence.
\newblock In {\em Handbook of Differential Equations: stationary partial differential equations}, volume~3, pages 101--213. Elsevier, 2006.

\bibitem[BT76]{BaernsteinTaylor1976}
A. Baernstein, II and B.~A. Taylor.
\newblock Spherical rearrangements, subharmonic functions, and {$\sp*$}-functions in {$n$}-space.
\newblock {\em Duke Math. J.}, 43(2):245--268, 1976.

\bibitem[BT08]{binney2008}
J. Binney and S. Tremaine.
\newblock {\em Galactic dynamics}.
\newblock Princeton University Press, 2008.

\bibitem[CC24]{chan2024meyers}
C.~H. Chan and M. Czubak.
\newblock The {M}eyers-{S}errin theorem on riemannian manifolds: a survey.
\newblock {\em arXiv preprint arXiv:2405.13322}, 2024.

\bibitem[CFP24]{carrillo2024existence}
J.~A. Carrillo, R.~C. Fetecau, and H. Park.
\newblock Existence of ground states for free energies on the hyperbolic space.
\newblock {\em arXiv preprint arXiv:2409.06022}, 2024.

\bibitem[CG21]{CarrilloGvalani2021}
J.~A. Carrillo and R.~S. Gvalani.
\newblock Phase transitions for nonlinear nonlocal aggregation-diffusion equations.
\newblock {\em Communications in Mathematical Physics}, 382(1):485–545, February 2021.

\bibitem[CGPS20]{carrillo2020long}
J.~A. Carrillo, R. Gvalani, G. Pavliotis, and A. Schlichting.
\newblock Long-time behaviour and phase transitions for the {M}c{K}ean--{V}lasov equation on the torus.
\newblock {\em Archive for Rational Mechanics and Analysis}, 235(1):635--690, 2020.

\bibitem[Chi11]{chihara2011introduction}
T. Chihara.
\newblock {\em An Introduction to Orthogonal Polynomials}.
\newblock Dover Books on Mathematics. Dover Publications, 2011.

\bibitem[CP10]{ChayesPanferov2010}
L. Chayes and V. Panferov.
\newblock The {M}c{K}ean-{V}lasov equation in finite volume.
\newblock {\em J. Stat. Phys.}, 138(1-3):351--380, 2010.

\bibitem[CR71]{CrandallRabinowitz1971}
M.~G. Crandall and P.~H. Rabinowitz.
\newblock Bifurcation from simple eigenvalues.
\newblock {\em J. Functional Analysis}, 8:321--340, 1971.

\bibitem[CS19]{ConlonSchlichting2019}
J.~G. Conlon and A. Schlichting.
\newblock {A non-local problem for the Fokker-Planck equation related to the Becker-Döring model}.
\newblock {\em Discret. Contin. Dyn. Syst.}, 39(4):1--57, 2019.

\bibitem[Dai13]{dai2013approximation}
F. Dai.
\newblock {\em Approximation theory and harmonic analysis on spheres and balls}.
\newblock Springer, 2013.

\bibitem[Dav07]{davies2007linear}
E.~B. Davies.
\newblock {\em Linear operators and their spectra}, volume 106.
\newblock Cambridge University Press, 2007.

\bibitem[DE11]{dupuis2011weak}
P. Dupuis and R.~S. Ellis.
\newblock {\em A weak convergence approach to the theory of large deviations}.
\newblock John Wiley \& Sons, 2011.

\bibitem[Dei13]{deimling2013nonlinear}
K. Deimling.
\newblock {\em Nonlinear functional analysis}.
\newblock Springer Science \& Business Media, 2013.

\bibitem[DFL15]{DegondFrouvelleLiu2015}
P. Degond, A. Frouvelle, and J.-G. Liu.
\newblock Phase transitions, hysteresis, and hyperbolicity for self-organized alignment dynamics.
\newblock {\em Arch. Ration. Mech. Anal.}, 216(1):63--115, 2015.

\bibitem[DG89]{DawsonGaertner1989}
D.~A. Dawson and J. G\"{a}rtner.
\newblock Large deviations, free energy functional and quasi-potential for a mean field model of interacting diffusions.
\newblock {\em Mem. Amer. Math. Soc.}, 78(398):iv+94, 1989.

\bibitem[DGP21]{delgadino2021diffusive}
M.~G. Delgadino, R.~S. Gvalani, and G.~A. Pavliotis.
\newblock On the diffusive-mean field limit for weakly interacting diffusions exhibiting phase transitions.
\newblock {\em Archive for Rational Mechanics and Analysis}, 241:91--148, 2021.

\bibitem[DGPS23]{DelgadinoGvalaniPavliotisSmith2023}
M.~G. Delgadino, R.~S. Gvalani, G.~A. Pavliotis, and S.~A. Smith.
\newblock Phase transitions, logarithmic sobolev inequalities, and uniform-in-time propagation of chaos for weakly interacting diffusions.
\newblock {\em Communications in Mathematical Physics}, 401(1):275–323, February 2023.

\bibitem[DP09]{dokmanic2009convolution}
I. Dokmanic and D. Petrinovic.
\newblock Convolution on the $ n $-sphere with application to pdf modeling.
\newblock {\em IEEE transactions on signal processing}, 58(3):1157--1170, 2009.

\bibitem[Due16]{duerinckx2016mean}
M. Duerinckx.
\newblock Mean-field limits for some riesz interaction gradient flows.
\newblock {\em SIAM Journal on Mathematical Analysis}, 48(3):2269--2300, 2016.

\bibitem[Erb10]{erbar2010heat}
M. Erbar.
\newblock The heat equation on manifolds as a gradient flow in the {W}asserstein space.
\newblock In {\em Annales de l'IHP Probabilit{\'e}s et statistiques}, volume~46, pages 1--23, 2010.

\bibitem[FHP21]{fetecau2021intrinsic}
R.~C. Fetecau, S.-Y. Ha, and H. Park.
\newblock An intrinsic aggregation model on the special orthogonal group so (3): well-posedness and collective behaviours.
\newblock {\em Journal of Nonlinear Science}, 31:1--61, 2021.

\bibitem[FP23a]{fetecau2023equilibria}
R.~C. Fetecau and H. Park.
\newblock Equilibria and energy minimizers for an interaction model on the hyperbolic space.
\newblock {\em Physica D: Nonlinear Phenomena}, 446:133670, 2023.

\bibitem[FP23b]{fetecau2023ground}
R.~C. Fetecau and H. Park.
\newblock Ground states for aggregation-diffusion models on cartan-hadamard manifolds.
\newblock {\em arXiv preprint arXiv:2306.04856}, 2023.

\bibitem[FP23c]{fetecau2023long}
R.~C. Fetecau and H. Park.
\newblock Long-time behaviour of interaction models on riemannian manifolds with bounded curvature.
\newblock {\em The Journal of Geometric Analysis}, 33(7):218, 2023.

\bibitem[FPP21]{fetecau2021well}
R.~C. Fetecau, H. Park, and F.~S. Patacchini.
\newblock Well-posedness and asymptotic behavior of an aggregation model with intrinsic interactions on sphere and other manifolds.
\newblock {\em Analysis and Applications}, 19(06):965--1017, 2021.

\bibitem[FS05]{fatkullin2005critical}
I. Fatkullin and V. Slastikov.
\newblock Critical points of the onsager functional on a sphere.
\newblock {\em Nonlinearity}, 18(6):2565, 2005.

\bibitem[GKPR24]{geshkovski2024dynamic}
B. Geshkovski, H. Koubbi, Y. Polyanskiy, and P. Rigollet.
\newblock Dynamic metastability in the self-attention model.
\newblock {\em arXiv preprint arXiv:2410.06833}, 2024.

\bibitem[GLPR24]{geshkovski2024mathematical}
B. Geshkovski, C. Letrouit, Y. Polyanskiy, and P. Rigollet.
\newblock A mathematical perspective on {T}ransformers, 2024.

\bibitem[Gol16]{golse2016dynamics}
F. Golse.
\newblock On the dynamics of large particle systems in the mean field limit.
\newblock {\em Macroscopic and large scale phenomena: coarse graining, mean field limits and ergodicity}, pages 1--144, 2016.

\bibitem[GP70]{GatesPenrose1970}
D.~J. Gates and O. Penrose.
\newblock The van der {W}aals limit for classical systems. {III}. {D}eviation from the van der {W}aals-{M}axwell theory.
\newblock {\em Comm. Math. Phys.}, 17(3):194--209, 1970.

\bibitem[GR14]{gradshteyn2014table}
I.~S. Gradshteyn and I.~M. Ryzhik.
\newblock {\em Table of integrals, series, and products}.
\newblock Academic press, 2014.

\bibitem[GS20]{GvalaniSchlichting2020}
R.~S. Gvalani and A. Schlichting.
\newblock {Barriers of the McKean-Vlasov energy via a mountain pass theorem in the space of probability measures}.
\newblock {\em J. Funct. Anal.}, 279(11):108720, 34, 2020.

\bibitem[Hay94]{hayman1994multivalent}
W.~K. Hayman.
\newblock {\em Multivalent functions}.
\newblock Cambridge University Press, 1994.

\bibitem[Heb96]{hebey1996sobolev}
E. Hebey.
\newblock {\em Sobolev spaces on {R}iemannian manifolds}, volume 1635.
\newblock Springer Science \& Business Media, 1996.

\bibitem[HHMT24]{hoeksema2024large}
J. Hoeksema, T. Holding, M. Maurelli, and O. Tse.
\newblock Large deviations for singularly interacting diffusions.
\newblock In {\em Annales de l'Institut Henri Poincare (B) Probabilites et statistiques}, volume~60, pages 492--548. Institut Henri Poincar{\'e}, 2024.

\bibitem[HK02]{HegselmannKrause2002}
R. Hegselmann and U. Krause.
\newblock Opinion dynamics and bounded confidence: models, analysis and simulation.
\newblock {\em The Journal of artifical societies and social simulation}, {5}({3}), {2002}.

\bibitem[J06]{WachsmuthThesis06}
W. J.
\newblock Suspensions of rod-like molecules: the isotropic-nematic phase transition and flow alignment in 2-d.
\newblock Master's thesis, University of Bonn, 2006.

\bibitem[Jab14]{jabin2014review}
P.-E. Jabin.
\newblock A review of the mean field limits for {V}lasov equations.
\newblock {\em Kinetic and Related models}, 7(4):661--711, 2014.

\bibitem[Kie12]{Kielhoefer2012}
H. Kielh\"ofer.
\newblock {\em Bifurcation theory}, volume 156 of {\em Applied Mathematical Sciences}.
\newblock Springer, New York, second edition, 2012.
\newblock An introduction with applications to partial differential equations.

\bibitem[KS71]{KellerSegel1971}
E.~F. Keller and L.~A. Segel.
\newblock Model for chemotaxis.
\newblock {\em Journal of Theoretical Biology}, 30(2):225 -- 234, 1971.

\bibitem[Kur81]{kuramoto1981rhythms}
Y. Kuramoto.
\newblock Rhythms and turbulence in populations of chemical oscillators.
\newblock {\em Phys. A}, 106(1-2):128--143, 1981.
\newblock Statphys 14 (Proc. Fourteenth Internat. Conf. Thermodynamics and Statist. Mech., Univ. Alberta, Edmonton, Alta., 1980).

\bibitem[Lac23]{Lacker2023}
D. Lacker.
\newblock Hierarchies, entropy, and quantitative propagation of chaos for mean field diffusions.
\newblock {\em Probab. Math. Phys.}, 4(2):377--432, 2023.

\bibitem[LV10]{lucia2010exact}
M. Lucia and J. Vukadinovic.
\newblock Exact multiplicity of nematic states for an onsager model.
\newblock {\em Nonlinearity}, 23(12):3157, 2010.

\bibitem[MA01]{MartzelAslangul2001}
N. Martzel and C. Aslangul.
\newblock Mean-field treatment of the many-body {F}okker-{P}lanck equation.
\newblock {\em J. Phys. A}, 34(50):11225--11240, 2001.

\bibitem[McK66]{McKean1966}
H.~P. McKean, Jr.
\newblock A class of {M}arkov processes associated with nonlinear parabolic equations.
\newblock {\em Proc. Nat. Acad. Sci. U.S.A.}, 56:1907--1911, 1966.

\bibitem[Mon17]{monmarche2017long}
P. Monmarch{\'e}.
\newblock Long-time behaviour and propagation of chaos for mean field kinetic particles.
\newblock {\em Stochastic Processes and their Applications}, 127(6):1721--1737, 2017.

\bibitem[Oel84]{Oelschlaeger1984}
K. Oelschl\"ager.
\newblock A martingale approach to the law of large numbers for weakly interacting stochastic processes.
\newblock {\em Ann. Probab.}, 12(2):458--479, 1984.

\bibitem[Oel89]{oelschlager1989derivation}
K. Oelschl{\"a}ger.
\newblock On the derivation of reaction-diffusion equations as limit dynamics of systems of moderately interacting stochastic processes.
\newblock {\em Probability Theory and Related Fields}, 82(4):565--586, 1989.

\bibitem[OV00]{otto2000generalization}
F. Otto and C. Villani.
\newblock Generalization of an inequality by talagrand and links with the logarithmic sobolev inequality.
\newblock {\em Journal of Functional Analysis}, 173(2):361--400, 2000.

\bibitem[OW05]{otto2005eulerian}
F. Otto and M. Westdickenberg.
\newblock Eulerian calculus for the contraction in the wasserstein distance.
\newblock {\em SIAM journal on mathematical analysis}, 37(4):1227--1255, 2005.

\bibitem[PNT90]{PoschNarenhoferThirring1990}
H.~A. Posch, H. Narnhofer, and W. Thirring.
\newblock Dynamics of unstable systems.
\newblock {\em Physical Review A}, 42(4):1880–1890, August 1990.
\newblock Video of simulations: \url{https://www.mediathek.at/atom/018AAB81-08A-0279F-00000484-0189A3E5}.

\bibitem[PS21]{patacchini2021nonlocal}
F.~S. Patacchini and D. Slep{\v{c}}ev.
\newblock The nonlocal-interaction equation near attracting manifolds.
\newblock {\em arXiv preprint arXiv:2106.01823}, 2021.

\bibitem[RVE22]{rotskoff2022trainability}
G. Rotskoff and E. Vanden-Eijnden.
\newblock Trainability and accuracy of artificial neural networks: An interacting particle system approach.
\newblock {\em Communications on Pure and Applied Mathematics}, 75(9):1889--1935, 2022.

\bibitem[San15]{santambrogio2015optimal}
F. Santambrogio.
\newblock Optimal transport for applied mathematicians.
\newblock {\em Birk{\"a}user, NY}, 55(58-63):94, 2015.

\bibitem[Ser20]{Serfaty2020mean}
S. Serfaty.
\newblock {Mean field limit for Coulomb-type flows}.
\newblock {\em Duke Mathematical Journal}, 169(15):2887 -- 2935, 2020.
\newblock Appendix with Mitia Duerinckx.

\bibitem[SS20]{sirignano2020mean}
J. Sirignano and K. Spiliopoulos.
\newblock Mean field analysis of neural networks: A law of large numbers.
\newblock {\em SIAM Journal on Applied Mathematics}, 80(2):725--752, 2020.

\bibitem[Ste00]{Stevens2000}
A. Stevens.
\newblock The derivation of chemotaxis equations as limit dynamics of moderately interacting stochastic many-particle systems.
\newblock {\em SIAM J. Appl. Math.}, 61(1):183--212, 2000.

\bibitem[Stu05]{sturm2005convex}
K.-T. Sturm.
\newblock Convex functionals of probability measures and nonlinear diffusions on manifolds.
\newblock {\em Journal de math{\'e}matiques pures et appliqu{\'e}es}, 84(2):149--168, 2005.

\bibitem[Tug14]{Tugaut2014}
J. Tugaut.
\newblock Phase transitions of {M}c{K}ean-{V}lasov processes in double-wells landscape.
\newblock {\em Stochastics}, 86(2):257--284, 2014.

\bibitem[Vil03]{Villani2003}
C. Villani.
\newblock {\em Topics in optimal transportation}, volume~58 of {\em Graduate Studies in Mathematics}.
\newblock American Mathematical Society, Providence, RI, 2003.

\bibitem[Vil08]{Villani2008}
C. Villani.
\newblock {\em Optimal transport: old and new}, volume 338.
\newblock Springer Science \& Business Media, 2008.

\bibitem[Vol17]{Vollmer2017}
M.~A.~C. Vollmer.
\newblock Critical points and bifurcations of the three-dimensional {O}nsager model for liquid crystals.
\newblock {\em Arch. Ration. Mech. Anal.}, 226(2):851--922, 2017.

\bibitem[Vol18]{vollmer2018bifurcation}
M. Vollmer.
\newblock {\em A bifurcation analysis of models of liquid crystals}.
\newblock PhD thesis, University of Oxford, 2018.

\bibitem[VSP{\etalchar{+}}17]{vaswani2017attention}
A. Vaswani, N. Shazeer, N. Parmar, J. Uszkoreit, L. Jones, A.~N. Gomez, {\L}. Kaiser, and I. Polosukhin.
\newblock Attention is all you need.
\newblock {\em Advances in neural information processing systems}, 30, 2017.

\bibitem[Vuk23]{vukadinovic2023phase}
J. Vukadinovic.
\newblock Phase transition for the mckean-vlasov equation of weakly coupled hodgkin-huxley oscillators.
\newblock {\em Discrete and Continuous Dynamical Systems}, 43(11):4113--4138, 2023.

\bibitem[WS15]{wu2015nonlocal}
L. Wu and D. Slep{\v{c}}ev.
\newblock Nonlocal interaction equations in environments with heterogeneities and boundaries.
\newblock {\em Communications in Partial Differential Equations}, 40(7):1241--1281, 2015.

\bibitem[ZS18]{zhao2018exact}
C. Zhao and J.~S. Song.
\newblock Exact heat kernel on a hypersphere and its applications in kernel svm.
\newblock {\em Frontiers in Applied Mathematics and Statistics}, 4:1, 2018.

\end{thebibliography}
\end{document}